\documentclass[12pt]{amsart} 

\usepackage{amsmath, amssymb, amsfonts, amsbsy, amsthm, latexsym, stmaryrd, mathtools, fullpage, enumerate, ytableau, shuffle, multirow, mathdots, enumerate, tikz, verbatim, comment, hhline, hyperref, arydshln}
\usetikzlibrary{calc,matrix,decorations.pathreplacing}

\newtheorem{thm}{Theorem}[section]
\newtheorem{lem}[thm]{Lemma}
\newtheorem{prop}[thm]{Proposition}
\newtheorem{cor}[thm]{Corollary}
\theoremstyle{definition}
\newtheorem{ex}[thm]{Example}
\newtheorem{rem}[thm]{Remark}
\newtheorem{defn}[thm]{Definition}
\newtheorem*{defn*}{Definition}
\newtheorem*{thm*}{Theorem}
\newtheorem*{prop*}{Proposition}
\newtheorem*{cor*}{Corollary}

\newtheorem{conjecture}[thm]{Conjecture}
\newtheorem*{question*}{Question}

\newcommand{\Z}{{\mathbb Z}}
\newcommand{\F}{{\mathbb F}}

\newcommand{\N}{{\mathbb N}}

\newcommand{\V}{\textbf{v}}

\usepackage[colorinlistoftodos]{todonotes}

\renewenvironment{proof}{\noindent {\bf Proof.}}{\qed \vskip 6pt}

\author{Ben Anzis}
\address{Department of Mathematics, University of Idaho, Moscow, ID 83844}
\email{\href{mailto:anzi4123@vandals.uidaho.edu}{{\tt anzi4123@vandals.uidaho.edu}}}
\author{Shuli Chen}
\address{Department of Mathematics, Cornell University, Ithaca, NY 14853}
\email{\href{mailto:sc2586@cornell.edu}{{\tt sc2586@cornell.edu}}}
\author{Yibo Gao}
\address{Department of Mathematics, Massachusetts Institute of Technology, \mbox{Cambridge, MA 02139}}
\email{\href{mailto:gaoyibo@mit.edu}{{\tt gaoyibo@mit.edu}}}
\author{Jesse Kim}
\address{Mathematics Department, Reed College, Portland, OR 97202}
\email{\href{mailto:jekim@reed.edu}{{\tt jekim@reed.edu}}}
\author{Zhaoqi Li}
\address{Department of Mathematics, Statistics, and Computer Science, Macalester College, St Paul, MN 55105}
\email{\href{mailto:zli@macalester.edu}{{\tt zli@macalester.edu}}}
\author{Rebecca Patrias}
\address{Universit\'e du Qu\'ebec \`a Montr\'eal \\
Montr\'eal (Qu\'ebec), Canada}
\email{\href{mailto:patriasr@lacim.ca}{{\tt patriasr@lacim.ca}}}

\begin{document}
\title{Jacobi-Trudi Determinants over Finite Fields}
\date{\today}

\begin{abstract} 
In this paper, we work toward answering the following question: given a uniformly random algebra homomorphism from the ring of symmetric functions over $\Z$ to a finite field $\F_q$, what is the probability that the Schur function $s_\lambda$ maps to zero? We show that this probability is always at least $1/q$ and is asymptotically $1/q$. Moreover, we give a complete classification of all shapes that can achieve probability $1/q$. In addition, we identify certain families of shapes where the corresponding Schur functions being sent to zero are independent events, and we look into the probability that a Schur functions is mapped to nonzero values in $\F_q$.
\end{abstract}

\maketitle

\section{Introduction}
Let $\Lambda$ denote the ring of symmetric functions over $\mathbb{Z}$.
Understanding the structure of the ring of symmetric functions is the focus of a great deal of research in algebraic combinatorics. For example,
see  \cite{macdonald1998symmetric}and \cite{stanley1999enumerative}.

In this paper, we study the structure of this ring using discrete
probability. We consider maps from the ring of symmetric functions over
$\mathbb{Z}$ to a finite field $\F_q$. In particular, one
can define a homomorphism from $\Lambda$ to $\F_q$ by giving the image of 
each of the complete homogeneous symmetric functions
$\{h_1,h_2,h_3,\ldots\}$. By the Jacobi-Trudi identity, we may express any
Schur function $s_{\lambda}$ in terms of a finite number of complete homogeneous symmetric
functions. This leads us to the following question, which we make more precise in a later section.

\begin{question*}What is the probability 
that $s_\lambda$ is sent to 0 by a uniformly random algebra homomorphism from $\Lambda$ to
$\F_q$? 
\end{question*}
We denote such a probability by $P(s_\lambda\mapsto 0)$. Moreover, one can look for families of partitions $\mathcal{F}$ such that $P(s_\lambda\mapsto 0 \mid s_\mu\mapsto 0)=P(s_\lambda\mapsto 0)$ for any $\lambda,\mu\in\mathcal{F}$ and/or $P(s_\lambda\mapsto a)=1/q$ for any $a\in\F_q$. This type of result is deeply connected to $\mathcal{F}$ being an algebraic basis for $\Lambda$, as we shall see later. 

We shall see that rectangle and staircase partitions play a special role in answering our question, as is also the case in many other areas of algebraic combinatorics. For example, in $K$-theory combinatorics, rectangles are the only shapes that are always unique rectification targets of $K$-theoretic jeu de taquin \cite{gaetz2016Knuth}; in the theory of Kashiwara crystals, rectangles are the only shapes for
which the classical crystal structure can be extended to the affine one \cite[Section 4]{shimozono2005crystals}; and rectangles and staircases are exactly the shapes for which promotion has a nice order \cite[Section 4]{haiman1992dual}. It is a mystery why these two families of shapes are special in so many contexts.

An additional motivation for our work comes from the fact that each Schur
function can be considered as a determinant via Jacobi-Trudi, so we are in
reality studying certain random matrices with entries in a finite field $\F_q$: 
matrices where certain entries are forced to be equal and certain entries may
be forced to be one or zero. This work has a similar flavor to the work of Haglund \cite{haglund1998q};  Lewis, Liu, Morales, and Panova \cite{lewis2010matrices}; Klein, Morales, and Lewis \cite{klein2014counting}; and others who studied matrices of a given rank over a finite field with specified positions equal to zero. In addition to combinatorics, there are similar questions being asked in coding theory. For example, in \cite{ravagnani2016rank}, Ravagnani studies rank-metric codes---sets of matrices of a given size over a finite field $\F_q$---which have applications in information theory.

Our paper is organized as follows. In Section \ref{sub:review} we review the basic notions of symmetric functions and partitions. We include an introduction to the Jacobi-Trudi identity and an example illustrating the main question of this work. 

Section~\ref{hooks_section} deals with Schur functions that are indexed by hooks---partitions of the form $(a,1^m)$ for $a,m\geq 0$. We prove the following result. 
\begin{prop*}[Proposition~\ref{hook_probability}]
Let $\lambda$ be a hook shape. Then $P(s_\lambda\mapsto 0)=1/q$. Furthermore, 
$P(s_\lambda\mapsto a)=1/q$ for any $a\in\mathbb{F}_q$.
\end{prop*}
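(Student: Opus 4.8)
The plan is to use the Jacobi--Trudi identity to realize $s_\lambda$, for a hook $\lambda$, as an integer polynomial in the complete homogeneous symmetric functions that is \emph{affine-linear}, with invertible leading coefficient, in a single well-chosen $h_n$; conditioning on the remaining $h_i$ then reduces the computation of $P(s_\lambda\mapsto a)$ to a one-line argument. Recall that a uniformly random algebra homomorphism $\varphi\colon\Lambda\to\F_q$ amounts to choosing the images $\varphi(h_1),\varphi(h_2),\dots\in\F_q$ independently and uniformly at random, and that $\varphi(s_\lambda)$ is obtained by substituting these values into the Jacobi--Trudi polynomial of $s_\lambda$. For a hook $\lambda=(a,1^m)$ this polynomial involves only $h_1,\dots,h_n$, where $n:=a+m$, so only finitely many of the coordinates are relevant; we assume $\lambda\neq\varnothing$, i.e.\ $n\ge1$.

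First I would pin down the exact form of the Jacobi--Trudi determinant $s_\lambda=\det\bigl(h_{\lambda_i-i+j}\bigr)_{1\le i,j\le m+1}$ for $\lambda=(a,1^m)$. The variable $h_n=h_{a+m}$ occurs in exactly one entry of this matrix, the top-right corner $(1,m+1)$: every entry in rows $2,\dots,m+1$ has the form $h_{1-i+j}$ with $j\le m+1$, hence index at most $m<n$. Expanding the determinant along that corner, the cofactor of $h_n$ is $\pm1$ times the minor obtained by deleting row $1$ and column $m+1$, and that minor is the $m\times m$ matrix $\bigl(h_{q-p}\bigr)_{1\le p,q\le m}$, which is upper-triangular with $h_0=1$ along the diagonal (all entries strictly below the diagonal have negative index and vanish); so the minor equals $1$. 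Hence
\[
s_\lambda \;=\; \varepsilon\, h_n \;+\; G(h_1,\dots,h_{n-1}), \qquad \varepsilon=\pm1,
\]
for some polynomial $G$ with integer coefficients; in particular $s_\lambda$ involves no $h_k$ with $k>n$ and is affine-linear in $h_n$ with a unit leading coefficient. (The boundary cases fit the same pattern: $s_{(a)}=h_a$ when $m=0$, and the column $s_{(1^m)}=e_m$ is affine-linear in $h_m$ with coefficient $\pm1$.)

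To conclude, I would condition on $\varphi(h_1),\dots,\varphi(h_{n-1})$. For any fixed values $c_1,\dots,c_{n-1}\in\F_q$ of these images, the map $x\mapsto \varepsilon x+G(c_1,\dots,c_{n-1})$ carrying $\varphi(h_n)$ to $\varphi(s_\lambda)$ is a bijection of $\F_q$, so for every $a\in\F_q$ there is exactly one value of $\varphi(h_n)$ with $\varphi(s_\lambda)=a$. Since $\varphi(h_n)$ is uniform on $\F_q$ and independent of $\varphi(h_1),\dots,\varphi(h_{n-1})$, it follows that $P\bigl(s_\lambda\mapsto a\mid \varphi(h_1),\dots,\varphi(h_{n-1})\bigr)=1/q$ no matter what the conditioning is, and averaging gives $P(s_\lambda\mapsto a)=1/q$ for every $a\in\F_q$; the case $a=0$ is the first assertion. (If one instead builds $\varphi$ from the $e_i$'s, the same conclusion follows by applying the involution $\omega$, since conjugates of hooks are hooks.)

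The only step with real content is the determinant identity in the second paragraph: a hook's Jacobi--Trudi matrix has a single ``top'' variable $h_n$ whose cofactor is, up to sign, $1$. I expect this to be short, and it is precisely this triangular structure---absent for a general shape---that forces the probability to be exactly $1/q$ with no lower-order corrections.
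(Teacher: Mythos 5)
Your proposal is correct and follows essentially the same route as the paper: both expand the hook's Jacobi--Trudi determinant to isolate $h_n$ (with $n=|\lambda|$), observe that $s_\lambda = \pm h_n + f(h_1,\dots,h_{n-1})$ because the relevant cofactor is a unit, and then condition on $h_1,\dots,h_{n-1}$ to conclude uniformity of the distribution of $s_\lambda$. The only difference is that you spell out the upper-triangularity of the minor explicitly, which the paper leaves implicit.
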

As a corollary, we show that the set of Schur functions indexed by certain families of hooks form an algebraic basis for the symmetric functions. 

In Section~\ref{sec:machinery}, we introduce the main tools for the remainder of the paper: Schur matrices and operators on Schur matrices. We then proceed to prove the following in Section~\ref{sec:generalresults}.

\begin{thm*}[Corollary~\ref{cor: geq 1/q}]
We have that $P(s_\lambda\mapsto0)\geq 1/q$ for all shapes $\lambda.$
\end{thm*}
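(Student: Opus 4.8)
The plan is to prove $P(s_\lambda \mapsto 0) \geq 1/q$ by exhibiting, for almost every homomorphism, a "partner" homomorphism that sends $s_\lambda$ to zero, so that the fibers over $0$ are large enough. Concretely, a random homomorphism $\varphi\colon \Lambda \to \F_q$ is determined by the values $\varphi(h_1), \varphi(h_2), \ldots$, but since $s_\lambda$ for $\lambda$ a partition of $n$ depends only on $h_1, \ldots, h_n$ via the Jacobi–Trudi determinant, we may restrict attention to the finite-dimensional sample space $(\varphi(h_1), \ldots, \varphi(h_n)) \in \F_q^n$, with the uniform measure. The Schur matrix of $\lambda$ is the $\ell(\lambda) \times \ell(\lambda)$ matrix $M_\lambda$ with $(i,j)$ entry $h_{\lambda_i - i + j}$ (with the convention $h_0 = 1$, $h_k = 0$ for $k<0$), and $s_\lambda = \det M_\lambda$. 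So we want to show that at least a $1/q$ fraction of the $q^n$ assignments make $\det M_\lambda = 0$ in $\F_q$.

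What's the plan? I think the natural approach is to isolate one coordinate — say $h_n$, or more cleverly the largest-index $h$ actually appearing — and show that for each fixed choice of the remaining coordinates, the determinant, viewed as a polynomial in that one variable over $\F_q$, either vanishes identically or has at least one root. Actually the cleanest version: show the determinant is an $\F_q$-affine-linear (degree $\leq 1$) function of the single variable $h_n$ once the others are fixed. Indeed, $h_n$ appears in $M_\lambda$ only in the top-right corner entry (position $(1, \ell(\lambda))$ when $\lambda_1 = n - \ell(\lambda) + 1$... more carefully, $h_n$ appears exactly when $\lambda_i - i + j = n$, and since $\lambda_1 - 1 + \ell(\lambda) \geq n$ with equality iff... hmm, one needs to check $h_n$ appears in exactly one entry, or at least that the determinant is affine-linear in $h_n$). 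Then a degree-$\leq 1$ polynomial over $\F_q$ in one variable either is identically zero (so all $q$ values of $h_n$ give a root, contributing a full $1/1$ fraction) or is $a h_n + b$ with $a \neq 0$ (so exactly one value of $h_n$ is a root, contributing exactly $1/q$). Averaging over the other coordinates, the fraction of zeros is at least $1/q$. The role of the Schur matrix machinery and the operators from Section~\ref{sec:machinery} is presumably to make this "the determinant is affine-linear in some well-chosen entry" argument clean, possibly by row/column operations that reduce to a smaller Schur-like matrix.

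**The main steps, in order.**

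First I would recall the Schur matrix $M_\lambda$ and $s_\lambda = \det M_\lambda$, and reduce the probability computation to the finite sample space $\F_q^n$. Second, I would identify a variable $h_k$ (the largest index with $h_k$ actually occurring as an entry, which is $h_{\lambda_1}$ appearing in the $(1,1)$ entry, or by transposing/reflecting, $h_n$ in a corner) such that expanding $\det M_\lambda$ along the relevant row or column exhibits $\det M_\lambda$ as $c \cdot h_k + (\text{terms not involving } h_k)$ for a cofactor $c$ that is itself a polynomial in the other $h_i$'s — i.e., $\det M_\lambda$ is affine-linear in $h_k$. Third, I would invoke the elementary fact that a polynomial of degree at most $1$ over $\F_q$ in one variable has at least $\lceil q/q \rceil = 1$ root, i.e., a $\geq 1/q$ fraction of its arguments are roots, with the extreme case (exactly $1/q$) only when the leading coefficient is a unit. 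Fourth, I would integrate over the remaining $\F_q^{n-1}$ coordinates via the law of total probability / Fubini for counting measure to conclude $P(s_\lambda \mapsto 0) \geq 1/q$.

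**The main obstacle.**

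The delicate point is the choice of the distinguished variable and verifying the affine-linearity claim rigorously: one must make sure that the chosen $h_k$ appears in $M_\lambda$ only in entries lying in a single row (or a single column), since otherwise the determinant could be quadratic or higher in $h_k$ and the one-variable root-counting bound fails. For a generic partition, $h_{\lambda_1}$ sits in the $(1,1)$ entry but $h$'s with the same index can also appear on the superdiagonals of other rows; for instance, $h_{\lambda_1}$ reappears at position $(i, j)$ whenever $\lambda_i - i + j = \lambda_1$, which can happen in multiple rows. So the naive choice $k = \lambda_1$ does not obviously work, and one likely needs either (a) the operators/reductions of Section~\ref{sec:machinery} to first perform column operations clearing all but one occurrence, or (b) a smarter induction: peel off the first row/column and argue that conditioning on all but the "new" largest $h$, the determinant is affine-linear in it, using that $h_n$ (for $\lambda \vdash n$) genuinely appears in only one cell of the Schur matrix (the top-right corner, since $\lambda_i - i + j = n$ forces $j = \ell(\lambda)$ and $\lambda_i - i = n - \ell(\lambda)$, which, combined with $\lambda_i \le \lambda_1 \le n - \ell(\lambda)+1$, pins down $i=1$ unless $\lambda$ has a very special shape — this edge case needs care). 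Handling that edge case, and confirming the cofactor of $h_n$ is not identically zero in the generic situation (which is what forces equality to $1/q$ only for special shapes, consistent with the classification promised in the abstract), is where the real work lies.
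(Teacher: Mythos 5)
Your step three contains the fatal error. A degree-$\leq 1$ polynomial $a\,h_N + b$ over $\F_q$ does \emph{not} always have at least one root: when $a = 0$ and $b \neq 0$ it has none, so the conditional fraction of roots given an assignment of the other variables can be $0$, not $\geq 1/q$. Your stated dichotomy (``identically zero, or $a \neq 0$'') omits exactly this case, and the case genuinely occurs: for $\lambda = (2,2)$ the Jacobi--Trudi determinant is $h_2^2 - h_1 h_3$, and conditioned on $h_1 = 0$, $h_2 \neq 0$ it is a nonzero constant as a function of $h_3$, so that slice contributes probability $0$. Averaging over the remaining coordinates therefore does not immediately give $\geq 1/q$: writing $\det M = c\,h_N + b$ with $c$ the relevant cofactor, one gets $P(\det M \mapsto 0) = \tfrac1q P(c\neq 0) + P(c = 0 \ \&\ b = 0)$, and the bound $\geq 1/q$ requires $P(b = 0 \mid c = 0) \geq 1/q$ --- essentially a recursive instance of the original problem, and the place where all the work lies. (Incidentally, the obstacle you flag --- multiple occurrences of the distinguished variable --- is a non-issue: the largest index $N = \lambda_1 + \ell(\lambda) - 1$ occurs only in the top-right entry, since $\lambda_i - i + j = N$ forces $i=1$, $j=\ell(\lambda)$. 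The true obstacle is the vanishing cofactor, and ``the cofactor is not identically zero as a polynomial'' does not help, because one must control the set of assignments on which it evaluates to zero.)

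The paper closes this gap with an induction on the number of variables of a reduced general Schur matrix (Theorem~\ref{thm: general matrix, geq 1/q}): condition on the variables up to the smallest label $i$ appearing in the matrix; if all $n$ diagonal entries carry label $i$ (the upper-triangular case), one directly exhibits, for each assignment of the other variables, at least one value of $x_i$ annihilating a diagonal entry and hence the determinant, giving $\geq 1/q$; otherwise the operator $\varphi$ produces a nonempty strictly smaller reduced general Schur matrix in every conditional slice, to which the inductive hypothesis applies, and only then is the averaging legitimate because \emph{every} slice is $\geq 1/q$ after the reduction. Your outline would need to be supplemented by an argument of this recursive kind; as written, the key inequality is asserted via a false one-variable root count.
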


\begin{thm*}[Theorem~\ref{thm:asymtotic}]
For any shape $\lambda$, $P(s_\lambda\mapsto 0)=1/q+O(1/q^2)$. 
\end{thm*}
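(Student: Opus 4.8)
The plan is to set up the Jacobi–Trudi determinant as a matrix over $\F_q$ whose entries are independent uniform random variables subject to the constraint that entry $(i,j)$ is $h_{\lambda_i - i + j}$, where $h_0 = 1$ and $h_k = 0$ for $k < 0$. So the relevant matrix $M$ is $\ell \times \ell$ (with $\ell = \ell(\lambda)$), it is upper-triangular-ish in the sense that all entries below a certain staircase are forced to $0$ and the entries on that staircase are forced to $1$, and all remaining entries — those strictly above the "$1$-diagonal" — are filled by \emph{distinct} complete homogeneous symmetric functions $h_1, h_2, h_3, \dots$, hence by genuinely independent uniform elements of $\F_q$ (no repetitions among the free entries, which is the crucial point distinguishing this from a general patterned matrix). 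I want to estimate $P(\det M = 0)$.

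The key steps, in order. First, I would isolate the free entries and condition on the last one to be used, expanding along a row or column: because each $h_i$ appears at most once in $M$, $\det M$ is an honest degree-$\le 1$ polynomial in each free variable separately. In particular, pick the free variable $x = h_N$ with $N$ maximal among those that occur in $M$; then $\det M = A x + B$ where $A$ and $B$ are polynomials in the \emph{other} free $h_i$'s only, and $A$ is (up to sign) the minor obtained by deleting the row and column containing $h_N$. If $A \ne 0$, then conditioning on the other variables, $\det M = 0$ has exactly one solution in $x$, so the conditional probability is exactly $1/q$. Therefore
\begin{equation}
P(\det M = 0) = \frac{1}{q}\,P(A \ne 0) + P(A = 0)\,P(\det M = 0 \mid A = 0) = \frac1q + \left(1 - \frac1q\right) P(A = 0) - \text{(correction)},
\end{equation}
and more simply $P(\det M = 0) \le \tfrac1q + P(A = 0)$ and $P(\det M = 0) \ge \tfrac1q\,P(A\ne 0) = \tfrac1q - \tfrac1q P(A = 0)$. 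So it suffices to show $P(A = 0) = O(1/q)$ and, for the two-sided asymptotic, to iterate: $A$ is itself a Jacobi–Trudi-type minor (a determinant of the same patterned shape on a smaller index set with the same "each $h$ appears once" property), so by induction on the size $\ell$, $P(A = 0) = 1/q + O(1/q^2) = O(1/q)$, which already gives $P(\det M = 0) = 1/q + O(1/q)$ — and then feeding the sharper bound $P(A=0) = O(1/q)$ back into the identity for $P(\det M = 0)$ upgrades the error term to $O(1/q^2)$. The base case $\ell \le 1$ (and the hook case, where Proposition~\ref{hook_probability} gives exactly $1/q$) is immediate.

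The one subtlety — and the main obstacle — is verifying that the cofactor $A$ really is again a Schur matrix / Jacobi–Trudi minor to which the inductive hypothesis applies: deleting the row and column of the maximal-index entry $h_N$ from $M$ must leave a matrix that is, after suitable relabeling, the Jacobi–Trudi matrix of some (skew or straight) shape, still with the property that every free $h$-entry appears at most once. This is where I expect to use the "Schur matrices and operators on Schur matrices" machinery from Section~\ref{sec:machinery}: I would phrase the induction not over partitions but over the class of Schur matrices closed under these deletion operators, show that the maximal free entry sits in a position whose deletion yields another Schur matrix, and check that distinctness of free entries is preserved (it is, since we only delete entries). A secondary point to handle carefully is the edge case where $A \equiv 0$ \emph{identically} as a polynomial — e.g. if deleting that row/column forces a zero row or a repeated structure — in which case $P(A = 0) = 1$ and the bound $P(\det M = 0) \le 1/q + P(A=0)$ is vacuous; here one must instead pick a \emph{different} free variable whose cofactor is not identically zero, and argue that at least one such choice exists whenever $\det M$ is not identically zero (which holds since $s_\lambda \ne 0$ in $\Lambda$). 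Handling that case cleanly — essentially, choosing the pivot variable so that expansion is nondegenerate — is the technical heart of the argument; everything else is the bookkeeping of the recursion $P(\det M = 0) = \tfrac1q + (1-\tfrac1q)P(A=0) + O(\text{lower order})$ and the induction on $\ell$.
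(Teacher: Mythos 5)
Your setup has a factual error and, more importantly, a genuine gap at the step that carries all the difficulty. First, the premise that the free entries of the Jacobi--Trudi matrix are \emph{distinct} $h_i$'s is false: entry $(i,j)$ is $h_{\lambda_i-i+j}$, so the same variable recurs whenever $\lambda_i - i = \lambda_{i'} - i'$ modulo a column shift (e.g.\ for $\lambda=(2,2)$ the matrix is $\begin{psmallmatrix} h_2 & h_3\\ h_1 & h_2\end{psmallmatrix}$, and for a rectangle every diagonal is constant). This does not sink the expansion in the top-right variable $h_N$ (which genuinely occurs only once), but it does mean your matrices are \emph{not} matrices of independent entries, and the paper has to build the whole ``reduced general Schur matrix'' and $\varphi$ machinery precisely to manage the repeated and constant entries under conditioning.

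The real gap is in the claim that ``feeding the sharper bound $P(A=0)=O(1/q)$ back into the identity upgrades the error term to $O(1/q^2)$.'' From
\[
P(\det M = 0) = \tfrac1q\,P(A\neq 0) + P(A=0)\,P(\det M = 0 \mid A=0),
\]
knowing only $P(A=0)=O(1/q)$ bounds the second term by $O(1/q)$, since you have no control on $P(\det M=0\mid A=0)$ beyond the trivial bound $1$; conditioning on $A=0$ destroys the uniformity of the remaining variables, so you cannot simply reapply the unconditional estimate. What is actually needed is the \emph{joint} bound $P(\det M = 0 \;\&\; A = 0) = O(1/q^2)$, and this is exactly the content of the paper's Lemma~\ref{lem:asymtotic2} (preceded by a companion bound $P(\det M\mapsto 0)\le n/q$ for reduced general Schur matrices with zeros below the main diagonal). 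Both are proved by a delicate induction via $\varphi$: one assigns the variables up to the smallest label on the relevant diagonal, splits according to whether the resulting constants vanish, and tracks how many rows and columns get deleted. Your proposal correctly identifies the cofactor decomposition $\det M = A\,h_N + B$ and the reduction to controlling $A$, which matches the paper's first move, but the second-order estimate is the technical heart of the theorem and is missing rather than merely deferred. (The worry about $A\equiv 0$ identically is, by contrast, handled automatically once one works with $\psi(M)$, whose lower-left minor is never identically zero.)
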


We also show that $P(s_\lambda\mapsto 0)$ cannot in general be expressed as a rational function in $q$ and conjecture an upper bound for the probability for any $\lambda$.

Section~\ref{sec:rectangles} focuses on Schur functions indexed by rectangles and staircases. In Section~\ref{sec:classification} we prove the following result.

\begin{thm*} [Theorem~\ref{thm:1/q}]
Suppose $P(s_\lambda\mapsto 0)=1/q$. Then $\lambda$ is either a hook, a rectangle, or a staircase.
\end{thm*}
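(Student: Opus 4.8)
\emph{Proof strategy.} The plan is to prove the contrapositive by induction: if $\lambda$ is neither a hook, a rectangle, nor a staircase, then $P(s_\lambda\mapsto 0)>1/q$. The engine is a Laplace expansion of the Jacobi--Trudi matrix along its last column. Write $\ell=\ell(\lambda)$ and let $M=\bigl(h_{\lambda_i-i+j}\bigr)_{1\le i,j\le\ell}$ be the Schur matrix of $\lambda$ (with $h_0=1$, $h_{<0}=0$). The corner entry $h_{\lambda_1+\ell-1}$ is \emph{fresh}: it occurs in $M$ only in position $(1,\ell)$. Hence
$$s_\lambda \;=\; \pm\, h_{\lambda_1+\ell-1}\, s_\mu \;+\; B_\lambda,$$
where $\mu=(\lambda_2-1,\dots,\lambda_\ell-1)$ is $\lambda$ with its first row and first column removed, and $B_\lambda$ (the determinant of $M$ with the corner entry set to $0$) does not involve $h_{\lambda_1+\ell-1}$. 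Conditioning on all variables other than $h_{\lambda_1+\ell-1}$ gives
$$P(s_\lambda\mapsto 0)=\tfrac1q+P(s_\mu\mapsto 0)\,\Bigl(P\bigl(B_\lambda\mapsto 0 \,\big|\, s_\mu\mapsto 0\bigr)-\tfrac1q\Bigr).$$
Since $P(s_\mu\mapsto 0)>0$ by Corollary~\ref{cor: geq 1/q}, we conclude that $P(s_\lambda\mapsto 0)=1/q$ holds if and only if either $\mu=\varnothing$ — the hook case, already handled by Proposition~\ref{hook_probability} — or $P(B_\lambda\mapsto 0\mid s_\mu\mapsto 0)=1/q$.

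So the whole question reduces to understanding $B_\lambda$ conditioned on $\{s_\mu\mapsto 0\}$, and I would attack this by peeling again. The largest-index entry of $B_\lambda$ is $h_{\lambda_1+\ell-2}$; because $\lambda_1\ge\lambda_2$, this index never appears among the variables occurring in $s_\mu$, so $h_{\lambda_1+\ell-2}$ is still uniform after conditioning on $\{s_\mu\mapsto 0\}$. This is where the three families appear, governed by the top-left corner of $\lambda$. If $\lambda_1>\lambda_2$, then $h_{\lambda_1+\ell-2}$ occurs in $B_\lambda$ only in position $(1,\ell-1)$, and iterating the expansion one is forced to produce a residual determinant that is a genuine product of two coprime non-unit factors — pushing the probability above $1/q$ — unless the successive row lengths of $\lambda$ drop by exactly $1$, which is precisely the staircase: there the relevant cofactors are units, coming from the $h_0$ and $h_{<0}$ entries sitting below the main diagonal. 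If instead $\lambda_1=\lambda_2$, then $h_{\lambda_1}$ occupies two diagonal positions, and after the corner variables are killed the residual determinant becomes a perfect square in a single variable exactly when all rows of $\lambda$ are equal (rectangle); any ``defect'' row turns it into an honest product of distinct independent variables, again forcing $P>1/q$.

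For the remaining (non-special) shapes, the recursion always exposes a sub-configuration that — after normalizing it with the Schur-matrix operators of Section~\ref{sec:machinery} — is a genuine Schur or skew-Schur determinant not of the three special types; the minimal instances are $s_{(3,2)}=h_2h_3-h_1h_4$ and $s_{(2,2,1)}$, for which a direct count gives $P(\,\cdot\mapsto 0)=1/q+O(1/q^2)>1/q$. Applying the inductive hypothesis to that smaller configuration yields $P(B_\lambda\mapsto 0\mid s_\mu\mapsto 0)>1/q$, and hence $P(s_\lambda\mapsto 0)>1/q$, as required.

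I expect the main obstacle to be the second step: once the corner entry is zeroed, $B_\lambda$ is no longer the Jacobi--Trudi matrix of a partition but of a skew shape (a minor with a ``column gap''), and when $\lambda_1=\lambda_2$ the next variable is no longer fresh, so the clean ``affine in one variable'' reduction must be replaced by a quadratic analysis. Closing the induction therefore requires (a) extending the $\ge 1/q$ bound and the notion of fresh/pivot variable to the skew-Schur determinants that arise, and (b) a careful combinatorial bookkeeping argument showing that the only shapes all of whose iterated corner-peels remain inside the ``hook/rectangle/staircase-compatible'' regime are the hooks, rectangles, and staircases themselves. The Schur-matrix operators of Section~\ref{sec:machinery} are exactly what I would use to renormalize these skew configurations back to honest Schur matrices so that the induction stays on familiar ground.
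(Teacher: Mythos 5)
Your opening reduction is correct and worth keeping: the corner entry $h_{\lambda_1+\ell-1}$ does occur only in position $(1,\ell)$, its cofactor is indeed $s_\mu$ for $\mu=(\lambda_2-1,\dots,\lambda_\ell-1)$, and the identity
$P(s_\lambda\mapsto 0)=\tfrac1q+P(s_\mu\mapsto 0)\bigl(P(B_\lambda\mapsto 0\mid s_\mu\mapsto 0)-\tfrac1q\bigr)$
follows by conditioning on all other variables. But from that point on the proposal is a plan rather than a proof, and the missing parts are exactly the hard parts. First, once you condition on the algebraic event $\{s_\mu\mapsto 0\}$, the variables occurring in $s_\mu$ are no longer uniform or independent, so the ``peel the next fresh variable'' step does not iterate: $B_\lambda$ is the determinant of a matrix with one entry zeroed out (not a Jacobi--Trudi matrix of any shape), and you have no analogue of Corollary~\ref{cor: geq 1/q} or of the fresh-variable argument for it under this conditioning. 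Second, the two claims doing all the classification work --- that for $\lambda_1>\lambda_2$ the residue is ``a product of two coprime non-unit factors'' unless the rows drop by exactly one, and that for $\lambda_1=\lambda_2$ it is ``a perfect square in a single variable'' exactly for rectangles --- are asserted, not proved; and even granting them you would still need a lemma that such a product vanishes with probability strictly greater than $1/q$, which requires showing the factors are probabilistically independent (or at least not simultaneously forced to vanish). Third, the combinatorial bookkeeping you flag in item (b) --- that hooks, rectangles, and staircases are the only shapes surviving every iterated peel --- is precisely the content of the theorem, and it is left entirely open. You have correctly identified where the difficulty lies, but identifying it is not the same as closing it.

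For comparison, the paper avoids the conditioning problem altogether. It applies the reduction operator $\psi$ to the Jacobi--Trudi matrix to obtain a special Schur matrix, proves via a chain of lemmas (Lemmas~\ref{lem: general matrix, 1}--\ref{lem: special matrix, 3}) that $P=1/q$ forces the labels on each diagonal of $\psi(M)$ to be constant and to form an arithmetic progression across diagonals (Corollary~\ref{cor: special matrix, 4}), and then translates that rigid label structure back into constraints on $\lambda$: common difference $0$ gives hooks, $1$ gives rectangles or fattened hooks (the latter excluded by Lemma~\ref{lem: 6.9}), and $\ge 2$ gives staircases. The lower bound of Theorem~\ref{thm: general matrix, geq 1/q} lets the paper exhibit a single well-chosen partial assignment with conditional probability strictly above $1/q$, sidestepping any need to control conditional distributions on events like $\{s_\mu\mapsto 0\}$. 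If you want to salvage your recursion, that ``average of conditional probabilities, each at least $1/q$'' device is the tool you are missing.
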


Section~\ref{sec:independence} explore independence results for rectangles and staircases. Namely, we identity families of such shapes where $P(s_\lambda\mapsto 0\mid s_\mu\mapsto 0)=P(s_\lambda\mapsto 0)$ for any $\lambda$ and $\mu$ in the family. 

In Section~\ref{sec:othervalue}, we broaden our focus and consider questions related to $P(s_\lambda\mapsto a)$ for any $a\in\F_q$, particularly when $\lambda$ is a rectangle. We end with a short discussion of some miscellaneous results for shapes not yet discussed in Section~\ref{sec:misc}.

\section{Symmetric functions and partitions}\label{sub:review}
We first review some basic definitions and results related to symmetric
functions and partition shapes. Readers may refer to
\cite{stanley1999enumerative} for details.  

Let $\mathcal{S}_m$ denote the set of permutations of $\{1,2,\ldots,m\}$. A (polynomial) function $f(x_1,\cdots,x_m)$ is \textit{symmetric} if for any
$\sigma \in \mathcal{S}_m$, we have $f(x_1,\cdots,x_m)
=f(x_{\sigma(1)},\cdots,x_{\sigma(m)})$. We may extend this definition and
consider symmetric functions of infinitely many variables $f(x_1,x_2,\ldots)$,
where these functions must satisfy the property that
$f(x_{\sigma(1)},x_{\sigma(2)},\ldots,x_{\sigma(n)},x_{n+1},x_{n+2},\ldots)=f(x_1,x_2,x_3,\ldots)$
for any $\sigma\in\mathcal{S}_n$ and for any $n$. 

For any positive integer $k$, the \textit{elementary symmetric function} $e_k$ is defined as
$$e_k=e_k(x_1,x_2,\ldots)= \sum_{i_1<\cdots<i_k} x_{i_1}\cdots x_{i_k},$$
and the complete homogeneous symmetric function $h_k$ is defined as 
$$h_k = h_k(x_1,x_2,\ldots)=\sum_{i_1\leq \cdots\leq i_k} x_{i_1}\cdots x_{i_k}.$$
For example, $e_2 = x_1x_2+x_1x_3+x_2x_3+\ldots$, while $h_2 =
x_1^2+x_1x_2+x_2^2+x_1x_3+x_2x_3+x_3^2\ldots$. The sets $\{e_k\}_{k>0}$ and
$\{h_k\}_{k>0}$ form algebraic bases for $\Lambda$, the ring of symmetric functions over $\Z$.

A \textit{partition} $\lambda$ of positive integer $n$ is a sequence of
integers $(\lambda_1, \lambda_2, \cdots,\lambda_k)$ such that $\lambda_1 \geq
\lambda_2 \geq \cdots \geq \lambda_k>0$ and $\sum_{i=1}^{k}\lambda_k =n$. For
each $i$, the integer $\lambda_i$ is called the $i^\text{th}$ \textit{part} of
$\lambda$. We call $n$ the {\it size} of $\lambda$ and denote it by $\vert
\lambda \vert$.  
For simplicity, we use the notation $(b^n)$ to denote the
sequence $(\underbrace{b,\cdots,b}_{\text{$n$}})$. 

There are a few families of shapes that will be important in future sections, so we give them names that are common in the literature. 

\begin{defn}
A {\it hook (shape)} is a partition of the form $\lambda = (a,1^m)$ for $a>0$ and $m\geq0$. A {\it fattened hook} is a partition of the form $\lambda = (a^n,b^m)$ for $a,b,n,m>0$. A {\it rectangle} is a partition of the form $\lambda = (b^m)$ for $b,m>0$.
A {\it staircase} is a partition of the form $\lambda = (k,k-1,\ldots,1)$.
\end{defn}

To any partition $\lambda = (\lambda_1, \lambda_2, \cdots,\lambda_k)$ of $n$,
we can associate a \textit{Young diagram}, which is a collection of left-justified
boxes with $\lambda_i$ boxes in the $i^\text{th}$ row from the top for each
$i$. By a standard abuse of notation we will call this Young diagram $\lambda$. We
obtain the \textit{transpose} of any partition $\lambda$, denoted $\lambda'$,
by writing down the number of boxes in each column from left to right. For
example, the transpose of $(4,2,1)$ is the partition $(3,2,1,1)$. 

A \textit{semistandard Young tableau} of shape $\lambda$ is a filling of the
boxes of $\lambda$ with positive integers such that the entries weakly
increase across rows and strictly increase down columns.
To each semistandard Young tableau $T$ of shape $\lambda$, we may
associate a monomial $x^T$ given by $$x^T = \prod_{i \in \mathbb{N^+}}
x_i^{m_i},$$ where $m_i$ is the number of times the integer $i$ appears as an
entry in $T$.  

To illustrate, take $\lambda=(4,2,1)$. Then a semistandard Young tableau of shape $\lambda$ is given by 
\begin{center}
$T =$ \begin{ytableau}
1 & 1 & 2 & 4 \\
4 & 6 \\
5 \\
\end{ytableau}\ ,
\end{center}
and the corresponding monomial is $x^T=x_1^2 x_2 x_4^2 x_5 x_6$.

We can now define the Schur function $s_\lambda$ by

$$s_\lambda = \sum_{T}x^T,$$
where the sum is over all semistandard Young tableaux of shape $\lambda$. It is well-known that the Schur functions are symmetric and that they form a linear basis for $\Lambda$.

We have the following Jacobi-Trudi identity, which expresses a Schur functions in terms of either elementary symmetric functions or complete homogeneous symmetric functions. It will be our main tool throughout this paper.

\begin{thm}[Jacobi-Trudi Identity] \label{thm: JT Id.}
For any partition $\lambda = (\lambda_1,\cdots,\lambda_k)$ and its transpose $\lambda'$, we have
\[s_{\lambda} = \det{(h_{{\lambda_i}-i+j})_{i,j=1}^k},\]
\[s_{\lambda'} = \det{(e_{{\lambda_i}-i+j})_{i,j=1}^k},\]
where we define $h_0=e_0 =1$ and $h_m=e_m=0$ for $m<0$.
\end{thm}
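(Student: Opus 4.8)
The plan is to prove the first identity, $s_\lambda = \det(h_{\lambda_i - i + j})_{i,j=1}^k$, via the Lindström--Gessel--Viennot (LGV) lemma; the second identity, $s_{\lambda'} = \det(e_{\lambda_i - i + j})_{i,j=1}^k$, then follows either by rerunning the same argument with a reflected lattice-path model, or more slickly by applying the standard ring involution $\omega\colon \Lambda \to \Lambda$ that sends $h_k \mapsto e_k$ and $s_\mu \mapsto s_{\mu'}$ for every partition $\mu$. So I focus on the $h$-version throughout.

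First, recall the lattice-path interpretation of $h_m$: in $\Z^2$ with unit steps north and east, where an east step along the horizontal line $y = i$ carries weight $x_i$ and north steps carry weight $1$, the weight generating function of all monotone paths with exactly $m$ east steps between two fixed vertical lines is $h_m$, since reading off the heights of the successive east steps recovers exactly a weakly increasing sequence $i_1 \le \cdots \le i_m$, i.e. a monomial of $h_m$. This is consistent with the stated conventions $h_0 = 1$ (the empty path) and $h_m = 0$ for $m < 0$ (no monotone path has a negative number of steps). Now choose sources $A_1, \ldots, A_k$ and sinks $B_1, \ldots, B_k$ with the horizontal coordinate of $A_i$ equal to $i - \lambda_i$ and that of $B_j$ equal to $j$; since a monotone path from $A_i$ to $B_j$ has exactly $j - (i - \lambda_i) = \lambda_i - i + j$ east steps, the number of weighted paths $A_i \to B_j$ is precisely $h_{\lambda_i - i + j}$, so $(h_{\lambda_i - i + j})_{i,j=1}^k$ is the path matrix of this configuration, provided the vertical coordinates are chosen so that the $A_i$ and the $B_j$ form appropriately staggered antichains.

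Next, apply the LGV lemma: via the sign-reversing involution that swaps the tails of two paths at their first common vertex, $\det(h_{\lambda_i - i + j})$ equals the weighted sum over all families of pairwise non-intersecting monotone paths $(P_1, \ldots, P_k)$ with $P_i\colon A_i \to B_i$, because the staggered placement of sources and sinks forces the connecting permutation of any non-intersecting family to be the identity. Finally, invoke the classical weight-preserving bijection between such non-intersecting path families and semistandard Young tableaux of shape $\lambda$: reading the heights of the east steps of $P_i$ from left to right into row $i$ produces weakly increasing rows by construction, and the non-intersecting condition translates exactly into strict increase down columns. Hence $\det(h_{\lambda_i - i + j}) = \sum_T x^T = s_\lambda$. (If $\lambda$ has fewer than $k$ nonzero parts, the extra rows merely border the matrix with an identity block and change nothing.)

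The main obstacle---really the only non-routine point---is pinning down the vertical coordinates of the $A_i$ and $B_j$ so that all three requirements hold simultaneously: the path count $A_i \to B_j$ equals $h_{\lambda_i - i + j}$ even in the degenerate cases where this index is $0$ or negative; every non-intersecting family realizes the identity permutation; and such families biject weight-preservingly with column-strict tableaux. Once the geometry is fixed, the LGV involution and the tableau bijection are entirely standard. An alternative that avoids lattice paths altogether is a purely algebraic induction on $k$: expand $\det(h_{\lambda_i - i + j})$ along its last row and use the Pieri rule $s_\mu h_r = \sum_\nu s_\nu$ (sum over $\nu \supseteq \mu$ with $\nu/\mu$ a horizontal strip of size $r$) together with the relations among the $h_k$ coming from the generating-function identity $\bigl(\sum_{i \ge 0} h_i t^i\bigr)\bigl(\sum_{j \ge 0} (-1)^j e_j t^j\bigr) = 1$; the bookkeeping there is of comparable length.
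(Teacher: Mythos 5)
The paper does not prove this statement at all: the Jacobi--Trudi identity is quoted as a classical theorem, with the reader referred to Stanley's \emph{Enumerative Combinatorics} for details, so there is no in-paper argument to compare yours against. On its own merits, your proposal is the standard Lindstr\"om--Gessel--Viennot proof and is essentially correct: the identification of $h_m$ with the weight generating function of monotone lattice paths, the choice of horizontal coordinates $i-\lambda_i$ for the sources and $j$ for the sinks (which makes the path matrix exactly $(h_{\lambda_i-i+j})$, including the degenerate entries $h_0=1$ and $h_{<0}=0$), the tail-swapping involution forcing the identity permutation, and the bijection between non-intersecting families and semistandard tableaux are all the right ingredients. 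The one point you leave unresolved --- the vertical coordinates --- is genuinely the only place where care is needed, but its resolution is routine: place all sources on a common horizontal line and all sinks at height $n$ (or at infinity in the inverse-limit setting), so that the increasing horizontal coordinates of both families force any non-intersecting connection to be order-preserving; with that choice the heights of the east steps of $P_i$ give row $i$ of the tableau and non-intersection translates into column-strictness. Your two suggested routes to the $e$-version (the involution $\omega$, or a reflected path model) and the alternative Pieri-rule induction are likewise standard and sound. I would only ask you to nail down the coordinates explicitly and to state the tie-breaking rule in the LGV involution (e.g.\ first intersection point on the smallest-index crossing path) so that it is manifestly an involution.
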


Using the Jacobi-Trudi identity, we clarify our question of
interest with an example. First notice that a uniformly random algebra homomorphism from $\Lambda$ to $\F_q$ can be defined by sending each $h_k$ to a value in $\F_q$. Also notice that each Schur function may be expressed using a finite number of distinct $h_k$.

Suppose we want to compute
$P(s_{(2,1)}\mapsto 0 )$. We write $s_{(2,1)}$ in terms of complete
homogeneous symmetric functions using Jacobi-Trudi as follows:
  \[
    s_{(2,1)} = \left\lvert 
    \begin{array}{cc}   
      h_2 & h_3 \\
      1 & h_1 \\
    \end{array}
    \right\rvert
    = h_2h_1-h_3.
  \]
We see that $s_{(2,1)}$ is sent to zero when $h_3$ is sent to the same value as $h_2h_1$. There are $q^3$ choices for where to send the complete homogeneous symmetric functions that appear in the Jacobi Trudi expansion, and $q^2$ of these choices send $s_{(2,1)}$ to zero. Thus $P(s_{(2,1)}\mapsto 0)=1/q$.
Since $s_\lambda$ can always be expressed in terms of $\{h_k\}_{k=0}^m$ for some finite $m$, the probability may be computed in the same manner for any partition $\lambda$.

The Jacobi-Trudi identity motivates the following definition, which we will use often throughout the paper.

\begin{defn}\label{def:JT matrxi}
We call a matrix a \textit{Jacobi-Trudi matrix} if it can be obtained using the Jacobi-Trudi identity for some Schur function $s_\lambda$.
\end{defn}

For example, the matrices below are Jacobi-Trudi matrices coming from $s_{(3,1)}$. 

  \[
  \left\lvert 
    \begin{array}{cc}   
      h_3 & h_4  \\
      1 & h_1 \\
    \end{array}
    \right\rvert \qquad \left\lvert 
    \begin{array}{ccc}   
      e_2 & e_3 & e_4  \\
      1 & e_1 & e_2 \\
      0 & 1 & e_1 \\
    \end{array}
    \right\rvert
  \]

In later sections, we discuss matrices whose entries are variables $x_1,x_2,\ldots$. We call such a matrix Jacobi-Trudi if either the identification $x_i=h_i$ or $x_i=e_i$ yields a Jacobi-Trudi matrix as defined above.

\section{Hooks} \label{hooks_section}
We begin with some results for hook-shaped partitions. These results have very simple proofs, which will not be the case for other families of shapes. 

\begin{prop}\label{hook_probability}
Let $\lambda$ be a hook shape. Then $P(s_\lambda\mapsto 0)=1/q$. Furthermore, 
$P(s_\lambda\mapsto a)=1/q$ for any $a\in\mathbb{F}_q$.
\end{prop}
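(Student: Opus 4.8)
The plan is to write out the Jacobi-Trudi matrix for a hook $\lambda = (a, 1^m)$ explicitly and observe that the determinant is linear (in fact degree one, with coefficient $\pm 1$) in one particular variable $h_k$, namely the one appearing in the upper-right corner of the matrix. For $\lambda = (a,1^m)$ the Jacobi-Trudi matrix is $(m+1)\times(m+1)$ with first row $(h_a, h_{a+1}, \ldots, h_{a+m})$ and, below that, a lower-triangular-ish block built from $h_0 = 1$, $h_1$, and zeros coming from negative indices. The key observation is that the entry $h_{a+m}$ sits in position $(1, m+1)$ and appears nowhere else in the matrix, since all other entries have strictly smaller index. Expanding the determinant along the first row (or along the last column), the coefficient of $h_{a+m}$ is $\pm$ the minor obtained by deleting row $1$ and column $m+1$; that minor is the determinant of a lower-triangular matrix with $1$'s (the $h_0$ entries) on the diagonal, hence equals $\pm 1$.

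From here the argument is the counting argument already used in the excerpt for $s_{(2,1)}$. Enumerate the finitely many complete homogeneous symmetric functions $h_0 = 1, h_1, h_2, \ldots, h_N$ that appear in the expansion (here $N = a+m$), so a homomorphism is determined by a choice of $(h_1, \ldots, h_N) \in \F_q^N$, giving $q^N$ equally likely maps. For any fixed values of $h_1, \ldots, h_{N-1}$, the image of $s_\lambda$ is of the form $\pm h_N + c$ where $c \in \F_q$ depends only on $h_1, \ldots, h_{N-1}$. Hence, as $h_N$ ranges over $\F_q$, the value $s_\lambda \mapsto \pm h_N + c$ takes each element of $\F_q$ exactly once. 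Therefore for every $a \in \F_q$ exactly $q^{N-1}$ of the $q^N$ homomorphisms send $s_\lambda$ to $a$, so $P(s_\lambda \mapsto a) = q^{N-1}/q^N = 1/q$, which in particular covers $a = 0$.

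I would also treat the degenerate cases $m = 0$ (so $\lambda = (a)$, a single row, $s_{(a)} = h_a$) and $a = 1$ with $m$ arbitrary (a single column, $s_{(1^{m+1})} = e_{m+1}$) separately or note they fall out of the same argument: in each the relevant Schur function is itself a single generator $h_a$ or $e_{m+1}$, which is uniform on $\F_q$. One should double check the edge behavior of the Jacobi-Trudi matrix when $a = 1$, where the first row starts $(h_1, h_2, \ldots)$ and the subdiagonal structure is slightly different, but the corner entry $h_{a+m}$ is still unique and the cofactor is still $\pm 1$.

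The only mild subtlety — and the one place to be careful rather than a genuine obstacle — is verifying that the cofactor of the corner entry really is $\pm 1$ and not something that could vanish or coincide with another term; this is immediate once one notes the minor is triangular with unit diagonal, but it is worth stating cleanly since the whole argument hinges on the image of $s_\lambda$ being an invertible affine function of a single free variable. No deeper machinery (Schur matrices, operators) is needed for hooks, which is exactly why the proposition admits such a short proof.
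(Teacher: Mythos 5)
Your proof is correct and is essentially the paper's own argument: expand the Jacobi--Trudi determinant along the first row, note that the top-right entry $h_{a+m}$ occurs nowhere else and its cofactor is a unitriangular determinant, so $s_\lambda = \pm h_{a+m} + f(h_1,\ldots,h_{a+m-1})$ is uniformly distributed. (One trivial slip: that minor is upper triangular with $1$'s on the diagonal rather than lower triangular, which changes nothing.)
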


\begin{proof}
Let $\lambda = (a,1^b)$ for non-negative integers $a$ and $b$, and let $n=a+b=|\lambda|$. Using the
Jacobi-Trudi identity, we can write
  \[
    s_{\lambda} = \left\lvert \begin{array}{cccc} 
      h_a & h_{a+1} & \cdots & h_n \\
      1 & h_1 & \cdots & h_{n-1} \\
      0 & \ddots & \ddots & \vdots \\
      0 & 0 & 1 & h_1 \\
      \end{array}\right\rvert
  \]
Using the cofactor expansion about the first row, we see that 
  \[
    s_{\lambda} = (-1)^nh_n + f(h_1,\ldots,h_{n-1}),
  \]
for some polynomial $f\in\mathbb{F}_q[h_1,h_2,\ldots,h_{n-1}]$. For any fixed assignment of $\{h_1,\ldots,h_{n-1}\}$, the values of
$s_{\lambda}$ are uniformly distributed. Hence the distribution of
$s_{\lambda}$ is itself uniform.
\end{proof}

This result shows that our question can be framed equivalently in terms of the
elementary symmetric functions: we may define a random ring homomorphism from
$\Lambda$ to $\mathbb{F}_q$ by specifying the image of $e_k$ and obtain the
same probabilities. This is stated formally in the following corollary.  

\begin{cor} \label{cor: e and h}
Let $f: \{e_n\}_{n \in \N} \to \F_q$ and $g: \{h_n\}_{n \in \N} \to
\F_q$ be arbitrary maps, and consider the unique extension of $f$ and $g$ to homomorphisms 
from the algebra of symmetric functions over $\mathbb{Z}$ to $\F_q$. Then 
  \[
    P\big(f(s_{\lambda}) = a\big) = P\big(g(s_{\lambda}) = a\big)
  \]
for all $a \in \F_q$. 
\end{cor}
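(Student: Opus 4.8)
The key idea is that the complete homogeneous symmetric functions $\{h_n\}$ and the elementary symmetric functions $\{e_n\}$ are related by a triangular change of variables. The plan is to exhibit a measure-preserving bijection on the probability spaces — that is, a bijection between assignments $\{h_n\mapsto \text{values in }\F_q\}$ and assignments $\{e_n\mapsto\text{values in }\F_q\}$ — that is compatible with the ring structure, so that each Schur function takes the same value under corresponding assignments.

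Concretely, recall the Newton-type identity relating the two families: for each $n\geq 1$ one has $\sum_{i=0}^{n}(-1)^i e_i h_{n-i}=0$, equivalently
\[
h_n = -\sum_{i=1}^{n}(-1)^i e_i h_{n-i} = e_1 h_{n-1} - e_2 h_{n-2} + \cdots + (-1)^{n-1} e_n.
\]
First I would observe that this expresses $h_n$ as a polynomial in $e_1,\dots,e_n$ in which $e_n$ appears with coefficient $(-1)^{n-1}$ (a unit in $\F_q$) and all other terms involve only $e_1,\dots,e_{n-1}$. Hence, for any fixed values of $e_1,\dots,e_{n-1}$, the map sending the value of $e_n$ to the value of $h_n$ is a bijection $\F_q\to\F_q$. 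Proceeding by induction on $n$, I would build up a bijection $\Phi$ from the set of all assignments $(h_1,h_2,\dots)\in\F_q^{\mathbb N}$ (with only finitely many coordinates relevant for any given $s_\lambda$) to the set of all assignments $(e_1,e_2,\dots)\in\F_q^{\mathbb N}$: given a target vector of $h$-values, solve recursively for the unique compatible vector of $e$-values. Both sets are products of copies of $\F_q$ with the uniform product measure, and $\Phi$ restricted to the first $m$ coordinates is a bijection $\F_q^m\to\F_q^m$ for every $m$, hence measure-preserving.

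The second step is to note that under this correspondence the two ring homomorphisms agree on all of $\Lambda$: if $g$ sends $h_n$ to $c_n$ and $f$ sends $e_n$ to the value dictated by $\Phi$, then $f$ and $g$ induce the same homomorphism $\Lambda\to\F_q$, because the $e_n$ generate $\Lambda$ and their images under $g$ (computed via the Newton identities from the $c_i$) coincide with their images under $f$ by construction. In particular $f(s_\lambda)=g(s_\lambda)$ for every $\lambda$. Therefore
\[
P\big(f(s_\lambda)=a\big)=\frac{\#\{f\text{-assignments}: f(s_\lambda)=a\}}{q^m}=\frac{\#\{g\text{-assignments}: g(s_\lambda)=a\}}{q^m}=P\big(g(s_\lambda)=a\big),
\]
where $m$ is large enough that $s_\lambda\in\Z[e_1,\dots,e_m]=\Z[h_1,\dots,h_m]$ and the counts are matched by the bijection $\Phi$ on $\F_q^m$.

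The main (and only mild) obstacle is bookkeeping: one must be careful that the same finite set of generators controls $s_\lambda$ on both sides — i.e., that $s_\lambda$ lies in $\Z[e_1,\dots,e_m]\cap\Z[h_1,\dots,h_m]$ for a common $m$ — which follows since the Jacobi-Trudi expansions of $s_\lambda$ in $h$'s and in $e_{\lambda'}$'s each involve only indices up to $|\lambda|$, and the Newton identities relating $\{h_1,\dots,h_m\}$ to $\{e_1,\dots,e_m\}$ stay within this truncation. Once the triangularity of the change of variables is set up cleanly, the measure-preservation and hence the equality of probabilities is immediate. (Alternatively, one could phrase the whole argument via the involutive algebra automorphism $\omega$ of $\Lambda$ with $\omega(h_n)=e_n$, but since $\omega$ does not fix the $s_\lambda$ it is cleaner to argue directly with the triangular substitution as above.)
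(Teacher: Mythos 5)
Your proof is correct and follows essentially the same route the paper intends: the corollary is stated without an explicit proof, but it is meant to follow from the triangularity exhibited in Proposition~\ref{hook_probability}, since applying that proposition to the column hook $(1^n)$ gives $e_n = s_{(1^n)} = \pm h_n + f(h_1,\ldots,h_{n-1})$, which is exactly the unitriangular change of variables you derive instead from the Newton-type identity $\sum_{i=0}^n(-1)^i e_i h_{n-i}=0$. Either identity yields the same measure-preserving bijection $\F_q^m\to\F_q^m$ identifying the two homomorphisms, so the argument is sound.
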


From Jacobi-Trudi, we immediately obtain the following corollary, which will
be used many times in later proofs. 
\begin{cor} \label{cor:transpose}
For all $\lambda$, we have 
  \[
    P(s_{\lambda} \mapsto 0) = P(s_{\lambda'} \mapsto 0).
  \]
\end{cor}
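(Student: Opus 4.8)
The plan is to read off both forms of the Jacobi--Trudi identity (Theorem~\ref{thm: JT Id.}) and observe that they produce matrices with \emph{identical} index patterns. Explicitly, for $\lambda=(\lambda_1,\dots,\lambda_k)$ we have $s_\lambda=\det(h_{\lambda_i-i+j})_{i,j=1}^k$ and also $s_{\lambda'}=\det(e_{\lambda_i-i+j})_{i,j=1}^k$, where in both cases the boundary conventions are the same ($h_0=e_0=1$, and $h_m=e_m=0$ for $m<0$). In other words, the Jacobi--Trudi matrix for $s_\lambda$ in the $h$-variables and the Jacobi--Trudi matrix for $s_{\lambda'}$ in the $e$-variables are the \emph{same} matrix $M=M(\lambda)$ whose $(i,j)$ entry is an indeterminate $x_{\lambda_i-i+j}$, with $x_0=1$, $x_m=0$ for $m<0$, under the two substitutions $x_m\mapsto h_m$ and $x_m\mapsto e_m$ respectively.

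The key steps, in order: (1) Fix $\lambda$ and let $m$ be large enough that every entry of $M(\lambda)$ is $x_t$ for some $t\le m$. (2) Compute $P(s_\lambda\mapsto 0)$ using a uniformly random algebra homomorphism specified by the images of $h_1,\dots,h_m$: this is precisely the fraction of tuples $(a_1,\dots,a_m)\in\F_q^m$ for which $\det M(\lambda)$ vanishes after the substitution $x_t\mapsto a_t$ (and $x_0\mapsto 1$, $x_{<0}\mapsto 0$). (3) Compute $P(s_{\lambda'}\mapsto 0)$ using a uniformly random algebra homomorphism specified by the images of $e_1,\dots,e_m$ (legitimate by Corollary~\ref{cor: e and h}, which says this gives the same probabilities as the $h$-version): by the matching of matrices in the previous paragraph, this is the fraction of tuples $(a_1,\dots,a_m)\in\F_q^m$ for which the \emph{same} determinant $\det M(\lambda)$ vanishes under $x_t\mapsto a_t$. (4) Since these two counting problems are literally identical, the two probabilities agree, and an appeal to Corollary~\ref{cor: e and h} once more (to pass $P(s_{\lambda'}\mapsto 0)$ back to the $h$-normalization if one insists on a single model) completes the proof.

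There is essentially no hard step here; the ``obstacle,'' such as it is, is purely bookkeeping: one must make sure the degenerate-index conventions ($x_0=1$, $x_{<0}=0$) and the matrix size $k=\ell(\lambda)$ on the $h$-side genuinely coincide with those appearing on the $e$-side of Theorem~\ref{thm: JT Id.} for $\lambda'$, so that the two random-matrix problems are the same down to the last entry. Once that identification is made explicit, equality of the probabilities is immediate, and Corollary~\ref{cor: e and h} is what licenses comparing a computation done with random $h_i$ to one done with random $e_i$.
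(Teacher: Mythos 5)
Your proposal is correct and is exactly the argument the paper intends: the two Jacobi--Trudi matrices for $s_\lambda$ (in the $h$'s) and $s_{\lambda'}$ (in the $e$'s) share the same index pattern, and Corollary~\ref{cor: e and h} licenses computing $P(s_{\lambda'}\mapsto 0)$ with uniformly random images of the $e_n$, making the two counting problems identical. The paper leaves this as an immediate consequence of Theorem~\ref{thm: JT Id.} and Corollary~\ref{cor: e and h}; your write-up just makes the bookkeeping explicit.
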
 

In the following proposition we see that it is easy to find infinite families of hooks where the corresponding Schur functions being sent to zero are independent events. We shall see that any such family indexes a set whose corresponding Schur functions form an algebraic basis for the symmetric functions.

\begin{prop} \label{hook_independence}
Let $\Lambda := \{\lambda^{(k)}\}_{k \in \N}$ be a collection of hook shapes
such that $|\lambda^{(k)}| = k$ for all
$k$. Then $$P\left(s_{\lambda^{(i)}}\to 0 \mid s_{\lambda^{(j)}}\to 0
\right)=P(s_{\lambda^{(i)}}\to 0)$$ for all $i,j\in\mathbb{N}$.  
\end{prop}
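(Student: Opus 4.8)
The plan is to prove the stronger statement that the two events $\{s_{\lambda^{(i)}}\mapsto 0\}$ and $\{s_{\lambda^{(j)}}\mapsto 0\}$ are \emph{independent} whenever $i\neq j$; dividing by $P(s_{\lambda^{(j)}}\mapsto 0)=1/q$ (Proposition~\ref{hook_probability}) then gives the stated identity. Since the joint probability $P(s_{\lambda^{(i)}}\mapsto 0,\ s_{\lambda^{(j)}}\mapsto 0)$ is symmetric in $i$ and $j$, and the case $i=j$ is trivial, we may assume $i<j$.

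The key input is the structure exhibited in the proof of Proposition~\ref{hook_probability}: writing $\lambda^{(k)}=(a_k,1^{b_k})$ with $a_k+b_k=k$, cofactor expansion along the first row of the Jacobi-Trudi matrix gives $s_{\lambda^{(k)}}=\pm h_k+f_k(h_1,\dots,h_{k-1})$ for some polynomial $f_k$. In particular $s_{\lambda^{(k)}}$ depends only on $h_1,\dots,h_k$, and for every fixed assignment of $h_1,\dots,h_{k-1}$ it takes each value of $\F_q$ for exactly one choice of $h_k$. Recall also that the random homomorphism is obtained by choosing the images of $h_1,h_2,\dots$ independently and uniformly in $\F_q$, and that $s_{\lambda^{(i)}}$ and $s_{\lambda^{(j)}}$ together depend only on $h_1,\dots,h_j$, so we work in the sample space of $q^{j}$ equally likely assignments of $(h_1,\dots,h_j)$.

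Now condition on the values of $h_1,\dots,h_{j-1}$. Because $i\le j-1$, the value of $s_{\lambda^{(i)}}$ is completely determined by this conditioning, so $\{s_{\lambda^{(i)}}\mapsto 0\}$ is an event in $(h_1,\dots,h_{j-1})$; its (unconditional) probability over these $q^{j-1}$ assignments is $1/q$ by Proposition~\ref{hook_probability}. On the other hand, by the displayed form of $s_{\lambda^{(j)}}$, for each such conditioning exactly one of the $q$ values of $h_j$ sends $s_{\lambda^{(j)}}$ to $0$, so $P\big(s_{\lambda^{(j)}}\mapsto 0 \mid h_1,\dots,h_{j-1}\big)=1/q$ regardless of the conditioning. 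Since $h_j$ is chosen independently of $h_1,\dots,h_{j-1}$,
\[
P\big(s_{\lambda^{(i)}}\mapsto 0,\ s_{\lambda^{(j)}}\mapsto 0\big)
=\sum_{h_1,\dots,h_{j-1}}\frac{1}{q^{\,j-1}}\,\mathbf{1}\big(s_{\lambda^{(i)}}\mapsto 0\big)\cdot\frac{1}{q}
=\frac{1}{q}\,P\big(s_{\lambda^{(i)}}\mapsto 0\big)=\frac{1}{q^{2}}.
\]
Dividing by $P(s_{\lambda^{(j)}}\mapsto 0)=1/q$ yields $P\big(s_{\lambda^{(i)}}\mapsto 0 \mid s_{\lambda^{(j)}}\mapsto 0\big)=1/q=P(s_{\lambda^{(i)}}\mapsto 0)$.

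There is no serious obstacle beyond careful bookkeeping of which $h_k$ each Schur function sees: the hypothesis $|\lambda^{(k)}|=k$ is precisely what makes the family ``triangular'' in the variables $h_1,h_2,\dots$, so that after fixing everything relevant to $s_{\lambda^{(i)}}$ the function $s_{\lambda^{(j)}}$ still has a free, uniformly distributed top variable $h_j$. The same layered argument---conditioning on $h_1,\dots,h_{m-1}$ where $m$ is the largest index appearing among a finite subfamily and peeling off one variable at a time---in fact shows that the whole family $\{s_{\lambda^{(k)}}\mapsto 0\}_{k\in\N}$ is mutually independent, although only pairwise independence is required here.
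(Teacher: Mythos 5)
Your proof is correct and follows essentially the same route as the paper: both arguments exploit the triangular form $s_{\lambda^{(k)}}=\pm h_k+f_k(h_1,\dots,h_{k-1})$ from Proposition~\ref{hook_probability}, reduce by symmetry to a fixed ordering of $i$ and $j$, and then condition on the lower-indexed variables so that the larger-index Schur function retains a free, uniformly distributed top variable. The only cosmetic difference is that you compute the joint probability $1/q^2$ and divide, whereas the paper phrases the same conditioning argument directly in terms of the conditional probability.
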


\begin{proof}
Using elementary probability theory and Proposition \ref{hook_probability}, we have 
  \begin{align*}
    P\left(s_{\lambda^{(i)}}\to 0 \mid s_{\lambda^{(j)}}\to 0 \right) &=
    \frac{P(s_{\lambda^{(i)}}\to 0 \; \& \; s_{\lambda^{(j)}}\to
      0)}{P(s_{\lambda^{(i)}}\to 0)} \\ &= \frac{P(s_{\lambda^{(i)}}\to 0 \; \& \;
      s_{\lambda^{(j)}}\to 0)}{P(s_{\lambda^{(j)}}\to 0)} \\
    &= P\left(s_{\lambda^{(j)}}\to 0 \mid s_{\lambda^{(i)}}\to 0 \right).
  \end{align*}
Hence we may assume without loss of generality that $j < i$. 

By the proof of Proposition \ref{hook_probability}, we see that 
$s_{\lambda^{(i)}}$ is determined by the values of $\{h_1,\ldots,h_i\}$ and
uniformly distributed for every choice of values for
$\{h_1,\ldots,h_{\ell}\}$, with $\ell < i$. Thus for any choice of values for
$\{h_1,\ldots,h_j\}$ such that $s_{\lambda^{(j)}} \mapsto 0$ (or such that
$s_{\lambda^{(j)}} \mapsto a\in\F_q$ for that matter), the values of 
$s_{\lambda^{(i)}}$ are uniformly distributed. Hence the probability that
$s_{\lambda^{(i)}} \mapsto 0$ is unaffected by the value of
$s_{\lambda^{(j)}}$. 
\end{proof}

\begin{cor} 
Let $\mathcal{H}$ be a collection of hooks satisfying the condition of Proposition
\ref{hook_independence}. Then $\{s_{\lambda}\}_{\lambda \in \mathcal{H}}$ forms an
algebraic basis of $\Lambda$.  
\end{cor}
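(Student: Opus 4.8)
The plan is to show that a collection $\mathcal{H} = \{\lambda^{(k)}\}$ of hooks with $|\lambda^{(k)}| = k$ for each $k \in \mathbb{N}$ gives rise to Schur functions $\{s_{\lambda^{(k)}}\}$ that are algebraically independent and generate $\Lambda$ as a $\mathbb{Q}$-algebra (equivalently, form an algebraic basis). Since $\Lambda \otimes \mathbb{Q}$ is a polynomial ring in countably many generators $h_1, h_2, \ldots$ with $\deg h_k = k$, it suffices to prove two things: first, that $\mathbb{Q}[s_{\lambda^{(1)}}, s_{\lambda^{(2)}}, \ldots] = \Lambda \otimes \mathbb{Q}$, and second, that there are ``no relations,'' which by a standard graded dimension count follows automatically once we know the $s_{\lambda^{(k)}}$ generate and that there is exactly one of them in each degree. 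So the whole argument reduces to the generation statement.

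First I would recall, from the proof of Proposition~\ref{hook_probability} (or rather its underlying algebraic content), the cofactor expansion
\[
s_{\lambda^{(k)}} = (-1)^k h_k + f_k(h_1, \ldots, h_{k-1})
\]
for some polynomial $f_k$ with integer coefficients — this identity holds over $\mathbb{Z}$, not just over $\mathbb{F}_q$, since Jacobi--Trudi is an identity in $\Lambda$. Now I would argue by induction on $k$ that $h_k \in \mathbb{Q}[s_{\lambda^{(1)}}, \ldots, s_{\lambda^{(k)}}]$. The base case $k=1$ is immediate since $\lambda^{(1)} = (1)$, so $s_{\lambda^{(1)}} = h_1$. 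For the inductive step, assume $h_1, \ldots, h_{k-1}$ all lie in $\mathbb{Q}[s_{\lambda^{(1)}}, \ldots, s_{\lambda^{(k-1)}}]$; then $f_k(h_1, \ldots, h_{k-1})$ does as well, and from the displayed identity $h_k = (-1)^k\big(s_{\lambda^{(k)}} - f_k(h_1, \ldots, h_{k-1})\big)$ lies in $\mathbb{Q}[s_{\lambda^{(1)}}, \ldots, s_{\lambda^{(k)}}]$. Since $\{h_k\}_{k \geq 1}$ is an algebraic basis of $\Lambda$, this shows $\{s_{\lambda^{(k)}}\}$ generates.

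For algebraic independence, I would use the grading. The polynomial ring $\Lambda \otimes \mathbb{Q}$ has, in degree $n$, dimension equal to the number of partitions $p(n)$. On the other hand, $s_{\lambda^{(k)}}$ is homogeneous of degree $k$, and the monomials in the $s_{\lambda^{(k)}}$ of total degree $n$ are indexed by partitions of $n$ (choosing a multiset of the $s_{\lambda^{(k)}}$'s). These monomials span degree $n$ by the generation result, and there are exactly $p(n)$ of them, so they must be linearly independent; hence no algebraic relation can hold among the $s_{\lambda^{(k)}}$, and they form an algebraic basis. I do not expect a serious obstacle here: the one point that needs a little care is making sure the cofactor-expansion identity is invoked as an identity over $\mathbb{Z}$ (so that ``leading term $\pm h_k$'' makes sense integrally and the triangularity is genuine), and then the induction and the dimension count are routine. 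An alternative to the dimension count is to observe directly that the transition matrix between $\{h_\lambda\}$ and $\{s_{\lambda^{(\cdot)}}\text{-monomials}\}$ in each degree is triangular with $\pm 1$ on the diagonal with respect to a suitable ordering of partitions, hence invertible over $\mathbb{Z}$; this even shows $\{s_{\lambda^{(k)}}\}$ is a $\mathbb{Z}$-algebra basis, though the statement only asks for an algebraic basis.
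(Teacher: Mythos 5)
Your proof is correct and follows essentially the same route as the paper: the unitriangular identity $s_{\lambda^{(k)}} = (-1)^k h_k + f_k(h_1,\ldots,h_{k-1})$ plus induction on $k$ gives generation, and algebraic independence follows from a counting/triangularity argument (the paper phrases this tersely as the $s_{\lambda_n}$ being expressible in $h_1,\ldots,h_n$, i.e.\ a transcendence-degree count, which your graded dimension count makes explicit). The only remark worth making is that the detour through $\mathbb{Q}$ is unnecessary for the stated result over $\Lambda$ (defined over $\mathbb{Z}$), but your own closing observation that the leading coefficient $(-1)^k$ is a unit in $\mathbb{Z}$ already closes that gap, matching how the paper argues integrally.
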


\begin{proof}
Recall that $\{h_n\}_{n \leq N}$ forms an algebraic basis of the symmetric functions of degree $\leq N$, and write $\mathcal{H} = \{\lambda_n\}_{n \in
  \N}$ with $|\lambda_n| = n$. 
We prove by induction on $N$ that $\{s_{\lambda_n}\}_{n \leq N}$ is algebraically
equivalent to $\{h_n\}_{n \leq N}$. The case $N = 1$ is trivial because $s_{\lambda_1}=s_{(1)} = h_1$.

Now suppose that $\{s_{\lambda_n}\}_{n \leq N}$ is algebraically equivalent to
$\{h_n\}_{n \leq N}$. Recall from Proposition \ref{hook_probability} that 
  \[
    s_{\lambda_{n+1}} = (-1)^{n+1}h_{n+1} + f(h_1,\ldots,h_n),
  \]
for some polynomial $f$. By hypothesis, we may write $f(h_1,\ldots,h_n) =
g(s_{\lambda_1},\ldots,s_{\lambda_n})$ for some polynomial $g$. We may
therefore write $h_{n+1}$ as a polynomial in
$\{s_{\lambda_1},\ldots,s_{\lambda_{n+1}}\}$, so $\{s_{\lambda}\}_{\lambda \in
  \mathcal{H}}$ algebraically spans $\Lambda$. Using Jacobi-Trudi, we can
express $s_{\lambda_n}$ as a polynomial in $\{h_1,\ldots,h_n\}$, so
the set $\{s_{\lambda_n}\}_{n \leq N}$ must be algebraically independent. 
\end{proof}

\section{Schur matrices and their operators}\label{sec:machinery}
\subsection{Schur Matrices}

We were able to prove results for hook shapes without any extra machinery, but this will not be the case moving forward. We next introduce terminology and operations on matrices that will be our main tools in the proofs of remaining results.

Let $A=(a_{ij})$ be a square matrix of size $n$. For each $1\leq k \leq 2n-1$, define \textit{the $k^{\text{th}}$ diagonal} to be the collection of all entries $a_{ij}$ with $i-j = n-k$. We call the $n^{\text{th}}$ diagonal \textit{the main diagonal}. Similarly, we define the \textit{the $k^{\text{th}}$ antidiagonal} of $A$ to be the collection of all entries $a_{ij}$ with $i+j=k+1$. We call the $n^{\text{th}}$ antidiagonal \textit{the main antidiagonal}.

Next, consider a polynomial of the form $g(x_1,\ldots,x_k)=x_k-f_{k-1}$, where $f_{k-1}$ is in $\F_q[x_1,x_2,\ldots,x_{k-1}]$. In this setting, we say that $k$ is the \textit{label} of $g$. We say that a nonzero constant polynomial has label $0$ and leave the label of the zero polynomial undefined. 

It is useful to generalize the type of square matrices arising from Jacobi-Trudi identities for Schur functions. In particular, we define three decreasingly general types of square matrices: the general Schur matrices, the reduced general Schur matrices, and the special Schur matrices.

\begin{defn}
An $n\times n$ matrix $M = (M_{ij})_{i,j=1}^n$ is called a \textit{general Schur matrix} of size $n$ with $m$ variables $x_1,\cdots,x_m$ if it satisfies the following conditions:
\begin{enumerate}[(a)]
\item For each $1 \leq i \leq n$, the $i^\text{th}$ row is of the form $$(\underbrace{0,\cdots,0}_\text{$d_i$}, \underbrace{M_{i(d_i+1)},\cdots,M_{in}}_\text{nonzero entries})$$ with $0 \leq d_1 \leq \cdots \leq d_n \leq n$.
\item Every nonzero entry is either a nonzero constant in $\F_q$ or a polynomial in the form $x_k - f_{k-1}$ where $k\in[m]$ and $f_{k-1}\in\F_q[x_1,\ldots,x_{k-1}]$.
\item The labels of the nonzero entries are strictly increasing across rows and strictly decreasing down columns. In particular, the label of the upper right entry is the largest.
\end{enumerate}
\end{defn}

If we strengthen the condition on the entries of $M$ by declaring that all constant entries must be zero, we obtain a reduced general Schur matrix.

\begin{defn}
Let $M$ be a general Schur matrix of size $n$ with $m$ free variables $x_1, \cdots, x_m$. It is called a \textit{reduced general Schur matrix} if it has the additional property that constant entries must be zero.
\end{defn}

\begin{ex}\label{ex:generalSchur}
The matrix below is a general Schur matrix of size $6$ with $13$ free variables and with $(d_1,d_2,d_3,d_4,d_5,d_6)=(0,0,1,1,2,3).$ This matrix is not a reduced general Schur matrix since entry $M_{42}=3$ is a nonzero constant.
\[
\begin{bmatrix}
x_4  & x_5    & x_6-x_1x_3    & x_8-x_5^2        & x_{10}        & x_{13}+4 \\
x_2  & x_3    & x_5-x_3    & x_7-x_5^2        & x_9        & x_{12}-x_{11}+3 \\
0    & x_2-x_1    & x_4    & x_5      & x_8-x_1-x_2      & x_{11} \\
0    & 3    & x_3-3x_1x_2    & x_4   & x_7      & x_{10}-x_7x_9 \\
0    & 0      & x_2-x_1    & x_3-x_2   & x_6-4x_2+4         & 2x_9 \\
0    & 0      & 0      &x_1    & x_2-2  & x_8 \\
\end{bmatrix}
\]
\end{ex}

\begin{defn}
A reduced general Schur matrix of size $n$ with $m$ free variables $x_1,\cdots, x_m$ is called a \textit{special Schur matrix} if \begin{enumerate}[(a)]
\item none of its entries is $0$;
\item none of its entries has a nonzero constant term;
\item for any $2\times2$ submatrix, the sum of labels of the two entries on the main diagonal is the same as the sum of labels of the two entries on the main antidiagonal.
\end{enumerate}
\end{defn}
\begin{ex}
Below is an example of a special Schur matrix of size $4$. If we look at the $2\times 2$ submatrix consisting of entries in the first two rows and columns, we see the sum of the labels on the main diagonal is 5+5=10 and the sum of the labels on the main antidiagonal is 6+4=10.
\[
\begin{bmatrix}
x_5    & x_6  & x_8-x_5^2    & x_9-x_3x_6    \\
x_4    & x_5-x_2  & x_7    & x_8     \\
x_2      & x_3   & x_4    & x_5    \\
x_1      & x_2   & x_3      & x_4      \\
\end{bmatrix}
\]
\end{ex}

\subsection{Operations on general Schur matrices}\label{sec:operations}

We now define two operations on general Schur matrices $\psi$ and $\varphi$ that will be useful later. Intuitively, $\psi$ can be thought of as performing row and column reductions in the flavor of Gaussian elimination, whereas $\varphi$ can be viewed as assigning values in $\F_q$ to variables one-by-one and applying $\psi$ after each step. 

\begin{defn}
Let $M$ be a general Schur matrix of size $n$ with $m$ variables. We define an operation $\psi$ that takes general Schur matrices to reduced general Schur matrices as follows.
\begin{enumerate}[(a)]
\item If $M$ has no nonzero constant entries, then $\psi(M) = M$.
\item If $M$ has $k \geq 1$ nonzero constant entries:
\begin{enumerate}
\item[(i)] From top to bottom, use each of these $k$ entries as a pivot to make all other entries in its column zero by subtracting appropriate multiples of its row from each of the rows above.
\item[(ii)] Then, use this nonzero constant to make all other entries in its row zero using column operations.
\item[(iii)] Finally, delete the rows and columns containing these nonzero constants to obtain a reduced general Schur matrix $M'=\psi(M)$.
\end{enumerate}
\end{enumerate}
\end{defn}

In this way, $\psi(M)$ is either an empty matrix or a nonempty reduced general Schur matrix of size at most $n$ with at most $m$ variables. Notice that as long as there is one row or column of $M$ that has no nonzero constant, $\psi(M)$ will be nonempty. If $\psi(M)$ is nonempty, then $\det(\psi(M))=\alpha \det(M)$ for some nonzero constant $\alpha$. Thus if we randomly assign the variables to values in $\F_q$, we have that $P(\det{M} \mapsto 0) = P(\det{\psi(M)} \mapsto 0)$.

\begin{ex}\label{ex:psi}
Below is an example where we obtain $\psi(M_1)$ starting from $M_1$.

\begin{center}
$M_1=
\begin{bmatrix}
0    & 2x_2  & x_4    & x_5    \\
0    & 1  & 4x_3    & x_4     \\   
0    & 0   & x_1    & x_3-x_2    \\
0    & 0   & 0      & x_2      \\
\end{bmatrix}
\xrightarrow{\text{steps $(i)$ and $(ii)$}} \begin{bmatrix}
0    & 0    & x_4-8x_2x_3    & x_5-2x_2x_4   \\
0    & 1    & 0    & 0 \\
0    & 0      & x_1    & x_3-x_2    \\
0    & 0      & 0      & x_2     \\
\end{bmatrix} \newline
\xrightarrow{\text{step }(iii)} \begin{bmatrix}
0         & x_4-8x_2x_3    & x_5-2x_2x_4   \\
0         & x_1    & x_3-x_2    \\
0         & 0      & x_2     \\
\end{bmatrix} = \psi(M_1)$
\end{center}

Let $M_2$ be a Jacobi-Trudi matrix corresponding to $\lambda=(4,4,2,2)$. Then we have the following $M_2$ and $\psi(M_2)$.

$$M_2=
\begin{bmatrix}
      h_4 & h_5 & h_6 & h_7 \\
      h_3 & h_4 & h_5 & h_6 \\
      1   & h_1 & h_2 & h_3 \\
      0   & 1   & h_1 & h_2 \\
\end{bmatrix}
\psi(M_2)=\begin{bmatrix}
       h_6-h_1h_5-h_2h_4+h_1^2h_4 & h_7-h_2h_5-h_3h_4+h_1h_2h_4 \\
       h_5+h_1^2h_3-h_2h_3-h_1h_4 & h_6-h_2h_4-h_3^2+h_1h_2h_3 \\
\end{bmatrix}$$
\end{ex}

\begin{prop}\label{prop: psi_M}Let $M$ be a Jacobi-Trudi matrix. Then $\psi(M)$ is a special Schur matrix.\end{prop}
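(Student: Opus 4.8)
The plan is to verify the three defining properties of a special Schur matrix for $\psi(M)$, treating the structural properties as preserved invariants under the reduction operation $\psi$. First I would observe that a Jacobi-Trudi matrix $M=(h_{\lambda_i-i+j})$ is itself a general Schur matrix: the zeros in each row appear in an initial segment (since $h_m=0$ for $m<0$), the constant entries are exactly the $1$'s on the subdiagonal coming from $h_0=1$, and setting the label of $h_k$ to be $k$ (and the label of a constant $1$ to be $0$, consistent with $h_0$), the labels increase by $1$ along each row and decrease by $1$ down each column, hence are strictly monotone as required. Moreover $M$ already satisfies properties (b) and (c) of a special Schur matrix: no entry $h_k$ with $k\geq 1$ has a nonzero constant term, and for any $2\times 2$ submatrix with rows $i<i'$ and columns $j<j'$, the main-diagonal label sum is $(\lambda_i-i+j)+(\lambda_{i'}-i'+j')$ and the main-antidiagonal label sum is $(\lambda_i-i+j')+(\lambda_{i'}-i'+j)$, which are equal. (One should handle the degenerate case where an entry is the zero polynomial, whose label is undefined, by noting such entries lie in the deleted rows.)

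Next I would show that property (a) for special Schur matrices — no zero entries — holds for $\psi(M)$. The key point is that in a Jacobi-Trudi matrix every zero entry lies in a row that also contains one of the constant $1$'s: a zero in row $i$ means $\lambda_i - i + j < 0$ for some $j$, and since $\lambda_i \geq 0$ and the entries in positions $j' > j$ of that row include $h_0 = 1$ at the column where $\lambda_i - i + j' = 0$, that row is among the $k$ rows containing a pivot constant. By the definition of $\psi$, exactly the rows and columns containing the pivot constants are deleted in step (iii), so every row surviving in $\psi(M)$ was free of zeros in $M$; I then need to check that steps (i) and (ii) — subtracting multiples of a pivot row from rows above it, and subtracting multiples of a pivot column from columns to its right — cannot introduce a new zero into a surviving row. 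This follows because such an operation changes an entry of label $\ell$ by adding a polynomial whose only term of label $\ell$ would have to come from the pivot row/column, but the strict monotonicity of labels forces the pivot's contribution at that position to have strictly smaller or differently-placed label, so the top-label term $x_\ell$ is never cancelled; hence surviving entries remain nonzero (in fact remain of the form $x_\ell - (\text{lower})$).

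Then I would argue that properties (b) and (c) are preserved by each elementary step of $\psi$, by induction on the number of pivots. Property (b): subtracting a multiple of the pivot row from a row above changes entry $M_{ij}$ by $-c\, M_{pj}$ where $p$ is the pivot row; since $M_{pj}$ has label strictly smaller than that of the pivot constant's row-neighbors... more carefully, since the pivot is a constant (label $0$) we are really in step (ii)/(i) combining a constant-labelled pivot, so the added term is $-c$ times an entry whose constant term we control — I would track that all entries stay "label-$\ell$ leading, no constant term" by noting the pivot column below/above the pivot has entries with nonzero constant term only at the pivot itself, which gets deleted. Property (c), the $2\times 2$ label-balance condition, is the step I expect to be the main obstacle: one must show that after a pivot elimination the label of each surviving entry is unchanged (it stays equal to its original $x_k$-index $k$), because only then does the original balance $(\lambda_i-i+j)+(\lambda_{i'}-i'+j') = (\lambda_i-i+j')+(\lambda_{i'}-i'+j)$ transfer verbatim to $\psi(M)$. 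The crux is thus a careful label-bookkeeping lemma: \emph{the operation $\psi$ preserves the label of every entry that is not deleted}. I would prove this by showing each elimination step adds to an entry only terms of label strictly less than that entry's label (using condition (c) on the relevant $2\times 2$ submatrix formed by the pivot, the target entry, and their two "crossing" entries, together with the monotonicity from condition (c) of general Schur matrices), so the leading label — and hence the label itself — is preserved. Once this label-preservation lemma is in hand, properties (a), (b), (c) for $\psi(M)$ all follow, completing the proof.
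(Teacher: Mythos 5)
Your proposal is correct and follows essentially the same route as the paper: observe that $M$ is a general Schur matrix already satisfying properties (b) and (c), note that every zero of $M$ lies in a row containing a constant $1$ and is therefore deleted by $\psi$, and check by induction that the pivot eliminations preserve the remaining structure. The paper compresses your label-preservation lemma into the phrase ``a simple induction shows $\psi(M)$ does as well,'' so your write-up simply supplies the bookkeeping the paper omits.
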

\begin{proof} The Jacobi-Trudi matrix $M$ for any Schur function is a general Schur matrix, and thus $\psi(M)$ is a reduced general Schur matrix. Moreover, notice that by the Jacobi-Trudi identity, any 0 in $M$ must appear in a row containing 1, and this row is deleted when we apply $\psi$. Therefore, $\psi(M)$ contains no zeroes and hence satisfies property 
$(a)$ for special Schur matrices. In addition, notice that $M$ satisfies properties $(b)$ and $(c)$ for special Schur matrices, and so a simple induction shows $\psi(M)$ does as well. 
\end{proof}

\begin{defn}\label{def: phi}
Let $M$ be a reduced general Schur matrix of size $n$ with $m$ variables. We recursively define an operation $\varphi$ that takes general Schur matrices with a set of assignments of $\{x_1,x_2,\ldots,x_i\}$ for $i\leq m$ to values in $\F_q$ to reduced general Schur matrices as follows.
\begin{enumerate}[(a)]
\item $\varphi(\emptyset; \text{ any assignment}) = \emptyset$, where $\emptyset$ denotes the empty matrix.
\item $\varphi(M;x_1=a_1) = \psi(M(x_1=a_1))$, where $M(x_1=a)$ denotes the matrix obtained from $M$ by assigning value $a_1$ to $x_1$.
\item $\varphi(M;x_1=a_1,\cdots,x_i=a_i) = \varphi(\varphi(M;x_1=a_1,\cdots,x_{i-1}=a_{i-1});x_i=a_i)$ for $i\geq2$.
\end{enumerate}
\end{defn}
In this way, $\varphi(M; x_1 = a_1,\cdots, x_i=a_i)$ is either empty or a reduced general Schur matrix.
Notice that if $M' =\varphi(M; x_1 = a_1,\cdots, x_i=a_i)$ is empty, then $$P(\det{M}\mapsto0 \mid x_1=a_1,\cdots,x_{i}=a_{i}) = 0.$$ If $M'$ is nonempty, then we have $$P(\det{M}\mapsto0 \mid x_1=a_1,\cdots,x_{i}=a_{i}) = P(\det{M'}\mapsto0).$$ Also notice that as long as $M$ has one row or column where all entries have labels strictly larger than $i$, $M'$ will be nonempty, which can be easily shown by induction. These two observations will be useful in many proofs.

\section{General Results}\label{sec:generalresults}

In this section, we will prove a sharp lower bound for $P(s_\lambda\to 0)$, examine the behavior of the probability as $q$ approaches infinity, show that the probability is not always a rational function of $q$, and conjecture an upper bound for the probability. In contrast with Section~\ref{hooks_section}, the results on this section hold for any partition $\lambda$.

\subsection{Lower bound on the probability}

We can now use the tools from the previous section to prove the following theorem.

\begin{thm}
\label{thm: general matrix, geq 1/q}
Let $M$ be a reduced general Schur matrix of size $n>0$ with $m$ variables $x_1, \cdots, x_m$.
Then assigning the variables to values in $\F_q$ uniformly at random, we have that $$P(\det{M} \mapsto 0) \geq 1/q.$$
\end{thm}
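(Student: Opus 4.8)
The plan is to argue by strong induction on the size $n$ of $M$. The case $n=1$ is immediate: being reduced, $M$ is either $(0)$, giving $P(\det M\mapsto 0)=1$, or $(x_k-f_{k-1})$ for some $k$, which vanishes with probability exactly $1/q$. For $n\ge 2$ I would first clear away the degenerate configurations. If $d_1>0$ then (since $d_1\le d_i$ for all $i$) the first column of $M$ is identically zero, so $\det M\equiv 0$; if $d_n=n$ the last row is zero; and if $d_n=n-1$ then cofactor expansion along the last row gives $\det M=M_{nn}\det M^{(n,n)}$, where $M^{(n,n)}$ (delete row $n$, column $n$) is a reduced general Schur matrix of size $n-1$, so $P(\det M\mapsto 0)\ge P(\det M^{(n,n)}\mapsto 0)\ge 1/q$ by induction. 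So assume $d_1=0$ and $d_n\le n-2$. Let $K$ be the largest label of $M$, attained at the corner entry $M_{1n}$; since no variable $x_j$ with $j>K$ occurs in $M$, we may take $K=m$. Monotonicity of labels together with $d_1=0$ forces $x_m$ to occur only in $M_{1n}$, and only linearly, so cofactor expansion along the last column gives
\[ \det M \;=\; (-1)^{n+1}\bigl(\det M^{(1,n)}\bigr)\, x_m \;+\; R, \qquad R\in\F_q[x_1,\dots,x_{m-1}], \]
where $M^{(1,n)}$ (delete row $1$, column $n$) is a reduced general Schur matrix of size $n-1$.

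The next step is to reduce to a conditional bound. Conditioning on $x_1,\dots,x_{m-1}$, for each assignment $\det M$ is an affine function of $x_m$: it has a unique zero if $\det M^{(1,n)}\ne 0$, and equals the constant $R$ otherwise, so
\[ P(\det M\mapsto 0)=\tfrac1q\,P\!\bigl(\det M^{(1,n)}\ne 0\bigr)+P\!\bigl(\det M^{(1,n)}=0 \text{ and } R=0\bigr). \]
Since $\det M$ and $R$ agree on $\{\det M^{(1,n)}=0\}$, a short computation shows the right-hand side is $\ge 1/q$ as soon as $P(\det M\mapsto 0\mid \det M^{(1,n)}=0)\ge 1/q$ (and there is nothing to prove if $P(\det M^{(1,n)}=0)=0$). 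Now $\det M^{(1,n)}$ involves only the ``low'' variables $x_1,\dots,x_t$ with $t=\mathrm{label}(M_{2,n-1})$, so fix an assignment $x_1=p_1,\dots,x_t=p_t$ in the zero locus of $\det M^{(1,n)}$. The resulting constant matrix is singular, hence rows $2,\dots,n$ of $M$, truncated to their first $n-1$ entries, admit a nonzero linear relation $\sum_i c_i\,(\text{row }i)=0$ with $c_i\in\F_q$. Choosing any $i_0$ with $c_{i_0}\ne 0$ and replacing row $i_0$ of $M$ by $\sum_i c_i\,(\text{row }i)$ — an operation scaling $\det M$ by $c_{i_0}\ne 0$ — turns row $i_0$ into $(0,\dots,0,w)$ with $w=\sum_i c_i M_{in}$, and expanding along it yields
\[ \det M \;=\; \pm\, c_{i_0}^{-1}\, w\,\det M^{(i_0,n)}, \]
where $M^{(i_0,n)}$ is again a reduced general Schur matrix of size $n-1$.

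For the finish: given $p$, we have $\det M\mapsto 0$ whenever $w\mapsto 0$ or $\det M^{(i_0,n)}\mapsto 0$. I would take the relation with support starting at the smallest index $a$ and set $i_0=a$; then $w$ involves $M_{an}$ with nonzero coefficient $c_a$, and $M_{an}$ has the strictly largest label among the summands of $w$ (labels strictly decrease down column $n$, and $d_a\le d_n\le n-2$ rules out the support being a single index). If that label exceeds $t$, the corresponding variable is still random, $w$ is a nonconstant affine function of it, and $P(w\mapsto 0\mid x_1=p_1,\dots,x_t=p_t)=1/q$, finishing this case. The main obstacle is the complementary possibility, in which every summand of $w$ has label $\le t$ so that $w$ becomes a constant after the substitution: if that constant is $0$ the conditional probability is $1$, but if it is nonzero one must instead use $\det M\mapsto 0\iff\det M^{(i_0,n)}\mapsto 0$ and apply the inductive hypothesis to the size-$(n-1)$ matrix $M^{(i_0,n)}$ via the operator $\varphi$ — which requires $\varphi(M^{(i_0,n)};x_1=p_1,\dots,x_t=p_t)$ to be nonempty, and, when it is not, a judicious re-choice of the relation $(c_i)$ or of the pivot $i_0$ (or, failing that, a direct argument bounding the fraction of ``fully constant'' fibers $p$ inside $\{\det M^{(1,n)}=0\}$ by $1-1/q$). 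Carrying out exactly this bookkeeping, for which the label axioms of reduced general Schur matrices and the reduction operators $\psi,\varphi$ are tailor-made, is where the real work lies.
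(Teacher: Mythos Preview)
Your argument is incomplete at exactly the point you flag: the ``complementary possibility'' where, for a given $p$ in the zero locus of $\det M^{(1,n)}$, every term of $w$ has label $\le t$, so $w$ becomes a nonzero constant. In that case you need $P(\det M^{(i_0,n)}\mapsto 0\mid p)\ge 1/q$, and you propose to get this from the inductive hypothesis applied to $\varphi(M^{(i_0,n)};p)$. But you have not shown this matrix is nonempty, and your fallback suggestions (re-choose the relation, re-choose $i_0$, or bound the density of bad fibers) are not carried out. Concretely, nothing prevents label$(M_{a,n})\le t$ once $a\ge 3$: the label axioms give no comparison between label$(M_{a,n})$ and $t=\text{label}(M_{2,n-1})$ when $a>2$, and it is easy to build $4\times 4$ examples where the unique relation has $c_2=0$ and $w$ collapses to a constant after substituting $p$. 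Nor is it automatic that $\varphi(M^{(i_0,n)};p)$ survives: although row~$1$ contains $M_{1,n-1}$ with label $>t$, earlier entries of row~$1$ can have label $\le t$ and become nonzero constants under $p$, causing row~$1$ to be deleted in an intermediate $\psi$-step. So the ``bookkeeping'' you defer is not routine; it is the heart of the matter, and as written the proof has a genuine gap.

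The paper's proof avoids all of this by inducting on the number of variables $m$ rather than the size $n$, and by looking at the \emph{smallest} label $i$ present in $M$ rather than the largest. If $i$ appears $n$ times the matrix is upper triangular with all diagonal entries of label $i$, and one assigns $x_i$ last to force a zero on the diagonal. Otherwise there are at most $n-1$ entries of label $\le i$, so for \emph{every} assignment of $x_1,\dots,x_i$ the matrix $\varphi(M;x_1=a_1,\dots,x_i=a_i)$ is nonempty with strictly fewer variables, and the inductive hypothesis gives each conditional probability $\ge 1/q$. This sidesteps entirely the delicate case analysis your approach runs into.
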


\begin{proof}
We proceed by induction on the number of variables $m$. If $m=0$, matrix $M$ is the zero matrix, and the conclusion trivially holds. If $m=1$, then because the labels are strictly increasing across rows and strictly decreasing down columns, we know all entries except $M_{1n}$ are $0$. Hence $P(\det{M} \mapsto 0)=1/q$ if $n=1$ and $P(\det{M} \mapsto 0)=1$ if $n\geq 2$. In either case, $P(\det{M} \mapsto 0) \geq 1/q$.  

Suppose the assertion holds for any $m$ with $m\geq 2$ and $m <k$, and suppose $M$ is a reduced general Schur matrix with $k$ free variables. Consider the smallest label among the entries. Let it be some $i \geq 1$. If there are $n$ entries with label $i$, then by definition of reduced general Schur matrix we know $M$ is an upper triangular matrix and all the entries on the main diagonal have label $i$. In this case, for any assignment of $\{x_1,\ldots,x_{i-1},x_{i+1},\ldots,x_k\}$, we have at least one way to assign $x_i$ so that at least one of the diagonal entries equals $0$, which makes the determinant equal $0$. Thus we have $P(\det{M} \mapsto 0) \geq 1/q$. 

If instead there are at most $(n-1)$ entries with label $i$, then for any list of constants $a_1,\ldots,a_{i}$, $M'=\varphi(M; x_1=a_1,\cdots,x_{i}=a_{i})$ is a reduced general Schur matrix of size at least $1$ with at most $k-i$ variables. By the induction hypothesis, we have $P(\det{M'}\mapsto0)\geq 1/q$. Since $P(\det{M'}\mapsto0) = P(\det{M}\mapsto0 \mid x_1=a_1,\cdots,x_{i}=a_{i})$, combining all the conditional probabilities from the different assignments gives $P(\det{M} \mapsto 0) \geq 1/q$. 
\end{proof}

\begin{cor}\label{cor: geq 1/q}
We have that $P(s_\lambda\mapsto0)\geq 1/q$ for all shapes $\lambda.$
\end{cor}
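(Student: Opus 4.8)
The plan is to deduce this immediately from Theorem~\ref{thm: general matrix, geq 1/q}, using the fact (recorded in the discussion following the definition of $\psi$) that applying $\psi$ does not change the probability that the determinant vanishes. First I would fix a shape $\lambda$ with $k$ parts and write $s_\lambda = \det M$, where $M = (h_{\lambda_i - i + j})_{i,j=1}^k$ is the Jacobi-Trudi matrix. As observed in the proof of Proposition~\ref{prop: psi_M}, $M$ is a general Schur matrix in the variables $x_i = h_i$, and the only subscripts that appear are $0, 1, \dots, n$ with $n := \lambda_1 + k - 1$. A uniformly random algebra homomorphism $\Lambda \to \F_q$ is determined by independent uniform choices of the images of $h_1, h_2, \dots$, so $P(s_\lambda \mapsto 0)$ is exactly the probability that $\det M \mapsto 0$ when $x_1, \dots, x_n$ are assigned values in $\F_q$ uniformly at random.

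Next I would pass to $\psi(M)$. The first row of $M$ is $(h_{\lambda_1}, h_{\lambda_1+1}, \dots, h_{\lambda_1+k-1})$, and since $\lambda_1 \geq 1$ every entry of this row has label at least $1$; in particular no entry of the first row is a nonzero constant, so $\psi(M)$ is nonempty. By the remarks accompanying the definition of $\psi$, this means $\det \psi(M) = \alpha \det M$ for some nonzero $\alpha \in \F_q$, hence $P(\det M \mapsto 0) = P(\det \psi(M) \mapsto 0)$ under the same uniform random assignment. Moreover $\psi(M)$ is a reduced general Schur matrix (indeed a special Schur matrix, by Proposition~\ref{prop: psi_M}) of some positive size with at most $n$ variables.

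Finally, Theorem~\ref{thm: general matrix, geq 1/q} applies directly to $\psi(M)$ and yields $P(\det \psi(M) \mapsto 0) \geq 1/q$. Chaining the equalities gives $P(s_\lambda \mapsto 0) = P(\det M \mapsto 0) = P(\det \psi(M) \mapsto 0) \geq 1/q$, which is the claim.

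Since every ingredient is already established, I do not expect a genuine obstacle; the only points needing care are bookkeeping ones — verifying that the "random homomorphism" model coincides with the "uniform random assignment of the finitely many relevant variables" model, and checking that $\psi(M)$ is nonempty so that Theorem~\ref{thm: general matrix, geq 1/q} can be invoked. (If one preferred to avoid $\psi$, one could instead rerun the induction of Theorem~\ref{thm: general matrix, geq 1/q} directly on general Schur matrices, treating a nonzero constant entry as an additional base-case branch; but routing through $\psi$ is cleaner and lets us cite the existing theorem as a black box.)
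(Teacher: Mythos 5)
Your proof is correct and follows essentially the same route as the paper: form the Jacobi--Trudi matrix $M$, observe it is a general Schur matrix, note $\psi(M)$ is a nonempty reduced general Schur matrix with the same vanishing probability, and apply Theorem~\ref{thm: general matrix, geq 1/q}. Your justification for nonemptiness (the first row contains no nonzero constants) is a harmless variant of the paper's observation that $M$ has at most $n-1$ ones.
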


\begin{proof}
Let $M$ be the Jacobi-Trudi matrix for $s_\lambda$, and note that it is a general Schur matrix. Since $M$ has at most $(n-1)$ 1's, $\psi(M)$ is a nonempty reduced general Schur matrix. Applying the previous theorem, $P(\det{M} \mapsto 0)=P(\det{\psi(M)} \mapsto 0) \geq 1/q$.
\end{proof}

In later sections, we will investigate which shapes have the property that $P(s_\lambda\mapsto 0)=1/q$ and will give a complete characterization of such shapes. Recall that we have already shown that this holds for hook shapes.

\subsection{Asymptotic Bound}

In this section, we show that $P(\det{M}\mapsto0)$ approaches $1/q$ as $q$ approaches infinity.
To do that, we first find an upper bound for the probability that a reduced general Schur matrix is singular.

\begin{lem}
For a reduced general Schur matrix $M$ of size $n$ such that every $0$ is strictly below the main diagonal, we have $P(\det{M}\mapsto0)\leq n/q$.
\end{lem}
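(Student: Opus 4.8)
The statement asks for an upper bound $P(\det M \mapsto 0) \le n/q$ for a reduced general Schur matrix $M$ of size $n$ in which every zero entry lies strictly below the main diagonal. The plan is to induct on $n$ (or equivalently on the number of variables), using the operator $\varphi$ to peel off one variable at a time while keeping track of how the "every zero below the diagonal" hypothesis behaves.

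First I would dispose of the base case $n=1$: then $M$ is a $1\times 1$ matrix whose single entry is either a variable-labeled polynomial $x_k - f_{k-1}$ or (being reduced) zero. If it is the zero polynomial the statement is vacuous or trivial; otherwise, assigning values uniformly, $P(\det M \mapsto 0) = 1/q \le 1/q$, using that the top variable $x_k$ appears linearly and independently. For the inductive step, let $i$ be the smallest label appearing among the entries of $M$. Because labels strictly decrease down columns and strictly increase across rows, the entries with label $i$ form (part of) the lower-left region; in particular, since every zero is strictly below the main diagonal, the main diagonal entries all have labels $\ge i$, and an entry of label exactly $i$ can only sit on or below the diagonal. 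I would then condition on the values $x_1 = a_1, \dots, x_i = a_i$. By the properties of $\varphi$ recorded in the excerpt, $M' = \varphi(M; x_1=a_1,\dots,x_i=a_i)$ is a reduced general Schur matrix with at most the original number of variables minus $i$, and — this is the point requiring care — its size drops by the number of constant (now purely scalar) entries that got pivoted away. I want to argue that $M'$ has size $\le n-1$ for a positive proportion of assignments, or more precisely set up the conditional-probability bookkeeping so that the weighted average of $P(\det M' \mapsto 0) \le (\text{size of } M')/q$ comes out to at most $n/q$.

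The cleanest version of the induction is probably to track the bound $P(\det M \mapsto 0) \le (\text{number of diagonal entries of } M)/q = n/q$ directly. I would split on whether, after substituting generic values $a_1,\dots,a_{i}$ for the variables of label $\le i$, the resulting scalar entry in the relevant diagonal position is zero: the "bad" event that some diagonal entry of label $i$ vanishes has probability $1/q$ per such entry (each such entry is $x_i$ minus something in the already-fixed variables), contributing at most some count $c$ times $1/q$; on the complement, $\psi$ deletes those $c$ rows/columns and we land in a reduced general Schur matrix of size $n - c$ to which induction applies, giving $\le (n-c)/q$. Adding, $P(\det M \mapsto 0) \le c/q + (n-c)/q = n/q$. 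The subtlety is confirming that the "every zero strictly below the main diagonal" hypothesis is inherited by $M'$ after the pivoting in $\psi$ — deleting the rows and columns of the scalar pivots could in principle move zeros onto or above the new diagonal — so I would need a lemma (or a direct check using the label monotonicity) that the pivot positions, being the label-$i$ entries which sit weakly below the diagonal with the diagonal entries of label $i$ sitting exactly on it, are deleted in a way that preserves the property for the surviving submatrix.

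The main obstacle I anticipate is exactly this structural bookkeeping: ensuring the inductive hypothesis is applicable to $M' = \varphi(M; \dots)$, i.e. that $M'$ still has all its zeros strictly below the diagonal and that its size equals $n$ minus the number of pivots used, so the counting $c/q + (n-c)/q = n/q$ closes. The probabilistic core — each newly-revealed top variable contributes an independent uniform coordinate, so a single linear condition has probability exactly $1/q$ — is routine; marshaling the diagonal/label combinatorics through the $\psi$-reduction is where the real work lies.
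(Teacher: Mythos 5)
Your proposal follows essentially the same route as the paper: induct on the size, condition on the variables below the critical label, and split on whether the label-$i$ diagonal entries vanish (probability at most $c/q$, with the conditional probability bounded crudely by $1$) or become nonzero pivots (so the size drops to at most $n-c$ and induction gives $(n-c)/q$), which sums to $n/q$ exactly as in the paper's computation $\frac{r}{q}\cdot 1+\frac{q-r}{q}\cdot\frac{k-d}{q}\le \frac{k}{q}$. The only substantive difference is that the paper takes $i$ to be the smallest label \emph{on the main diagonal} (so $d\ge 1$ and the size strictly drops in the good sub-case), which sidesteps the stall your version would hit when the minimal overall label occurs only strictly below the diagonal and $c=0$; the preservation of the ``zeros strictly below the diagonal'' hypothesis under $\varphi$ that you flag as the main obstacle is likewise asserted without detailed proof in the paper.
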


\begin{proof}
The conclusion trivially holds when $n \geq q$, so we can assume $q>n$. We proceed by induction on $n$. When $n=1$, $M$ consists of a single entry of the form $x_j-f_{j-1}$ for some positive integer $j$, and $$P(\det{M}\mapsto0) = P(x_j=f_{j-1}) = 1/q= n/q.$$

Suppose the lemma holds for all $n<k$ for some $k\in\mathbb{N},k<q$. Let $M$ be a reduced general Schur matrix of size $k$ such that every 0 is strictly below the main diagonal, and let $i$ denote the smallest label among all the entries on the main diagonal. It suffices to show each of the conditional probabilities $P(\det M \mapsto 0 \mid x_1=a_1,x_2=a_2,\ldots, x_{i-1}=a_{i-1})\leq k/q$, where $a_1,\ldots,a_{i-1}\in \F_q$.

Assign $x_1,\cdots,x_{i-1}$ to some values $a_1,\cdots,a_{i-1}\in\F_q$, and consider $M'=\varphi(M;\ x_1=a_1,\cdots,x_{i-1}=a_{i-1})$ with size $\ell$. We know $\ell\geq 1$ since all the entries in the last column of $M$ have label at least $i$, and we know that $0$'s are strictly below the main diagonal in $M'$. If $\ell<k$, we know that $P(\det{M'}\mapsto0) \leq \ell/q \leq k/q$ by inductive hypothesis.

Hence we only need to consider the case when $\ell=k$. This means we deleted
no rows or columns when applying $\varphi$. Let the number of diagonal entries
with label $i$ be $d$. We may list the distinct entries with label $i$ that
appear on the main diagonal as $\{x_i-a_1,x_i-a_2,\ldots,x_i-a_r\}$, 
where the $a_j$ are distinct elements of $\F_q$ and $r\leq d$. Note that these entries have the form $x_i$ minus a constant because we have already assigned values in $\F_q$ to the variables $x_1,\ldots,x_{i-1}$.

If we assign $x_i$ to be something other than $a_1,a_2,\ldots,a_r$, then we
will have $d$ nonzero constants on the main diagonal. Applying $\psi$ we
obtain a reduced general Schur matrix of size at most $k-d$, and the
conditional probability in this sub-case is at most $(k-d)/q$ by our inductive
hypothesis.  
Thus
\begin{align*}
P(\det{M}\mapsto0 \mid x_1=a_1,\cdots,x_{i-1}=a_{i-1}) &=P(\det{M'}\mapsto0) \\
& \leq \frac{r}{q} \cdot 1 +\frac{q-r}{q}\cdot\frac{k-d}{q} \\
& = \frac{k}{q} - \frac{d-r}{q}- \frac{r(k-d)}{q^2} \\
&\leq \frac{k}{q}.
\end{align*}
\end{proof}

\begin{lem}\label{lem:asymtotic2}
Let $M$ be a reduced general Schur matrix of size $n\geq2$ such that every $0$ is strictly below the $(n-1)^\text{th}$ diagonal. Let $N$ be the $(n-1)\times(n-1)$ submatrix in its lower left corner. Then $P(\det{M}\mapsto0\ \&\ \det{N}\mapsto0)\leq n(n-1)/q^2$.
\end{lem}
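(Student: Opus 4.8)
The plan is to induct on the size $n$, staying close in spirit to the two preceding lemmas. The case $n=1$ is vacuous: then $N$ is the empty matrix, so $\det N = 1\neq 0$ and the probability is $0$. So assume $n\geq 2$ and that the statement holds for all smaller sizes. Since $M$ is reduced and has no zero on or above its $(n-1)^{\text{st}}$ diagonal, the label conditions force the minimum label $i$ among all entries of $M$ to be attained only on the $(n-1)^{\text{st}}$ (sub-)diagonal: every entry on or above the main diagonal has strictly larger label than the subdiagonal entry in its row, and every entry strictly below the subdiagonal is $0$. Let $d\geq 1$ be the number of subdiagonal entries of label $i$. Conditioning on $x_1,\dots,x_{i-1}$ creates no new constants and changes neither the shape of $M$ nor that of $N$, so it suffices to bound the conditional probability after fixing $x_1=a_1,\dots,x_{i-1}=a_{i-1}$ and then splitting on $x_i$. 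After this conditioning each of the $d$ subdiagonal entries of label $i$ has the form $x_i - b$ for a constant $b\in\F_q$; let $b_1,\dots,b_r$ ($r\leq d$) be the distinct such constants.

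In \emph{Case A}, $x_i\in\{b_1,\dots,b_r\}$, which occurs with probability $r/q$. Then some subdiagonal entry $(k{+}1,k)$ becomes $0$; because it had the minimum label, every entry weakly below and weakly to its left is $0$ as well (else there would be an entry of smaller label), so the rows $k{+}1,\dots,n$ vanish in columns $1,\dots,k$. After applying $\psi$ — which only deletes the rows and columns of the remaining $\leq d-1$ subdiagonal entries, now nonzero constants — the matrix $M''=\varphi(M;x_1=a_1,\dots,x_i=a_i)$ is block upper triangular, and each diagonal block is a reduced general Schur matrix whose zeros lie strictly below its own main diagonal (the vanishing subdiagonal entries fall in the strictly-lower-triangular block region, not inside any diagonal block). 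By the preceding lemma, $P(\det(\text{block})\mapsto 0)\leq (\text{size of block})/q$, so a union bound gives $P(\det M''\mapsto 0)\leq (\text{size of }M'')/q\leq n/q$. Moreover the same zero block forces the last $n-k$ rows of the lower-left $(n-1)\times(n-1)$ submatrix to be supported on only $n-1-k<n-k$ columns, so $\det N$ vanishes identically here; hence $P(\det M\mapsto 0\ \&\ \det N\mapsto 0\mid \text{Case A})=P(\det M''\mapsto 0)\leq n/q$.

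In \emph{Case B}, $x_i\notin\{b_1,\dots,b_r\}$, with probability $(q-r)/q$. Then all $d$ subdiagonal entries of label $i$ become nonzero constants, and these are the only constants created, so $\psi$ deletes exactly those $d$ rows and columns, yielding a reduced general Schur matrix $M'=\varphi(M;x_1=a_1,\dots,x_i=a_i)$ of size $n-d$ whose zeros lie strictly below its $(n-d-1)^{\text{st}}$ diagonal; since the deleted positions lie on the subdiagonal, the row operations of $\psi$ restrict compatibly to the lower-left submatrix, so the lower-left $(n-d-1)\times(n-d-1)$ submatrix of $M'$ is $\varphi(N;x_1=a_1,\dots,x_i=a_i)=:N'$. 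By the inductive hypothesis applied to $M'$, $P(\det M\mapsto 0\ \&\ \det N\mapsto 0\mid\text{Case B})=P(\det M'\mapsto 0\ \&\ \det N'\mapsto 0)\leq (n-d)(n-d-1)/q^2$. Combining the two cases,
\[ P\big(\det M\mapsto 0\ \&\ \det N\mapsto 0 \mid x_1=a_1,\dots,x_i=a_i\big)\ \leq\ \frac{r}{q}\cdot\frac{n}{q}+\frac{q-r}{q}\cdot\frac{(n-d)(n-d-1)}{q^2}\ \leq\ \frac{rn+(n-d)(n-d-1)}{q^2}, \]
and since $1\leq r\leq d\leq n-1$ we have $rn+(n-d)(n-d-1)\leq dn+(n-d)(n-d-1)=n(n-1)-d(n-d-1)\leq n(n-1)$; averaging over $a_1,\dots,a_{i-1}$ completes the induction.

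The main obstacle is the bookkeeping in Case A: one must verify carefully that the vanishing of a minimum-label subdiagonal entry really does produce the claimed block-triangular shape, that $\psi$ preserves the property "zeros strictly below the main diagonal" for each diagonal block (so the earlier lemma applies block by block), and that $\det N$ vanishes identically on this event; the compatibility of $\psi$ with passage to the lower-left submatrix used in Case B is similar but more routine. All of these rest on the same local fact — an entry modified by $\psi$ has its leading variable preserved, and pivot rows are zero to the left of the pivot because the pivot carries the minimum label — so I expect them to go through, but they are the delicate part of the write-up.
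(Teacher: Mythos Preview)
There is a genuine gap at the outset: you have the hypothesis backwards. The lemma assumes that every zero entry of $M$ lies strictly below the $(n-1)^{\text{st}}$ diagonal; it does \emph{not} assume that every entry strictly below that diagonal is zero. (In the paper the lemma is applied to $\psi$ of a Jacobi-Trudi matrix, which is a special Schur matrix with no zeros at all, so the region below the subdiagonal is typically full of nonzero entries.) Hence your claim that the overall minimum label $i$ occurs only on the subdiagonal fails: nonzero entries below the subdiagonal may carry strictly smaller labels. This undermines both the assertion that conditioning on $x_1,\dots,x_{i-1}$ ``creates no new constants and changes neither the shape of $M$ nor that of $N$,'' and the Case~A block-triangular picture, which relies on the block weakly below and to the left of a vanishing subdiagonal entry being entirely zero.

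The paper's argument sidesteps this. It lets $i$ be the smallest label \emph{on the subdiagonal}, applies $\varphi(M;x_1=a_1,\dots,x_{i-1}=a_{i-1})$, and if the resulting size $\ell$ is less than $n$ simply invokes the inductive hypothesis; only when $\ell=n$ does it split on $x_i$. In the analogue of your Case~A it attempts no block decomposition at all: it crudely bounds the joint probability by $P(\det M\mapsto 0)$ alone and applies the preceding lemma, whose hypothesis (zeros strictly below the \emph{main} diagonal) is the one that actually holds. Your final inequality $rn+(n-d)(n-d-1)\le n(n-1)$ is exactly the paper's computation, so the skeleton of your induction is right; the repair is to insert the $\ell<n$ branch and to replace the Case~A block analysis by this direct appeal to the previous lemma.
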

\begin{proof}
Note that we may assume that $q>n-1$ because otherwise the result follows trivially. We again proceed by induction on $n$. Suppose $n=2$, and write $M=(M_{ij})_{i,j=1}^2$. Then $P(\det{N}=0)=1/q$ since $\det{N}=M_{21}$. Given $\det{N}=0$, we have $\det{M}=M_{11}M_{22}$. Since $P(M_{11}=0)=P(M_{22}=0)=1/q$, $P(\det{M}=0\ |\ \det(N)=0)\leq 2/q$. Hence
\[
P(\det{M}\mapsto0\ \&\ \det{N}\mapsto 0) \leq 1/q \cdot 2/q = n(n-1)/q^2.
\]

Suppose the lemma holds for all $2 \leq n < k$ for some $k\in\mathbb{N}$, and suppose $M$ is a reduced general Schur matrix of size $k$ with every 0 strictly below the $(k-1)^{th}$ diagonal. Denote the smallest label of the entries on the $(k-1)^\text{th}$ diagonal by $i$. It suffices to show that $P(\det{M}\mapsto 0\ \&\ \det{N}\mapsto0 \mid x_1=a_1,\cdots,x_{i-1}=a_{i-1})\leq k(k-1)/q$ for any choice of $a_1,\ldots,a_{i-1}\in\F_q$.  

Fix such a choice of $a_1,\ldots,a_{i-1}\in\F_q$, and consider the square matrix $M'=\varphi(M; x_1=a_1,\cdots,x_{i-1}=a_{i-1})$ of size $\ell$. Notice that none of the entries on the $(n-1)^{th}$ diagonal and the $n^{th}$ diagonal of $M$ become constant after we make the assignment $x_1=a_1,\cdots,x_{i-1}=a_{i-1}$. It follows that the first two rows and the last two columns are not deleted by $\varphi$.
Therefore, $\ell\geq 2$. Moreover, the $0$'s are still strictly below the $(\ell-1)^\text{th}$ diagonal in $M'$. 

Since the determinants of $M'$ and its $(\ell-1)\times(\ell-1)$ submatrix at
the lower left corner $N'$ are nonzero constant multiples the
determinants of $M$ and $N$, respectively, $$P(\det{M}\mapsto
0 \; \& \; \det{N} \mapsto 0 \mid x_1=a_1,\cdots,x_{i-1}=a_{i-1}) =
P(\det{M'}\mapsto 0 \; \& \; \det{N'}\mapsto 0).$$ If $\ell<k$,
then $$P(\det{M}\mapsto 0\ \&\ \det{N}\mapsto 0 \mid
x_1=a_1,\cdots,x_{i-1}=a_{i-1})\leq\ell(\ell-1)/q^2 \leq k(k-1)/q^2$$ by
inductive hypothesis. 

We are only left with the case when $\ell=k$. Let the number of entries on the $(k-1)^\text{th}$ diagonal with label $i$ be $d\leq k-1$. We may list these entries as $\{x_i-a_1,x_i-a_2,\ldots,x_i-a_r\}$, where the $a_j\in\F_q$ are distinct and $r\leq d\leq k-1$. If we assign $x_i$ to be one of $a_1,\ldots,a_r$, the previous lemma gives that the corresponding conditional probability that $\det{M}=0$ is at most $k/q$. If we instead assign $x_i$ to be none of the $a_1,\ldots,a_r$, then we have $d$ nonzero constants on the $(k-1)^{\text{th}}$ diagonal of $M'$. Thus $\psi(M')$ is a reduced general Schur matrix of size at most $k-d$, and the corresponding conditional probability is at most $(k-d)(k-d-1)/q^2$ by inductive hypothesis.

Combining the two sub-cases, we see that

\begin{align*}
P(\det{M}\mapsto 0\ \&\ \det{N}\mapsto0 &\mid x_1=a_1,\cdots,x_{i-1}=a_{i-1})\\
&\leq  \frac{r}{q}\cdot\frac{k}{q} + \frac{q-r}{q}\cdot \frac{(k-d)(k-d-1)}{q^2}\\
&= \frac{k(k-1)}{q^2} - \frac{k(d-r)}{q^2} - \frac{d(k-1-d)}{q^2} - \frac{r[d^2-(2k+1)d+k(k-1)]}{q^3} \\
&\leq \frac{k(k-1)}{q^2}  - \frac{r[(k-1)^2-(2k-1)(k-1)+k(k-1)]}{q^3} \\ 
&\leq \frac{k(k-1)}{q^2}.
\end{align*}
\end{proof}

For the next theorem, recall that we say that $f(x)=O\left(g(x)\right)$ if there exist constants $c,n_0>0$ such that $|f(n)|\leq c \cdot |g(n)|$ for $n>n_0$. 

\begin{thm}\label{thm:asymtotic}
For any shape $\lambda$, we have $P(s_\lambda\mapsto 0)=1/q+O(1/q^2)$. 
\end{thm}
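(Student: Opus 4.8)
The plan is to combine the lower bound from Corollary~\ref{cor: geq 1/q} with an upper bound of the form $P(s_\lambda \mapsto 0) \leq 1/q + O(1/q^2)$, so that the two bounds pinch the probability to $1/q + O(1/q^2)$. The lower bound is already in hand, so all the work goes into the matching upper bound, and the two lemmas just proved are exactly tailored for this. First I would let $M = \psi(N)$ where $N$ is the Jacobi-Trudi matrix for $s_\lambda$; by Proposition~\ref{prop: psi_M} this is a special Schur matrix, in particular a reduced general Schur matrix with no zero entries, so \emph{every} $0$ is vacuously strictly below every diagonal and $P(s_\lambda \mapsto 0) = P(\det M \mapsto 0)$. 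Write $n$ for the size of $M$ (which is at most $|\lambda|$, a constant not depending on $q$).

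The key step is to decompose the event $\{\det M = 0\}$ according to whether the lower-left $(n-1)\times(n-1)$ submatrix $N'$ of $M$ is also singular:
\begin{align*}
P(\det M \mapsto 0) &= P(\det M \mapsto 0 \ \&\ \det N' \mapsto 0) + P(\det M \mapsto 0 \ \&\ \det N' \not\mapsto 0).
\end{align*}
The first term is at most $n(n-1)/q^2 = O(1/q^2)$ by Lemma~\ref{lem:asymtotic2} (applicable since $M$ has no zero entries). For the second term, I would condition on the value of $N' = \varphi$ applied to... more carefully: condition on an assignment of all variables appearing in $N'$ that makes $\det N' \neq 0$. Under such a conditioning, expanding $\det M$ along its first row (or using cofactor expansion about the entry in position $(1,n)$ whose label is the unique largest, hence involves a variable $x_k$ not appearing anywhere else in $M$), one gets $\det M = (\pm)\,x_k \cdot \det N' + (\text{terms not involving } x_k)$, a degree-one polynomial in $x_k$ with nonzero leading coefficient $\pm\det N'$. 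Hence conditionally this vanishes with probability exactly $1/q$, so the second term is at most $1/q$. Actually, to extract the genuinely linear-in-a-free-variable structure I would invoke the label conditions (c) of a general Schur matrix: the $(1,n)$ entry has the strictly largest label $k$, so $x_k$ occurs only there, making $\det M$ affine-linear in $x_k$ with leading coefficient equal to the cofactor, which is exactly $\pm\det N'$.

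Putting these together yields $P(\det M \mapsto 0) \leq 1/q + n(n-1)/q^2 = 1/q + O(1/q^2)$, and combined with Corollary~\ref{cor: geq 1/q} this gives $P(s_\lambda \mapsto 0) = 1/q + O(1/q^2)$ as claimed. The main obstacle I anticipate is being careful about the cofactor-expansion argument for the second term: one must make sure that the variable $x_k$ of largest label really appears with a nonzero coefficient and in no other entry, and that after conditioning on $\det N' \neq 0$ this coefficient is genuinely nonzero in $\F_q$ (not merely a nonzero polynomial) — this is where property (c) of general Schur matrices and the nonvanishing of $\det N'$ both get used. A secondary technical point is confirming that $M = \psi(N)$ has size $\geq 2$ so that Lemma~\ref{lem:asymtotic2} applies; if $\psi(N)$ has size $1$ then $s_\lambda$ is, up to sign and lower-label terms, a single variable $x_k$ minus a polynomial, so $P(s_\lambda \mapsto 0) = 1/q$ exactly and the claim is immediate (this is essentially the hook case and should be disposed of as a trivial base case).
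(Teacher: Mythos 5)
Your proposal is correct and follows essentially the same route as the paper: reduce to $M'=\psi(M)$, dispose of the size-$1$ case, split on whether the lower-left $(n-1)\times(n-1)$ cofactor $N'$ vanishes, bound the joint vanishing by $n(n-1)/q^2$ via Lemma~\ref{lem:asymtotic2}, and use the first-row expansion $\det M' = \pm\,\det N'\cdot h_k + (\text{terms in }h_1,\dots,h_{k-1})$ to get exactly $1/q$ on the complementary event, then pinch against Corollary~\ref{cor: geq 1/q}. The technical points you flag (the top-right entry carrying the unique largest label, and the size-$\geq 2$ hypothesis for the lemma) are handled identically in the paper.
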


\begin{proof}
We show equivalently that $q \cdot P(s_{\lambda} \mapsto 0) \to 1$ as $q \to \infty$.

Let $M$ be the Jacobi-Trudi matrix for $s_\lambda$ and let $M'=\psi(M)$. Recall that $P(s_\lambda\mapsto 0) = P(\det{M'}\mapsto0)$. If $M'$ has size $1$, then $P(s_\lambda\mapsto 0) =1/q$ and we are done.

If $M'$ has size $n\geq 2$, denote the label of the upper right entry of $M'$ by $k$ and the $(n-1)\times(n-1)$ submatrix in its lower left corner by $N'$. Suppose $\det{N'} \neq0$. Expansion across the first row gives $\det{M'} = (-1)^{n+1}\det{N'}h_k + P(h_1,\cdots,h_{k-1})$ where $P(h_1,\cdots,h_{k-1})\in\F_q[h_1,\ldots,h_{k-1}]$. Thus for any assignment of $h_1,\cdots,h_{k-1}$, we have precisely one way to assign $h_k$ to achieve $\det{M'}=0$. Hence $P(\det{M'}\mapsto0 \mid \det{N'} \not\mapsto 0) =1/q$. If $\det{N'}=0$, by Lemma~\ref{lem:asymtotic2}, $P(\det{M'}\mapsto0\ \&\ \det{N'}\mapsto0)\leq n(n-1)/q^2$. 

Combining two cases, we have 
\begin{align*}
P(\det{M'}\mapsto0) =& P(\det{M'}\mapsto0 \mid \det{N'}\mapsto0)P(\det{N'}\mapsto0) \\
                           & + P(\det{M'}\mapsto0 \mid \det{N'}\not\mapsto 0)P(\det{N'}\not\mapsto 0) \\
                         =& P(\det{M'}\mapsto0\ \&\ \det{N'}\mapsto0) +  1/q \cdot P(\det{N'}\not\mapsto 0) \\
                         &\leq n(n-1)/q^2 + 1/q\cdot 1\\
                         =& 1/q + n(n-1)/q^2
\end{align*}
where the inequality follows from the previous lemma. 

Since by Theorem \ref{cor: geq 1/q}, $P(s_\lambda\mapsto 0) \geq 1/q$, we have that
$$ 1 = q\cdot 1/q \leq q\cdot P(s_\lambda\mapsto 0) \leq q\cdot [1/q + n(n-1)/q^2] = 1 +n(n-1)/q.$$
Taking $q\to \infty$ gives the desired result.
\end{proof}

\subsection{General Form of the Probability}\

For any partition shape $\lambda$, the probability $P(s_\lambda\mapsto0)$ is given by the number of homomorphisms that map $s_{\lambda}$ to 0 over the total number of homomorphisms, where the total number of homomorphisms is always a power of $q$. Given the results for hook shapes, one might hope that the probability is always a rational function of $q$, but this is not the case. We next give some examples that show another form the probability can take. 

\begin{prop} \label{prop: quasi-polynomial}
For $\lambda=(4,4,2,2)$, we have
\[
  P(s_{\lambda}\mapsto0)=\begin{cases}
        \frac{q^4+(q-1)(q^2-q)}{q^5} & \text{if \ $q \equiv 0 \mod{2}$}\\
       \frac{q^4+(q-1)(q^2-q+1)}{q^5} & \text{if \ $q \equiv 1 \mod{2}$.}
            \end{cases}
\]
\end{prop}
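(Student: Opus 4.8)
The quantity $P(s_\lambda \mapsto 0)$ for $\lambda = (4,4,2,2)$ depends only on the reduced matrix $M' = \psi(M_2)$ computed in Example~\ref{ex:psi}, which is the $2\times 2$ special Schur matrix
\[
\psi(M_2) = \begin{bmatrix}
 h_6-h_1h_5-h_2h_4+h_1^2h_4 & h_7-h_2h_5-h_3h_4+h_1h_2h_4 \\
 h_5+h_1^2h_3-h_2h_3-h_1h_4 & h_6-h_2h_4-h_3^2+h_1h_2h_3
\end{bmatrix},
\]
and $P(s_\lambda \mapsto 0) = P(\det \psi(M_2) \mapsto 0)$. The plan is to condition on the values of $h_1, h_2, h_3, h_4$ (these are the variables with label $\leq 4$) and then analyze the remaining dependence on $h_5, h_6, h_7$. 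After such an assignment, write $A = h_5 + c_3$, $B = h_6 + c_2$, $C = h_7 + c_1$ and $D = h_6 + c_0$ for constants $c_i \in \F_q$ depending on the fixed values, where $A$ is the $(2,1)$ entry, $B$ the $(1,1)$ entry, $C$ the $(1,2)$ entry, and $D$ the $(2,2)$ entry. Note $B$ and $D$ both involve $h_6$ and differ by the constant $c_2 - c_0 = -h_2h_4 - h_3^2 + h_1h_2h_3 - (-h_1h_5 \dots)$; crucially a direct check of the two polynomials shows $B - D$ is a fixed polynomial in $h_1,\dots,h_4$ only, so once those are fixed, $B$ and $D$ are two affine-linear functions of $h_6$ with the same leading coefficient, hence $B$ and $D$ are either always equal or never equal as $h_6$ ranges over $\F_q$.

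The determinant is $BD - AC$. First I would split on whether $c_2 = c_0$ (equivalently $B = D$ identically in $h_6$) or not. When $B \neq D$: for fixed $h_1,\dots,h_5$, the map $(h_6, h_7) \mapsto (BD, AC)$ has $AC$ depending only on $h_7$ (linearly, since $A$ is a constant once $h_5$ is fixed — indeed $A$ depends only on $h_1,\dots,h_5$) and $BD$ a quadratic in $h_6$; I would count, for each fixed $h_1,\dots,h_5$, the number of pairs $(h_6, h_7)$ with $BD = AC$. If $A \neq 0$ then $C$ is a bijection onto $\F_q$ as $h_7$ varies, so the count of solutions equals $\sum_{h_6} 1 = q$ regardless (for each $h_6$ there is exactly one $h_7$). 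If $A = 0$, we need $BD = 0$, i.e. $B = 0$ or $D = 0$, which (since $B,D$ are nonconstant affine in $h_6$ and distinct) happens for exactly $2$ values of $h_6$, each paired with all $q$ values of $h_7$, giving $2q$ solutions. The event $A = 0$ (for fixed $h_1,\dots,h_4$) has exactly one solution in $h_5$. Assembling: the number of good $(h_5,h_6,h_7)$ is $(q-1)\cdot q + 1 \cdot 2q = q^2 + q$ — but this must be compared against the total $q^3$ and then combined over $h_1,\dots,h_4$; I will need to recheck the bookkeeping because the stated answer has numerator $q^4 + (q-1)(q^2-q)$ over $q^5$, i.e. $q^3 \cdot [\text{something}]$, so the count per $(h_1,\dots,h_4)$-fiber where $c_2 \neq c_0$ should be $q^2 - q$, not $q^2 + q$; the discrepancy means I need to track carefully the sub-case where $A=0$ forces a contradiction with $BD$ versus $AC=0$ automatically.

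Then I would handle the case $c_2 = c_0$, i.e. $B = D$ identically: the determinant becomes $B^2 - AC$, a difference of a square and a product of two independent-ish affine functions. For fixed $h_1,\dots,h_5$ (so $A$ fixed), count pairs $(h_6,h_7)$ with $B^2 = AC$. If $A\neq 0$: $C$ ranges over all of $\F_q$, so for each $h_6$ there is exactly one $h_7$, giving $q$ solutions. If $A = 0$: need $B^2 = 0$, i.e. $B = 0$, one value of $h_6$, all $q$ values of $h_7$: $q$ solutions. So this fiber always contributes exactly $q \cdot q = q^2$ good triples — wait, that is $q^2$ over $q^3$, matching the $+1$ in the odd-$q$ numerator $q^2 - q + 1$ only if there is an additional $\pm 1$ correction. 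The parity enters precisely here: when $A \neq 0$ and we ask $B^2 = AC$ with $B$ linear in $h_6$ — actually the subtlety is whether, ranging over $h_6$ too, $B^2/A$ hits values of $C$; since $C$ is a free coordinate this is automatic. I expect the real parity dependence comes from counting solutions to an equation of the form $y^2 = $ (linear in another variable) over $\F_q$, or from whether $-1$ or some fixed constant arising from the $c_i$ is a square, which differs for $q$ even versus odd.

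**Main obstacle.** The hard part will be the careful casework on how many of the four "label-$\le 4$" variables actually need to be fixed before the residual count stabilizes, and in particular isolating the single configuration of $(h_1,\dots,h_4)$ (or a codimension-one locus) on which the residual count of good $(h_5,h_6,h_7)$ jumps, producing the quasi-polynomial behavior. I anticipate that the parity of $q$ enters through a term counting $\F_q$-points on a conic or through the number of square roots of a specific element, so the cleanest route is: reduce to $\psi(M_2)$, condition down to an explicit polynomial identity in few variables, and then invoke the elementary fact that $x^2 = c$ has $1 + \chi(c)$ solutions (with $\chi$ the quadratic character, $\chi(0)=0$) — whose sum over a suitable family yields a $(q-1)/2$-type term that collapses to the two stated cases. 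I would verify the final arithmetic by checking small cases $q = 2, 3, 4, 5$ against a direct enumeration.
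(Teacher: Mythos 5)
Your overall reduction is the right one (pass to $\psi(M_2)$, write the determinant as $BD-AC$ with $A$ the $(2,1)$ entry, and note that when $A\neq 0$ the variable $h_7$ contributes exactly one root per choice of the remaining variables), and this is indeed how the paper's proof is structured. But there is a genuine error at the hinge of your argument: the claim that $B-D$ "is a fixed polynomial in $h_1,\dots,h_4$ only" is false. From the displayed matrix, $B-D = -h_1h_5 + h_1^2h_4 + h_3^2 - h_1h_2h_3$, which depends on $h_5$ whenever $h_1\neq 0$. Consequently the locus $\{B=D\}$ is not a union of $(h_1,\dots,h_4)$-fibers, and your top-level case split on $c_2=c_0$ versus $c_2\neq c_0$ is not well posed as stated. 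The correct order is the reverse: split first on whether $A=0$ (i.e., $h_5=f_4$). When $A\neq 0$ the comparison of $B$ with $D$ is irrelevant (you get exactly one good $h_7$ per $(h_1,\dots,h_6)$, contributing $q^6-q^5$ singular assignments), and only on the codimension-one locus $h_5=f_4$ does the question of whether $f_5=g_5$ arise; substituting $h_5=f_4$ there yields $f_5-g_5 = 2h_1h_2h_3 - h_1^3h_3 - h_3^2$, which genuinely lives in $\F_q[h_1,h_2,h_3]$. Your own flagged "discrepancy" ($q^2+q$ versus an expected $q^2-q$ per fiber) is a symptom of this mis-ordering combined with a misreading of the target formula; the per-fiber count $q^2+q$ is in fact correct for the generic fibers, and the proposal as written does not resolve this.

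You also misidentify where the parity of $q$ enters. It is not a quadratic-character or conic-point count: both diagonal roots $f_5$ and $g_5$ are explicit, so the number of roots of $(h_6-f_5)(h_6-g_5)$ in $h_6$ is $1$ or $2$ according to whether $f_5=g_5$, with no square-root extraction anywhere. The parity dependence comes solely from the literal coefficient $2$ in $f_5-g_5\big|_{h_5=f_4} = 2h_1h_2h_3-h_1^3h_3-h_3^2$: in characteristic $2$ this collapses to $h_3(h_3+h_1^3)$, whose vanishing locus has $q(2q-1)$ points in $(h_1,h_2,h_3)$, whereas in odd characteristic it factors as $-h_3\bigl[h_3-h_1(2h_2-h_1^2)\bigr]$ with $2q^2-2q+1$ zeros. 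Fixing the order of the case analysis, carrying out the substitution $h_5=f_4$ before comparing $f_5$ with $g_5$, and then counting the two vanishing loci would complete the proof; as it stands, the argument has an unsound step and an unresolved count.
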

\begin{proof}
By Jacobi-Trudi, we have 
$$s_{\lambda} = \begin{vmatrix}
      h_4 & h_5 & h_6 & h_7 \\
      h_3 & h_4 & h_5 & h_6 \\
      1   & h_1 & h_2 & h_3 \\
      0   & 1   & h_1 & h_2 \\
\end{vmatrix}.$$

Denote this Jacobi-Trudi matrix by $M$. We count the number of assignments of the variables such that $M$ is singular. 

Applying $\psi$ to $M$ gives
\[
  \psi(M) =\begin{bmatrix}
       h_6-h_1h_5-h_2h_4+h_1^2h_4 & h_7-h_2h_5-h_3h_4+h_1h_2h_4 \\
       h_5+h_1^2h_3-h_2h_3-h_1h_4 & h_6-h_2h_4-h_3^2+h_1h_2h_3 \\
\end{bmatrix}
=\begin{bmatrix}
       h_6-f_5 & h_7-f_6 \\
       h_5-f_4 & h_6-g_5 \\
\end{bmatrix}
\]
where $f_5\in\F_q[h_1,\ldots,h_5]$, $f_6\in\F_q[h_1,\ldots,h_6]$, $f_4\in\F_q[h_1,\ldots,h_4]$, and $g_5\in\F_q[h_1,\ldots,h_5]$.

The question now turns into counting the number of assignments that makes $\psi(M)$ singular. We have $\det{\psi(M)}=(h_6-f_5)(h_6-g_5)-(h_7-f_6)(h_5-f_4).$ If $h_5\neq f_4$, then given any assignment of $h_1,\ldots, h_6$ there is exactly one choice of $h_7$ that makes the matrix singular. This gives $q^4\cdot (q-1)\cdot q\cdot 1 = q^6-q^5$ singular matrices in this case. 

If instead $h_5=f_4$, then $\det{\psi(M)}=0$ implies either $h_6 = f_5$ or $h_6=g_5$. The equality $h_5=f_4$ gives $f_5-g_5 =2h_1h_2h_3 - h_1^3h_3-h_3^2$. Depending on whether $\F_q$ has characteristic $2$, we have two cases.

If $\F_q$ has characteristic 2, then $f_5-g_5=h_3(h_3+h_1^3)$. If $f_5=g_5$, then given any assignment of $h_1,h_2,h_3,h_4,h_5,h_7$, we have one choice to make the matrix singular: $h_6=f_5$. In this subcase, there are $q+(q-1)=2q-1$ choices for $h_1$ and $h_3$. Hence there are $q\cdot(2q-1)\cdot q\cdot 1 \cdot q \cdot 1 = 2q^4-q^3$ singular matrices.

If $f_5\neq g_5$, then there are two choices for $h_6$, and we have $q^2-(2q-1)=q^2-2q+1$ choices for $h_1$ and $h_3$. This gives $q\cdot (q^2-2q+1)\cdot q\cdot 1 \cdot q \cdot 2= 2q^5-4q^4+2q^3$ singular matrices of this form.

Therefore if $\F_q$ has characteristic 2, there are $2q^5-2q^4+q^3$ singular matrices. 

Now suppose $\F_q$ does not have characteristic 2. Then $f_5-g_5 =-h_3[h_3-h_1(2h_2-h_1^2)]$. 

If $f_5-g_5=0$, then we have exactly one choice of $h_6$ to make the matrix singular and $2q^2-2q+1$ choices of $h_1$, $h_2$, and $h_3$ to make $f_5-g_5=0$. Hence we have $(2q^2-2q+1)\cdot q\cdot 1 \cdot q \cdot 1= 2q^4-2q^3+q^2$ singular matrices.

If $f_5-g_5 \neq 0$, then we have two choices of $h_6$ to make the matrix singular. In this subcase we have a total of $q^3-(2q^2-2q+1)=q^3-2q^2+2q-1$ choices of $h_1,h_2,h_3$ to make $f_5-g_5\neq 0$, which gives $(q^3-2q^2+2q-1)\cdot q\cdot 1 \cdot q \cdot 2= 2q^5-4q^4+4q^3-2q^2$ singular matrices.

Adding these two gives $2q^5-2q^4+2q^3-q^2$ singular matrices in this case.

Therefore, if $\F_q$ has characteristic $2$, we have a total of $q^6-q^5+2q^5-2q^4+q^3$ singular matrices; if $\F_q$ does not have characteristic $2$, then we instead have $q^6-q^5+2q^5-2q^4+2q^3-q^2$ of them.  Dividing each of these two polynomials by $q^7$ gives us the desired probabilities in the two cases.
\end{proof}

\begin{rem}
We also found that for $\lambda=(4,4,3,3)$, 
\[
  P(s_{\lambda}\mapsto0)=\begin{cases}
               \frac{q^5+(q-1)q^3}{q^6} = \frac{q^2+q-1}{q^3} &\text{if \ $q \equiv 0 \mod{3}$}\\
               \frac{q^5+(q-1)(q^3-1)}{q^6} &\text{if \ $q \equiv 1 \mod{3}$}\\
               \frac{q^5+(q-1)(q^3+1)}{q^6} &\text{if \ $q \equiv 2 \mod{3}$}
            \end{cases}
\]
and for $\mu=(4,4,3,2)$, 
\[
  P(s_{\lambda_2}\mapsto0)=\begin{cases}
               \frac{q^4+(q-1)(q^2+q-1)}{q^5} &\text{if \ $q \equiv 0 \mod{2}$}\\
               \frac{q^4+(q-1)(q^2+q-2)}{q^5}  &\text{if \ $q \equiv 1 \mod{2}$.}
            \end{cases}
\]
Every partition with fewer than four parts yields a rational function. Of all partitions with exactly four parts, these three examples are the smallest that do not yield rational functions in terms of the size of their first part.
\end{rem}

We call a function $g:\Z \to \Z$ is a  \textit{quasi-polynomial} if there exists an integer $N > 0$ and polynomials $ g_0, g_1, \cdots , g_{N-1}\in \Z[x]$ such that $g(n) = g_i(n)$ if $n \equiv i \mod{N}$.

\begin{conjecture}
For a partition $\lambda$, $P(s_\lambda\mapsto0)$ is always in the form $f(q)/q^k$, where $k$ is a positive integer and $f(q)$ is a quasi-polynomial depending on the residue class of $q$ modulo some integer.
\end{conjecture}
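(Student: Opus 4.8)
The plan is to reinterpret the count behind $P(s_\lambda\mapsto 0)$ as the number of $\F_q$-points of an affine hypersurface and to compute it recursively on Schur matrices, with the recursion driven by the cofactor expansion already used in the proof of Theorem~\ref{thm:asymtotic}. Fix $\lambda$, let $M$ be its Jacobi--Trudi matrix, and set $R=\psi(M)$, which is a special Schur matrix in variables $x_1,\dots,x_m$ by Proposition~\ref{prop: psi_M}. Because $\det R$ and $\det M$ agree up to a nonzero scalar and $\psi$ only removes variables pinned to determined polynomial values, it suffices to prove that $N_R(q):=\#\{a\in\F_q^m:\det R(a)=0\}$ is quasi-polynomial in $q$; I would prove this for every special Schur matrix $R$. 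The natural framework is an induction on the size $n$ of $R$, but the recursion will force us to enlarge the class of loci under consideration (see below), so the cleanest setup is a single induction on a complexity measure over a class $\mathcal{C}$ of schemes containing all special Schur determinant loci and closed under the operations that arise. The base cases are trivial: a $1\times 1$ special Schur matrix has entry $x_m-f_{m-1}$ with $f_{m-1}\in\Z[x_1,\dots,x_{m-1}]$, so its locus is a graph with exactly $q^{m-1}$ points.

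For the inductive step, recall that in a special Schur matrix the top-right entry has the strictly largest label, which forces that label to equal $m$ and forces $x_m$ to occur only there, linearly and with coefficient $1$. Let $N$ be the $(n-1)\times(n-1)$ submatrix in the lower-left corner---itself a special Schur matrix, since conditions (a)--(c) are inherited---and let $R_0=\det R|_{x_m=0}\in\Z[x_1,\dots,x_{m-1}]$, so that a last-column expansion gives $\det R=\pm x_m\det N+R_0$. Stratifying $\F_q^m$ by the value of $x_m$ yields the exact identity
\[
N_R(q)=\bigl(q^{m-1}-\#V(\det N)(\F_q)\bigr)+q\cdot\#\bigl(V(\det N)\cap V(R_0)\bigr)(\F_q).
\]
The first term is quasi-polynomial by induction (applied to $N$), so everything reduces to the ``second-order'' locus $V(\det N)\cap V(R_0)$. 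Iterating the same move inside $N$, one is led to count $\F_q$-points on intersections of nested special Schur determinants together with their $x_m=0$ slices; peeling off largest-label variables one at a time, these intersections reduce to coincidence conditions among the ``constant parts'' $c_j(x_1,\dots,x_{i-1})$ of the entries sharing a common label $i$. The relevant polynomials are the differences $c_j-c_{j'}$---exactly the objects such as $f_5-g_5$ in the proof of Proposition~\ref{prop: quasi-polynomial}---and whether such a difference vanishes identically, factors, or stays coprime to its companions modulo $p$ is governed by $q$ modulo a fixed integer; this is the precise source of the quasi-polynomial (rather than polynomial) behavior in the conjecture.

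The heart of the matter, and the step I expect to be the real obstacle, is to pin down the class $\mathcal{C}$: it must contain every special Schur determinant locus, be closed under the three operations used above (pass to the lower-left submatrix, intersect with the $x_m=0$ slice of the determinant, restrict to a coincidence locus of equal-label constant parts), and consist only of schemes whose base change to each $\F_p$ is of Artin--Tate type, i.e.\ mixed Tate after a finite extension---equivalently, all Frobenius eigenvalues on compactly supported cohomology are roots of unity times integer powers of $p$. For such schemes $\#X(\F_{p^n})$ is automatically a quasi-polynomial in $q=p^n$, cut into finitely many polynomial pieces by the residue class of $q$ modulo some $N$. The structural inputs that should make $\mathcal{C}$ closed and Artin--Tate are condition~(c) in the definition of a special Schur matrix (additivity of labels along both diagonals of every $2\times 2$ submatrix) and the ``solve for the highest-label variable'' mechanism, which keeps every equation that arises linear in its top variable, so that the loci stratify into pieces assembled from affine spaces and split tori. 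A weaker fallback that still suffices is to track only the classes in the Grothendieck ring of varieties and show they lie in the subring generated by $[\mathbb{A}^1]$ and classes of Artin schemes; conversely, if some coincidence locus for a large $\lambda$ contributed a non-Artin--Tate motive (for example that of an elliptic curve), the conjecture would be false, so a careful analysis of these loci is unavoidable in either direction.
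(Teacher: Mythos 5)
This statement is a \emph{conjecture} in the paper: the authors give no proof, only supporting computations such as Proposition~\ref{prop: quasi-polynomial} and the examples following it. So the question is whether your proposal actually closes the problem, and it does not. The reductions you perform are sound as far as they go: since $x_m$ occurs only in the top-right entry of the special Schur matrix $R$ and occurs there linearly, $\det R=\pm x_m\det N+R_0$, and the stratification $N_R(q)=\bigl(q^{m-1}-\#V(\det N)\bigr)+q\cdot\#\bigl(V(\det N)\cap V(R_0)\bigr)$ is an exact identity that correctly isolates the difficulty in the iterated loci. But the decisive step --- that the class $\mathcal{C}$ generated by your three operations consists only of schemes with quasi-polynomial point counts (in your formulation, schemes of Artin--Tate type) --- is precisely the content of the conjecture, and you explicitly leave it open as ``the real obstacle.'' Nothing in conditions (a)--(c) of a special Schur matrix is shown to prevent, say, the motive of an elliptic curve from appearing in one of the coincidence loci $c_j=c_{j'}$ for a large $\lambda$; indeed you concede that such an appearance would falsify the conjecture. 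What you have is a reformulation and a research plan with the essential lemma unproved, not a proof.

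There is also a second, independent gap even if the mixed-Tate claim were granted. The Grothendieck--Lefschetz mechanism you invoke yields quasi-polynomiality of $\#X(\F_{p^n})$ as a function of $n$ for each \emph{fixed} prime $p$, with a period and polynomial pieces that a priori depend on $p$. The conjecture asserts a single quasi-polynomial $f(q)$ with one fixed modulus $N$ working uniformly over all prime powers $q$. Passing from the fixed-$p$ statement to this uniform one requires an arithmetic argument over $\Z$ --- for instance, showing that the counting functions are integer combinations of quantities like $\gcd(d,q-1)$, as happens in Lemma~\ref{lem:nonzero} and in the mod-$2$ and mod-$3$ case analyses of Proposition~\ref{prop: quasi-polynomial} --- and the motivic framing alone does not supply this. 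Until both points are addressed, the conjecture remains open.
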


\subsection{Conjecture on the Upper Bound}\

For any given $k$, we next consider the special case where the smallest part of $\lambda$ is at least $k$ and the row sizes are far apart enough that there are no constant or repeated entries in the Jacobi-Trudi matrix. We then remove the condition that the smallest part is at least $k$.

\begin{prop} \label{prop: general shape} Let $\lambda = (\lambda_1,\ldots,\lambda_k)$, where $\lambda_i - \lambda_{i + 1} \geq k - 1$ and $\lambda_k \geq k$. Then
$$P(s_{\lambda} \mapsto 0) = 1- \frac{GL_{k}(\F_q)}{q^{k^2}}= \frac{1}{q^{k^2}}\left(q^{k^2} - \prod_{j=0}^{k-1}(q^k - q^j)\right).$$
All the other conditions being equal, if we have $\lambda_k < k$ instead, then
$$P(s_{\lambda} \mapsto 0) = 1- \frac{GL_{k-1}(\F_q)}{q^{(k-1)^2}}= \frac{1}{q^{(k-1)^2}}\left(q^{(k-1)^2} - \prod_{j=0}^{k-2}(q^{k-1} - q^j)\right)$$
\end{prop}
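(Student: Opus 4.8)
The plan is to analyze the Jacobi-Trudi matrix $M = (h_{\lambda_i - i + j})_{i,j=1}^k$ directly under the hypotheses. First I would check that the arithmetic conditions $\lambda_i - \lambda_{i+1} \geq k-1$ force every entry $h_{\lambda_i-i+j}$ to have a distinct, "large" subscript: as $j$ ranges over $1,\dots,k$ in a fixed row the subscript ranges over $k$ consecutive integers, and the gap condition between consecutive rows guarantees that the subscript ranges of different rows are disjoint and monotone, so no two entries of $M$ share a subscript and (using $\lambda_k \geq k$, which makes even the smallest subscript $\lambda_k - k + 1 \geq 1$) none of them is $h_0 = 1$ or $h_m = 0$. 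Hence all $k^2$ entries of $M$ are \emph{distinct indeterminates} $h_r$ with $r$ ranging over a set of $k^2$ values that are freely and independently assigned by the random homomorphism. So the relevant probability space is exactly: choose a uniformly random $k\times k$ matrix over $\F_q$, what is the probability it is singular? That probability is $1 - |GL_k(\F_q)|/q^{k^2}$, and $|GL_k(\F_q)| = \prod_{j=0}^{k-1}(q^k - q^j)$ by the standard count of ordered bases of $\F_q^k$. This gives the first formula.

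For the second case $\lambda_k < k$: now the subscript $\lambda_k - k + j$ in the last row equals $0$ when $j = k - \lambda_k$ (and is negative for smaller $j$), so the bottom row of $M$ looks like $(0,\dots,0,1,h_{*},\dots,h_{*})$ with the $1$ in column $k-\lambda_k$, while the gap conditions still ensure all \emph{other} entries are distinct large indeterminates and that this is the only row affected (since $\lambda_{k-1} \geq \lambda_k + (k-1) \geq k$, so row $k-1$ and above still have all subscripts $\geq 1$, and in fact $\geq$ the column index, so no further $1$'s or $0$'s appear above the last row — here I should double check the precise inequality, but $\lambda_{k-1} - (k-1) + 1 \geq \lambda_k + 1 \geq 1$ handles it, and more care shows the entries of rows $1,\dots,k-1$ are genuinely distinct from each other and from those in the last row). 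Expanding $\det M$ along the bottom row, or equivalently applying the operation $\psi$ from Section~\ref{sec:operations} which uses the constant $1$ as a pivot and deletes its row and column, reduces $M$ to a $(k-1)\times(k-1)$ matrix whose entries are again $(k-1)^2$ distinct free indeterminates (the entries $h_r$ of $M$ lying outside the pivot row and column — one must check the column operations from $\psi$ don't create collisions, but since the pivot entry is the constant $1$ the column operations only subtract multiples of one column from others and $\det$ is unchanged up to sign, so $P(\det M \mapsto 0) = P(\det \psi(M)\mapsto 0)$ and $\psi(M)$ is literally a generic $(k-1)\times(k-1)$ matrix). Therefore $P(s_\lambda \mapsto 0) = 1 - |GL_{k-1}(\F_q)|/q^{(k-1)^2}$.

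The main obstacle — really the only thing requiring care rather than bookkeeping — is verifying rigorously that under the stated gap hypotheses the Jacobi-Trudi matrix has \emph{no repeated entries} and the only constant entries are the forced $1$'s on the subdiagonal (in the second case) or none at all (in the first case). This amounts to showing that the function $(i,j)\mapsto \lambda_i - i + j$ is injective on $[k]\times[k]$ and that its image lies in $\{1,2,\dots\}$ (first case) or meets $\{\dots,-1,0\}$ only in the last row (second case). Injectivity follows because if $\lambda_i - i + j = \lambda_{i'} - i' + j'$ with $i < i'$ then $\lambda_i - \lambda_{i'} = (i - i') + (j' - j) \leq -1 + (k-1) = k - 2 < k - 1 \leq \lambda_i - \lambda_{i'}$, a contradiction; the positivity/constant claims follow from tracking $\lambda_k - k + j$ and $\lambda_{k-1} - (k-1) + 1$ against the hypotheses. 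Once this structural fact is nailed down, both formulas drop out of the $GL_k$ count, so I would spend the bulk of the write-up on this injectivity-and-range verification and treat the rest as immediate.
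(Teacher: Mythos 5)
Your proof is correct, and the structural verification you rightly identify as the crux (injectivity of $(i,j)\mapsto\lambda_i-i+j$ on $[k]\times[k]$ and control of where the constants land) is exactly what the paper also relies on; the first formula is obtained in essentially the same one-line way in both treatments. For the case $\lambda_k<k$ you take a genuinely different route. The paper does not pivot on the constant $1$: it counts invertible specializations directly, observing that the last row (which contains the $1$) can never be the zero vector, so all $q^{\lambda_k}$ choices of its free entries are admissible, and then choosing rows $k-1,k-2,\ldots,1$ successively outside the span of the rows already fixed ($q^k-q^i$ choices for row $k-i$, since that row consists of $k$ fresh indeterminates); the count $q^{\lambda_k}\prod_{i=1}^{k-1}(q^k-q^i)$ out of $q^{(k-1)k+\lambda_k}$ total assignments then simplifies to $|GL_{k-1}(\F_q)|/q^{(k-1)^2}$. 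Your reduction via $\psi$ is conceptually cleaner --- it explains why the answer is literally the generic $(k-1)\times(k-1)$ count --- but one clause needs tightening: after step (i) of $\psi$ the entries above the pivot are \emph{not} the original indeterminates; the $(i,j)$ entry becomes $h_{\lambda_i-i+j}-h_{\lambda_i-i+p}\,h_{\lambda_k-k+j}$, where $p$ is the pivot column. The conclusion still holds because each such entry retains a distinct leading variable $h_{\lambda_i-i+j}$ that occurs nowhere else in $\psi(M)$ and not in any correction term, so conditioning on the variables of the deleted row and column leaves a uniformly random $(k-1)\times(k-1)$ matrix; with that sentence added your argument is complete. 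The paper's bottom-up counting avoids this subtlety at the cost of a slightly more ad hoc computation.
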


\begin{proof} We have
$$s_{\lambda} = \begin{vmatrix}
h_{\lambda_1} & h_{\lambda_1 + 1} & \cdots & h_{\lambda_1 + k - 1} \\
h_{\lambda_2 - 1} & h_{\lambda_2} & \cdots & h_{\lambda_2+k-2} \\
\vdots & & \ddots & \vdots \\
h_{\lambda_k - k + 1} & & \cdots & h_{\lambda_k} \\
\end{vmatrix}.$$

The first assertion follows immediately from the fact that there is no constant or repeated variable in the Jacobi-Trudi matrix for $s_{\lambda}$. 

For the second assertion, we will count the number of invertible matrices of the given form. 
Since we have $\lambda_{k-1} - \lambda_{k} \geq k-1$, $$\lambda_{k-1} \geq \lambda_{k} + k - 1 \geq 1+k-1 \geq k.$$ Hence the first $k-1$ rows only have $h_j$'s in them, while the last row has $\lambda_k$ instances of $h_j$'s, one 1, and $k-1-\lambda_k$ zeros. Since row $k$ has a nonzero entry and thus cannot be the zero vector, we may choose the $\lambda_k$ non-constant entries in row $k$ freely. Thus there are $q^{\lambda_k}$ ways to choose row $k$. We can choose the other rows from bottom to top successively, and for row $k-i$ where $1 \leq <k$, there are $q^k - q^i$ ways to choose so that row $k-i$ is not in the span of the rows previously chosen. So we have in total $q^{\lambda_k}\prod_{i=1}^{k-1}(q^k - q^i)$ ways to obtain an invertible matrix. There are $(k-1)k+\lambda_k$ distinct $h_j$'s, so there are in total $q^{(k-1)k+\lambda_k}$ matrices.
Therefore
  \begin{align*}
    P(s_{\lambda} \mapsto 0) &= 1 - \frac{q^{\lambda_k}\prod_{i=1}^{k-1}(q^k - q^i)}{q^{(k-1)k+\lambda_k}}  \\
    &= \frac{1}{q^{k^2-k}}\left( q^{k^2-k} - \prod_{i=1}^{k-1}(q^k - q^i) \right)\\
    &= \frac{1}{q^{k^2-k}}\left( q^{k^2-k} - q^{k-1}\prod_{i=0}^{k-2}(q^{k-1} - q^i) \right)\\
    &= \frac{1}{q^{(k-1)^2}}\left(q^{(k-1)^2} - \prod_{i=0}^{k-2}(q^{k-1} - q^i)\right) \\
    &= 1 - \frac{|GL_{k-1}(\F_q)|}{q^{(k-1)^2}}.
  \end{align*}
\end{proof}

Conceivably, if we have repeated variables or $1$'s in the Jacobi-Trudi matrix, then it would be harder to make some of the rows to be linearly dependent, which in turns decreases probability that the Schur function is mapped to zero. This suggests that for a partition $\lambda$ with $k$ parts, $1- GL_{k}(\F_q)/q^{k^2}$ provides an upper bound for the probability $P(s_{\lambda} \mapsto 0)$. Numerical calculations seem to support this. 
\begin{conjecture}[Upper Bound]
For any partition $\lambda=(\lambda_1,\ldots,\lambda_k)$, we have $$P(s_{\lambda} \mapsto 0) \leq 1- GL_{k}(\F_q)/q^{k^2}.$$
\end{conjecture}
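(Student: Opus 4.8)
Since this statement is posed as a conjecture, I will outline the line of attack I would pursue rather than a finished proof.

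\emph{Reduction to special Schur matrices.} Let $M$ be the Jacobi--Trudi matrix of a $k$-part partition $\lambda$. By Proposition~\ref{prop: psi_M}, $M' := \psi(M)$ is a special Schur matrix of some size $n \le k$, and $P(s_\lambda \mapsto 0) = P(\det M' \mapsto 0)$. Since $|GL_n(\F_q)|/q^{n^2} = \prod_{i=1}^{n}(1-q^{-i})$ is decreasing in $n$, we have $1 - |GL_n(\F_q)|/q^{n^2} \le 1 - |GL_k(\F_q)|/q^{k^2}$ whenever $n\le k$. Hence it suffices to prove the self-contained statement $(\star)$: for every special Schur matrix $M$ of size $n$, $P(\det M \mapsto 0) \le 1 - |GL_n(\F_q)|/q^{n^2}$. (One may additionally pass to $\lambda'$ via Corollary~\ref{cor:transpose} to replace $k$ by $\min(k,\lambda_1)$ and so strengthen the conclusion, but $(\star)$ already yields the conjecture as stated.)

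\emph{Why the combinatorial structure is essential.} Writing $M$ as the image of the generic matrix $(X_{ij})$ under $X_{ij}\mapsto h_{\lambda_i-i+j}$, the random matrix $M$ is a uniform point of an affine subspace $A\subseteq M_k(\F_q)$ cut out by forcing certain bottom-left entries to $0$, certain staircase entries to $1$, and pairs of equal-index entries to agree. Thus $(\star)$ is equivalent to the inequality $|A\cap GL_n(\F_q)|\ \ge\ |GL_n(\F_q)|/q^{\operatorname{codim}A}$, i.e.\ that $A$ meets $GL_n$ in at least its ``expected'' fraction. This is false for arbitrary affine subspaces (e.g.\ ``first row zero''), so any proof must exploit the Toeplitz-type geometry of $A$: the forced equalities run along diagonals and never identify two whole rows or columns, and Proposition~\ref{prop: general shape} already guarantees that the generic point of $A$ is invertible --- the conjecture says, informally, that no degeneration of $A$ can beat the generic one.

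\emph{Inductive plan.} I would prove $(\star)$ by induction on $n$, conditioning on the variables of smallest label via $\varphi$, in the style of the lemmas of Section~\ref{sec:generalresults}: after assigning those variables one either deletes a row and column, landing in a smaller special Schur matrix where the hypothesis applies (and is numerically stronger), or one reaches the ``no deletion'' case, where one splits according to whether the smallest surviving variable hits one of the finitely many values that create a zero on the relevant diagonal. Equivalently, one can expand along the top row, whose top-right entry carries the unique largest label, and condition on everything else; the bottom-left $(n-1)\times(n-1)$ submatrix $N$ of a special Schur matrix is again special, so $P(\det M\mapsto 0 \mid \det N\not\mapsto 0)=1/q$ exactly, and the entire weight of the argument falls on the joint event $\{\det M\mapsto 0\ \&\ \det N\mapsto 0\}$.

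\emph{The main obstacle.} The recursion does not close with a crude estimate: bounding $P(\det M\mapsto 0\mid \det N\mapsto 0)$ by $1$, or even by the estimate of Lemma~\ref{lem:asymtotic2}, already overshoots $1 - |GL_n(\F_q)|/q^{n^2}$ at $n=2$. Working the $n=2$ case out by hand, one finds that equality is forced only if the inclusion--exclusion $P(M_{11}\mapsto 0 \text{ or } M_{22}\mapsto 0)=2/q - q^{-2}$ is used exactly, and that the ``no nonzero constant term'' hypothesis on special Schur matrices is precisely what keeps the bound from being exceeded, since it forces the relevant one-variable polynomials to share the root $0$. So the real task is to identify the correct \emph{system} of sharp bounds --- on the joint vanishing of the nested bottom-left $1\times1, 2\times2, \dots$ corners, each with a $\prod_i(1-q^{-i})$-type right-hand side --- that is self-consistent under this conditioning, \emph{and} robust to the genuinely quasi-polynomial behavior of Proposition~\ref{prop: quasi-polynomial} (the count of forbidden values of a conditioning variable is a number of $\F_q$-roots of a polynomial and hence depends on $q$ modulo some integer, so the recursion coefficients cannot be uniform in $q$). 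Finding and verifying that system is, I expect, where the real work lies. A tempting alternative would be a direct coupling of the random Jacobi--Trudi matrix with a fully random $k\times k$ matrix, deducing $(\star)$ from $P(\text{random }k\times k\text{ invertible})=|GL_k(\F_q)|/q^{k^2}$; but merging entries in general \emph{increases} singularity probability, so such a coupling would have to use the diagonal arrangement of the merges in an essential way.
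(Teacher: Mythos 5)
The paper does not prove this statement: it is left as an open conjecture, supported only by Proposition~\ref{prop: general shape} (which gives equality when the row lengths are spread out enough that the Jacobi--Trudi matrix has no constant or repeated entries), by the heuristic that merging entries or introducing $1$'s should only make rows harder to make dependent, and by numerical data. So there is no proof of the paper's to compare against, and your submission --- which you correctly present as a plan of attack rather than a proof --- does not settle the conjecture either.

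More importantly, the first concrete step of your plan already fails: the intermediate statement $(\star)$, that every special Schur matrix of size $n$ satisfies $P(\det M\mapsto 0)\le 1-|GL_n(\F_q)|/q^{n^2}$, is false. Take
\[
M=\begin{pmatrix} x_4-f & x_5\\ x_3 & x_4\end{pmatrix},\qquad f=x_1^{q-1}+x_2^{q-1}-x_1^{q-1}x_2^{q-1}.
\]
This satisfies every axiom of a special Schur matrix: no zero entries, no nonzero constant terms, labels $4,5$ in the first row and $3,4$ in the second, and $4+4=3+5$. But $f$ vanishes only at $x_1=x_2=0$, so conditional on $x_3=0$ the two diagonal entries agree (and hence share their unique root in $x_4$) only on an event of probability $q^{-2}$ rather than $q^{-1}$; a short computation gives $P(\det M\mapsto 0)=1/q+1/q^2-1/q^4$, which exceeds $1-|GL_2(\F_q)|/q^4=1/q+1/q^2-1/q^3$. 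This also shows your diagnosis of the $n=2$ case is not quite right: the ``no nonzero constant term'' axiom only forces the diagonal polynomials to share a root at the origin of the conditioned slice, which is far weaker than forcing them to coincide with probability $1/q$. Any successful argument must therefore use more than the special-Schur axioms --- for instance, that the polynomials $f$ actually occurring in $\psi(M)$ for a Jacobi--Trudi matrix $M$ are minors of $M$ itself (compare the explicit $\psi(M_2)$ in Example~\ref{ex:psi}, and the rigidity exploited in Lemma~\ref{lem:rec}), which constrains how often two diagonal entries can disagree after conditioning. Your remaining reductions are sound and worth keeping: the monotonicity of $1-|GL_n(\F_q)|/q^{n^2}$ in $n$, the passage to $\lambda'$ via Corollary~\ref{cor:transpose}, the exactness of $P(\det M\mapsto 0\mid\det N\not\mapsto 0)=1/q$, and the observation that the claimed inequality fails for arbitrary affine subspaces of matrices, so that the diagonal arrangement of the identifications must enter essentially.
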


\section{Rectangles and Staircases}\label{sec:rectangles}
In Section~\ref{hooks_section}, we showed that for any hook shape $\lambda$, $P(s_\lambda \mapsto 0)=1/q$. We next show that the same is true for rectangles and staircases.

\subsection{Rectangles}

Let $\lambda= (a^n) $ be a rectangle, where integers $a,n\geq1$. We will deal explicitly with the case where $a\geq n$ as the case where $a<n$ will follow from Corollary~\ref{cor:transpose}. Given such a $\lambda$ with $a\geq n\geq1$, we rename the the variables in the corresponding Jacobi-Trudi matrix and write the matrix as $A=(x_{j-i+n})_{1\leq i,j\leq n}$. 

\begin{lem}\label{lem:rec}
Let $A=(x_{j-i+n})_{1\leq i,j\leq n}$ be an $n\times n$ matrix, where $x_1,\ldots,x_{2n-1}$ are variables. Then for any $0\leq r\leq q$ and any $(a_1,\ldots,a_r)\in\F_q^r$, we have three options for $\varphi(A;x_1=a_1,\ldots,x_r=a_r)$:
\begin{enumerate}
\item $\varphi(A;x_1=a_1,\ldots,x_r=a_r)$ is empty,
\item $\varphi(A;x_1=a_1,\ldots,x_r=a_r)$ is a matrix of size $n'\geq 1$ such that each of its first $n'$ diagonals contains only zeroes, or
\item $\varphi(A;x_1=a_1,\ldots,x_r=a_r)$ is a matrix of size $n'\geq 1$ with
$1\leq k\leq n'$ such that all entries in the $i^{\text{th}}$ diagonal are
  0 for $i<k$, all entries in the $k^{\text{th}}$ diagonal are equal, and all
  entries in the $i^{\text{th}}$ diagonal have label $2n-2n'+i$ for $i\geq k$.
\end{enumerate}
\end{lem}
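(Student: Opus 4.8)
The plan is to induct on $r$, the number of assigned variables. For $r=0$ we have $\varphi(A;\ )=A$ (and $\psi(A)=A$, since $A$ has no constant entries), and $A$ is the Toeplitz matrix whose $i^{\text{th}}$ diagonal consists of copies of $x_i$, an entry of label $i$; this is option (3) with $n'=n$ and $k=1$ (the diagonals $1,\dots,k-1$ form an empty collection, the single entry of diagonal $1$ is trivially ``all equal,'' and $i=2n-2n+i$). For the inductive step write $M':=\varphi(A;x_1=a_1,\dots,x_{r-1}=a_{r-1})$ and $M'':=\varphi(A;x_1=a_1,\dots,x_r=a_r)=\psi\bigl(M'(x_r=a_r)\bigr)$; by hypothesis $M'$ is in option (1), (2) or (3), and we must show $M''$ is too.

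Two observations do most of the work. First, after $x_1,\dots,x_{r-1}$ have been assigned, every surviving entry of $M'$ has label $\geq r$: assigning $x_j$ turns the label-$j$ entries into honest elements of $\F_q$, and $\psi$ deletes the rows and columns through them, so no label $\leq r-1$ survives. Second, in options (2) and (3) the label is constant along each full diagonal --- it is $2n-2n'+i$ on diagonal $i$ --- so all entries of label exactly $r$, if any exist, lie on the single diagonal $i_0:=r-2n+2n'$; combined with the first observation and the fact that the diagonals below the threshold $k$ vanish, this forces $i_0=k$, i.e. a label-$r$ entry sits precisely on the current threshold diagonal.

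Now the case analysis. If $M'$ is empty, so is $M''$. Suppose $M'$ is in option (3). If it has no label-$r$ entry then $M'(x_r=a_r)=M'$ is already reduced and $M''=M'$ stays in option (3). Otherwise diagonal $k$ is common, equal to $x_r-g$ for a constant $g$, and substituting $x_r=a_r$ collapses it to the constant $c=a_r-g$. If $c=0$ the diagonal is wiped out, giving option (3) with threshold $k+1$ when $k<n'$ and option (2) when $k=n'$. If $c\neq0$, $\psi$ pivots on the $k$ equal constants of diagonal $k$; these occupy exactly the bottom $k$ rows and leftmost $k$ columns, so after elimination and deletion one is left with the top-right $(n'-k)\times(n'-k)$ block (or the empty matrix when $k=n'$), which a direct check shows is again of option-(3) type --- the surviving diagonals reindex so that the label on its new diagonal $i$ is $2n-2(n'-k)+i$, and its threshold is $1$, the lowest diagonal consisting of a single entry. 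The case $M'$ in option (2) is similar but easier, since no ``all equal'' condition must be maintained: the label-$r$ entries lie on a strictly-upper diagonal, and because the column through such an entry has been reduced to just that one nonzero entry by the time $\psi$ reaches it, the eliminations preserve the zero pattern; deleting the pivots leaves either the empty matrix or a smaller strictly-upper-triangular matrix, i.e. option (1) or (2).

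I expect the main obstacle to be maintaining the ``all equal'' property of the threshold diagonal through the $c=0$ and $c\neq 0$ transitions in option (3) --- this is the only genuinely nonformal claim. It does not follow from option (3) as stated, so I would strengthen the inductive hypothesis to also record how the entries of a diagonal above the threshold differ from one another: roughly, that the pairwise differences of the entries on diagonal $i$ lie in the ideal generated by the polynomials occurring on the diagonals strictly below $i$ and at or above the threshold. With this strengthening in place, zeroing the threshold diagonal forces the next diagonal to collapse to a common value, and the pivot-and-delete transition respects the strengthened statement as well (the index shift that recomputes the labels does the same bookkeeping for the ideal membership). The only remaining point --- that the row and column operations of $\psi$ never increase the largest variable index occurring in an entry, so that labels are preserved --- follows from the strict monotonicity of labels across rows and down columns and can be isolated as a short sublemma applied uniformly throughout.
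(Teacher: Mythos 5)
Your induction skeleton and case analysis match the paper's: the only nontrivial transition is an option-(3) matrix whose threshold diagonal equals $x_r-b$, split according to whether $a_r=b$ (diagonal collapses to zero, threshold advances) or $a_r\neq b$ (pivot on the constants and delete). You have also correctly located the crux: when the threshold diagonal is zeroed, one must show the \emph{next} diagonal becomes constant, and this does not follow from option (3) as stated. But at exactly this point your argument stops being a proof. You propose strengthening the inductive hypothesis to an ideal-membership condition on pairwise differences within each diagonal, and then assert in a single parenthetical that ``the pivot-and-delete transition respects the strengthened statement as well (the index shift\dots does the same bookkeeping).'' That preservation claim is the entire difficulty. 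In the $c\neq 0$ case the surviving entries are not the old entries of the top-right block: each is modified by subtracting $\tfrac{1}{c}\sum_j(\cdot)(\cdot)$-type correction terms built from products of entries on various diagonals of the previous matrix, and one must show that the differences of these corrected entries along each new diagonal land in the correspondingly re-indexed ideal. Nothing in your sketch addresses this, and it is not a routine bookkeeping check --- it is essentially a restatement of the lemma itself one step later.

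For comparison, the paper resolves this point by an entirely explicit computation rather than an invariant: using the fact that restricted row/column operations preserve the minors $A'[S,T]$ (their Lemma on minors), it identifies each entry of the new threshold diagonal with a ratio of $(m+1)\times(m+1)$ minors of the original Toeplitz matrix, and exploits the linear recurrence $a_{t+m}=\V\cdot(a_t,\ldots,a_{t+m-1})^T$ forced by the nonvanishing of $\det A'[S,T]$ to show every such entry equals the single expression $x_{2m+k+1}-\V\cdot V^{m+k}\cdot(a_1,\ldots,a_m)^T$, independent of position. Your ideal-theoretic invariant is plausible (it checks out on the worked example in the paper) and might well be provable, but as written the step that carries all the weight is unproven, so the proposal has a genuine gap.
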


Notice that if the lower left corner of $\varphi(A;x_1=a_1,\ldots,x_r=a_r)$
has positive label, then the lemma holds trivially with $k=1$. 

Intuitively, this lemma is saying that no matter how we assign variables and perform row and column operations, the matrix behaves nicely as shown in the example below. This nice behavior is what drives the proofs in this rectangle case and will be extensively used for the rest of the paper. 

\begin{ex} 
Starting with matrix $A$, we see that at each step, the matrix satisfies Lemma \ref{lem:rec}.
In other words, these matrices have zeroes in the first few diagonals and entries are equal in the first nonzero diagonal. In particular, the matrices fit into option 3 with $k=1$, $k=1$, $k=1$, $k=2$, and $k=3$, respectively.
\begin{align*}
A=&\begin{bmatrix}
x_4 & x_5 & x_6 & x_7 \\
x_3 & x_4 & x_5 & x_6 \\
x_2 & x_3 & x_4 & x_5 \\
x_1 & x_2 & x_3 & x_4
\end{bmatrix}
\xrightarrow{\varphi(x_1=1)}
\begin{bmatrix}
x_5-x_2x_4 & x_6-x_3x_4 & x_7-x_4^2 \\
x_4-x_2x_3 & x_5-x_3^2 & x_6-x_3x_4 \\
x_3-x_2^2 & x_4-x_2x_3 & x_5-x_2x_4 \\
\end{bmatrix}\\
\xrightarrow{\varphi(x_2=2)}&
\begin{bmatrix}
x_5-2x_4 & x_6-x_3x_4 & x_7-x_4^2 \\
x_4-2x_3 & x_5-x_3^2 & x_6-x_3x_4 \\
x_3-4 & x_4-2x_3 & x_5-2x_4 \\
\end{bmatrix}
\xrightarrow{\varphi(x_3=4)}
\begin{bmatrix}
x_5-2x_4 & x_6-4x_4 & x_7-x_4^2 \\
x_4-8 & x_5-16 & x_6-4x_4 \\
0 & x_4-8 & x_5-2x_4 \\
\end{bmatrix}\\
\xrightarrow{\varphi(x_4=8)}&
\begin{bmatrix}
x_5-16 & x_6-32 & x_7-64 \\
0 & x_5-16 & x_6-32 \\
0 & 0 & x_5-16 \\
\end{bmatrix}
\end{align*}
\end{ex}

Suppose that $A$ is an $n\times n$ matrix and that $S$ and $T$ are subsets of $[n]$ of size $k$. Define $A[S,T]$ to be the $k\times k$ submatrix of $A$ obtained by selecting entries in the $i^{\text{th}}$ row and $j^{\text{th}}$ column of $A$ for all $i\in S$ and $j\in T$. The following lemma is immediate.

\begin{lem}\label{lem:minor}
The determinant of $A[S,T]$ does not change if we perform row and column operations on $A$ that only involve subtracting by rows in $S$ and by columns in $T$.
\end{lem}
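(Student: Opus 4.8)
\textbf{Proof proposal for Lemma~\ref{lem:minor}.}

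The plan is to reduce the claim to the elementary observation that a minor $A[S,T]$ of a matrix is affected only by row and column operations that stay within the index sets $S$ and $T$. First I would set up notation: an elementary row operation ``add $c$ times row $a$ to row $b$'' on $A$ restricts, on the submatrix $A[S,T]$, to nothing at all unless both $a$ and $b$ lie in $S$. Indeed, the rows of $A[S,T]$ are precisely the rows of $A$ indexed by $S$, truncated to the columns in $T$; if $b\notin S$ the affected row is simply not present in $A[S,T]$, and if $b\in S$ but $a\notin S$, then the row we are adding into $A[S,T]$'s row is not one of the retained rows, which is exactly the situation the hypothesis forbids. An entirely symmetric statement holds for column operations with the set $T$. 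So under the stated restriction, every row operation performed on $A$ either leaves $A[S,T]$ literally unchanged (when it touches a row outside $S$) or induces the corresponding elementary row operation ``add $c$ times row $a$ to row $b$'' \emph{internally} on the $k\times k$ matrix $A[S,T]$ (when $a,b\in S$); likewise for columns and $T$.

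The next step is to invoke the invariance of the determinant under elementary operations of the type ``add a scalar multiple of one row (column) to another distinct row (column)'': such an operation multiplies the determinant by $1$. Since each operation in the given sequence, when viewed on $A[S,T]$, is either the identity or one of these determinant-preserving elementary operations, the composite leaves $\det A[S,T]$ unchanged. One should note that ``row and column operations'' here means operations of this additive form (as used throughout the $\psi$ and $\varphi$ constructions earlier in the paper, which only ever subtract multiples of one row or column from another), so no scalings or swaps enter and there is no determinant factor to track.

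I do not expect a genuine obstacle here; the only point requiring a moment's care is the bookkeeping in the case analysis above — making sure we correctly account for an operation that subtracts a row in $S$ from a row outside $S$ (this changes $A$ but not $A[S,T]$, and is harmless) versus one that subtracts a row outside $S$ from a row in $S$ (this \emph{would} change $A[S,T]$ in a way not realizable internally, and is precisely what the hypothesis rules out). Once that dichotomy is spelled out, the lemma follows. If desired, one can phrase the whole argument in one line: the hypothesis guarantees that the sequence of operations on $A$ factors through the action on $A[S,T]$ by a product of transvections, which preserve the determinant.
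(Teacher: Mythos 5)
Your proof is correct; the paper states this lemma without proof (calling it ``immediate''), and your case analysis --- operations touching only rows outside $S$ leave $A[S,T]$ literally unchanged, while operations internal to $S$ (resp.\ $T$) act as determinant-preserving transvections on $A[S,T]$, with the one problematic case excluded by hypothesis --- is exactly the justification the authors have in mind. Nothing further is needed.
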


Now we are ready to prove Lemma \ref{lem:rec}.

\begin{proof}[Proof of Lemma~\ref{lem:rec}]

We will use proof by induction on $r$. When $r=0$, the lemma holds trivially.

Assume that the lemma holds for $r-1$ and consider the matrix $B=\varphi(A;x_1=a_1,\ldots,x_{r-1}=a_{r-1})$ of size $n'$. If $B$ is empty, or equivalently, $n'=0$, then assigning any $a_r\in\F_q$ to $x_r$ will still result in an empty matrix, so the inductive step holds trivially. Similarly, if all entries in $B$ below or on the main diagonal are zero, the inductive step will also hold. Also, if the smallest positive label of entries in $B$ is strictly greater than $r$, assigning $a_r$ to $x_r$ will not change any labels so the inductive step holds. 

We are now left to consider the case where $B$ satisfies option (3). Before
addressing it in full generality, we show the special case $k = n'$, 
which will be useful later. In this case, the main diagonal entries are the same, say of
label $r$. We may assume they are of the form $x_r - b$ for some $b \in
\F_q$. Then looking at $\varphi(B; x_r = a_r)$, we see that if $a_r \neq b$,
the matrix will become empty after applying $\psi$, satisfying option (1). If
$a_r = b$, then the matrix will have only zeros on and below the main
diagonal, satisfying option (2).

Now we consider the case that $B$ satisfies option (3) in full
generality. Suppose that the first nonzero diagonal of $B$ has entries $x_r-b$
by inductive hypothesis, and suppose that the next diagonal contains
$x_{r+1}-f_1(x_r),\ldots,x_{r+1}-f_{\ell}(x_r)$, where $f_i \in \F_q[x_r]$.


$$B=\begin{bmatrix}
\vdots & & & & & & \\
x_{r+1}-f_1 & & & & & & \\
x_r-b & x_{r+1}-f_2 & & & & & \\
0 & x_r-b & \rotatebox[origin=c]{-45}{$\cdots$} & & & & \\
\vdots & 0 & \rotatebox[origin=c]{-45}{$\cdots$} & \rotatebox[origin=c]{-45}{$\cdots$} & & \\
0 & \rotatebox[origin=c]{-45}{$\cdots$} & 0 & x_r-b & \rotatebox[origin=c]{-45}{$\cdots$} & & \\
0 & 0 & \cdots & 0 & x_r-b & x_{r+1}-f_{\ell} & \cdots
\end{bmatrix}$$
If we assign $a_{r}\neq b$ to $x_r$ and apply $\psi$ to this matrix, we will
use the $a_r-b$ to cancel out their corresponding rows and columns. The
remaining matrix $\varphi(B;x_r=a_r)$ contains no constant so it falls into option (3) trivially with $k=1$.
Therefore, the only case we need to consider is when
$a_r=b$. It suffices to show that when $x_r=b$, $x_{r+1}-f_i(x_r)$ is the same for
each $i$. 

Let $m=n-n'$, where $n'$ is the size of $B$. According to the definition of $\varphi$ and induction hypothesis, we can use row and column operations involving only subtracting by the last $m$ rows and the first $m$ columns to get from $A':=A(x_1=a_1,\ldots,x_r=b)$ to $\begin{bmatrix}0&B'\\M&0\end{bmatrix}$, where $B'=B(x_r=b)$ and $M$ can be written as blocks of nonzero multiples of identity matrices from southwest to northeast. Specifically,
\begin{equation} \label{eqn:rec}
M=\begin{bmatrix}&&&c_wI_{s_w}\\&&\rotatebox[origin=c]{45}{$\cdots$}&\\&c_2I_{s_2}&&\\c_1I_{s_1}&&&\end{bmatrix},
\end{equation}
where $c_i\in\F_q^{\times}$, $s_i\in\mathbb{N},$ and $I_{s_i}$ is the $s_i\times s_i$ identity matrix. 
We know that $M$ is $m\times m$, and we let $d=\det M\neq0$.

Notice that the first $k$ diagonals of $B'$ are all zero and the $(k+1)^{\text{th}}$ diagonal of $B'$ has entries of label $r+1$ where $k\leq n-m=n'$. We have already proved the case of $k=n'$ so now assume that $k<n'$. In other words, $B'_{i,j}=0$ for all $i,j\in\Z_{\geq1}$ such that $i-j\geq n-m-k=n'-k$. We need to show that all entries on the $(k+1)^{\text{th}}$ diagonal are the same.

Let $S=\{n-m+1,n-m+2,\ldots,n-1,n\}$ and $T=\{1,2,\ldots,m\}$ and consider $A'[S,T]$. By Lemma \ref{lem:minor}, the determinant of $A'[S,T]$ does not change as we go from $A'$ to  $\begin{bmatrix}0&B'\\M&0\end{bmatrix},$ so $\det A'[S,T]=\det M=d\neq0$.

Since $\det A'[S,T]\neq0$, $\{(a_t,a_{t+1},\ldots,a_{t+m-1})^T:t=1,\ldots,m\}$ forms a linear basis of $\F_q^m$. It follows that there exists a row vector $\V\in\F_q^m$ such that $\V\cdot A'[S,T]=(a_{m+1},a_{m+2},\ldots,a_{2m})$.

We know that for any $j=1,\ldots,k$, by Lemma~\ref{lem:minor}, the determinant of $A'\big[S\cup\{n-m\},T\cup\{m+j\}\big]$ does not change as we go from $A'$ to  $\begin{bmatrix}0&B'\\M&0\end{bmatrix}$. As a minor of the latter matrix, every entry of the first row is zero possibly except the last entry, which is $B'_{n',j}$. Since $n'-j\geq n'-k$, it is also zero. As a result, $\det A'\big[S\cup\{n-m\},T\cup\{m+j\}\big]=0$. 
Fix one such $j\in\{1,\ldots,k\}$, and note that we now have an $(m+1)\times(m+1)$ submatrix of $A'$ with zero determinant, which is $A'\big[S\cup\{n-m\},T\cup\{m+j\}\big]$.

Subtract $\V\cdot A'\big[S,T\cup\{m+1\}\big]$---a linear combination of the last $m$ rows---from the first row of $A'\big[S\cup\{n-m\},T\cup\{m+j\}\big]$. The first $m$ entries of the first row of the resulting matrix are 0 and the last entry in the first row is $a_{2m+j}-\V\cdot(a_{m+j},a_{m+j-1},\cdots,a_{2m+j-1})^T$.


Since the resulting $(m+1)\times(m+1)$
submatrix still has determinant zero, we have $$\det A'[S,T]\Big(a_{2m+j}-\V\cdot(a_{m+j},a_{m+j-1},\cdots,a_{2m+j-1})^T\Big)=0.$$
As $\det A'[S,T]=d\neq0$, we know $a_{2m+j}=\V\cdot(a_{m+j},a_{m+j-1},\cdots,a_{2m+j-1})^T.$ With the goal of expressing this observation using matrix multiplication, define $m\times m$ matrix $V$ shown below
$$
\newcommand*{\temp}{\multicolumn{1}{|c}{}}
\newcommand*{\temps}{\multicolumn{1}{c}{\ \ \ \ }}
V:=\left(
\begin{array}{cccc}
 & \temp & & \\
0 & \temp & I_{m-1} & \\ 
\temps & \temp & & \\ \hline
 & \V &  & 
\end{array}\right)
$$
where the ``0" here represents an $(m-1)\times1$ array with all zeroes. We may then write
$$V\cdot(a_{t},a_{t+1},\ldots,a_{t+m-1})^T=(a_{t+1},a_{t+2},\ldots,a_{t+m})^T\text{ for all }1\leq t\leq m+k.$$
A simple inductive argument will give us, for all $t+h\leq m+k+1$,
\begin{equation}\label{eqn:linear1}
V^h\cdot(a_{t},a_{t+1},\ldots,a_{t+m-1})^T=(a_{t+h},a_{t+h+1},\ldots,a_{t+h+m-1})^T.
\end{equation}
At the same time, for all $t+h\leq m+k$, $t\geq1$, $h\geq0$, we have
\begin{equation}\label{eqn:linear2}
\V\cdot V^h\cdot(a_{t},a_{t+1},\ldots,a_{t+m-1})^T=\V\cdot(a_{t+h},a_{t+h+1},\ldots,a_{t+h+m-1})^T=a_{t+h+m}.
\end{equation}

Now we have tools we need to deal with $B'_{i,j}$ for $i-j=n-m-k-1$. Recall that $k<n'$ so $i-j=n'-k-1\geq0$. Fix such $i$ and $j$. For convenience of notation, let $i'=n-m-i+1=n'+1-i\geq1$ so we have $i'+j=k+2$ with $i,j\in\{1,\ldots,k+1\}$. 
and deal with $B'_{n-m+1-i',j}$. Correspondingly, consider $A'\big[S\cup\{n-m+1-i'\},T\cup\{m+j\}\big]$. To calculate the determinant of this $(m+1)\times(m+1)$ submatrix, we subtract the first row by $$(\V V^{i'-1})\cdot A'\big[S,T\cup\{m+j\}\big],$$ which is a linear combination of its last $m$ rows. Then, the first $m$ entries of its first row become 0 by Equation \eqref{eqn:linear2} with $h=i'-1$ and $t=1,\ldots,m$, noticing that indices are in the proper range since $m+i'-1\leq m+k$. The last entry of the first row is
\begin{equation}\label{eqn:linear3}
\begin{split}
&x_{2m+k+1}-v\cdot V^{i'-1}\cdot(a_{m+j},a_{m+j-1},\ldots,a_{2m+j-1})^T\\
=&x_{2m+k+1}-v\cdot V^{i'-1}\cdot V^{m+j-1}\cdot(a_1,a_2,\ldots,a_m)^T\\
=&x_{2m+k+1}-v\cdot V^{m+k}\cdot(a_1,a_2,\ldots,a_m)^T
\end{split}
\end{equation}
where the first equality comes from Equation \eqref{eqn:linear1} with $t=1$ and $h=m+j-1\leq m+k$. 

With this, it is clear that 
\begin{align*}
\det A'\big[S\cup\{n-m+1-i'\},T\cup\{m+j\}\big]=&\det A'[S,T]\cdot\big(x_{2m+k+1}-v\cdot V^{m+k}\cdot(a_1,a_2,\ldots,a_m)^T\big) \\
=&d\big(x_{2m+k+1}-v\cdot V^{m+k}\cdot(a_1,a_2,\ldots,a_m)^T\big).
\end{align*}

Then, according to the form of $M$ and $B'$ and Lemma \ref{lem:minor},
\begin{align*}
B'_{i,j}=&B'_{n-m+1-i',j}=\frac{\det A'\big[S\cup\{n-m+1-i'\},T\cup\{m+j\}\big]}{\det M}\\
=&x_{2m+k+1}-v\cdot V^{m+k}\cdot(a_1,a_2,\ldots,a_m)^T,
\end{align*}
which is an expression that is independent of $i$ and $j$ (only dependent on $i'+j=k+2$).

A simple index counting shows that $r+1=2m+k+1$. And thus, the inductive step is complete as desired.
\end{proof}

\begin{cor} \label{rectangle_probability}
For any rectangular shape $\lambda$, $P(s_\lambda\mapsto0)=1/q$.
\end{cor}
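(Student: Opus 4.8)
The plan is to deduce Corollary~\ref{rectangle_probability} from Lemma~\ref{lem:rec} by a conditioning argument that tracks the matrix $A$ as its variables are assigned one at a time. By Corollary~\ref{cor:transpose} it suffices to treat $\lambda=(a^n)$ with $a\geq n$, so that the Jacobi-Trudi matrix is exactly $A=(x_{j-i+n})_{1\leq i,j\leq n}$ in the variables $x_1,\dots,x_{2n-1}$, and $P(s_\lambda\mapsto 0)=P(\det A\mapsto 0)$.

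First I would set up the recursion on the variable index. Assign $x_1,x_2,\dots$ in order and look at $\varphi(A;x_1=a_1,\dots,x_r=a_r)$; Lemma~\ref{lem:rec} tells us that at every stage this matrix is either empty (in which case $\det A$ is already determined to be nonzero along that branch) or it is a reduced general Schur matrix of some size $n'\geq 1$ whose first $k-1$ diagonals are zero, whose $k$th diagonal has all entries equal, and whose $i$th diagonal entries all carry label $2n-2n'+i$. The key point is that along any branch the entries on the first nonzero diagonal are \emph{all equal} to a single polynomial of the form $x_s-(\text{constant})$, where $s=2n-2n'+k$ is the common label. So I would run the conditioning down to the step where we are about to assign this variable $x_s$: for each fixed choice of $a_1,\dots,a_{s-1}$ we get a matrix $B$ falling into option (2) or option (3) of the lemma.

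The heart of the computation is then a short case analysis. If $B$ falls into option (2) — every entry on and below the main diagonal is zero — then $B$ is strictly upper triangular, so $\det B=0$ identically and the conditional probability is $1$; but this happens only on a set of prior assignments of probability $\le 1/q$, because reaching option (2) already required the first nonzero diagonal entries to have become zero, i.e. required $x_{s'}$ to hit a prescribed value at some earlier step $s'<s$. Cleaner is to phrase the induction directly: I claim that for $B$ as above (nonempty, first nonzero diagonal constant across it, with $k\ge 1$), $P(\det B\mapsto 0)=1/q$ when $k=1$ and $P(\det B\mapsto 0)=1$ when $k\ge 2$. For $k\ge 2$ the matrix is already singular since its first row lies in the span of later rows (all of $B$'s first column below the $(n'-k+1)$ entry being zero forces, via cofactor structure, $\det B = 0$); more simply, expanding, a matrix with its entire first $k-1\ge 1$ diagonals zero has $\det B$ equal to $\pm$ the product of the equal $k$th-diagonal entries times a minor, but when $k\ge2$ the first row and last column overlap in a zero pattern forcing $\det B=0$. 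For $k=1$, expand $\det B$ along the first row: the coefficient of the top-right variable $x_{2n-2n'+n'}=x_{2n-n'}$ is $\pm$ the determinant of the lower-left $(n'-1)\times(n'-1)$ submatrix, whose bottom-left entry is the common $k=1$ diagonal entry $x_s-c$ with $c$ a constant — and crucially this variable $x_s$ has a strictly smaller label than $x_{2n-n'}$, so that submatrix determinant does not involve $x_{2n-n'}$. Hence for every assignment of the lower-labeled variables there is exactly one value of $x_{2n-n'}$ making $\det B=0$ \emph{unless} that leading coefficient itself vanishes; but the leading coefficient vanishes exactly when the lower-left submatrix is singular, and by the very same Lemma~\ref{lem:rec} applied to that submatrix (it is again of the same shape, now with its first nonzero diagonal constant), this is a sub-event to be handled by induction on $n'$.

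So the proof is an induction on $n'$ (the size) using Lemma~\ref{lem:rec} as the structural input: condition on all variables of label less than the top-right label $k_0$; on each branch $B$ is either empty (contributes $0$), or option-(2)/option-($\ge2$) (contributes $1$ but only on a branch of total weight $\le$ the right amount), or option-(3) with $k=1$, where a first-row cofactor expansion isolates $x_{k_0}$ linearly with leading coefficient $=\pm\det(\text{lower-left minor})$, and $P(\det B\mapsto 0)=P(\text{coeff}\neq 0)\cdot\tfrac1q+P(\text{coeff}=0)\cdot P(\det B\mapsto 0\mid \text{minor singular})$; the induction hypothesis on the smaller minor pins the second term down and the whole thing collapses to exactly $1/q$. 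Applying this with $B=A$ itself (here $k_0=2n-1$ and the lower-left minor is $(x_{j-i+(n-1)})_{1\le i,j\le n-1}$, the same family one size down) gives $P(\det A\mapsto 0)=1/q$, hence $P(s_\lambda\mapsto 0)=1/q$. The main obstacle I anticipate is bookkeeping: making sure that along every conditioning branch the ``top-right variable has strictly larger label than everything in the relevant minor'' claim is justified by condition (c) of the general Schur matrix definition together with the label formula $2n-2n'+i$ from Lemma~\ref{lem:rec}, and that the exceptional branches (options (1) and (2)) are assembled with the correct probabilities so that the final sum is \emph{equal} to $1/q$ rather than merely $\ge 1/q$ (the latter being already known from Corollary~\ref{cor: geq 1/q}).
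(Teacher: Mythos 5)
Your overall strategy---reduce to $a\geq n$ via Corollary~\ref{cor:transpose}, condition on the variables one at a time, and use Lemma~\ref{lem:rec} to control the shape of $\varphi(A;x_1=a_1,\dots,x_r=a_r)$ at every stage---is the same as the paper's, but your central case analysis contains a genuine error. You claim that for a matrix $B$ in option~(3) of Lemma~\ref{lem:rec}, $P(\det B\mapsto 0)=1$ whenever $k\geq 2$. This is false: having the first $k-1$ diagonals equal to zero with $2\leq k\leq n'$ does \emph{not} make $B$ singular. For example, with $\lambda=(2,2)$ and $x_1=0$ one gets $\det B=x_2^2$, which vanishes with probability $1/q$, not $1$; and for $n'=3$, $k=2$ the matrix $\begin{psmallmatrix}x_3&x_4&x_5\\ x_2&x_3&x_4\\ 0&x_2&x_3\end{psmallmatrix}$ is generically nonsingular. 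Only option~(2) (all of the first $n'$ diagonals zero, i.e.\ strictly upper triangular) forces $\det B=0$. The correct statement---and the one the paper's induction actually proves---is that $P(\det B\mapsto 0)=1/q$ for \emph{every} nonempty $B$ whose entries on and below the main diagonal are not all zero, i.e.\ for every $1\leq k\leq n'$. This is exactly what makes the average over all branches come out to $1/q$ rather than something larger; with your claim the answer would exceed $1/q$.

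Your fallback argument for $k=1$ (cofactor expansion along the first row, with leading coefficient $\pm\det N$ for $N$ the lower-left $(n'-1)\times(n'-1)$ minor) also has a gap: it reduces the problem to computing $P(\det B\mapsto 0\mid \det N\mapsto 0)$, and this is \emph{not} supplied by the induction hypothesis applied to $N$. Knowing $P(\det N\mapsto 0)=1/q$ says nothing about the conditional distribution of $\det B$ on that event; establishing that this conditional probability is $1/q$ is essentially an independence statement of the kind treated separately in Section~\ref{sec:independence}, and it needs its own argument. The paper avoids this entirely: it inducts on the number of remaining free variables, and at each assignment of the next variable checks that the resulting matrix still satisfies the (correct) induction hypothesis---either the size is unchanged and the first nonzero diagonal stays put or moves up by one (still $\leq n'$), or the matrix shrinks and has no zeros---so every branch contributes exactly $1/q$ and no conditioning on a singular minor is ever needed. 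You should replace your $k\geq 2$ claim with the uniform statement $P(\det B\mapsto 0)=1/q$ for $1\leq k\leq n'$, prove the boundary case $k=n'$ directly ($\det B=(x_t-f_{t-1})^{n'}$), and run the induction on the number of free variables rather than on the size of the lower-left minor.
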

\begin{proof}
We will use an inductive argument. Let $A$ be the Jacobi-Trudi matrix corresponding to shape $\lambda=(b^n)$. By Corollary \ref{cor:transpose}, we can assume $b \geq n$. We claim that if $B=\varphi(A;x_1=a_1,\ldots,x_r=a_r)$ is not empty and that if its entries in and below the main diagonal are not all 0, then $P(\det B\mapsto 0)=1/q$.

We will induct on the number of free variables, i.e. $2n-1-r$. If $B$ has exactly one free variable, then $B$ must be size 1 to satisfy the above hypotheses, so $P(\det B \mapsto 0) = 1/q$ as desired.

Otherwise, suppose that $B$ has size $n'$. According to Lemma \ref{lem:rec}, if $B$ is not empty and if entries on and below the main diagonal are not all zero, then there exists $0 \leq k\leq 2n'$ such that all entries in the $i^{\text{th}}$ diagonal are zero for all $i<k$ and all entries in the $k^{\text{th}}$ diagonal are equal. Clearly we also have $k \leq n'$. 

All entries on the $k^{\text{th}}$ diagonal are equal to $x_t-f_{t-1}$ for some $t>0$ and some $f_{t-1}$ a polynomial of $x_1,\ldots,x_{t-1}$.

If $k=n'$, then $\det B=(x_t-f_{t-1})^{n'}$ and clearly $\det B=0$ iff $x_t=f_{t-1}$, giving a probability of $1/q$ as desired. If $k<n'$, then 
$$P(\det B\mapsto0)=\frac{1}{q}\sum_{c\in \F_q}P\big(\varphi(A;x_1=a_1,\ldots,x_r=a_r,x_{r+1}=c)\mapsto0\big).$$
We can use the induction hypothesis on each term of the sum because 

\begin{enumerate}
\item $B' = \varphi(A;x_1=a_1,\ldots,x_r=a_r,x_{r+1}=c)$ has fewer free variables than $B$; 
\item $B'$ is not empty since at most $k<n'$ rows and columns will be removed due to row and column operations from $B$; 
\item One of the following holds:
\begin{enumerate}
\item $r+1 < t$, in which case $B'$ is the same size as $B$ and the $k^{th}$ diagonal remains the first nonzero diagonal.
\item $r+1 = t$ and $c=f_{t-1}$, in which case $B'$ is the same size as $B$ and the first nonzero diagonal is $k+1 \leq  n'$.
\item $r+1 = t$ and $c\neq f_{t-1}$, in which case $B'$ is smaller than $B$ with no zeroes.
\end{enumerate}
\end{enumerate}

Thus, $P(\det B\mapsto0)=\frac{1}{q}(q\cdot\frac{1}{q})=1/q$.
\end{proof}

\subsection{Staircases}

Now we turn our attention to the staircase shapes, $\lambda=(k,k-1,\ldots,1)$.
\begin{thm} \label{thm: staircase 1/q}
Let $\lambda=(k,k-1,\ldots,1)$ be a staircase, then
$$P(s_\lambda\mapsto0)=1/q.$$
\end{thm}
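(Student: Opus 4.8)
The plan is to follow the same strategy that worked for rectangles: reduce $s_\lambda$ to $\det(\psi(M))$ where $M$ is the Jacobi-Trudi matrix, and then show by an inductive argument on the number of variables assigned that at every stage of applying $\varphi$, the reduced matrix falls into a short list of structurally controlled forms — with the crucial feature that the "first nonzero diagonal has equal entries" phenomenon of Lemma~\ref{lem:rec} persists. For the staircase $\lambda=(k,k-1,\ldots,1)$, the Jacobi-Trudi matrix is $M=(h_{k-2i+j+1})_{i,j=1}^k$ (up to the usual conventions $h_0=1$, $h_{<0}=0$), so $M$ already has $1$'s and $0$'s in it, and $\psi(M)$ is a special Schur matrix by Proposition~\ref{prop: psi_M}. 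The key will be to prove a staircase analogue of Lemma~\ref{lem:rec}: that for any partial assignment $x_1=a_1,\ldots,x_r=a_r$, the matrix $\varphi(M;x_1=a_1,\ldots,x_r=a_r)$ is either empty, or has all entries on and below the main diagonal equal to zero, or has a first nonzero diagonal whose entries are all equal. Granting such a structural lemma, the conclusion $P(s_\lambda\mapsto 0)=1/q$ follows by exactly the induction in the proof of Corollary~\ref{rectangle_probability}: if the first nonzero diagonal is the main one, the determinant is a power of a single polynomial $x_t - f_{t-1}$ and vanishes with probability $1/q$; otherwise one conditions on the next variable and invokes the inductive hypothesis, the three sub-cases (label not yet reached, label reached with the "bad" value, label reached with a "good" value) being identical to those in Corollary~\ref{rectangle_probability}.

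First I would compute $\psi(M)$ explicitly enough to see its shape: because the staircase has all parts distinct and decreasing by exactly $1$, the bottom row of $M$ is $(1, h_1, h_2, \ldots)$-ish and successively higher rows shift; after clearing with the $1$'s, $\psi(M)$ should be a special Schur matrix whose antidiagonal-label condition (property (c) of special Schur matrices) is the analogue of the "$x_{j-i+n}$" structure that made Lemma~\ref{lem:rec} work for rectangles. In fact the heart of the rectangle proof — the linear-algebra argument with the vector $\V$, the shift matrix $V$, and Equations~\eqref{eqn:linear1}–\eqref{eqn:linear3} — only used that the original matrix was a Toeplitz-type matrix of the form $(x_{j-i+n})$ together with the bookkeeping on labels. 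So I would try to either (i) show directly that $\psi(M_{\text{staircase}})$ is again of a form to which a Lemma~\ref{lem:rec}-type statement applies, or better (ii) extract from the rectangle proof the precise hypothesis it actually used — that each entry's label is a fixed affine function of (antidiagonal index), constant along diagonals of the original matrix — and verify the staircase Jacobi-Trudi matrix satisfies it. Because the staircase's Jacobi-Trudi matrix $(h_{\lambda_i - i + j}) = (h_{k - 2i + j + 1})$ has entry labels depending on $j - 2i$, not $j - i$, it is \emph{not} Toeplitz; the labels increase by $2$ as one moves up a column. This is the essential new feature.

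The main obstacle, therefore, is that Lemma~\ref{lem:rec} does not transfer verbatim: the staircase Jacobi-Trudi matrix is not constant along diagonals, so the clean "all entries on the $k$th diagonal are equal, and the $i$th diagonal has label $2n-2n'+i$" statement must be replaced by the correct analogue after passing through $\psi$. I expect that after applying $\psi$ — which deletes the rows/columns carrying the $1$'s — the resulting special Schur matrix $\psi(M)$ \emph{does} become sufficiently regular (the antidiagonal-sum condition (c) for special Schur matrices is exactly a Toeplitz-like constraint on labels), so that the rectangle argument, suitably reindexed, goes through. The delicate points will be: (1) checking that the block decomposition $A' \rightsquigarrow \begin{bmatrix}0 & B'\\ M & 0\end{bmatrix}$ and the identities \eqref{eqn:linear1}–\eqref{eqn:linear3} survive when the "shift by one diagonal" is replaced by whatever shift the staircase labels dictate; and (2) handling the $1$'s and $0$'s already present in $M$, i.e. making sure the very first application of $\psi$ (before any $\varphi$ steps) lands us in the right class. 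An alternative, if the direct structural approach proves too delicate, would be to exploit the self-conjugacy of the staircase ($\lambda = \lambda'$) together with Corollary~\ref{cor:transpose} and a symmetry of $\psi(M)$ to cut the casework in half, and to use the cofactor-expansion trick from Proposition~\ref{hook_probability} on the top row of $\psi(M)$ whenever the lower-left $(n-1)\times(n-1)$ minor is nonzero, reducing to understanding when that minor vanishes. I would pursue the structural-lemma route first, since it parallels the established rectangle proof most closely and the paper's machinery ($\psi$, $\varphi$, special Schur matrices) is evidently built precisely to make it work.
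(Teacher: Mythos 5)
Your plan hinges entirely on a ``staircase analogue of Lemma~\ref{lem:rec},'' but you never state that lemma precisely, let alone prove it, and that is exactly where the difficulty lives. The rectangle proof of Lemma~\ref{lem:rec} is not generic label bookkeeping: the shift-vector argument (the row vector $\V$, the companion-type matrix $V$, and the identities culminating in the formula $x_{2m+k+1}-\V\cdot V^{m+k}\cdot(a_1,\ldots,a_m)^T$ for every entry of the first nonzero diagonal) uses the Toeplitz form $A=(x_{j-i+n})$ of the \emph{original} matrix in an essential way --- a single one-step linear recursion relates consecutive columns of the assigned matrix. The staircase matrix $(h_{k-2i+j+1})$ admits no such recursion, and already after the first application of $\psi$ the entries on a given diagonal are \emph{unequal as polynomials}: for $\lambda=(3,2,1)$ one gets $\psi(M)=\left(\begin{smallmatrix} h_3 & h_5-h_1h_4\\ h_1 & h_3-h_1h_2\end{smallmatrix}\right)$, whose main diagonal carries $h_3$ and $h_3-h_1h_2$. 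So what you need is not ``the same lemma, reindexed'' but a new structural claim about how these non-Toeplitz matrices evolve under $\varphi$, and you give no argument that any such claim holds; you only say you ``expect'' it. Your fallback of expanding along the top row when the lower-left minor is nonzero only yields $1/q+O(1/q^2)$, as in Theorem~\ref{thm:asymtotic}, not the exact value.

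The paper's proof avoids all of this via a different observation: in the staircase Jacobi-Trudi matrix every column carries $h_j$'s of a single parity, and the columns containing a constant $1=h_0$ are precisely the even-parity ones. Moving those columns to the left produces a block form in which the left and right blocks involve \emph{disjoint, hence independent, sets of variables}. One then defines $p(m,n)$ for the resulting two-parameter family of matrices and proves $p(m,n)=1/q$ by induction: for $n\le m$ a column operation (subtracting $y_1/a$ times each left column from the corresponding right column) reduces $p(m,n)$ to $p(m-1,n)$, and for $n>m$ one conditions on $y_1=0$ versus $y_1\neq 0$ to reduce to $p(m-1,n)$ and $p(m,n-1)$ respectively; both branches give $1/q$, so the average does too. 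As written, your proposal correctly identifies the obstacle but does not overcome it, so it does not yet constitute a proof.
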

\begin{proof}
By Jacobi-Trudi, 
\begin{align*}
s_\lambda &= \begin{vmatrix}
h_k    & \cdots & h_{2k-4} &h_{2k-3} & h_{2k-2} & h_{2k-1} \\
\vdots &        &          & \ddots   &          & \vdots \\
0      & \cdots & h_2      & h_3      & h_4      & h_5 \\
0      & \cdots & 1        & h_1      & h_2      & h_3 \\
0      & \cdots & 0        & 0        & 1        & h_1 \\
\end{vmatrix} \\
&= (-1)^l\ \begin{array}{|c c c ;{1.5pt/3pt} c c c|}
\cdots & h_{2k-4} & h_{2k-2} & \cdots  & h_{2k-3} & h_{2k-1} \\
\vdots & \ddots   & \vdots   & \vdots  & \ddots   & \vdots \\
\cdots & h_2      & h_4      & \cdots  & h_3      & h_5 \\
\cdots & 1        & h_2      & \cdots  & h_1      & h_3 \\
\cdots & 0        & 1        & \cdots  & 0        & h_1 \\
\end{array}.
\end{align*}
where the second equality follows from rearranging the columns so that columns containing a constant 1 are to the left of the vertical line.

Notice that in the new matrix, all the variables on the left side are distinct from all the variables on the right side. Renaming the variables on the left side using $x_i$'s and the variables on the right side using $y_i$'s, we find the above is a determinant of some $(m+n)\times(m+n)$ matrix $M$ of the following form:
$$M =\left[\begin{array}{c c c c c ;{1.5pt/3pt} c c c c c}
x_n     & \cdots & x_{m+n-3} & x_{m+n-2} & x_{m+n-1} & y_{m+1}& \cdots & y_{m+n-2} & y_{m+n-1} & y_{m+n} \\
x_{n-1} &        & x_{m+n-4} & x_{m+n-3} & x_{m+n-2} & y_{m}  &        & y_{m+n-3} & y_{m+n-2} & y_{m+n-1} \\
\vdots  &        & \ddots    &           & \vdots    & \vdots &        & \ddots    &           & \vdots   \\
0       &        & a         & x_1       & x_2       & 0      &        & y_1 & y_2 & y_3 \\
0       &        & 0         & a         & x_1       & 0      &        & 0   & y_1 & y_2 \\
0       & \cdots & 0         & 0         & a         & 0      & \cdots & 0   & 0   & y_1 \\
\end{array}\right]$$
where the left side has $m$ columns, the right side has $n$ columns, and $a \neq 0$ is constant. 

Denote the probability that $M$ is singular by $p(m,n)$. We claim that for all $m \geq 0, n >0$, we have $p(m,n) = 1/q$. We prove this claim by induction on $m$.

If $m=0$, then the matrix is an upper triangular matrix with $n$ instances of $y_1$ on its main diagonal. The determinant is $y_1^n$ and thus $p(0,n) =1/q$ for all $n >0$.

Assume for all $0<k<m$ and $n>0$, $p(k,n)=1/q$. Then consider $p(m,n)$. We have two possible cases: $n \leq m$ and $n > m$.

First notice that if $n \leq m$, then for each $1 \leq i \leq n$, we can subtract $y_1/a$ times the $i^\text{th}$ column from the $(i+m)^\text{th}$ column to obtain a determinant of the form

\begin{align*}
 &\begin{array}{|c c c c ;{1.5pt/3pt} c c c|}
x_n     & \cdots & x_{m+n-2} & x_{m+n-1} & \cdots & y_{m+n-1}-\frac{y_1x_{m+n-2}}{a} & y_{m+n}-\frac{y_1x_{m+n-1}}{a} \\
x_{n-1} &        & x_{m+n-3} & x_{m+n-2} &        & y_{m+n-2}-\frac{y_1x_{m+n-3}}{a} & y_{m+n-1}-\frac{y_1x_{m+n-2}}{a} \\
\vdots  & \ddots &           & \vdots    & \ddots &           & \vdots   \\
0       &        & x_1       & x_2       &        & y_2-\frac{y_1x_1}{a} & y_3-\frac{y_1x_2}{a}  \\
0       &        & a         & x_1       &        & 0   & y_2-\frac{y_1x_1}{a} \\
0       & \cdots & 0         & a         & \cdots & 0   & 0 \\
\end{array} \\
&= (-1)^{n} a \cdot \begin{array}{|c c c ;{1.5pt/3pt} c c c|}
x_n     & \cdots & x_{m+n-2}  & \cdots & y_{m+n-2}' & y_{m+n-1}' \\
x_{n-1} &        & x_{m+n-3}  &        & y_{m+n-3}' & y_{m+n-2}'\\
\vdots  & \ddots &           & \ddots &           & \vdots   \\
0       &        & x_1       &        & y_1' & y_2'  \\
0       &        & a         &        & 0   & y_1' \\
\end{array}
\end{align*}
by expanding across the last row and renaming the variables. We are allowed to rename the $y_i$'s as their values are independent of each other. In this way we obtain a smaller matrix of the same kind with $m-1$ columns on the left side and $n$ columns on the right side. So for $n \leq m$, $p(m,n) = p(m-1,n) = 1/q$ by the inductive hypothesis, and we are left with the case when $n > m$. 

If $y_1 = 0$, then the last row of $M$ will be $(0,\ldots,a,0,\ldots,0)$ with $a$ in the $m^\text{th}$ entry and $0$ in the remaining entries. Expand across the last row and we obtain $(-1)^{n} a$ times a smaller determinant of this kind with $m-1$ columns on the left side and $n$ columns on the right side. Since $a$ is nonzero, we have 

\[P(\det{M} =0 \mid y_1=0) =p(m-1,n) = 1/q\]
by the inductive hypothesis.

If $y_1 \neq 0$, then let $b=y_1$. Using a similar argument as in the case $n \leq m$, interchanging the role of $m$ and $n$ and letting $b$ play the role of $a$, we obtain that the conditional probability that $M$ is singular is given by 

\[P(\det{M}=0 \mid y_1\neq 0)=p(m,n-1)=\cdots =p(m,m).\]
Then it is reduced to the case when $n \leq m$ and $P(\det{M}=0 \mid y_1\neq 0)=1/q$.

So in the case when $n > m$, we have 

\begin{align*}
p(m,n)&=P(\det{M}=0 \mid y_1=0)P(y_1=0) + P(\det{M}=0 \mid y_1\neq 0)P(y_1\neq 0) \\
&= 1/q\cdot 1/q + (q-1)/q\cdot 1/q\\ 
&= 1/q.
\end{align*}

We conclude that $p(m,n)=1/q$ for all $m,n$. In particular, we have \[P(s_\lambda \mapsto 0) = p(\lfloor\frac{k}{2}\rfloor, \lceil\frac{k}{2}\rceil)= 1/q.\]
\end{proof}

\section{Classification of $1/q$}\label{sec:classification}
In this section we prove that the partition shapes that achieve the lower bound of $1/q$ given in Corollary \ref{cor: geq 1/q} are exactly hooks, rectangles, and staircases.

With that goal, we first state and prove a series of lemmas concerning when a reduced general Schur matrix has probability $P(\det{M} \mapsto 0) > 1/q$ as we assign the variables to numbers in $\F_q$ uniformly at random. The key idea is to view the probability $P(\det{M} \mapsto 0)$ as an average of different conditional probabilities coming from partial assignments of variables. By Theorem \ref{thm: general matrix, geq 1/q}, each of these conditional probabilities is at least $1/q$, so our task is reduced to finding a particular conditional probability that is strictly larger than $1/q$.

\begin{lem}\label{lem: general matrix, 1}
Let the matrix $M$ be a reduced general Schur matrix of size $n$ with $m$ variables $x_1, \cdots, x_m$. If either the upper left or lower right entry is zero or the upper left and lower right entries have different labels, then $P(\det{M} \mapsto 0) > 1/q$.
\end{lem}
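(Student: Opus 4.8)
The plan is to condition on a carefully chosen partial assignment of the variables and then exhibit a conditional probability strictly exceeding $1/q$; since $P(\det M \mapsto 0)$ is the average of these conditional probabilities and each is at least $1/q$ by Theorem~\ref{thm: general matrix, geq 1/q}, this suffices. Write $n$ for the size of $M$, let $j$ be the label of the upper right entry $M_{1n}$, and let $a$ and $b$ be the labels of the upper left entry $M_{11}$ and lower right entry $M_{nn}$ respectively (with label $0$ recording the case that one of these entries is zero, after interpreting a vanishing entry appropriately). Because labels strictly increase along rows and strictly decrease down columns, all labels are at most $j$, and more importantly every label appearing in the first column is at most $a$ and every label in the last row is at most $b$.

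First I would dispose of the degenerate case $n=1$: then $M=(x_j-f_{j-1})$ and $P(\det M\mapsto 0)=1/q$, but in that case the upper left and lower right entries coincide and are nonzero, so the hypothesis of the lemma is not met; hence we may assume $n\geq 2$. Next, assign values to $x_1,\dots,x_{\ell}$ where $\ell$ is chosen to be one less than the smallest label occurring among the entries that \emph{matter} (I will make this precise below), and track the resulting reduced general Schur matrix $M' = \varphi(M; x_1=a_1,\dots,x_\ell=a_\ell)$. The point of the hypothesis is that either the top-left or bottom-right corner ``disappears'' relative to a generic reduced general Schur matrix, which forces a collapse somewhere: concretely, if $M_{11}=0$ then the entire first column consists of zeros in rows $\geq 1$ except possibly in rows whose label structure is constrained, and one shows a whole row or column can be made zero (or made constant and pivoted away) by a suitable assignment, after which the remaining matrix is smaller and we get an extra factor. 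The cleanest incarnation: condition on $x_1,\dots,x_{i-1}$ where $i$ is the smallest label on the main diagonal, reduce via $\varphi$, and observe that the hypothesis guarantees either the reduced matrix has size strictly smaller than $n$ with positive probability while never having size $0$ — yielding by an inductive argument a gain — or else one can split off a block whose determinant vanishes with probability $1/q$ independently, so the whole determinant vanishes with probability at least $1/q + (\text{positive})$.

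The core mechanism I expect to use is cofactor expansion combined with a counting of ``free'' configurations: expand $\det M$ along the first column (if the top-left corner is the anomaly) or along the last row (if the bottom-right corner is). If $M_{11}=0$, the first column has its top entry zero, so $\det M$ only sees the minors obtained from rows $2,\dots,n$; but then the label $j$ variable $x_j$ still appears linearly in exactly one of those cofactor terms with a coefficient that is itself a nonzero polynomial determinant, while simultaneously a \emph{lower-label} variable (forced by the corner being zero) provides a second, independent linear knob — giving two genuinely different ways to make the determinant zero for a positive fraction of assignments of the other variables, hence probability $> 1/q$. The case of mismatched labels $a \neq b$ is handled similarly: whichever of $a,b$ is smaller, say $a<b$, means the top-left variable $x_a$ appears with label strictly below the ``diagonal trend,'' creating slack one exploits to beat $1/q$, and I would reduce it to the zero-corner case or argue directly by expanding and comparing the top-degree behavior in $x_j$ versus $x_a$.

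The main obstacle, I expect, is making the ``extra slack'' argument rigorous and uniform — i.e.\ pinning down exactly which partial assignment to condition on so that the reduced matrix $M'$ is guaranteed nonempty (using the remark after Definition~\ref{def: phi} that a row or column with all labels $>\ell$ survives) \emph{and} genuinely inherits the corner-anomaly, so that an induction on the number of variables $m$ closes. One must be careful that applying $\psi$ during $\varphi$ can change which entry is the corner and can alter labels, so the induction hypothesis should be phrased for reduced general Schur matrices directly (as in Theorem~\ref{thm: general matrix, geq 1/q}), with the base case $m\leq 1$ checked by hand, and the inductive step showing that the conditional probability over assignments of the smallest-label variable is an average in which at least one term strictly exceeds $1/q$ while none falls below it.
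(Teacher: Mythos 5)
Your high-level strategy is exactly the paper's: write $P(\det M \mapsto 0)$ as an average of conditional probabilities over assignments of an initial segment $x_1,\dots,x_\ell$, note each term is $\geq 1/q$ by Theorem~\ref{thm: general matrix, geq 1/q} provided the reduced matrix stays nonempty, and exhibit one term that is strictly larger. But the proposal never actually produces that witness, and the mechanism you sketch in its place (cofactor expansion, ``two independent linear knobs,'' top-degree comparison in $x_j$ versus $x_a$) is both unnecessary and not obviously sound: two distinct ways of forcing the determinant to vanish need not give probability $>1/q$ unless you control their overlap and verify the relevant coefficients are nonzero on a positive fraction of assignments. You acknowledge this is the ``main obstacle,'' which is precisely where the proof lives.

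The two missing observations are elementary. First, in a general Schur matrix the zero pattern is a staircase ($d_1 \leq d_2 \leq \cdots \leq d_n$), so if the upper left entry is zero the \emph{entire} first column is zero, and if the lower right entry is zero the entire last row is zero; in either case $\det M \equiv 0$ and the probability is $1$, with no assignment or expansion needed (your hedge ``except possibly in rows whose label structure is constrained'' suggests you did not see this). Second, in the mismatched-label case, take $k$ to be the smaller of the two corner labels, say that of the upper left entry. Since labels strictly decrease down columns, every entry of the first column has label $\leq k$ and these labels are distinct, so one explicit assignment of $x_1,\dots,x_k$ (chosen from the smallest label upward) makes the whole first column zero, giving conditional probability exactly $1$. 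Meanwhile labels strictly decrease down the last column and its bottom entry has label $>k$, so every entry of the last column has label $>k$ and $\varphi(M;x_1=a_1,\dots,x_k=a_k)$ is nonempty for \emph{every} choice of the $a_i$, so no term of the average falls below $1/q$. (If instead the lower right corner carries the smaller label, zero out the last row and use the first row for nonemptiness.) Without these two facts the argument does not close, so as written the proposal has a genuine gap.
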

\begin{proof}
First notice by the definition of general Schur matrix, if an entry is $0$, then all the entries below it or to the left of it must be $0$ as well.
In particular, if either of the upper left or lower right entries is zero, then either the first column or the last row is the zero vector and thus $P(\det{M} = 0) = 1 > 1/q$. So we can assume that neither of these two entries is zero. 

Without loss of generality let the label of the upper left entry be the smaller of the two, and denote it by $k$. We have 
\[P(\det{M} \mapsto 0) = \frac{1}{q^k}\sum_{(a_1,\cdots, a_k) \in \F_q^k} P(\det{M} \mapsto 0 \mid x_1=a_1,\cdots,x_k=a_k).\] 
Notice for any $a_1,\cdots,a_k$, the matrix $M'=\varphi(M; x_1=a_1,\cdots,x_k=a_k)$ is nonempty and thus is a reduced general Schur matrix, since the labels of the entries in the last column of $M$ are all strictly greater than $k$. Hence 
\[P(\det{M} \mapsto 0 \mid x_1=a_1,\cdots,x_k=a_k) = P(\det{M'} \mapsto 0) \geq 1/q\] by Theorem \ref{thm: general matrix, geq 1/q}. Therefore, to show $P(\det{M} \mapsto 0) > 1/q$, it suffices to find one conditional probability that is strictly larger than $1/q$. 

Now we can choose values $b_1,\cdots,b_k$ for $x_1,\cdots,x_k$ such that under this assignment all the entries in the first column become $0$. In this case, the conditional probability $P(\det{M} \mapsto 0 \mid x_1=b_1,\cdots,x_k=b_k) = 1$, as desired.
\end{proof}

\begin{lem}\label{lem: general matrix, 2}
We next strengthen the previous lemma: if the labels of the entries in the first column from bottom to top are not the same as the labels of the entries in the last row from left to right, then $P(\det{M} \mapsto 0) > 1/q$.
\end{lem}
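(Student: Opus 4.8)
The plan is to reduce the statement to Lemma~\ref{lem: general matrix, 1} by way of a partial-assignment argument, exactly in the spirit of that lemma's proof. Write $k$ for the smallest label appearing in $M$; since labels strictly increase across rows and strictly decrease down columns, the entry of label $k$ must be the lower-left entry $M_{n1}$ (if it is nonzero) — more precisely, $k$ is the label of $M_{n1}$ whenever that entry is nonzero, and otherwise $P(\det M\mapsto 0)=1$ and we are done. So assume $M_{n1}\neq 0$ with label $k$, and write
\[
P(\det M\mapsto 0)=\frac{1}{q^{k}}\sum_{(a_1,\dots,a_k)\in\F_q^{k}}P\big(\det M\mapsto 0\mid x_1=a_1,\dots,x_k=a_k\big).
\]
As in the previous lemma, each matrix $M'=\varphi(M;x_1=a_1,\dots,x_k=a_k)$ is nonempty (the last column of $M$ has all labels $>k$), so each summand is at least $1/q$ by Theorem~\ref{thm: general matrix, geq 1/q}; hence it suffices to exhibit one assignment $(b_1,\dots,b_k)$ for which the conditional probability is strictly larger than $1/q$.

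The key step is to choose $b_1,\dots,b_k$ cleverly and then invoke Lemma~\ref{lem: general matrix, 1} on $M'$. First I would handle the case where the first column and last row have the same length of ``run'' of smallest-label entries but the labels themselves eventually disagree: pick $b_1,\dots,b_k$ so that the entry $M_{n1}$ of label $k$ becomes the zero constant after substitution (possible since it has the form $x_k-f_{k-1}(x_1,\dots,x_{k-1})$). Then trace through $\psi$: using the zero in position $(n,1)$ does not directly help, so instead I would argue more carefully. The honest approach: among all entries of the first column of $M$, suppose their labels from bottom to top are $k=\ell_n<\ell_{n-1}<\cdots<\ell_1$, and among the last row the labels from left to right are $k=\ell'_n<\ell'_{n-1}<\cdots<\ell'_1$ (indexing so $M_{n1}$ and $M_{nn}$ both carry label $k$ — note $M_{nn}$ has label $k$ too since it is both the bottom of its column and, being in the last row, small; actually one must check which of $M_{n1},M_{nn}$ has the smaller label and symmetrize). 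Let $j$ be the largest index with $\ell_i=\ell'_i$ for all $i>j$ but $\ell_j\neq\ell'_j$; without loss of generality $\ell_j<\ell'_j$. Assign values to $x_1,\dots,x_{\ell_j}$ so that every entry of the first column with label $\le\ell_j$ becomes the zero constant while making no entry of the last row a nonzero constant of smaller label — this is possible because the labels in the last row at those column positions are all $\ge\ell'_{\,\cdot}>\ell_j$ in the relevant range, so those entries are untouched. After applying $\psi$, the rows/columns carrying the zero-constant pivots in the first column get deleted, leaving a reduced general Schur matrix $M'$ whose new lower-right entry still has label $k$ (unchanged, it sits in the untouched last row) but whose new upper-left entry now has a label strictly exceeding what it would need for the column/row labels to match — in particular the upper-left and lower-right entries of $M'$ have different labels, or the upper-left entry of $M'$ is zero. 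Either way Lemma~\ref{lem: general matrix, 1} gives $P(\det M'\mapsto 0)>1/q$, hence $P(\det M\mapsto 0)>1/q$.

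The main obstacle is bookkeeping: making precise the claim that one can kill exactly the small-label entries of the first column by a partial assignment \emph{without} inadvertently creating a nonzero constant somewhere in the last row (which would let $\psi$ delete the last row and wreck the argument), and checking that after $\psi$ the resulting matrix genuinely falls under the hypotheses of Lemma~\ref{lem: general matrix, 1}. I expect this to come down to the observation that the first column's entries of label $\le\ell_j$ lie in rows whose \emph{last-row-aligned} columns have labels that are forced to be large by the strict monotonicity of labels (increasing across rows, decreasing down columns), so the substitution $x_i\mapsto b_i$ for $i\le\ell_j$ only affects entries of label $\le\ell_j$, and by the mismatch hypothesis no entry of the last row has label $\le\ell_j$. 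Symmetry (transpose, or swap the roles of column and row, as in Lemma~\ref{lem: general matrix, 1}) handles the case $\ell_j>\ell'_j$. The degenerate cases — first column or last row already containing a zero, or the two ``runs'' of equal labels having different lengths — are absorbed either directly ($\det M=0$ identically) or again into Lemma~\ref{lem: general matrix, 1}.
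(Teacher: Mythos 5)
Your high-level strategy matches the paper's: reduce to Lemma~\ref{lem: general matrix, 1} by averaging over partial assignments of $x_1,\dots,x_k$ (where $k$ is the smallest label appearing in the first column or last row but not both) and exhibiting one assignment whose conditional probability exceeds $1/q$. But the mechanism you use to produce the label mismatch is broken. You propose to make every first-column entry of label $\le k$ equal to the \emph{zero} constant and then claim that ``the rows/columns carrying the zero-constant pivots in the first column get deleted'' by $\psi$. That is not how $\psi$ works: it pivots on, and deletes rows and columns containing, \emph{nonzero} constants only. Zero entries trigger no deletion, so after your assignment the matrix has the same size, the upper-left entry keeps its label $l$, and no mismatch with the lower-right entry is created. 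The correct move (the one the paper makes) is to zero out only the first-column entries strictly \emph{below} the label-$k$ entry, and then choose $x_k$ so that the label-$k$ entry becomes a \emph{nonzero} constant; $\psi$ then deletes the first column, the new upper-left entry has label strictly greater than $l$ while the lower-right entry still has label $l$, and Lemma~\ref{lem: general matrix, 1} applies.

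A second gap: you assert that the zeroing of the first column can be done ``without making any entry of the last row a nonzero constant,'' justifying this by saying the relevant last-row labels exceed $\ell_j$. That only covers the last-row entries far from the corner; the entries near the corner have labels \emph{equal} to the first-column labels you are substituting into (that is precisely what it means for the two label sequences to agree up to index $j$), so they do become constants, possibly nonzero, and you cannot in general prevent this. The paper does not try to prevent it --- it turns this into a second case: if a nonzero constant appears in the last row during the process, the last row is deleted, the lower-right label strictly increases while the upper-left label stays at $l$, and Lemma~\ref{lem: general matrix, 1} again gives probability $>1/q$. Your proof needs both this case split and the nonzero-pivot mechanism above to go through. (The minor confusion about whether $M_{n1}$ or $M_{nn}$ carries the smallest label is harmless --- it is always $M_{n1}$ --- but it signals the same underlying imprecision.)
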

\begin{proof}
Using Lemma \ref{lem: general matrix, 1}, we may assume the upper left entry and the lower right entry are both nonzero and have the same label $l$.  Let $k$ be the smallest label that appears in the first column or last row but not both. Without loss of generality, assume $k$ appears in the first column. Note that the labels of the entries in the first column below $k$ must match the labels of the initial entries of the last row. We have 
\[P(\det{M} \mapsto 0) = \frac{1}{q^{k}}\sum_{(a_1,\cdots, a_{k}) \in \F_q^k} P(\det{M} \mapsto 0 \mid x_1=a_1,\cdots,x_{k}=a_{k}).\]
For any $a_1,\cdots,a_{k}\in \F_q$, the matrix $M'=\varphi(M; x_1=a_1,\cdots,x_{k}=a_{k})$ is nonempty and thus a reduced general Schur matrix, since the labels of the entries in the last column of $M$ are all strictly greater than $k$. Again, we show that $P(\det\varphi(M; x_1=a_1,\cdots,x_{k}=a_{k})\mapsto 0)>1/q$ for some choice of $a_1,\ldots,a_k$.  

Now assign values $b_1,\cdots,b_{k-1}\in \F_q$ to $x_1, \cdots x_{k-1}$  such that all the entries in the first column below the one with label $k$ are zero. Let $M'_i =\varphi(M; x_1=b_1,\cdots,x_{i}=b_{i})$ for $1 \leq i \leq k-1$. If in this process any entry in the last row becomes a nonzero constant, the last row of $M$ is deleted in the process of calculating $M'_{k-1} =\varphi(M; x_1=b_1,\cdots,x_{k-1}=b_{k-1})$. Suppose we delete the last row when assigning $x_t=b_t$ $(t<k-1)$. Denote the label of the entry that lies strictly above the lower right entry in $M'_{t-1}$ as $l'$. We immediately have $l'>l$. After performing $\varphi(M'_{t-1}; x_t=b_t)$, the entry with label $l'$ will become  the lower right entry in $M'_t$, while the upper left entry of $M'_t$ still has label $l$ because we never delete the first column. Since we can only increase the label of the lower right entry in $M'_i$ for $t \leq i \leq k-1$, we know the labels of the upper left and lower right entries in $M'_{k-1}$ do not match. Hence we have $P(\det{M'_{k-1}} \mapsto 0) > 1/q$ from Lemma~\ref{lem: general matrix, 1}. It follows that 

\begin{align*}
\sum_{b_k\in \F_q}P(\det{M} \mapsto 0 \mid x_1=b_1,\cdots,x_{k}=b_{k}) &= P(\det{M} \mapsto 0 \mid x_1=b_1,\cdots,x_{k-1}=b_{k-1}) \\
&= P(\det{M'_{k-1}} \mapsto 0) \\
&> 1/q.
\end{align*}\

If instead there is no nonzero constant in the first row, then we assign some value $b_k$ to $x_k$ to make the entry in the first column with label $k$ nonzero. Then look at $M'_a = \varphi(M; x_1=a_1,\cdots,x_{k-1}=a_{k-1})$. Now the upper left entry in $M'_a$ has some label strictly larger than $l$ while the lower right entry of $M'_a$ still has label $l$. Applying Lemma \ref{lem: general matrix, 1},  \[P(\det{M} \mapsto 0 | x_1=b_1,\cdots,x_{k}=b_{k}) = P(\det{M'_a} \mapsto 0) > 1/q.\]
\end{proof}
\begin{lem}\label{lem: general matrix, 3}
Let $M$ be an $n\times n$ reduced general Schur matrix with $m$ variables $x_1, \cdots, x_m$ with the following properties:

(i) All the entries on the main diagonal are nonzero.

(ii) The label of the upper left and the lower right entries are equal.

(iii) The label of any entry on the main diagonal is at least the label of the upper left entry.
If the labels of entries on the main diagonal of M are not all equal, $P(\det{M} \mapsto 0) > 1/q$.
\end{lem}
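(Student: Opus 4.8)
The plan is to mimic the averaging strategy used in Lemmas~\ref{lem: general matrix, 1} and~\ref{lem: general matrix, 2}: write $P(\det M \mapsto 0)$ as an average of conditional probabilities $P(\det M\mapsto 0\mid x_1=a_1,\dots,x_k=a_k)$ over partial assignments of the first $k$ variables, where $k$ is the (common) label of the upper-left and lower-right entries. By hypothesis~(iii) this $k$ is the smallest label appearing anywhere on the main diagonal, and since the last column has all labels strictly greater than $k$, each $M'=\varphi(M;x_1=a_1,\dots,x_k=a_k)$ is a nonempty reduced general Schur matrix, so each conditional probability is $\geq 1/q$ by Theorem~\ref{thm: general matrix, geq 1/q}. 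It therefore suffices to exhibit a single partial assignment giving conditional probability strictly larger than $1/q$.

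First I would handle the diagonal entries of label $k$. Let $j$ be the smallest index such that the $(j,j)$ entry has label $> k$; such a $j$ exists because the diagonal labels are not all equal but start at $k$ (the $(1,1)$ entry) and all diagonal entries are nonzero. By property~(iii) every diagonal entry $(i,i)$ with $i<j$ has label exactly $k$. The idea is to choose values $a_1,\dots,a_k$ for $x_1,\dots,x_{k-1}$ and then pick $x_k=a_k$ cleverly. The entries of the first $j-1$ diagonal positions, after specializing $x_1,\dots,x_{k-1}$, become of the form $x_k-c_i$ for constants $c_i$. If these constants are not all equal, I choose $x_k$ to equal one of them: this kills one diagonal entry and (using that $0$'s propagate leftward and downward in a general Schur matrix, together with the fact that the labels are strictly increasing across rows and strictly decreasing down columns, so the pivoting of $\psi$ reduces the size) forces the determinant into a situation where Lemma~\ref{lem: general matrix, 1} or~\ref{lem: general matrix, 2} applies to the resulting smaller matrix — i.e. the resulting reduced Schur matrix has a mismatch between its upper-left and lower-right labels, because the lower-right corner's label was never $k$ while we destroyed the label-$k$ diagonal entry from the top. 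Alternatively, if all the $c_i$ are equal, say equal to $c$, then choosing $x_k \neq c$ makes all $j-1$ of these diagonal entries into equal nonzero constants, which $\psi$ uses as pivots to delete $j-1$ rows and columns, strictly shrinking $M$; tracking what happens to the corners as in the proof of Lemma~\ref{lem: general matrix, 2}, one of the corner labels changes while the other does not, again triggering Lemma~\ref{lem: general matrix, 1}.

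The cleanest route may be to argue by induction on $n$: the case where the upper-left and lower-right labels differ, or a corner entry vanishes, is Lemma~\ref{lem: general matrix, 1}; otherwise, condition on $x_1,\dots,x_k$ and observe that for a generic choice the matrix $M'=\varphi(M;x_1=a_1,\dots,x_k=a_k)$ is strictly smaller than $M$ (because at least one label-$k$ diagonal entry became a nonzero constant and was pivoted away, while the label-$>k$ diagonal entry guarantees at least one row/column survives). If $M'$ still has unequal diagonal labels and satisfies (i)–(iii), apply the inductive hypothesis; if along the way a corner label got disturbed, apply Lemma~\ref{lem: general matrix, 1} directly. In every branch we obtain one partial assignment with conditional probability $>1/q$, and averaging with the remaining assignments (each $\geq 1/q$) yields $P(\det M\mapsto 0)>1/q$.

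The main obstacle I anticipate is the bookkeeping in the inductive/pivoting step: one must verify carefully that properties~(i)–(iii) are inherited by $M'$ (or else that a corner-label mismatch arises), and that $\psi$ genuinely reduces the size in the relevant cases — this requires tracking how the strictly-increasing-across-rows / strictly-decreasing-down-columns label structure interacts with the deletion of pivot rows and columns, exactly the kind of delicate argument already seen in the proof of Lemma~\ref{lem: general matrix, 2}. Once that structural invariance is nailed down, the probabilistic averaging is routine.
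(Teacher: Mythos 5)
Your setup (average the conditional probabilities over assignments of $x_1,\dots,x_k$, note each is $\geq 1/q$ by Theorem~\ref{thm: general matrix, geq 1/q}, and exhibit one that is $>1/q$) matches the paper, but the mechanism you propose for producing the strict inequality does not work, and it is not the one the paper uses. You try in every branch to manufacture a corner-label mismatch so that Lemma~\ref{lem: general matrix, 1} or~\ref{lem: general matrix, 2} applies, or to recurse. Two concrete problems: first, your claim that ``the lower-right corner's label was never $k$'' contradicts hypothesis (ii), which forces the lower-right label to equal $k$; your case analysis on the constants $c_i$ also only covers the diagonal entries in positions $1,\dots,j-1$ and ignores the $(n,n)$ entry, which has label $k$ as well. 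Second, your inductive route has an unhandled branch: after pivoting, the smaller matrix $M'$ may have \emph{all} diagonal labels equal with matching corners, in which case neither Lemma~\ref{lem: general matrix, 1} nor your inductive hypothesis applies and you only know $P(\det M'\mapsto 0)\geq 1/q$, which does not yield the strict inequality you need in that branch.

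The paper's proof gets strictness from a different place, and this is the idea your proposal is missing. Choose $b_1,\dots,b_{k-1}$ so that every entry of the first column \emph{below} the top becomes zero (if this creates a nonzero constant in the last row, Lemma~\ref{lem: general matrix, 2} already finishes). Now condition on $x_k$: with probability $1/q$ the upper-left entry also vanishes, the entire first column is zero, and the determinant is zero \emph{surely}; with probability $(q-1)/q$ the upper-left entry is a nonzero constant, $\psi$ produces a strictly smaller matrix $M''$, and the hypothesis that the diagonal labels are not all equal is used exactly here --- some diagonal entry has label $>k$, so fewer than $n$ nonzero constants appear and $M''$ is nonempty, whence $P(\det M''\mapsto 0)\geq 1/q$ by Theorem~\ref{thm: general matrix, geq 1/q}. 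The weighted average $\tfrac1q\cdot 1+\tfrac{q-1}{q}\cdot\tfrac1q>\tfrac1q$ then closes the argument; no corner mismatch and no recursion on the lemma itself is needed. Without the ``determinant surely zero'' branch and the nonemptiness role of the unequal-labels hypothesis, your argument does not go through.
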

\begin{proof}
Denote the label of the the upper left entry by $k$. For any assignment of $x_1=a_1,\cdots,x_{k-1}=a_{k-1}$, the matrix $M'=\varphi(M; x_1=a_1,\cdots,x_{k-1}=a_{k-1})$ is nonempty and thus a reduced general Schur matrix since the labels of the entries in the last column of $M$ are all strictly greater than $k-1$. Hence 
\[P(\det{M} \mapsto 0 \mid x_1=a_1,\cdots,x_{k-1}=a_{k-1}) = P(\det{M'} \mapsto 0) \geq 1/q\]
by Theorem \ref{thm: general matrix, geq 1/q}. Again, it suffices to find one conditional probability larger than $1/q$. 

Assign values $b_1,\cdots,b_{k-1}\in \F_q$ to $x_1,\cdots, x_{k-1}$ so that all entries in the first column except the topmost entry become zero. If in this process we have some nonzero constant in the last row, then by Lemma \ref{lem: general matrix, 2} we already have $P(\det{M} \mapsto 0) > 1/q$. Hence we can assume all except the rightmost entry in the last row also become zero. Let $M' = \varphi(M; x_1=b_1,\cdots,x_{k-1}=b_{k-1})$. Notice in $M'$ the labels of the upper left and lower right entries are still $k$ while labels of the other diagonal entries get larger or stay the same compared to $M$. So the new matrix $M'$ still satisfies the assumption of the lemma. 

Consider the upper left entry of $M'$ and assign $x_k$. There is a $1/q$ chance this assignment makes the upper left entry 0, in which case we have a zero column and the determinant is zero. Otherwise, we apply $\psi$ to obtain a reduced general Schur matrix $M''$ of smaller size. Notice that since there are some entries on the diagonal with label strictly larger than $k$, the number of nonzero constants in $M'$ must be strictly smaller than the its size, so $M''$ is nonempty. By Theorem \ref{thm: general matrix, geq 1/q}, $P(\det{M''}\mapsto0)\geq 1/q$. Combining, we get the conditional probability is 
\begin{align*}
P(\det{M} \mapsto 0 | x_1=b_1,\cdots,x_k=b_k) &= P(\det{M'} \mapsto 0) \\
&= 1/q \cdot 1 + (q-1)/q \cdot P(\det{M''}\mapsto0) \\
&\geq 1/q + (q-1)/q \cdot 1/q\\ 
&> 1/q.
\end{align*}
\end{proof}

Next we turn our attention to special Schur matrices and state and prove some necessary conditions for a special Schur matrix $M$ to have $P(\det{M} \mapsto 0) = 1/q$. The idea is to use proof by contradiction and find one conditional probability strictly larger than $1/q$.

\begin{lem}\label{lem: special matrix, 1}
Let $M$ be a special Schur matrix of size n. If $P(\det{M} \mapsto 0) = 1/q$, then the entries on the main diagonal of $M$ all have the same label.
\end{lem}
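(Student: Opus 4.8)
I will argue by contradiction: suppose $M$ is a special Schur matrix with $P(\det M \mapsto 0) = 1/q$ but whose main-diagonal entries do not all have the same label. The first thing I would do is extract the ``additive'' structure of labels in a special Schur matrix: condition (c) of the definition (the $2\times2$ label identity) forces $\mathrm{label}(i,j) = \rho_i + \gamma_j$, where $\rho_i := \mathrm{label}(i,1)$ and $\gamma_j := \mathrm{label}(1,j) - \mathrm{label}(1,1)$, with $\rho_1 > \cdots > \rho_n$ and $0 = \gamma_1 < \cdots < \gamma_n$. The main-diagonal labels are then $\ell_i := \rho_i + \gamma_i$, and the hypothesis is that the $\ell_i$ are not all equal.

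Next I would use Lemma~\ref{lem: general matrix, 2} to reduce to the case where the labels of the first column read bottom-to-top agree with the labels of the last row read left-to-right; in the additive notation this says $\rho_{n+1-k} = \rho_n + \gamma_k$ for all $k$, so in particular $\ell_1 = \rho_1 + \gamma_1 = \rho_n + \gamma_n = \ell_n$: the two corner diagonal entries share a common label, call it $\ell$. I would then record the following consequence of Lemmas~\ref{lem: general matrix, 1} and~\ref{lem: general matrix, 3}: if the upper-left diagonal entry has the smallest label among all diagonal entries, then $P(\det M\mapsto0) > 1/q$ (indeed, either the lower-right diagonal entry also has label $\ell$, and Lemma~\ref{lem: general matrix, 3} applies since a special Schur matrix has no zero entries, or it does not, and Lemma~\ref{lem: general matrix, 1} applies). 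Since the anti-transpose of a special Schur matrix is again special, has the same singularity probability, and has its main-diagonal labels reversed, the same holds if the lower-right diagonal entry has the smallest label. So I am reduced to the case where $\min_i \ell_i$ is attained only at interior positions.

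In this remaining case I would build a single partial assignment whose conditional probability exceeds $1/q$, following the template of the proof of Lemma~\ref{lem: general matrix, 3}. Assign the variables $x_1,\dots,x_{\ell-1}$ in order, choosing values that make every entry of the first column below the top equal to $0$ (possible because these entries have the distinct labels $\rho_2 > \cdots > \rho_n$, all of which are $< \ell$). Because the first-column and last-row labels coincide, each such assignment also acts on a last-row entry, which either becomes $0$ or becomes a nonzero constant; in the latter case the argument of Lemma~\ref{lem: general matrix, 2} already shows the conditional probability exceeds $1/q$, so I may assume the last row collapses onto its rightmost entry. Interior entries that become nonzero constants are removed by $\psi$, while the first/last rows and columns and the two corner entries survive (their labels never drop to $\le \ell-1$). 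The resulting matrix $M'$ then has its first column and last row zero except for corner entries $M'_{1,1} = x_\ell - u$ and $M'_{\mathrm{corner}} = x_\ell - v$ (with $x_\ell$ not yet assigned), and expanding along the first column and then the last row gives $\det M' = \pm(x_\ell-u)(x_\ell-v)\det N$ for a reduced general Schur matrix $N$. If the assignment can be chosen so that $u \ne v$, then $\{x_\ell = u\}\cup\{x_\ell = v\}$ forces the determinant to vanish with probability $2/q$. Otherwise $\det M' = \pm(x_\ell - u)^2\det N$, and combining $P(x_\ell = u) = 1/q$ with $P(\det N \mapsto 0) \ge 1/q$ from Theorem~\ref{thm: general matrix, geq 1/q} — after first assigning any remaining variables on which $N$ depends other than $x_\ell$, so that the two events become independent — again yields a conditional probability strictly above $1/q$. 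Either way this contradicts $P(\det M \mapsto 0) = 1/q$.

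The routine parts are the additive-label observation, the Lemma~\ref{lem: general matrix, 2} reduction, and the corner-minimum case. The main obstacle will be the interior-minimum case of the third paragraph: controlling exactly which rows and columns $\psi$ deletes during the collapse so that the corner entries and the inner block $N$ survive in the right shape, and, in the $u=v$ subcase, disentangling the single remaining variable $x_\ell$ from $\det N$ (the inner block can genuinely contain entries of label $\ge \ell$). This is precisely the kind of bookkeeping carried out in the proof of Lemma~\ref{lem: general matrix, 3}, and I expect it to go through by the same method, possibly after an extra preliminary round of assignments of the variables of label larger than $\ell$ appearing inside $N$.
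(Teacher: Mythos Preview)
Your approach can likely be pushed through, but it is far more complicated than necessary and the ``interior-minimum'' case is not complete as written. The essential point you are missing is property~(b) of special Schur matrices: every entry has zero constant term. This single property collapses the whole argument into one line, and the paper's proof exploits exactly this.

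Here is the paper's proof in full. Suppose the diagonal labels are not all equal, and let $k$ be the \emph{largest} label on the main diagonal. Assign $x_1 = x_2 = \cdots = x_{k-1} = 0$. Because each entry $x_j - f_{j-1}$ has $f_{j-1}$ with zero constant term, every entry of label $< k$ becomes exactly $0$ under this assignment (not merely some constant). Every strictly sub-diagonal entry has label $<$ the diagonal label in its column, hence label $< k$; so the matrix becomes upper triangular. And at least one diagonal entry has label $< k$ (since not all are equal and $k$ is the maximum), so that entry is $0$. Thus the conditional determinant is identically $0$, giving conditional probability $1 > 1/q$, and by Theorem~\ref{thm: general matrix, geq 1/q} the unconditional probability exceeds $1/q$.

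Compared to this, your route---invoking Lemmas~\ref{lem: general matrix, 2} and~\ref{lem: general matrix, 3} for the corner-minimum case, then building a bespoke partial assignment for the interior-minimum case---never uses property~(b), and in the $u=v$ subcase you need to justify that $P(x_\ell \ne u \text{ and } \det N \mapsto 0) > 0$ when $N$ may itself depend on $x_\ell$. This can be done (the first row of $N$ inherits labels $> \ell$ from the first row of $M$, so $\varphi(N; x_\ell = a)$ is nonempty for every $a$, and Theorem~\ref{thm: general matrix, geq 1/q} then gives the needed positivity), but you did not supply this step, and ``an extra preliminary round of assignments of the variables of label larger than $\ell$'' is not the right fix since those variables are exactly what $N$ lives on. The moral: always check which defining properties of your special object you have not yet used.
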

\begin{proof}
Assume to the contrary that the entries on the main diagonal of $M$ do not all have the same label, and let $k$ be the largest such label. Assign $0$ to $x_1,\cdots, x_{k-1}$, causing every entry with label smaller than $k$ to become 0.  Then every entry below the diagonal and at least one entry on the diagonal is $0$. Thus
\[P(\det{M} \mapsto 0 | x_1=0,\cdots,x_{k-1}=0) = 1 > 1/q.
\]
Hence $P(\det{M} \mapsto 0) > 1/q$, a contradiction. 
\end{proof}

\begin{lem} \label{lem: special matrix, 2}
Let $M$ be special Schur matrix $M$ of size $n$ with $P(\det{M} \mapsto 0) = 1/q$. Then for each diagonal below the main diagonal, the labels of the northwesternmost entry and southeasternmost entry are equal and are weakly smaller than the label of any other entry on that diagonal. 
\end{lem}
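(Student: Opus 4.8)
I would prove this by following the same "find a better-than-$1/q$ conditional probability" strategy used in the preceding lemmas, combined with the structural information now available from Lemma~\ref{lem: special matrix, 1}. By Lemma~\ref{lem: special matrix, 1}, every main-diagonal entry of $M$ has the same label, call it $\ell$; this is the starting point. Fix a diagonal $D$ strictly below the main diagonal, say the $(n-j)^{\text{th}}$ diagonal for some $j\geq 1$, with entries $M_{j+1,1}, M_{j+2,2}, \ldots, M_{n,n-j}$ running from the northwesternmost to the southeasternmost entry. The claim is that the first and last of these have equal label, and that this common label is weakly minimal along $D$. The key tool is the defining property (c) of a special Schur matrix: within any $2\times 2$ submatrix, the sum of labels on the main diagonal equals the sum of labels on the main antidiagonal. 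Applying this to the $2\times 2$ submatrix with corners at positions $(i,i-j)$, $(i,i-j+1)$, $(i+1,i-j)$, $(i+1,i-j+1)$ — whose antidiagonal lies on $D$ and whose diagonal consists of a main-diagonal entry (label $\ell$) and the entry one step below the main diagonal — propagates label relations down from the main diagonal to $D$.

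**Key steps.** First I would use property (c) repeatedly to derive an explicit formula for the label of each entry of $M$ in terms of its position: if $\mathrm{lab}(M_{ij})$ denotes the label, then $\mathrm{lab}(M_{i,j}) + \mathrm{lab}(M_{i+1,j+1}) = \mathrm{lab}(M_{i,j+1}) + \mathrm{lab}(M_{i+1,j})$ for all valid $i,j$, which says the label function is "harmonic" in the discrete sense, hence of the form $\mathrm{lab}(M_{ij}) = \alpha_i + \beta_j$ for some integers $\alpha_i,\beta_j$; combined with $\mathrm{lab}(M_{ii})=\ell$ for all $i$ this forces $\alpha_i + \beta_i = \ell$, so $\mathrm{lab}(M_{ij}) = \ell + \beta_j - \beta_i$. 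The condition (from the definition of general Schur matrix) that labels strictly increase across rows and strictly decrease down columns then says $\beta_j$ is strictly increasing. On the diagonal $D$ (entries with $i - j = j_0$ fixed), $\mathrm{lab}(M_{ij}) = \ell + \beta_j - \beta_{j+j_0}$; since $\beta$ is increasing, this is an increasing function of $j$ along $D$ only if... — here I would need to be careful, because naively that would say the label is \emph{not} constant on $D$. So in fact the correct reading is subtler: the northwesternmost and southeasternmost entries of $D$ need not literally have the minimum by this crude bound, so I would instead argue by contradiction exactly as in the earlier lemmas.

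**The contradiction argument.** Suppose the conclusion fails for some diagonal $D$ below the main diagonal — either the NW and SE entries of $D$ have different labels, or some interior entry of $D$ has strictly smaller label than both ends. In either case I would assign $0$ to $x_1, \ldots, x_{k-1}$ where $k$ is chosen just large enough to kill all entries of label $<k$, arranging that after this substitution and applying $\varphi$, the resulting reduced general Schur matrix $M'$ violates the hypothesis of Lemma~\ref{lem: general matrix, 2} (labels of the first column not matching labels of the last row) or the hypothesis of Lemma~\ref{lem: general matrix, 3}, so that $P(\det M' \mapsto 0) > 1/q$. Concretely, if the interior of $D$ dips below, I would zero out variables up through that interior label; the entries at the low interior position vanish while the NW and SE corners of $D$ survive, which forces a mismatch between the pattern of zeros down the first column and along the last row of the reduced matrix, triggering Lemma~\ref{lem: general matrix, 2}. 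Then averaging $P(\det M \mapsto 0)$ over the partial assignments — each conditional probability is $\geq 1/q$ by Theorem~\ref{thm: general matrix, geq 1/q}, and at least one is $>1/q$ — yields $P(\det M \mapsto 0) > 1/q$, contradicting the hypothesis.

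**Main obstacle.** The delicate point will be bookkeeping the label pattern after applying $\varphi$: when we substitute $0$ for the low-label variables, $\psi$ deletes rows and columns, and I must track precisely which diagonal of the \emph{shrunken} matrix the corners of $D$ land on, to be sure the mismatch needed for Lemma~\ref{lem: general matrix, 2} or the non-constancy needed for Lemma~\ref{lem: general matrix, 3} genuinely appears rather than being absorbed by the deletions. Getting the indices exactly right — in particular handling the two failure cases (unequal ends vs.\ a low interior entry) uniformly, and checking that the surviving corner entries of $D$ are indeed nonzero and retain their original labels after $\varphi$ — is where the real care is needed; the probabilistic averaging at the end is then routine given the earlier lemmas.
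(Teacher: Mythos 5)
Your high-level strategy (contradiction, partial assignment of low-label variables, then one of the earlier ``$>1/q$'' lemmas plus averaging) is the right one and matches the paper, but the concrete mechanism you propose for producing the contradiction does not work, and the part you defer as ``where the real care is needed'' is essentially the entire content of the proof. First, the ``equal labels at the two ends'' half needs no substitution at all: the northwesternmost entry of the $i^{\text{th}}$ diagonal is $M_{n-i+1,1}$ (first column) and the southeasternmost is $M_{n,i}$ (last row), so Lemma~\ref{lem: general matrix, 2} applied directly to $M$ already forces these pairs to have equal labels. This same observation defeats your proposed route to a contradiction in the ``low interior entry'' case: if you zero out $x_1,\ldots,x_k$ so that the offending interior entries of the diagonal vanish, then since each diagonal's first-column entry and last-row entry carry the \emph{same} label, the first column and last row are zeroed symmetrically and their label sequences still match. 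No mismatch between the first column and last row ever appears, so Lemma~\ref{lem: general matrix, 2} cannot be triggered; and since the main-diagonal labels are untouched (all equal to $\ell>k$), Lemma~\ref{lem: general matrix, 3} is not triggered either. The defect created by a low interior label lives strictly in the interior of the matrix, which neither lemma sees under a pure zeroing substitution.

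The paper's argument uses a genuinely different substitution: set $x_1=\cdots=x_{k-1}=0$ and $x_k=1$ (\emph{nonzero}), where $k$ is the minimal label on the first offending diagonal. The label-$k$ entries then become nonzero constants, so $\psi$ deletes their rows and columns; this column shift moves entries of strictly larger label onto the main diagonal of the shrunken matrix $M'$, while the upper-left and lower-right corners retain label $\ell$, and Lemma~\ref{lem: general matrix, 3} then yields $P(\det M\mapsto 0)>1/q$. Making this work requires the counting step you omit entirely: using the minimality of the offending diagonal $i$ one shows that $M$ has at most $i-2\leq n-3$ entries of label $k$, so $M'$ has size at least $3$ and its main-diagonal labels are genuinely not all equal. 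Without the nonzero assignment of $x_k$, the row/column deletions that create the main-diagonal label inhomogeneity never happen, so your outline as written cannot be completed. (Your additive-label observation $\mathrm{lab}(M_{ij})=\alpha_i+\beta_j$ from property (c) is correct but, as you note, inconclusive on its own.)
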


\begin{proof}
Assume not, and let $i<n$ be the smallest integer such that the label of the two entries $M_{n-i+1,1}$ and $M_{n,i}$ is not the smallest on the $i^\text{th}$ diagonal. (These two entries have the same label by Lemma \ref{lem: general matrix, 2}.) Let $k$ be the smallest of all the labels on this diagonal. Then by definition of general Schur matrix, entries above the $i^\text{th}$ diagonal all have labels strictly larger than $k$, and by our assumption $M_{n-i+1,1}$ and $M_{n,i}$ also have labels strictly larger than $k$. This shows that the entries with label $k$ must be in the first $i$ diagonals, and on the $i^{\text{th}}$ diagonal there are at most $i-2$ entries with label $k$. If there is some entry $M_{st}$ with label $k$ on the $l$th diagonal where $l<i$, by definition of general Schur matrix we can show that the entries between $M_{n-i+t,t}$ and $M_{s,s+i-n}$ on the $i$th diagonal will all have label greater than $k$. Therefore, the total number of entries in $M$ with label $k$ is also at most $i-2 \leq n-3$. 

Let $M'=\varphi(M;x_1=\cdots=x_{k-1}=0,x_k=1)$. Recall in the application of $\varphi$ rows and columns are only deleted after $x_k$ is assigned. Since in $M$ the number of entries with label $k$ is at most $n-3$, the size of $M'$ is at least $3$. Lemma \ref{lem: special matrix, 1} gives that all the entries on the main diagonal of $M$ have the same label. Now the upper left entry and the lower right entry of $M'$ still have the same label as they come from the corresponding entries of $M$, but some other entries on the main diagonal have strictly larger labels as some columns are shifted to the left. By Lemma \ref{lem: general matrix, 3}, we have $P(\det{M} \mapsto 0) > 1/q$, a contradiction.
\end{proof}

\begin{lem}\label{lem: special matrix, 3}
Let M be a special Schur matrix $M$ of size $n$ with $P(\det{M} \mapsto 0) = 1/q$. Then all the entries on the $(n-1)^\text{th}$ diagonal have the same label.
\end{lem}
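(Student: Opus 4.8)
The plan is to argue by contradiction: suppose $M$ is a special Schur matrix of size $n$ with $P(\det M \mapsto 0) = 1/q$, but the entries on the $(n-1)^\text{th}$ diagonal (the one immediately below the main diagonal, consisting of $M_{21}, M_{32}, \ldots, M_{n,n-1}$) do not all have the same label. I will exhibit a partial assignment of the variables under which the resulting conditional probability is strictly greater than $1/q$; since every conditional probability is $\geq 1/q$ by Theorem~\ref{thm: general matrix, geq 1/q} and $P(\det M \mapsto 0)$ is the average of these over the partial assignments, this contradicts $P(\det M \mapsto 0) = 1/q$. By Lemma~\ref{lem: special matrix, 1} we may already assume all entries on the main diagonal share a common label, call it $\ell$; and by Lemma~\ref{lem: special matrix, 2} applied to the $(n-1)^\text{th}$ diagonal, the two corner entries $M_{21}$ and $M_{n,n-1}$ of that diagonal have a common label $k$ which is weakly smaller than the label of every other entry on that diagonal. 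The negation of the conclusion therefore means some interior entry $M_{j,j-1}$ on this diagonal has label strictly larger than $k$.

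The key computation is to pin down $k$ in terms of $\ell$ using the defining property (c) of special Schur matrices. Looking at the $2\times 2$ submatrix on rows $\{1,2\}$ and columns $\{1,2\}$: its main diagonal entries are $M_{11}$ and $M_{22}$, both of label $\ell$, and its antidiagonal entries are $M_{12}$ (label $\ell+1$, since labels strictly increase along a row) and $M_{21}$ (label $k$). Property (c) forces $\ell + \ell = (\ell+1) + k$, i.e. $k = \ell - 1$. So every entry on the $(n-1)^\text{th}$ diagonal has label $\geq \ell - 1$, with equality at both ends, and by our contradiction hypothesis some interior entry has label $\geq \ell$. Now set $M' = \varphi(M; x_1 = \cdots = x_{\ell-2} = 0)$. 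Since all entries in the last column of $M$ have labels at least $\ell$ (they lie weakly above the main diagonal in their column, strictly to the right along their row), no rows or columns are deleted in forming $M'$, so $M'$ has the same size $n$, and the bottom two corner entries of its subdiagonal still have label $\ell-1$ while the upper-left and lower-right entries still have label $\ell$.

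The remaining step is to show $P(\det M' \mapsto 0) > 1/q$, which will follow by conditioning once more on the value assigned to the variable $x_{\ell-1}$. After further assigning $x_{\ell-1} = 0$, every entry of label $\ell-1$ vanishes; in particular $M'_{21} = M'_{n,n-1} = 0$, and all entries strictly below the subdiagonal are already zero (their labels are $<\ell-1$, hence they were killed earlier), so the resulting matrix has zero first column below the main diagonal — but I must be careful, since $M'_{11}$ has label $\ell$ and need not vanish. The cleaner route: note that after $x_1 = \cdots = x_{\ell-1} = 0$ the matrix is upper triangular below the subdiagonal and moreover $M'_{21} = 0$, so the first column is $(M'_{11}, 0, \ldots, 0)^T$; hence $\det$ of this specialization equals $M'_{11}$ times the determinant of the lower-right $(n-1)\times(n-1)$ minor, and that minor is itself a reduced general Schur matrix whose main diagonal contains at least one entry of label $\ell$ (the old interior subdiagonal entry) alongside entries of label $\ell$ at its corners — so by Theorem~\ref{thm: general matrix, geq 1/q} its determinant vanishes with probability $\geq 1/q$, while $M'_{11}$ vanishes independently with probability $1/q$, giving conditional probability at least $1/q + (q-1)/q \cdot 1/q > 1/q$ for this branch, and $\geq 1/q$ for the complementary branches where $x_{\ell-1} \neq 0$; averaging yields $P(\det M' \mapsto 0) > 1/q$. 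The main obstacle is bookkeeping: verifying precisely which entries survive $\varphi$, checking that no premature row/column deletions collapse the size, and confirming that the relevant sub-block is genuinely a reduced general Schur matrix with a label-$\ell$ entry off the corners of its diagonal; once those structural facts are nailed down the probability estimate is routine.
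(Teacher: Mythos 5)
There is a genuine gap, and it sits at the very first step of your argument. From property (c) applied to the $2\times2$ submatrix on rows and columns $\{1,2\}$ you conclude $k=\ell-1$, but this requires the label of $M_{12}$ to be \emph{exactly} $\ell+1$; the axioms only say that labels strictly increase across a row, so that label is merely $\geq\ell+1$ and you only get $k\leq\ell-1$. Worse, the contradiction scenario you then set up cannot occur at all: since labels strictly decrease down columns, each entry $M_{j+1,j}$ of the $(n-1)^{\text{th}}$ diagonal sits directly below a main-diagonal entry of label $\ell$ and therefore has label at most $\ell-1$. So no interior entry of that diagonal can ever have label $\geq\ell$. In fact, if the corner label really were $\ell-1$, then by Lemma~\ref{lem: special matrix, 2} all subdiagonal labels would be squeezed between $\ell-1$ and $\ell-1$ and the lemma would hold vacuously; the configurations your proof must actually rule out are those with corner label $k\leq\ell-2$ and some interior subdiagonal entry of label strictly between $k$ and $\ell-1$ inclusive of $\ell - 1$, and these are never addressed.

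The probabilistic step would also fail in those real configurations. Every subdiagonal entry has label $<\ell$, so the assignment $x_1=\cdots=x_{\ell-1}=0$ kills the entire $(n-1)^{\text{th}}$ diagonal and everything below it, leaving an upper-triangular matrix whose determinant is a product of the main-diagonal entries; the outcome is completely insensitive to whether the subdiagonal labels were equal, so no excess probability can be extracted this way. (Your parenthetical claim that the lower-right $(n-1)\times(n-1)$ minor carries ``the old interior subdiagonal entry'' on its main diagonal is also off: deleting row $1$ and column $1$ places $M_{22},\ldots,M_{nn}$ on that minor's diagonal, not any subdiagonal entry.) The paper's proof avoids both problems by taking $k$ to be the \emph{largest} label on the $(n-1)^{\text{th}}$ diagonal, setting $x_1=\cdots=x_{k-1}=0$ and $x_k=1$, so that the (at most $n-3$) interior entries of label $k$ become nonzero constants; $\psi$ then deletes their rows and columns, which shifts the main diagonal of the reduced matrix so that its upper-left and lower-right entries keep label $\ell$ while other diagonal entries acquire strictly larger labels, and Lemma~\ref{lem: general matrix, 3} yields a conditional probability $>1/q$. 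Some device of this kind---one that turns the offending subdiagonal entries into pivots rather than zeros---is essential.
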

\begin{proof}
 Assume not. Denote the largest label among the entries on the $(n-1)^\text{th}$ diagonal by $k$. Since $M_{2,1}$ and $M_{n,n-1}$ have the smallest label on the $(n-1)^\text{th}$ diagonal and the labels of the main diagonal entries are all equal and strictly larger than $k$, there are at most $n-3$ entries in $M$ with label $k$. Let $M'=\varphi(M;x_1=\cdots=x_{k-1}=0,x_k=1)$. Notice the size $M'$ is at least $3$ and in $M'$ the labels of upper left and lower right entries are equal as in $M$, while the labels of some other main diagonal entries are increased. So on the main diagonal, the labels of the upper left and the lower right entries are the smallest and some other entries have strictly larger labels. By Lemma \ref{lem: general matrix, 3}, we have $P(\det{M} \mapsto 0) > 1/q$, a contraction.
\end{proof}
\begin{cor}\label{cor: special matrix, 4}
Let $M$ be a special Schur matrix of size $n$. If we have $P(\det{M} \mapsto 0) = 1/q$, then on each diagonal, the labels of the entries are equal. Further, the labels of each diagonal form an arithmetic progression.
\end{cor}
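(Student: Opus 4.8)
The plan is to combine Lemmas~\ref{lem: special matrix, 1}, \ref{lem: special matrix, 2}, and \ref{lem: special matrix, 3} with an inductive descent on the size of the matrix. First I would observe that Lemma~\ref{lem: special matrix, 1} handles the main ($n^{\text{th}}$) diagonal and Lemma~\ref{lem: special matrix, 3} handles the $(n-1)^{\text{th}}$ diagonal, so the content of the corollary is really about the diagonals $1,2,\ldots,n-2$ strictly below the main diagonal. For those, Lemma~\ref{lem: special matrix, 2} already tells us that on the $i^{\text{th}}$ diagonal the two extreme entries $M_{n-i+1,1}$ and $M_{n,i}$ share a common label that is weakly minimal on that diagonal; what remains is to upgrade ``weakly minimal at the two ends'' to ``constant along the whole diagonal.''

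The key step is to reduce to a smaller special Schur matrix. Given a special Schur matrix $M$ of size $n$ with $P(\det M \mapsto 0) = 1/q$, I would let $k$ be the common label of the upper-left and lower-right entries (equal by Lemma~\ref{lem: general matrix, 2} and of value $k$ by Lemma~\ref{lem: special matrix, 1}). Assigning $x_1 = \cdots = x_{k-1} = 0$ and $x_k = c$ for a generic $c \in \F_q^\times$ and applying $\psi$ produces a matrix $M' = \varphi(M; x_1=0,\ldots,x_{k-1}=0,x_k=c)$; one checks that $M'$ is again a special Schur matrix (its entries have no constant term and the $2\times 2$ antidiagonal-label condition is preserved under $\psi$, exactly as in Proposition~\ref{prop: psi_M}) and that, because $P(\det M'\mapsto 0) = P(\det M\mapsto 0 \mid \cdots) \geq 1/q$ averages to $1/q$, in fact $P(\det M'\mapsto 0) = 1/q$ for every such $c$. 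The size of $M'$ equals $n$ minus the number of label-$k$ entries in $M$; since Lemma~\ref{lem: special matrix, 1} forces all main-diagonal labels to equal the common label of the main diagonal, which is $\geq k$ and (if $M$ is not already the trivial case) strictly larger, $M'$ is strictly smaller than $M$ whenever there is at least one entry of label $k$ off the first column, i.e. whenever some diagonal is not yet constant. Then induction on $n$ applied to $M'$ shows every diagonal of $M'$ has constant label; translating back through the column shifts induced by $\psi$ recovers constancy of every diagonal of $M$ on the portion not collapsed, and the collapsed part consists exactly of the label-$k$ entries, which are constant by construction. The base case $n \leq 2$ is immediate from Lemma~\ref{lem: special matrix, 1} and Lemma~\ref{lem: general matrix, 2}.

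Once every diagonal has a single well-defined label, say $\ell_i$ for the $i^{\text{th}}$ diagonter, I would prove the arithmetic-progression claim directly from the defining property~(c) of a special Schur matrix: for any $2\times 2$ submatrix with rows $i_1 < i_2$ and columns $j_1 < j_2$, the sum of the main-diagonal labels equals the sum of the main-antidiagonal labels. Choosing the submatrix so that its two main-antidiagonal entries lie on the same diagonal of $M$ (take $i_2 - j_1 = i_1 - j_2$, possible since consecutive diagonals of $M$ are adjacent) forces $\ell_{d} + \ell_{d'} = 2\ell_{(d+d')/2}$ for appropriate diagonal indices, i.e.\ the middle diagonal's label is the average of its neighbours'; applying this to consecutive triples $\ell_{i-1}, \ell_i, \ell_{i+1}$ yields $\ell_{i+1} - \ell_i = \ell_i - \ell_{i-1}$, so the $\ell_i$ form an arithmetic progression.

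The main obstacle I anticipate is bookkeeping in the inductive descent: I must verify carefully that the row/column deletions performed by $\psi$ when $x_1=\cdots=x_{k-1}=0,\,x_k=c$ really do remove precisely the label-$k$ entries and shift the remaining entries in a label-preserving way along diagonals, so that ``diagonal-constancy of $M'$'' pulls back to ``diagonal-constancy of $M$.'' This is the same style of argument as in Lemmas~\ref{lem: special matrix, 2} and~\ref{lem: special matrix, 3} but now must be tracked globally across all diagonals at once rather than one diagonal at a time; I expect it to go through but to require the explicit description of how $\psi$ acts on a special Schur matrix whose smallest label is isolated to the first column and last row.
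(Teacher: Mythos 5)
The arithmetic\hyphenation{pro-gres-sion}-progression step at the end of your proposal is correct, and it is worth noticing that the very same $2\times2$ label identity (property (c) of a special Schur matrix) also proves the constancy claim, which is all the paper does: Lemmas~\ref{lem: special matrix, 1} and~\ref{lem: special matrix, 3} give two \emph{adjacent} diagonals (the $n^{\text{th}}$ and $(n-1)^{\text{th}}$) on which the label is constant, and then for any $2\times2$ submatrix on rows $i,i+1$ and columns $j,j+1$ the identity reads $\mathrm{lab}(M_{i,j})+\mathrm{lab}(M_{i+1,j+1})=\mathrm{lab}(M_{i,j+1})+\mathrm{lab}(M_{i+1,j})$, where the two left-hand entries lie on a common diagonal $d$ and the right-hand entries lie on diagonals $d+1$ and $d-1$. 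So if diagonals $d$ and $d+1$ carry constant labels $\ell_d,\ell_{d+1}$, every entry of diagonal $d-1$ is forced to have label $2\ell_d-\ell_{d+1}$, and symmetrically going upward. Inducting outward from the main diagonal settles both constancy and the arithmetic progression in one stroke; Lemma~\ref{lem: special matrix, 2} and the whole size-reduction machinery are not needed.

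The inductive descent you propose instead has a genuine flaw. You take $k$ to be the common label of $M_{1,1}$ and $M_{n,n}$, which by Lemma~\ref{lem: special matrix, 1} is the label of \emph{every} main-diagonal entry. Since labels strictly decrease down columns, every entry strictly below the main diagonal has label $<k$, and since a special Schur matrix has no constant terms in its entries, the assignment $x_1=\cdots=x_{k-1}=0$ sends all of those entries to $0$ while $x_k=c\neq0$ sends every main-diagonal entry to the nonzero constant $c$. The matrix is then upper triangular with nonzero diagonal, so $\psi$ uses all $n$ pivots and deletes every row and column: $M'$ is the empty matrix, the conditional probability of vanishing is $0$ (not $1/q$), and there is no smaller special Schur matrix to induct on. Relatedly, the step ``each conditional probability is $\geq 1/q$ and they average to $1/q$, hence each equals $1/q$'' is only valid when every conditional $\varphi$-image is a nonempty reduced general Schur matrix, which is exactly what fails here. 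Even under the more charitable reading where $k$ is the globally smallest label, you would still have to verify that the surviving entries have no constant terms after substitution (they generally acquire them, e.g.\ $x_j-x_k^2\mapsto x_j-c^2$), so ``$M'$ is again a special Schur matrix'' does not come for free. I recommend discarding the descent and running the property-(c) propagation directly.
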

\begin{proof}
Use Lemma \ref{lem: special matrix, 1} and \ref{lem: special matrix, 3} and induct from the main diagonal down to the first diagonal and up to the $(2n-1)^\text{th}$ diagonal. Use the fact that by definition, for any $2\times2$ submatrix of a special Schur matrix, the sum of labels of the two entries on the diagonal equals the sum of labels of the two entries on the antidiagonal.
\end{proof}

With these lemmas in hand, we can narrow our attention to Jacobi-Trudi matrices. For the rest of this section, let $M$ be the Jacobi-Trudi matrix of a partition shape $\lambda = (\lambda_1, \lambda_2, \ldots, \lambda_n)$. We introduce the following main theorem which characterizes all the possible shapes $\lambda$ that have probability $1/q$. 

\begin{thm}\label{thm:1/q}
Suppose $P(s_\lambda\mapsto 0)=1/q$. Then $\lambda$ is either a hook, a rectangle, or a staircase.
\end{thm}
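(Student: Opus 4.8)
The plan is to combine the structural lemmas already established (Lemmas~\ref{lem: special matrix, 1}--\ref{lem: special matrix, 3} and Corollary~\ref{cor: special matrix, 4}) with a careful analysis of how the arithmetic-progression condition on diagonals constrains the partition $\lambda$. By Proposition~\ref{prop: psi_M}, the matrix $M' = \psi(M)$ is a special Schur matrix, and $P(\det M \mapsto 0) = P(\det M' \mapsto 0) = 1/q$. Applying Corollary~\ref{cor: special matrix, 4} to $M'$, we learn that on each diagonal of $M'$ the labels are constant, and the sequence of these per-diagonal labels is an arithmetic progression. The first step is to translate this into a statement about $M$ itself: since $\psi$ only deletes rows/columns containing the constant $1$ entries of the Jacobi-Trudi matrix, and these $1$'s sit in predictable positions determined by the ``jumps'' $\lambda_i - \lambda_{i+1}$, I would track exactly which rows and columns survive and read off what the arithmetic-progression condition says about the surviving entries' labels, which are of the form $\lambda_i - i + j$.

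The core combinatorial step is then: determine all partitions $\lambda$ for which $\psi(M)$ has this very rigid ``constant on diagonals, labels in arithmetic progression'' structure. I expect the argument to split on $n = \ell(\lambda)$ and on whether rows are ``far apart'' or ``close.'' If $\psi(M)$ has size $1$, we are in the hook-reducing or degenerate case; more generally, one shows that the label of entry $(i,j)$ of $M'$ depends only on $i - j$ (diagonal-constancy) forces all the $\lambda_i$ that contribute entries to a common diagonal to be equal or to differ by exactly the amount that makes labels line up. Concretely, two rows $i < i'$ of $M$ that both survive $\psi$ have entries $\lambda_i - i + j$ and $\lambda_{i'} - i' + j$; for the diagonal-label condition to hold after the deletions, one needs $\lambda_i - \lambda_{i'} \in \{i - i', \, 0\}$-type relations (equal rows, giving a rectangular block, versus rows dropping by $1$, giving a staircase block), and the arithmetic-progression condition across all diagonals then forbids mixing these two behaviors within a single shape unless the shape is globally a rectangle, globally a staircase, or a hook. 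I would organize this as: (1) reduce to the case where $M' = \psi(M)$ is $2\times 2$ or larger; (2) show the label of $M'_{ij}$ is an affine function $\alpha + \beta(j-i)$ of $j - i$ alone; (3) back-substitute through the structure of $\psi$ to pin down the differences $\lambda_i - \lambda_{i+1}$, showing each is forced to be $0$ (rectangle), eventually $1$ (staircase), or a single large jump followed by $1$'s / preceded by the column of $1$'s (hook).

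The main obstacle will be step (2)--(3): bookkeeping the effect of $\psi$ precisely. When $\lambda$ has some large gaps $\lambda_i - \lambda_{i+1}$, the Jacobi-Trudi matrix has $1$'s and $0$'s, and $\psi$ deletes those rows and columns; I need to verify that the ``arithmetic progression of diagonal labels'' of the reduced matrix cannot be achieved by a shape that is a nontrivial mixture — e.g., a fattened hook $(a^n, b^m)$ — and this requires computing the surviving labels and checking the progression fails. The cleanest route is probably to argue by contradiction: assume $\lambda$ is not a hook, rectangle, or staircase, locate the first place where the sequence of differences $(\lambda_i - \lambda_{i+1})$ deviates from a constant-$0$, constant-$1$, or single-jump pattern, and exhibit a $2\times 2$ submatrix of $\psi(M)$ violating the condition of Corollary~\ref{cor: special matrix, 4} (constant diagonal labels or the antidiagonal-sum identity), or alternatively directly produce a partial assignment $\varphi(M; x_1 = a_1, \dots, x_k = a_k)$ yielding conditional probability strictly above $1/q$ via Lemma~\ref{lem: general matrix, 3}. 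I would also use Corollary~\ref{cor:transpose} freely to assume, say, $\lambda_1 \geq \ell(\lambda)$, halving the casework.
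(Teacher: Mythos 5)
The proposal follows the paper's skeleton --- pass to $M'=\psi(M)$, invoke Proposition~\ref{prop: psi_M} and Corollary~\ref{cor: special matrix, 4}, and case on the resulting label structure --- but there is a genuine gap at the decisive step. You assert that for a nontrivial mixture such as a fattened hook $(a^p,b^m)$ the ``arithmetic progression of diagonal labels'' of the reduced matrix \emph{fails}, and that checking this is mere bookkeeping. It does not fail. For $\lambda=(a^p,b^m)$ with $a>b\geq 2$, $\psi(M)$ is a $b\times b$ matrix whose diagonals carry constant labels in arithmetic progression with common difference $1$ --- exactly the same label data as the reduced matrix of a rectangle. Likewise, in the $k\geq 2$ regime the ``generalized staircases'' (all surviving row-differences equal to $k-1\geq 2$, with two extra parameters $r$ and $p$) pass every label-based test in Lemmas~\ref{lem: special matrix, 1}--\ref{lem: special matrix, 3} and Corollary~\ref{cor: special matrix, 4}. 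So the label machinery only narrows the candidates to hooks, rectangles, fattened hooks, and generalized staircases; it cannot by itself separate rectangles from fattened hooks, or staircases from generalized staircases, and your step (3) as stated would wrongly conclude at this point.

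The paper closes this gap with an argument of a different character: for fattened hooks (Lemma~\ref{lem: 6.9}) and for the $k\geq 2$ shapes, one writes two entries on the same low diagonal of $\psi(M)$ as $x_{\ell}-G$ and $x_{\ell}-H$ where $G\neq H$ as polynomials, and exhibits an explicit partial assignment of the $h_i$ under which $G$ vanishes while $H=1$; Lemma~\ref{lem: general matrix, 2} then yields conditional probability strictly greater than $1/q$. This exploits the actual polynomial entries rather than their labels, and it is precisely where the staircase conditions $r=p=1$, $k=2$ are isolated. Your fallback clause (``or alternatively directly produce a partial assignment\dots'') gestures at the right tool, but without recognizing that this is the main content of the proof --- and without specifying the assignments --- the argument stalls exactly on the shapes the theorem must exclude. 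You would also need something like Lemma~\ref{lem: 6.8} to show that $k=1$ together with $\lambda_n\geq 2$ forces $\lambda=(\lambda_1^{m},\lambda_n^{\,n-m})$ before the fattened-hook elimination can even be invoked.
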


Denote the Jacobi-Trudi matrix for $s_\lambda$ by $M$. We divide the proof of this theorem into cases.

\textbf{Case 1:} No entry of $M$ is constant. This implies that every entry in $M$ is a variable. Therefore $M$ is a special Schur matrix. Then by Corollary~\ref{cor: special matrix, 4}, the labels of the entries on the main diagonal are equal, which implies that $\lambda_1=\lambda_2=\cdots=\lambda_n$. Thus $\lambda$ is a rectangle. 

\textbf{Case 2:} At least one entry of $M$ is constant. In this case we first state some special cases of $M$, and then generalize all the possible shapes that have probability $P(\det{M} \mapsto 0) = 1/q$.

\begin{lem}\label{lem: psi_M_size}
If $M$ contains at least one constant, then $\psi(M)$ is at least $\lambda_n\times\lambda_n$. 
\end{lem}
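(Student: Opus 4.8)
The plan is to determine exactly how many rows and columns $\psi$ removes from $M$, to recognize this number as the side length of the Durfee square of $\lambda$, and then to compare that number with $\lambda_n$.

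First I would locate the constant entries of $M$. By Jacobi--Trudi, $M=(h_{\lambda_i-i+j})_{i,j=1}^n$, so $M_{ij}$ is a nonzero constant precisely when $\lambda_i-i+j=0$, i.e. $j=i-\lambda_i$, in which case $M_{ij}=h_0=1$. For each row $i$ there is at most one such $j$, and it is a legal column index (that is, $1\le i-\lambda_i\le n$) if and only if $\lambda_i<i$; here the upper bound $i-\lambda_i\le n$ is automatic since $i\le n$ and $\lambda_i\ge 1$. Hence the number of nonzero constant entries of $M$ is $k:=\#\{i\in[n]:\lambda_i<i\}$, and since the map $i\mapsto i-\lambda_i$ is strictly increasing (because $\lambda$ is weakly decreasing), these constants lie in $k$ distinct rows and $k$ distinct columns. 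By the definition of $\psi$, which deletes exactly one row and one column for each nonzero constant entry of its input, $\psi(M)$ has size
\[
 n-k=\#\{i\in[n]:\lambda_i\ge i\}=:d .
\]
Because $\lambda$ is weakly decreasing, $\lambda_i\ge i$ forces $\lambda_{i'}\ge\lambda_i\ge i>i'$ for every $i'<i$, so $\{i:\lambda_i\ge i\}=\{1,\dots,d\}$; thus $d$ is the largest index with $\lambda_d\ge d$, i.e. the side length of the Durfee square of $\lambda$.

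It then remains to check that $d\ge\lambda_n$. Since $M$ contains at least one constant, $k\ge 1$, so $d\le n-1$ and the part $\lambda_{d+1}$ exists. As $d+1\notin\{1,\dots,d\}$, we have $\lambda_{d+1}<d+1$, i.e. $\lambda_{d+1}\le d$; and since $\lambda$ is weakly decreasing, $\lambda_n\le\lambda_{d+1}\le d$. Therefore $\psi(M)$ has size $d\ge\lambda_n$, which is exactly the assertion of the lemma.

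The argument is short and I do not anticipate a serious obstacle; the only point needing a little care is confirming that $\psi$ removes exactly $k$ rows and columns. This is immediate from the definition of $\psi$ as a single pass through the constant entries present in its input, together with the observation above that the $k$ constants of $M$ occupy distinct rows and columns; it is also consistent with Proposition~\ref{prop: psi_M}, which guarantees that $\psi(M)$ is a special Schur matrix and hence has no constant entries. If one wishes to be completely explicit, a routine induction on labels (exactly as in the proof of Proposition~\ref{prop: psi_M}) shows that the row and column operations carried out by $\psi$ on a Jacobi--Trudi matrix never introduce new constant entries, so no further rows or columns are deleted.
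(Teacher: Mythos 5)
Your proof is correct, and it takes a sharper route than the paper's. The paper only needs to know that no constant of $M$ sits in the last $\lambda_n$ columns: since the unique constant in row $i$ occurs at column $i-\lambda_i$ and this index is maximized at $i=n$, all the $1$'s live in columns $\le n-\lambda_n$, so the last $\lambda_n$ columns survive $\psi$ and the lemma follows in three lines. You instead count the constants exactly, showing $\psi(M)$ has size $n-k$ where $k=\#\{i:\lambda_i<i\}$, identify $n-k$ as the side length $d$ of the Durfee square of $\lambda$, and then check $d\ge\lambda_n$ via $\lambda_n\le\lambda_{d+1}\le d$. This buys a strictly stronger statement (the exact size of $\psi(M)$, which is consistent with Remark~\ref{rem: psi_M_size}) at the cost of slightly more bookkeeping. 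One small remark: your closing worry about new constants appearing during the row and column operations is immaterial for this lemma, since step (iii) of the definition of $\psi$ deletes only the rows and columns of the \emph{original} $k$ constants, so the size $n-k$ is forced by the definition regardless; the absence of new constants matters only for Proposition~\ref{prop: psi_M}, where it is needed to conclude that $\psi(M)$ is a reduced general Schur matrix.
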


\begin{proof}
Since $M$ contains constants, the last row must have at least one constant. Hence $M$ must have at least $\lambda_n$ columns. Since the last $\lambda_n$ columns do not contain any constants, they will not be deleted by the operation $\psi$. Hence $\psi(M)$ is at least $\lambda_n\times\lambda_n$.
\end{proof}

\begin{rem}\label{rem: psi_M_size}
We see that $\psi(M)$ is exactly $\lambda_n\times\lambda_n$ if and only if each of the first $n - \lambda_n$ columns of $M$ contains a nonzero constant, i.e., each of them will be deleted by $\psi$. 
\end{rem}

\begin{lem} \label{lem: 6.8}
If $\lambda_n\geq 2$ and $P(\det{M} \mapsto 0) = 1/q$, then $\lambda = (\lambda_1^{m}, \lambda_n^{n-m})$ where $1\leq m \leq n$. 
\end{lem}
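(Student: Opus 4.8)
The plan is to transfer the hypothesis $P(s_\lambda\mapsto0)=1/q$ to the special Schur matrix $\psi(M)$ and then invoke Corollary~\ref{cor: special matrix, 4}. Since we are in Case~2, $M$ has a nonzero constant, so by Proposition~\ref{prop: psi_M}, $\psi(M)$ is a special Schur matrix, and $P(\det\psi(M)\mapsto0)=P(s_\lambda\mapsto0)=1/q$. The first step is to read off the combinatorial shape of $\psi(M)$ from $\lambda$. In $M=(h_{\lambda_i-i+j})$, the entry $M_{i,j}$ equals the constant $1$ exactly when $j=i-\lambda_i\ge1$, i.e. exactly in the rows with $\lambda_i<i$; hence $\psi$ pivots on these $1$'s and deletes precisely the rows $r+1,\dots,n$ together with the columns $\{\,i-\lambda_i:r<i\le n\,\}$, where $r:=\max\{i:\lambda_i\ge i\}$ (and $2\le r<n$: the lower bound because $\lambda_2\ge\lambda_n\ge2$, the upper because $M$ has a constant). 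The point requiring care is that every row and column operation performed by $\psi$ merely replaces an entry $h_c$ by $h_c$ minus sums of products $h_ah_b$ with $a,b\ge1$, and no such product equals $h_c$; so $\psi$ introduces no new constant entries and preserves the label of every surviving entry. It follows that $\psi(M)$ is $r\times r$, its rows are the original rows $1,\dots,r$, its columns are the surviving columns $c_1<\dots<c_r$, and the entry in original position $(a,c_b)$ still has label $\lambda_a-a+c_b$.

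Now Corollary~\ref{cor: special matrix, 4} tells us that each diagonal of $\psi(M)$ carries a single label. Applying this to each adjacent pair of diagonal positions gives $\lambda_a-a+c_b=\lambda_{a+1}-(a+1)+c_{b+1}$ for all $1\le a,b\le r-1$, i.e. $c_{b+1}-c_b=1+\lambda_a-\lambda_{a+1}$. The left side depends only on $b$ and the right only on $a$, so both equal a constant $D\ge1$; thus $\lambda_1,\dots,\lambda_r$ is an arithmetic progression with $\lambda_a-\lambda_{a+1}=D-1$ and $c_b=c_1+(b-1)D$. Since $i-\lambda_i\le n-\lambda_n<n$, the column $n$ is never deleted, so $c_r=n$ and $c_1=n-(r-1)D$.

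It remains to force $D=1$. If instead $D\ge2$, then $c_{r-1}=n-D$, so $n-1$ lies strictly between the consecutive surviving columns $c_{r-1}$ and $c_r$ and is therefore deleted; as $i\mapsto i-\lambda_i$ is strictly increasing on $\{r+1,\dots,n\}$, the largest deleted column is $n-\lambda_n$, which forces $n-\lambda_n=n-1$ and $\lambda_n=1$, contradicting $\lambda_n\ge2$. Hence $D=1$, so $\lambda_1=\dots=\lambda_r$ and the deleted columns are exactly $\{1,\dots,n-r\}$; monotonicity of $i\mapsto i-\lambda_i$ then gives $\lambda_i=r$ for every $i>r$. Therefore $\lambda=(\lambda_1^{\,r},r^{\,n-r})$ with $\lambda_1\ge\lambda_r\ge r=\lambda_n$, which is the claimed form $(\lambda_1^{\,m},\lambda_n^{\,n-m})$ with $m=r$ and $1\le m\le n$.

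The main obstacle is the bookkeeping in the first step: one must check, by induction along the construction of $\psi$, that exactly the asserted rows and columns are deleted and that all labels survive, so that $\psi(M)$ really does record the exponents $\lambda_a-a+c_b$. Once that is in hand, Corollary~\ref{cor: special matrix, 4} does the heavy lifting and the rest is the elementary arithmetic above, with $\lambda_n\ge2$ entering only to eliminate the case $D\ge2$.
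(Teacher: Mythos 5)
Your proof is correct and follows essentially the same route as the paper's: pass to the special Schur matrix $\psi(M)$, invoke Corollary~\ref{cor: special matrix, 4} to force the labels of consecutive diagonals to differ by a constant, use $\lambda_n\ge 2$ to pin that constant to $1$, and then read off from the positions of the $1$'s that $\lambda$ has only two distinct part sizes. The only cosmetic difference is that you rule out a common difference $D\ge 2$ by contradiction (column $n-1$ would be deleted, forcing $\lambda_n=1$), whereas the paper argues directly that the last two columns of $M$ survive and their labels in the last row differ by $1$; these are the same observation.
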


\begin{proof} 
Let $M'=\psi(M)$, and denote the size of $M'$ by $m$. By Corollary $\ref{cor: special matrix, 4}$, the labels of each diagonal of $M'$ form an arithmetic progression. We denote the common difference by $k$. Since $\lambda_n\geq 2$, we have $M_{nn} = h_{\lambda_n}$, $M_{(n-1)n} = h_{\lambda_{n}-1}$, and they are both non-constant. Therefore, the entries $M'_{(m-1)m}$ and $M'_{mm}$ come from the last two columns of $M$, and the difference between their labels is $1$. We hence have $k=1$, which means the labels in each row of $M'$ form a consecutive sequence. It follows that the $1$'s in the original matrix $M$ are in the leftmost $n-m$ rows, otherwise the difference between some adjacent entries in the same row of $M'$ would be at least two. We already know the $1$'s are in the bottom $n-m$ rows of $M$, so we can divide $M$ into four blocks as shown below.

$$\left[
\begin{array}{cccc|cccc}
  h_{\lambda_1} & h_{\lambda_1+1} & \cdots & & h_{a} & h_{a+1} & \cdots \\
  \vdots & h_{\lambda_2} & \cdots & & h_{a-1} & h_{a} & \cdots \\
  & & \ddots & & \vdots & & \ddots \\
  \hline
  1 & h_1 & h_2 & \cdots &  \ddots \\
  0 & 1 & h_1 & \cdots & \\
  \vdots & & \ddots & & & \vdots & \vdots \\
  0 & \cdots & 0 & 1 & \cdots & h_{\lambda_n-1} & h_{\lambda_n}\\
\end{array}
\right]$$

Notice that all the 1's in $M$ must appear consecutively along the diagonal in the lower left block of $M$, and all the entries in $M'$ come from the block $M[[m],\{n-m+1,\cdots,n\}]$ in the upper right corner of $M$. Since $n-m$ is the number of 1's, we have $\lambda_1 = \lambda_2 = \cdots = \lambda_m$ and $\lambda_{m+1}=\lambda_{m+2}=\ldots=\lambda_n$, which gives the desired result. 
\end{proof}

The previous lemma says that if $\lambda =(\lambda_1,\cdots,\lambda_n)$ with $\lambda_n \geq 2$ and $P(s_\lambda \mapsto 0)=1/q$, then $\lambda$ must be a rectangle or a fattened hook. We next show it cannot be the latter. 

\begin{lem} \label{lem: 6.9}
Let $M$ be the Jacobi-Trudi matrix corresponding to a partition shape $\lambda = (a^p, b^m)$ where $p, m\in \Z^+$  and $a > b \geq 2$. Then $P(\det{M} \mapsto 0) > 1/q$. 
\end{lem}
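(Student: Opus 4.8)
\textbf{Proof proposal for Lemma~\ref{lem: 6.9}.} The plan is to exhibit a single partial assignment of variables whose conditional probability of making $\det M$ vanish exceeds $1/q$; since Theorem~\ref{thm: general matrix, geq 1/q} guarantees every conditional probability is at least $1/q$, averaging over the remaining assignments then forces $P(\det M \mapsto 0) > 1/q$. The shape $\lambda = (a^p, b^m)$ with $a > b \geq 2$ has a Jacobi-Trudi matrix $M$ of size $n = p+m$ in which the bottom $m-1$ rows each contain a single constant $1$, positioned consecutively along a diagonal, and the top $p+1$ rows contain only $h$'s. Let $M' = \psi(M)$; by Lemma~\ref{lem: psi_M_size} and Remark~\ref{rem: psi_M_size}, $M'$ has size exactly $p + b$ (the first $n - b = p + m - b$ columns each carry a nonzero constant and get deleted), and by Proposition~\ref{prop: psi_M} it is a special Schur matrix. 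The key structural point is that in $M'$ the labels on each diagonal need not form an arithmetic progression with common difference $1$: the jump between the block of rows coming from the $a$-part and the block coming from the $b$-part creates a gap of size $a - b \geq 1$ somewhere among the row labels.

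First I would make this gap explicit. Writing out $M'$, the entry $M'_{ij}$ is $h_{\lambda_{?} - ? + ?}$ and the labels along, say, the main diagonal will take the form of $p$ copies of one value followed by a different pattern for the remaining $b$ rows, with a discontinuity; concretely, because $a - b \geq 1$, some pair of horizontally adjacent entries in a row of $M'$ differ in label by at least $2$. I would then invoke Corollary~\ref{cor: special matrix, 4}: if $P(\det M' \mapsto 0) = 1/q$ then the labels on every diagonal of $M'$ are equal and the diagonals' labels form an arithmetic progression — in particular adjacent entries in a row differ by a constant $k$, and (as in the proof of Lemma~\ref{lem: 6.8}, using that $b \geq 2$ forces the last two columns of $M$ to be non-constant and land in $M'$) that constant must be $k = 1$. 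This contradicts the label gap of size $a - b \geq 1 > 0$ we just located. Hence $P(\det M' \mapsto 0) > 1/q$, and since $P(\det M \mapsto 0) = P(\det M' \mapsto 0)$ by the basic property of $\psi$, we are done.

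The main obstacle is bookkeeping the labels in $M' = \psi(M)$ precisely enough to be sure the arithmetic-progression-with-difference-$1$ conclusion of Corollary~\ref{cor: special matrix, 4} genuinely fails. One has to track which columns of $M$ survive $\psi$ (the last $b$ columns certainly, but also possibly some columns strictly between the constant region and the last $b$), and what row operations $\psi$ performs when it eliminates the $1$'s; the labels of the surviving entries are inherited from $M$ up to the relabeling caused by column deletions, so I would argue that the relative label structure within any single row of $M'$ is exactly that of the corresponding row of $M$ restricted to surviving columns, and in that row the step from the last column of the "$a$-block" region to the first column of the "$b$-block" region is $\geq 2$ precisely when $a > b$. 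If it turns out that $\psi$ can delete enough interior columns to smooth over the gap, a fallback is to instead assign $x_1 = \cdots = x_{\lambda_n - 1} = 0$ followed by one more carefully chosen value, mimicking the proofs of Lemmas~\ref{lem: special matrix, 2} and~\ref{lem: special matrix, 3}, so as to produce a matrix satisfying the hypotheses of Lemma~\ref{lem: general matrix, 3} with unequal diagonal labels; that route also yields $P(\det M \mapsto 0) > 1/q$.
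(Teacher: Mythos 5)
Your main route does not work, and it fails precisely in the case the lemma is actually needed for. Take $\lambda=(3,3,2,2)$, so $a=3$, $b=p=2$, $m=2$. The constants $1$ sit at positions $(3,1)$ and $(4,2)$, so $\psi(M)$ is the $2\times2$ matrix built from rows $1,2$ and columns $3,4$, with labels $5,6$ in its first row and $4,5$ in its second row: a perfect arithmetic progression with common difference $1$, constant along each diagonal, and indistinguishable by labels from the Jacobi-Trudi matrix of the rectangle $(5,5)$. The same happens for every fattened hook $(a^p,b^m)$ with $p=b$ (and following the proof of Lemma~\ref{lem: 6.8}, the hypotheses $P=1/q$ and $\lambda_n\geq2$ already force the $1$'s to occupy exactly the leftmost columns, i.e.\ $p=b$, so this is the only case with real content; it includes the $(4,4,2,2)$ of Proposition~\ref{prop: quasi-polynomial}). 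The ``gap of size $a-b$'' you hope to locate is simply not visible in the labels of $\psi(M)$: the columns deleted by $\psi$ are exactly the ones that would have exhibited it, so Corollary~\ref{cor: special matrix, 4} yields no contradiction. What distinguishes $(a^b,b^m)$ from a rectangle lives entirely in the lower-order polynomial tails $f_{k-1}$ of the entries $x_k-f_{k-1}$, which carry no label information. (Relatedly, your size count ``$M'$ has size exactly $p+b$'' is wrong --- for $(3,3,2,2)$ it is $2\times2$, not $4\times4$ --- which suggests the bookkeeping was not tested on an example.)

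Because of this, the paper's proof must do something your proposal avoids: it computes enough of the two equal-label entries on the diagonal just above the lower-left corner of $\psi(M)$, namely $h_{m+k+2}-G_{m+k+1}$ and $h_{m+k+2}-H_{m+k+1}$, to exhibit an explicit assignment ($h_1=\cdots=h_m=h_{m+k+1}=0$ and $h_{m+1}=h_{k+1}=1$, consistent after using Corollary~\ref{cor:transpose} to assume $k\geq m$) under which $G$ evaluates to $0$ while $H=h_{m+1}h_{k+1}$ evaluates to $1$. The two entries, which Lemma~\ref{lem: special matrix, 2} forces to behave symmetrically whenever $P=1/q$, then evaluate to different polynomials, and an application of Lemma~\ref{lem: general matrix, 2} finishes. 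Your fallback gestures in this direction, but it aims at producing ``unequal diagonal labels'' so as to invoke Lemma~\ref{lem: general matrix, 3}; the labels remain equal no matter what partial assignment you make, and the asymmetry you need is in the \emph{values} of the polynomial tails. Exhibiting that asymmetry requires the explicit computation that is the real substance of the proof, and it is missing from the proposal.
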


\begin{proof}
By Corollary $\ref{cor: geq 1/q}$, we only need to find 
some $c_1,\cdots,c_j \in \F_q$ such that $P(M\mapsto 0 \mid h_1=c_1,\cdots,h_j=c_j)>1/q$. 

Let $k=a-b \geq 1$. We can draw the partition shape $\lambda$ as shown in Figure~\ref{fig:lambda1}. It suffices to prove the result for $k \geq m$ by Corollary \ref{cor:transpose}.

\begin{figure}[h!]
\includegraphics[scale=0.25]{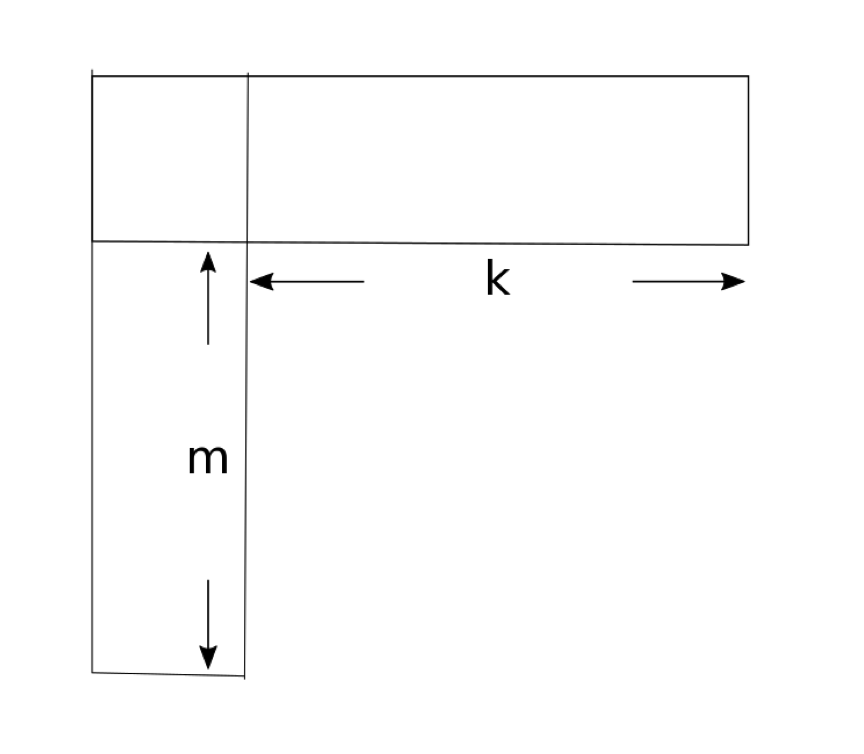}
\caption{Partition shape of $\lambda$}\label{fig:lambda1}
\end{figure} 

The Jacobi-Trudi matrix $M$ is of the following form: 
$$M=\left[
\begin{array}{cccccccc}
  h_{a} & &\cdots &  \\
   & \ddots & \\
\vdots & & h_a & \cdots & \\
 & & \cdots & h_b & \cdots & \\
   1 & \cdots & &&\ddots & & \\
   & \ddots &&&& \ddots \\
   &  & 1 & \cdots &&& h_b
\end{array}
\right].$$

In particular, the main diagonal contains only $h_a$'s and $h_b$'s, and there is an upper triangular matrix with constant $1$ on the main diagonal at the lower left corner. Zooming in, we have the matrix below.

\begin{equation}\label{eqn:M}  
M = \begin{tikzpicture}[baseline={([yshift=0pt]current bounding box.center)},decoration=brace,align=center]
 \matrix(m) [matrix of math nodes,left delimiter=[,right delimiter={]},text width=2.75em] {
{h_a}    & \cdots  & \phantom{|} & \phantom{|} & h_{a+m-1} & h_{a+m} & \phantom{|} & \phantom{|}& \phantom{|} & {\phantom{|}}\\
\vdots &    &    & \ddots         & \vdots   & \vdots      &  & \ddots & & \vdots \\
h_{k+2}     & \cdots & & h_{m+k}      & h_{m+k+1}      & h_{m+k+2}      & \cdots & \\
h_{k+1} & h_{k+2}  &   & \cdots & h_{m+k}  & h_{m+k+1}      & h_{m+k+2}      & \cdots & &  \phantom{|}\\ 
1      & h_1        & h_2  & \cdots  & h_{m-1}  & h_m & h_{m+1} & h_{m+2} & \cdots & h_{m+b-1}\\
0 & 1 & h_1 & \cdots & h_{m-2} & h_{m-1} & h_m & h_{m+1} & \cdots & h_{m+b-2}\\
\vdots &              & \ddots   &      &\vdots    & \vdots & & \vdots \\
0   &   & \cdots & 0        & 1        & h_1 & h_2 &  & \cdots     & {h_b} \\
  };
    \draw[decorate,transform canvas={yshift=0.5em}, thick] (m-1-1.north west) -- node[above=2pt] {$m$ columns} (m-1-5.north east);
    \draw[decorate,transform canvas={yshift=0.5em}, thick] (m-1-6.north west) -- node[above=2pt] {$b$ columns} (m-1-10.north east);
    \draw[decorate,transform canvas={xshift=1.2em}, thick] (m-1-10.north east) -- node[right=2pt] {$b$ rows} (m-4-10.south east);
    \draw[decorate,transform canvas={xshift=1.2em}, thick] (m-5-10.north east) -- node[right=2pt] {$m$ rows} (m-8-10.south east);
    \draw[thick] (m-1-5.north east) -- (m-8-5.south east);
    \draw[thick] (m-5-1.north west) -- (m-5-10.north east);
\end{tikzpicture} 
\end{equation}

Notice neither the block matrix at the top left corner nor the one at the lower right corner is a square matrix. The form of $M$ gives
$$\psi(M) = \begin{bmatrix}
\vdots \\
h_{m+k+2} - G_{m+k+1} & \iddots \\
h_{m+k+1} - F_{m+k} & h_{m+k+2} - H_{m+k+1} & \cdots 
\end{bmatrix}$$
where $\psi(M)$ is $b\times b$, $F_{m+k}$ is a polynomial of $h_1$ through $h_{m+k}$ with zero constant term and $G_{m+k+1},H_{m+k+1}$ are polynomials of $h_1$ through $h_{m+k+1}$ with zero constant terms. 

We assign $h_1=h_2=\cdots=h_m = h_{m+k+1} = 0$ and  $h_{m+1} = h_{k+1} = 1$. Since $k+1>m$, this assignment is valid in the sense that no single variable is assigned two different values. Under this assignment, we have 
$$M' = \left[
\begin{array}{ccccc|cccc}
h_a    & \cdots  & & & &\\
\vdots &    &    &          & \ddots   &          & \vdots \\
h_{k+2}     & \cdots & & h_{m+k}      & 0      & h_{m+k+2}      & \cdots \\
1 & h_{k+2}  &   & \cdots & h_{m+k}      & 0      & h_{m+k+2}      & \cdots \\ \hline
1       & 0        & 0  & \cdots  & 0  & 0 & 1 & h_{m+2} & \cdots \\
0 & 1 & 0 & \cdots & 0 & 0 & 0 & 1 & \cdots \\
\vdots &              & \ddots   &      &\vdots    & \vdots & \vdots \\
0   &   & \cdots & 0        & 1        & 0 & 0 & \cdots     & h_b \\
\end{array}
\right]
$$
We can now see that in $\psi(M')$, we have $h_{m+k+1} - F_{m+k} = 0$, $G_{m+k+1}=0$ while $H_{m+k+1} = h_{m+1}h_{k+1} = 1\neq 0$. Therefore by Lemma $\ref{lem: general matrix, 2}$ we have $P(\det{M} \mapsto 0) > 1/q$. 
\end{proof}

\begin{proof}[Proof of Theorem~\ref{thm:1/q}]
 
By Corollary \ref{cor: geq 1/q}, for any shape $\lambda$ that is not a hook, a rectangle or a staircase, we only need to show that there exist some $c_1,\cdots,c_j \in \F_q$ such that $P(M\mapsto 0 \mid h_1=c_1,\cdots,h_j=c_j)>1/q$, where $M$ is its Jacobi-Trudi matrix.

Given a partition shape $\lambda = (\lambda_1, \lambda_2, \ldots, \lambda_n)$ that is not a hook, a rectangle or a staircase, we perform $\psi$ on its Jacobi-Trudi matrix $M$ and denote $M'=\psi(M)$. By Proposition~\ref{prop: psi_M}, $M'$ is a special Schur matrix. By Corollary~\ref{cor: special matrix, 4}, we can assume the labels of each diagonal of $M'$ form an arithmetic progression. We denote the common difference of the labels of two consecutive entries by $k$. 

\textbf{Subcase 1:} $k=0$. This means that $\psi(M)$ is a $1\times1$ matrix. By Lemma~\ref{lem: psi_M_size}, $\lambda_n=1$. By Remark~\ref{rem: psi_M_size}, every column except the last column of $M$ contains 1. This implies that $\lambda_2=\lambda_3=\cdots=\lambda_n=1$. Hence $\lambda$ is a hook. Since we assume that $\lambda$ is not a hook, this case will not happen.

\textbf{Subcase 2: }$k=1$. In order to maintain the probability $1/q$, we claim that $\psi(M)$ must be $\lambda_n\times\lambda_n$. If $\psi(M)$ contains at least $\lambda_n+1$ columns, then the $(\lambda_n+1)^\text{th}$ column of $M'$ counted from the right cannot come from the $(\lambda_n+1)^\text{th}$ column of $M$ from the right since this column contains $1$, so it is at least one column away from the last $\lambda_n$ columns in $M$. In this case, the labels in $\psi(M)$ cannot form an arithmetic progression, so $P(\det{M} \mapsto 0) > 1/q$ by Corollary~\ref{cor: special matrix, 4}. Since $\psi(M)$ is at least $2\times2$, $\lambda_n\geq 2$, and $\lambda$ is not a rectangle, it follows from Lemmas $\ref{lem: 6.8}$ and \ref{lem: 6.9} that $P(\det M \mapsto 0)>1/q$. 

\textbf{Subcase 3: } $k\geq 2$. In Subcase 2 and the proof of Lemma 5.11 we have shown that $k=1$ if and only if $\lambda_n \geq 2$. Hence in this subcase, $\lambda_n=1$, which implies $M_{n(n-1)}=1$. By the fact that  $k$ is the common difference of the labels of two consecutive entries in $M$, there must be $k-1$ columns between any two columns in $M$ that do not contain $1$. This means that between any two columns with no nonzero constants, there are $k-1$ columns that have nonzero constants. The constant 1's must appear consecutively starting from the bottom to the top along some diagonal. Therefore, we can write the lower right corner of $M$ in the following form: 

\begin{align*}
M_{LR} &= \begin{vmatrix}
0 & 1 & h_1 & \cdots & h_{k-2} & h_{k-1} & \cdots & & \\
0 & 0 & 1 && \cdots & h_{k-2} & \cdots \\
&&&\ddots \\
&&&& 1 & h_1 & \cdots\\
&&&&& 0 & 1 & \cdots \\
&&&&&&& \ddots \\
&&&&&&&&1 & h_1
\end{vmatrix}
\end{align*}
where $\psi(M)$ has size $b$ and $M_{LR}$ is a $(kb-k-b+1)\times (kb-b+1)$ matrix. Since $M_{LR}$ includes all the columns not deleted by the operation $\psi$, the rest of the columns must contain constant 1, and they must appear on the left side of $M$. Assume there are $p$ such columns, then similar to Equation~\ref{eqn:M} in Lemma~\ref{lem: 6.9} we can write $M$ in the following form: 

$$
\newcommand*{\temp}{\multicolumn{1}{c|}{}}
M = \left[\begin{array}{cccc|cccccc}
\vdots & & \ddots & & & & & & & \\
h_{r+k+1}      & & \cdots  &  h_{r+p+k}   & h_{r+p+k+1}   &\cdots    \\
h_{r+1}   & \cdots & & h_{r+p}      & h_{r+p+1}   & & \cdots  & & h_{r+p+k+1}   & \cdots \\ \hline
1       & h_1           & \cdots & h_{p-1}   & h_p & h_{p+1} & \cdots & & h_{p+k} & \cdots \\
0 & 1 & h_1 & \vdots & h_{p-1} & h_p  & \cdots & \vdots \\
\vdots &              & \ddots   &          & \vdots & \vdots \\
0      & \cdots & 0   & 1        & h_1 &  h_2 & \cdots     & h_k & h_{k+1} & \cdots \\ \hline
&&&\temp\\
&0&&\temp&&& M_{lr}\\
&&&\temp
\end{array}\right]
$$
where $r$ is the difference between two parts of the partition.

As shown above, there are $(k-1)(b-1)$ $1$'s in $M_{LR}$. Also notice that starting from the row with $h_{r+1}$ and going to the top, the difference in labels between two consecutive rows is exactly $k$, showing the difference between the corresponding $\lambda_i$ and $\lambda_{i+1}$ is $k-1$. Then we can draw the partition shape $\lambda$ as shown in Figure~\ref{fig:lambda}.

\begin{figure}[h!]
\includegraphics[scale=0.2]{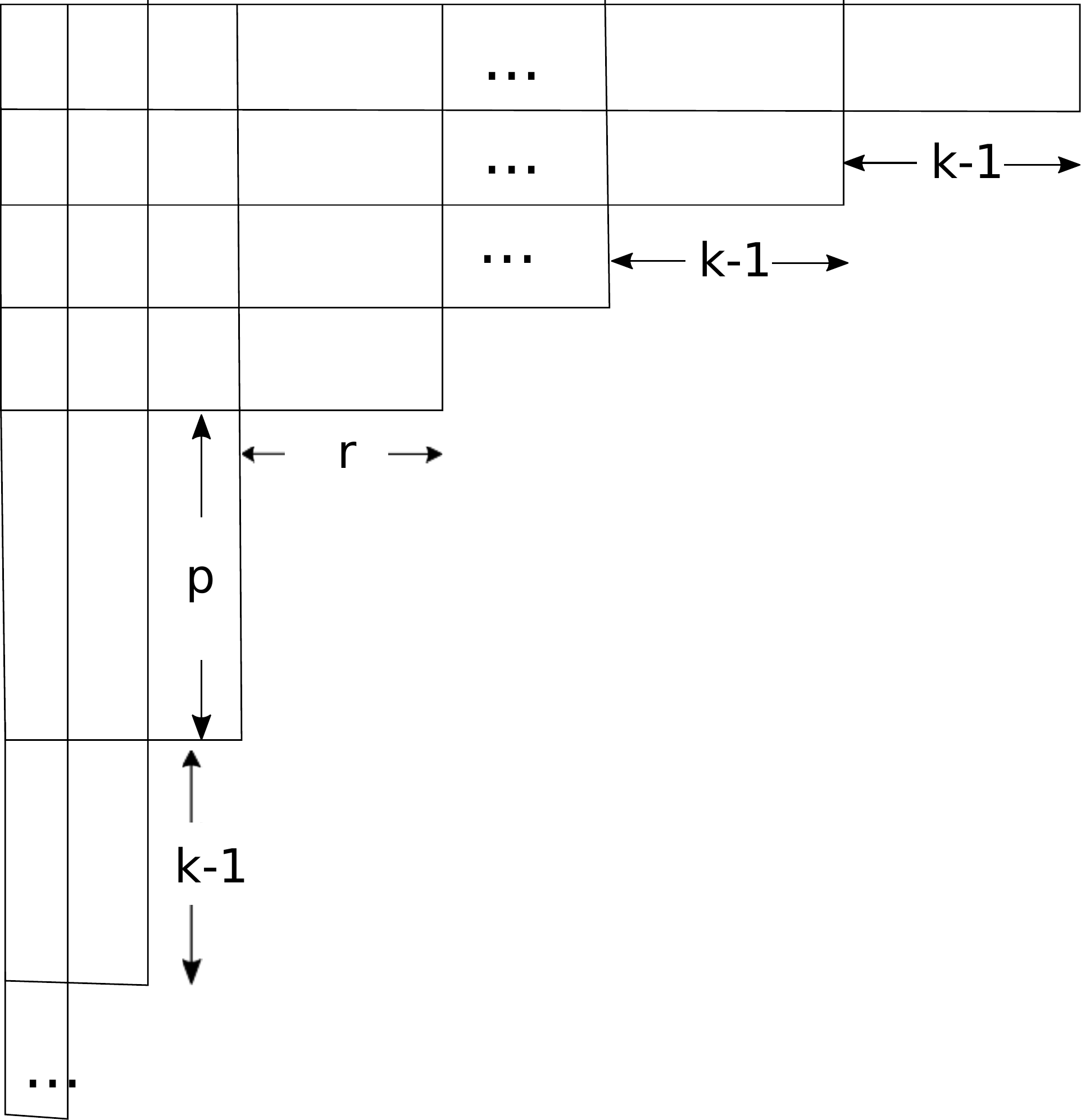}
\caption{Partition shape of $\lambda$}
\label{fig:lambda}
\end{figure}

By Corollary~\ref{cor:transpose} we may assume without loss of generality that $r\geq p$. 
Then 
$$\psi(M)=\begin{vmatrix}
\vdots \\
h_{r+p+k+1} - F_{r+p+k} & \iddots \\
h_{r+p+1} - G_{r+p} & h_{r+p+k+1} - H_{r+p+k} & \cdots 
\end{vmatrix}$$
where $F_i, G_i, H_i$ are polynomials of $h_1$ through $h_{i}$.

If $p=0$, we assign $h_1=h_2=\cdots=h_{k-2}=0$, $h_{r+1}=0$ and $h_{k-1}=h_{r+2}=1$. Under this assignment, in $\psi(M)$, we have $h_{r+p+1}-G_{r+p}=0$, $F_{r+p+k}=0$, while $H_{r+p+k}=h_{k-1}h_{r+2}=1 \neq 0$. Therefore by Lemma \ref{lem: special matrix, 3} we have $P(\det(M) \mapsto 0) > 1/q$. 

If $p\neq 0, p\neq k-1$, then $p+1 \neq k$, $r+k\neq r+p+1$ and $r+k>r+1\geq p+1$ by the assumption, so it is valid to assign $h_1=h_2=\cdots=h_p=0$, $h_{r} = \cdots = h_{r+k-1}=h_{r+k+1}= \cdots h_{r+p+1}=0$, and $h_{p+1}=h_{r+k}=1$. Under this assignment, in $\psi(M)$, $h_{r+p+1}-G_{r+p}=0$, $F_{r+p+k}=0$, while $H_{r+p+k} =h_{p+1}h_{r+k}=1\neq 0$. Therefore by Lemma \ref{lem: special matrix, 3} we have $P(\det(M) \mapsto 0) > 1/q$. 

If $p=k-1$, we assign $h_1=h_2=\cdots=h_p=h_{r+p+1}=0$, and assign $h_{p+1}=h_{r+p}=1$. We know that $p<r+p<r+p+1$, so for this assignment to be valid we only need to show that $h_{k+1}$ is not assigned to be both $0$ and $1$. Since $p=k-1$, we have $p<k+1\leq r+p+1$ and the equality is achieved if and only if $r=1$. If $r=1$, then since $r\geq p$ we have $p=1$, $k=2$, which shows $\lambda$ is a staircase. Since we assume $\lambda$ is not a staircase, we must have $r\geq 2$, which means $h_{k+1}$ is not assigned two different values, and the assignment is valid. By a similar argument as before we can show that in $\psi(M)$, $G_{r+p+k-1}=0$, while $H_{r+p+k-1}\neq 0$, showing that $P(\det(M) \mapsto 0) > 1/q$. Combining two cases gives us the desired result.

\end{proof}

\section{Independence Results for families with $P = 1/q$}\label{sec:independence}

In Section \ref{hooks_section}, we saw that it is very easy to find infinite families of hooks where the events of their corresponding Schur funtions being sent to zero are independent. In this section, we investigate the independence of vanishing of the other two shapes with $P(s_\lambda\mapsto 0)=1/q$: rectangles and staircases.

\subsection{Rectangles}

\begin{prop} \label{rectangle_indep}
Fix some $c \in \Z$. Then the events $\{s_{k^{\ell}} \mapsto 0 \mid k-\ell = c\}$ are set-wise independent.
\end{prop}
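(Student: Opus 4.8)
We outline the argument. Since $k=\ell+c$ is determined by $\ell$, a finite subfamily of $\{(k^\ell):k-\ell=c\}$ is recorded by a set $\{\ell_1<\cdots<\ell_t\}$ of positive integers with $\ell_i+c\ge1$, and because each of these rectangles has vanishing probability $1/q$ by Corollary~\ref{rectangle_probability}, set-wise independence is equivalent to
\[
P\Big(\bigcap_{i=1}^{t}\{s_{((\ell_i+c)^{\ell_i})}\mapsto0\}\Big)=q^{-t}.
\]
Abbreviate $D_\ell:=s_{((\ell+c)^\ell)}$. Its Jacobi--Trudi matrix is $A_\ell=(h_{c+j-i+\ell})_{1\le i,j\le\ell}$, a polynomial in $h_{c+1},\dots,h_{c+2\ell-1}$, and for $\ell\le m$ the matrix $A_\ell$ is precisely the lower-left $\ell\times\ell$ corner of $A_m$. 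Expanding $\det A_\ell$ along its top row gives the hook-type identity $D_\ell=(-1)^{\ell+1}h_{c+2\ell-1}D_{\ell-1}+R_\ell$ with $R_\ell\in\F_q[h_{c+1},\dots,h_{c+2\ell-2}]$; in particular $h_{c+2\ell-1}$ is a fresh variable, occurring in no $D_{\ell'}$ with $\ell'<\ell$, and for any choice of $h_{c+1},\dots,h_{c+2\ell-2}$ at which $D_{\ell-1}\ne0$ the polynomial $D_\ell$ is a bijective affine function of $h_{c+2\ell-1}$, hence uniformly distributed.

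The plan is to prove $P(D_{\ell_i}\mapsto0\mid D_{\ell_1}\mapsto0,\dots,D_{\ell_{i-1}}\mapsto0)=1/q$ for each $i$, which gives the claim by the chain rule. Put $E=\bigcap_{j<i}\{D_{\ell_j}\mapsto0\}$; this depends only on $h_{c+1},\dots,h_{c+2\ell_{i-1}-1}$, hence only on $h_{c+1},\dots,h_{c+2\ell_i-2}$, so the freshness of $h_{c+2\ell_i-1}$ yields $P(D_{\ell_i}\mapsto0\mid E,\,D_{\ell_i-1}\ne0)=1/q$. When $\ell_i-1\notin\{\ell_1,\dots,\ell_{i-1}\}$ we split on whether $D_{\ell_i-1}$ vanishes: the nonvanishing part contributes $\tfrac1q\,P(D_{\ell_i-1}\ne0\mid E)$, and the vanishing part is governed by the same statement for the index set $\{\ell_1,\dots,\ell_{i-1},\ell_i-1\}$, whose largest element has dropped, so one proceeds by induction on $\max\{\ell_1,\dots,\ell_i\}$. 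The base case is the pair $\{\ell-1,\ell\}$: there $1/q=P(D_\ell\mapsto0)=\tfrac1q P(D_{\ell-1}\ne0)+P(D_\ell\mapsto0\mid D_{\ell-1}\mapsto0)\cdot\tfrac1q$, which with Corollary~\ref{rectangle_probability} forces $P(D_\ell\mapsto0\mid D_{\ell-1}\mapsto0)=1/q$, i.e.\ $\{D_{\ell-1}\mapsto0\}$ and $\{D_\ell\mapsto0\}$ are already independent.

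The remaining, and hardest, case is when the top of the index set is a block of consecutive integers $\ell,\ell+1,\dots,\ell+s$: conditioning on the lower $D$'s being zero forces $D_{\ell+1}=R_{\ell+1}$, which is no longer an affine function of a fresh variable. We propose to handle this by running the operator $\varphi$ of Section~\ref{sec:machinery} on the single large matrix $A_m$, $m=\ell_t$, assigning $h_{c+1},h_{c+2},\dots$ one at a time and tracking all $t$ corner determinants $\det A_{\ell_i}$ at once. Each elementary operation performed by $\psi$ subtracts a pivot row or column that lies inside every nested lower-left corner whose current size is at least the index of the first nonzero diagonal, so by Lemma~\ref{lem:minor} it preserves each such $\det A_{\ell_i}$ up to a nonzero scalar and preserves the nested-corner structure, while Lemma~\ref{lem:rec} keeps the reduced matrix in the controlled shape of option~(3), so each reduced corner is again of rectangle type. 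Imitating the inductive proof of Corollary~\ref{rectangle_probability} in this joint setting then expresses $P\big(\bigcap_i\{D_{\ell_i}\mapsto0\}\big)$ as a sum, over one-variable assignments, of analogous probabilities for strictly smaller matrices; feeding back the identity $P(D_\ell\mapsto0)=1/q$ (equivalently, the pairwise independence just established) resolves the degenerate branches, whose contributions average out to give exactly $q^{-t}$. The principal obstacle is precisely this bookkeeping: tracking how the sizes of the nested corners evolve under $\varphi$, keeping account of corners that become strictly upper triangular (hence forced singular), and verifying that at every stage the conditional probabilities recombine into the clean power of $q$.
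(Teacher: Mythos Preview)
Your argument has a genuine gap. The ``hardest case''---a block of consecutive indices $\ell,\ell+1,\dots,\ell+s$ at the top---is not proved: you sketch a plan to run $\varphi$ on the largest matrix while tracking all nested lower-left corners, but you explicitly identify the required bookkeeping as ``the principal obstacle'' and do not carry it out. The claim that the degenerate branches ``average out to give exactly $q^{-t}$'' is the whole content of the proposition in that case, and it is asserted rather than verified.

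What you are missing is that the convexity trick you used for the base case works in full generality, and replaces the entire $\varphi$ bookkeeping. Concretely: the proof of Theorem~\ref{thm: general matrix, geq 1/q} shows that $P(\det M\mapsto 0\mid x_1=a_1,\dots,x_r=a_r)\ge 1/q$ for \emph{every} partial assignment of the low-index variables. Since each condition $D_{\ell_j}\mapsto 0$ (or $\not\mapsto 0$) with $\ell_j<\ell_t$ is a function of $h_{c+1},\dots,h_{c+2\ell_t-3}$, any conjunction $C$ of such conditions satisfies
\[
P(D_{\ell_t}\mapsto 0\mid C)\;\ge\;\tfrac{1}{q},
\]
being an average of conditional probabilities over specific assignments, each of which is $\ge 1/q$. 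Now expand $P(D_{\ell_t}\mapsto 0)=1/q$ (Corollary~\ref{rectangle_probability}) as a convex combination $\sum_C a_C\,P(D_{\ell_t}\mapsto 0\mid C)$ over all $2^{\ell_t-1}$ sign patterns $C$ on $D_1,\dots,D_{\ell_t-1}$. Every term is $\ge 1/q$ and the average is exactly $1/q$, so every term equals $1/q$. In particular $P(D_{\ell_t}\mapsto 0\mid D_{\ell_1}\mapsto 0,\dots,D_{\ell_{t-1}}\mapsto 0)=1/q$, and the chain rule finishes. This is precisely the computation you did for $t=2$, applied with $C$ ranging over all conditions at once; you never needed the nested-corner analysis.
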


\begin{proof}
We show the result in the case $c = 0$; the proof is similar for other values of $c$.

We first reduce to showing the following result: Let $k \in \N$ be
arbitrary and $C$ a collection of conditions $\{C_i\}_{i = 1,\ldots,k-1}$,
where $C_i$ is either $s_{i^i} \mapsto 0$ or $s_{i^i} \not\mapsto 0$ for each
$i$. Then 
  \[
    P(s_{k^k} \mapsto 0 \mid C) = 1/q.
  \]
Assuming this result, we show that $P(s_{a^a} \mapsto 0 \mid s_{b^b} \mapsto 0) = 1/q$ for $a \neq b$. 

First, define $\alpha(C)$ to be the number of $i$ such that $C_i$ is $s_{i^i}
\not\mapsto 0$. We have 
  \[
    P(s_{a^a} \mapsto 0 \mid s_{b^b} \mapsto 0) = \frac{P(s_{a^a} \mapsto 0
      \;\&\; s_{b^b} \mapsto 0)}{P(s_{b^b} \mapsto 0)} = q \cdot P(s_{a^a}
    \mapsto 0 \;\&\; s_{b^b} \mapsto 0),
  \]
so it suffices to show that $P(s_{a^a} \mapsto 0 \;\&\; s_{b^b} \mapsto 0) =
1/q^2$. For simplicity, we may assume without loss of generality that $b <
a$. 

We have 
  \begin{align*}
    P(&s_{a^a} \mapsto 0 \;\&\; s_{b^b} \mapsto 0) \\ 
    &= P(s_{a^a} \mapsto 0 \;\&\;
    s_{b^b} \mapsto 0 \;\&\; s_{(a-1)^{a-1}}\mapsto 0) + P(s_{a^a} \mapsto 0
    \;\&\; s_{b^b} \mapsto 0 \;\&\; s_{(a-1)^{a-1}} \not\mapsto 0) \\
    &= \cdots \\
    &= \sum_{C \in \mathcal{C}}{P(s_{a^a} \mapsto 0 \;\&\; s_{b^b} \mapsto 0
      \;\&\; C)},
  \end{align*}
where $\mathcal{C}$ is the collection of conditions
$C=\{C_i\}_{\substack{1
    \leq i \leq a \\ i \neq a,b}}$. 
Therefore we reduce to showing that each summand is $(q-1)^{\alpha(C)}/q^a$. 

Indeed, we can write 
  \[
    P(s_{a^a} \mapsto 0 \;\&\; s_{b^b} \mapsto 0 \;\&\; C) = \frac{P(s_{a^a}
      \mapsto 0 \mid s_{b^b} \mapsto 0 \;\&\; C)}{P(s_{b^b} \mapsto 0 \;\&\;
      C)} = \frac{1}{q P(s_{b^b} \mapsto 0 \;\&\; C)}
  \]
by our assumed result. Therefore, induction on $a$ gives that $P(s_{b^b} \mapsto
0 \;\&\; C) = (q-1)^{\alpha(C)}/q^{a-1}$, which gives our desired result.

We are now left with showing the result claimed at the beginning of this proof. We claim this follows immediately from Corollary \ref{cor: geq 1/q}. Indeed, we write
  \[
    P(s_{k^k} \mapsto 0) = P(s_{1^1} \mapsto 0) P(s_{k^k} \mapsto 0 \mid s_{1^1} \mapsto 0) + (1-P(s_{1^1} \mapsto 0)) P(s_{k^k} \mapsto 0 \mid s_{1^1} \not\mapsto 0).
  \]
Expanding each term similarly, we see inductively that we can write
  \[
    P(s_{k^k} \mapsto 0) = \sum_{C = \{C_i\}_{i=1,\ldots k-1}}{a_CP(s_{k^k}
      \mapsto 0 \mid C)}, 
  \]
where $\sum_{C}{a_C} = 1$. Because $P(s_{k^k} \mapsto 0)=1/q$ and each summand
probability is at least $1/q$ as shown in the proof of Corollary \ref{cor: geq 1/q},
each summand must be precisely $1/q$. This
completes the proof. 
\end{proof}

Using the same argument on the collection $\{s_{k^{\ell}} \mid k+\ell = c\}$ gives the following.

\begin{prop}
Fix some $c \in \N$. Then the events $\{s_{k^{\ell}} \mapsto 0 \mid k+\ell = c\}$ are set-wise independent.
\end{prop}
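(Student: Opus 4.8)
The final statement to prove is:

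\begin{prop}
Fix some $c \in \N$. Then the events $\{s_{k^{\ell}} \mapsto 0 \mid k+\ell = c\}$ are set-wise independent.
\end{prop}

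The plan is to mirror almost verbatim the proof of Proposition~\ref{rectangle_indep} (the $k-\ell=c$ case), since the author explicitly says ``using the same argument.'' The reduction there used only two structural facts: that $P(s_{k^\ell}\mapsto 0)=1/q$ for every rectangle (Corollary~\ref{rectangle_probability}) and that every conditional probability $P(s_{k^\ell}\mapsto 0\mid\text{partial info})$ is at least $1/q$ (Corollary~\ref{cor: geq 1/q}, via the general Schur matrix machinery). Both hold equally well for rectangles lying on an antidiagonal $k+\ell=c$. So first I would fix $c$ and enumerate the relevant rectangles as $\lambda^{(1)},\lambda^{(2)},\ldots$ in increasing order of size (i.e.\ order them so that if $i<j$ then $|\lambda^{(i)}|<|\lambda^{(j)}|$; along a fixed antidiagonal the shapes $k^\ell$ with $k+\ell=c$ have sizes $k\ell$, which are distinct for distinct $k$ as long as we avoid the trivial symmetry, so a linear order by size exists up to transposition, and $P(s_\lambda\mapsto 0)=P(s_{\lambda'}\mapsto 0)$ by Corollary~\ref{cor:transpose} handles the symmetric pair).

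Second, I would state and prove the key lemma exactly as in Proposition~\ref{rectangle_indep}: for any rectangle $\mu=\lambda^{(j)}$ in our family and any collection $C$ of conditions of the form ``$s_{\lambda^{(i)}}\mapsto 0$'' or ``$s_{\lambda^{(i)}}\not\mapsto 0$'' ranging over all $i<j$, we have $P(s_{\lambda^{(j)}}\mapsto 0\mid C)=1/q$. The proof is the telescoping/averaging argument: write $P(s_{\lambda^{(j)}}\mapsto 0)$ as a convex combination $\sum_C a_C\,P(s_{\lambda^{(j)}}\mapsto 0\mid C)$ with $\sum_C a_C=1$, obtained by repeatedly conditioning on whether each earlier $s_{\lambda^{(i)}}$ vanishes. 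Since the left side equals $1/q$ by Corollary~\ref{rectangle_probability} and every summand $P(s_{\lambda^{(j)}}\mapsto 0\mid C)\geq 1/q$ by the lower bound established in the proof of Corollary~\ref{cor: geq 1/q}, each summand must equal $1/q$.

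Third, I would deduce set-wise independence from this lemma by the same bookkeeping as in Proposition~\ref{rectangle_indep}: to show $P\big(\bigcap_{t\in S}\{s_{\lambda^{(t)}}\mapsto 0\}\big)=\prod_{t\in S}P(s_{\lambda^{(t)}}\mapsto 0)=q^{-|S|}$ for any finite subset $S$, induct on $\max S$, peeling off the largest index using the lemma (the conditional probability is $1/q$ regardless of the status of all smaller-indexed events), and absorb the intermediate conditions on indices not in $S$ by summing over all $\pm$ choices, tracking the factor $(q-1)^{\alpha(C)}$ exactly as written there. It is cleanest to just state that the argument of Proposition~\ref{rectangle_indep} applies mutatis mutandis and indicate the two inputs (values $=1/q$, conditionals $\geq 1/q$).

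The only genuine obstacle is the ordering issue: along the antidiagonal $k+\ell=c$ the rectangles come in transpose pairs $k^\ell$ and $\ell^k$ with equal size $k\ell$, so a strict size order does not literally exist. The fix is routine: by Corollary~\ref{cor:transpose} the events $\{s_{k^\ell}\mapsto 0\}$ and $\{s_{\ell^k}\mapsto 0\}$ have the same probability, and in fact the Jacobi--Trudi matrices are transposes of each other so the two events coincide as events (a homomorphism sends $s_\lambda$ to $0$ iff it sends $s_{\lambda'}$ to $0$, since $s_\lambda$ and $s_{\lambda'}$ are literally equal after the $e\leftrightarrow h$ identification is irrelevant here — more carefully, one restricts to one representative from each transpose pair). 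So one simply picks one rectangle from each pair, getting a family indexed by distinct sizes, runs the argument above, and notes the discarded transposes contribute nothing new. Everything else is a direct transcription of the preceding proof.
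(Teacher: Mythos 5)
Your reduction runs into a genuine problem exactly at the point you dismiss as ``routine'': the transpose pairs. Corollary~\ref{cor:transpose} says that $P(s_\lambda\mapsto 0)=P(s_{\lambda'}\mapsto 0)$; it does \emph{not} say the two events coincide, and in general they do not. The functions $s_\lambda$ and $s_{\lambda'}$ are different elements of $\Lambda$ (e.g.\ $s_{(2)}=h_2$ while $s_{(1,1)}=h_1^2-h_2$), so a given homomorphism can annihilate one and not the other; your parenthetical claim that the Jacobi--Trudi matrices are transposes of one another confuses the $h$-matrix of $s_{\lambda'}$ with the $e$-matrix of $s_\lambda$. Consequently, restricting to one representative per transpose pair proves at most independence of a proper sub-family, and the discarded events are not ``nothing new.'' Worse, the full statement appears to fail as written: take $c=4$ and $q$ odd. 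Then $s_{(3)}=h_3$ and $s_{(1,1,1)}=h_1^3-2h_1h_2+h_3$ both belong to the family, each vanishes with probability $1/q$, but
\[
P\big(s_{(3)}\mapsto 0 \;\&\; s_{(1,1,1)}\mapsto 0\big)=\frac{1}{q}\cdot P\big(h_1(h_1^2-2h_2)=0\big)=\frac{2q-1}{q^3}\neq\frac{1}{q^2}.
\]
So no amount of bookkeeping rescues the argument; the family must be restricted (say to $k\ge\ell$) or the claim qualified before a proof can exist.

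There is a second, subtler gap inherited from the template. The averaging step needs $P(s_\mu\mapsto 0\mid C)\ge 1/q$ for every condition $C$ built from events $s_\nu\mapsto 0$ and $s_\nu\not\mapsto 0$, and you cite Corollary~\ref{cor: geq 1/q} for this. That corollary is an unconditional bound and says nothing about conditioning on events of the form $s_\nu\not\mapsto 0$; indeed the bound fails in the example above, where $P(s_{(1^3)}\mapsto 0\mid s_{(3)}\not\mapsto 0)=(q-1)/q^2<1/q$. In the diagonal family of Proposition~\ref{rectangle_indep} the rectangles use strictly nested variable sets $h_1,\dots,h_{k+\ell-1}$, which is what makes the conditional lower bound plausible there; on an antidiagonal every rectangle involves exactly the same variables $h_1,\dots,h_{c-1}$, so that nesting structure is entirely absent and this step would need a genuinely new justification even for the restricted family.
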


\begin{rem} It is not hard to find examples of rectangles $\lambda$ and $\mu$ such that $P(s_\lambda\mapsto 0 \mid s_\mu\mapsto 0)\neq P(s_\lambda \mapsto 0)$. For example, one can take $\lambda=(2,2)$ and $\mu=(3,3)$. It is straightforward to show that $P(s_{\lambda} \mapsto 0 \;\&\; s_{\mu} \mapsto 0) \neq 1/q^2$ by examining the Jacobi-Trudi matrices, which implies the desired inequality.
\end{rem}

\subsection{Staircases}

We now turn our attention to staircases. We denote staircase $(k,k-1,k-2,\ldots,2,1)$ by $\delta_k$.

\begin{prop} \label{thm: staircase_indep}
For any $k \in \N$ with $k \geq 3$, we have
  \[
  	P(s_{\delta_k} \mapsto 0 \mid s_{\delta_{k-2}} \mapsto 0) = 1/q.
  \]
\end{prop}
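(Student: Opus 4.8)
The plan is to mimic the structure used for rectangles in Proposition~\ref{rectangle_indep}, but now for the nested family of staircases $\delta_1 \subset \delta_3 \subset \delta_5 \subset \cdots$ (or the even analogue). The key conceptual point is the same: by Corollary~\ref{cor: geq 1/q} every conditional probability $P(s_{\delta_k}\mapsto 0 \mid C)$ is at least $1/q$, and a probability that equals $1/q$ cannot be an average of quantities all $\geq 1/q$ unless each of them is exactly $1/q$. So if we can write $P(s_{\delta_k}\mapsto 0)$ as a convex combination $\sum_C a_C\, P(s_{\delta_k}\mapsto 0\mid C)$ where $C$ ranges over full conditioning data on $s_{\delta_j}$ for all $j<k$ of the same parity (with $\sum_C a_C = 1$), then \emph{every} summand equals $1/q$, and in particular $P(s_{\delta_k}\mapsto 0\mid s_{\delta_{k-2}}\mapsto 0)=1/q$ after marginalizing over the other conditions. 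Since Theorem~\ref{thm: staircase 1/q} already gives $P(s_{\delta_k}\mapsto 0)=1/q$ for all $k$, the ``$1/q$'' input we need is in hand.

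**The key reduction step.**

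First I would set up notation exactly as in the rectangle proof: fix the parity class of $k$, let $j$ run over $\{k-2, k-4, \ldots\}$ down to the base ($1$ or $2$), and for a tuple $C=\{C_j\}$ of conditions (each $C_j$ being $s_{\delta_j}\mapsto 0$ or $s_{\delta_j}\not\mapsto 0$) expand
\[
P(s_{\delta_k}\mapsto 0) = \sum_C a_C\, P(s_{\delta_k}\mapsto 0 \mid C), \qquad \sum_C a_C = 1,
\]
by repeatedly conditioning on $s_{\delta_{k-2}}$, then $s_{\delta_{k-4}}$, and so on. Then invoke Corollary~\ref{cor: geq 1/q} (each summand $\geq 1/q$) together with $P(s_{\delta_k}\mapsto 0)=1/q$ to conclude each summand is $1/q$. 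To turn this into the stated pairwise statement, I would then run the same bookkeeping as in Proposition~\ref{rectangle_indep}: write
\[
P(s_{\delta_k}\mapsto 0 \mid s_{\delta_{k-2}}\mapsto 0) = \frac{P(s_{\delta_k}\mapsto 0\ \&\ s_{\delta_{k-2}}\mapsto 0)}{P(s_{\delta_{k-2}}\mapsto 0)} = q\cdot P(s_{\delta_k}\mapsto 0\ \&\ s_{\delta_{k-2}}\mapsto 0),
\]
expand the joint probability as a sum over conditions $C$ on the intermediate staircases, and use the fact (provable by induction on $k$, exactly as in the rectangle case) that $P(s_{\delta_{k-2}}\mapsto 0\ \&\ C) = (q-1)^{\alpha(C)}/q^{(k-k_0)/2}$ where $\alpha(C)$ counts the $\not\mapsto 0$ conditions and $k_0$ is the base index, together with $P(s_{\delta_k}\mapsto 0\mid s_{\delta_{k-2}}\mapsto 0\ \&\ C)=1/q$ from the reduction; the sum telescopes to $1/q^2 \cdot q = 1/q$.

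**The main obstacle.**

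The delicate point — and the only place where staircases genuinely differ from rectangles — is verifying that the conditioning events we use actually partition the probability space in a compatible way, i.e.\ that the recursive expansion $P(s_{\delta_k}\mapsto 0)=\sum_C a_C P(s_{\delta_k}\mapsto 0\mid C)$ is legitimate. For rectangles this was immediate because the Jacobi--Trudi variables of $(k^k)$ strictly contain those of $((k-1)^{k-1})$, so conditioning is ``nested.'' For staircases, one must check that the set of $h_i$'s occurring in the Jacobi--Trudi matrix of $\delta_{k-2}$ is a subset of those occurring for $\delta_k$ — which is true since $\delta_k$ uses $h_1,\ldots,h_{2k-1}$ and $\delta_{k-2}$ uses $h_1,\ldots,h_{2k-5}$ — so that a homomorphism determining $s_{\delta_k}$'s value already determines $s_{\delta_{k-2}}$'s value; this makes the conditioning well-founded and the convex-combination expansion valid. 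Once that observation is in place, the argument is essentially formal: no new matrix manipulation beyond Theorem~\ref{thm: staircase 1/q} is required, and the remaining work is the same elementary probability bookkeeping already carried out in Proposition~\ref{rectangle_indep}. I expect the write-up to be short, with the bulk of the text devoted to the inductive identity for $P(s_{\delta_{k-2}}\mapsto 0\ \&\ C)$.
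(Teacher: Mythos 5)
Your argument has a genuine gap at its foundation: the claim that ``by Corollary~\ref{cor: geq 1/q} every conditional probability $P(s_{\delta_k}\mapsto 0\mid C)$ is at least $1/q$.'' Corollary~\ref{cor: geq 1/q} is an \emph{unconditional} statement, and the conditional version is false for general events determined by the low-index variables: for example, $s_{\delta_3}=h_1h_2h_3-h_3^2-h_1^2h_4+h_1h_5$, so $P(s_{\delta_3}\mapsto 0\mid h_1=0,\ h_2=0,\ h_3=1)=0$. Since your conditioning events $C$ are unions of exactly such cylinder sets, the inequality $P(s_{\delta_k}\mapsto 0\mid C)\geq 1/q$ --- which is the entire engine of the ``an average of quantities $\geq 1/q$ equal to $1/q$ forces each summand to equal $1/q$'' argument --- requires proof, and nothing you cite supplies it for staircases. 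In the rectangle case this step is (implicitly) backed by the structure theory of Lemma~\ref{lem:rec} and Corollary~\ref{rectangle_probability}, which control what happens under \emph{every} partial assignment of an initial segment of variables; no analogue of that machinery exists for staircases, and the proof of Theorem~\ref{thm: staircase 1/q} only ever conditions on the single variable $y_1$. Your ``main obstacle'' paragraph worries about nestedness of the variable sets, which is the easy part, and misses this issue entirely.

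The paper's actual proof sidesteps all of this with one concrete observation about the Jacobi--Trudi matrix of $\delta_k$: deleting its first and last rows and its last two columns leaves the Jacobi--Trudi matrix of $\delta_{k-2}$, and cofactor expansion shows that $h_{2k-1}$ occurs in $s_{\delta_k}$ only in the single term $\pm h_{2k-1}s_{\delta_{k-2}}$. Hence $P(s_{\delta_k}\mapsto 0\mid s_{\delta_{k-2}}\not\mapsto 0)=1/q$ exactly (once the other variables are fixed and $s_{\delta_{k-2}}\neq 0$, the value of $h_{2k-1}$ forcing vanishing is unique), and substituting this together with $P(s_{\delta_k}\mapsto 0)=P(s_{\delta_{k-2}}\mapsto 0)=1/q$ into the law of total probability solves for $P(s_{\delta_k}\mapsto 0\mid s_{\delta_{k-2}}\mapsto 0)=1/q$. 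Note that even to make your scheme work for the pairwise statement you would need the lower bound $P(s_{\delta_k}\mapsto 0\mid s_{\delta_{k-2}}\not\mapsto 0)\geq 1/q$, and the cofactor computation above is essentially the cheapest way to obtain it --- at which point the convex-combination apparatus is no longer needed.
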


\begin{proof}
By Jacobi-Trudi, we have
  \[
    s_{\delta_k} = \begin{vmatrix}
    h_k    & \cdots & h_{2k-4} &h_{2k-3} & h_{2k-2} & h_{2k-1} \\
    \vdots &        &          & \ddots   &          & \vdots \\
    0      & \cdots & h_2      & h_3      & h_4      & h_5 \\
    0      & \cdots & 1        & h_1      & h_2      & h_3 \\
    0      & \cdots & 0        & 0        & 1        & h_1 \\
    \end{vmatrix}.
  \]
Note that $s_{\delta_{k-2}}$ can be obtained from this matrix by removing the first and last rows, along with the last two columns. Hence expanding the determinant about the first row and then last row, we see the determinant will contain a term of the form 
  \[
    h_{2k-1}s_{\delta_{k-2}},
  \]
from the $(1,n)$- and then $(n-1,n-1)$-cofactor. Further, this is the only term in which $h_{2k-1}$ appears. So
  \[
    P(s_{\delta_k} \mapsto 0 \mid s_{\delta_{k-2}} \not\mapsto 0) = 1/q.
  \]
From Theorem \ref{thm: staircase 1/q}, we have
  \begin{align*}
    \frac{1}{q} &= P(s_{\delta_k} \mapsto 0) \\
    &= P(s_{\delta_{k-2}} \mapsto 0)P(s_{\delta_{k}} \mapsto 0 \mid s_{\delta_{k-2}} \mapsto 0) + P(s_{\delta_{k-2}} \not\mapsto 0)P(s_{\delta_{k}} \mapsto 0 \mid s_{\delta_{k-2}} \not\mapsto 0) \\
    &= \frac{1}{q} \cdot \frac{1}{q} + \frac{q-1}{q}P(s_{\delta_{k}} \mapsto 0 \mid s_{\delta_{k-2}} \mapsto 0).
  \end{align*}
Solving this, we obtain the desired implication.
\end{proof}

\begin{cor} \label{cor: staircase_indep1}
For any $k \in \N$, the event $s_{\delta_k} \mapsto 0$ is independent of $s_{\delta_{k+2}} \mapsto 0$. 
\end{cor}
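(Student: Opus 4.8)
The plan is to deduce this immediately from Proposition~\ref{thm: staircase_indep} together with Theorem~\ref{thm: staircase 1/q}. Recall that two events $A$ and $B$ are independent precisely when $P(A \mid B) = P(A)$ (assuming $P(B) \neq 0$). So I would take $A$ to be the event $s_{\delta_{k+2}} \mapsto 0$ and $B$ the event $s_{\delta_k} \mapsto 0$, and simply verify $P(A \mid B) = P(A)$.

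First I would apply Proposition~\ref{thm: staircase_indep} with $k$ replaced by $k+2$; since $k+2 \geq 3$ whenever $k \in \N$, this is legitimate, and it yields
\[
  P(s_{\delta_{k+2}} \mapsto 0 \mid s_{\delta_k} \mapsto 0) = 1/q.
\]
Next, by Theorem~\ref{thm: staircase 1/q} applied to the staircase $\delta_{k+2}$, we have $P(s_{\delta_{k+2}} \mapsto 0) = 1/q$ as well. Comparing the two displays gives
\[
  P(s_{\delta_{k+2}} \mapsto 0 \mid s_{\delta_k} \mapsto 0) = P(s_{\delta_{k+2}} \mapsto 0),
\]
which is exactly the assertion that $s_{\delta_k} \mapsto 0$ and $s_{\delta_{k+2}} \mapsto 0$ are independent. (If one's convention has $0 \in \N$, the case $k = 0$ is degenerate: $s_{\delta_0} = 1$ never vanishes, so $P(s_{\delta_0} \mapsto 0) = 0$ and the independence is vacuous; this could be dispatched in one line or excluded by the hypothesis $k \geq 1$.)

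There is essentially no obstacle here: the content is entirely in Proposition~\ref{thm: staircase_indep}, and this corollary is just the reformulation of ``conditional probability equals unconditional probability'' as ``independence,'' using that the unconditional probability is also $1/q$ by Theorem~\ref{thm: staircase 1/q}. The only point worth a sentence of care is the index shift ($k \mapsto k+2$) and checking it stays within the range $k \geq 3$ required by the proposition.
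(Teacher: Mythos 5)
Your proof is correct and is essentially the paper's argument: both deduce the corollary from Proposition~\ref{thm: staircase_indep} (applied with the index shifted so that the hypothesis $k\geq 3$ holds) together with Theorem~\ref{thm: staircase 1/q}, the only cosmetic difference being that the paper phrases independence as $P(A\,\&\,B)=1/q^2=P(A)P(B)$ while you phrase it as $P(A\mid B)=P(A)$.
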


\begin{proof}
We have
  \[
    P(s_{\delta_{k}} \mapsto 0 \;\&\; s_{\delta_{k-2}} \mapsto 0) = P(s_{\delta_{k-2}} \mapsto 0)P(s_{\delta_{k}} \mapsto 0 \mid s_{\delta_{k-2}} \mapsto 0) = \frac{1}{q^2} 
  \]
by Theorem \ref{thm: staircase_indep} and Theorem \ref{thm: staircase 1/q}. Swapping the roles of $k$ and $k-2$ above and again using Theorem \ref{thm: staircase 1/q}, we obtain the desired result.
\end{proof}

\begin{cor} \label{cor: staircase_indep2}
For any $k \in \N$, the event $s_{\delta_k} \mapsto 0$ is independent of $s_{\delta_1} = s_1 = h_1 = e_1 \mapsto 0$.
\end{cor}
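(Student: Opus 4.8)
The plan is to build directly on the proof of Theorem~\ref{thm: staircase 1/q} rather than argue from scratch. Recall that that proof rewrites the Jacobi--Trudi matrix of $s_{\delta_k}$ — after permuting columns so that those carrying a constant $1$ come first, and renaming the distinct complete homogeneous symmetric functions appearing in it as fresh variables $x_i,y_i$ — as a block matrix $M$ of size $m+n$ with $m=\lfloor k/2\rfloor$ columns on the left and $n=\lceil k/2\rceil$ on the right, and establishes $P(\det M\mapsto 0)=p(m,n)=1/q$ by induction on $m$. The crux of the present argument is to check that the single variable $h_1$ is identified, under that renaming, with the variable $y_1$ sitting in the bottom-right corner of $M$. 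Indeed, the last column of the Jacobi--Trudi matrix of $\delta_k$ has entry $h_{2k-2i+1}$ in row $i$, so it carries no constant $1$ and no zero; in particular its bottom entry is $h_{\lambda_k}=h_1$. Hence this column is never moved by the column permutation, so the bottom-right entry of $M$ is $h_1$, which is exactly the entry called $y_1$ in that proof. Since each block of $M$ is Toeplitz (the entry of a block depends only on the difference of its row and column indices), every occurrence of $h_1$ lies on a single diagonal of the right block, namely the $y_1$-diagonal. Consequently, conditioning the uniformly random homomorphism on $h_1\mapsto 0$ is literally the same as conditioning on $y_1=0$ inside $M$, so $P(s_{\delta_k}\mapsto 0\mid h_1\mapsto 0)=P(\det M\mapsto 0\mid y_1=0)$.

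With this identification in hand, I would simply quote the sub-computation already inside the proof of Theorem~\ref{thm: staircase 1/q}: setting $y_1=0$ turns the last row of $M$ into $(0,\dots,0,a,0,\dots,0)$ with the nonzero constant $a$ in position $m$, and cofactor expansion along that row gives $\det M\big|_{y_1=0}=\pm a\,\det M''$, where $M''$ is a matrix of the same block form with $m-1$ columns on the left and $n$ on the right. Assuming $k\ge 2$ (so $m\ge 1$ and $M''$ is defined), the conclusion of that theorem gives $P(\det M''\mapsto 0)=p(m-1,n)=1/q$, hence $P(\det M\mapsto 0\mid y_1=0)=1/q$. Combining with $P(s_{\delta_k}\mapsto 0)=1/q$ (Theorem~\ref{thm: staircase 1/q} itself) yields $P(s_{\delta_k}\mapsto 0\mid s_{\delta_1}\mapsto 0)=P(s_{\delta_k}\mapsto 0)$, the desired independence. (The statement is intended for $k\ge 2$; for $k=1$ the two events are literally the same event $h_1\mapsto 0$.)

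The only genuine work is the bookkeeping in the first paragraph: tracking where the variable $h_1$ goes under the column permutation and renaming used in the proof of Theorem~\ref{thm: staircase 1/q}, and confirming that all of its occurrences land in the right-hand block on the single diagonal that becomes $y_1$. Once that is pinned down, everything else is a verbatim appeal to steps already present there. A more self-contained alternative would be to set $h_1=0$ in the Jacobi--Trudi matrix of $\delta_k$ directly, expand along the bottom row (which becomes $(0,\dots,0,1,0)$), and analyze the resulting $(k-1)\times(k-1)$ minor; but that minor is not itself a staircase matrix, so this route seems to require redoing an induction rather than reusing Theorem~\ref{thm: staircase 1/q}, and I would prefer the first approach.
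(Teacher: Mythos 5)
Your proof is correct and takes essentially the same route as the paper: the paper's proof of this corollary is precisely an appeal to the fact that the argument for Theorem~\ref{thm: staircase 1/q} already computes $P(s_{\delta_k}\mapsto 0\mid h_1\mapsto a)=1/q$ for every $a$, which is the sub-computation you isolate (your bookkeeping identifying $h_1$ with the variable $y_1$ in the block matrix $M$, and your remark that $k=1$ must be excluded, are just more explicit than what the paper writes). No gaps.
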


\begin{proof}
The proof of Theorem \ref{thm: staircase 1/q} showed that $P(s_{\delta_k} \mapsto 0 \mid h_1 \mapsto 0) = P(s_{\delta_k} \mapsto 0 \mid h_1\mapsto a) = 1/q$, for any $a\in\F_q$. Hence $s_{\delta_k} \mapsto 0$ is independent of $h_1 \mapsto 0$. The reverse follows along the same lines as Corollary \ref{cor: staircase_indep1}.
\end{proof}

\begin{conjecture}
The families $\{s_{\delta_k} \mid k \text{ odd}\}$ and $\{s_{\delta_k} \mid k \text{ even}\}$ are set-wise independent.
\end{conjecture}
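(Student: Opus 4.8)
The plan is to follow the scheme of Proposition~\ref{rectangle_indep}. Since $P(s_{\delta_k}\mapsto 0)=1/q$ for every $k$ by Theorem~\ref{thm: staircase 1/q}, set-wise independence of $\{s_{\delta_k}\mid k\text{ odd}\}$ (the even case is identical) is equivalent to the statement that $P\bigl(\bigwedge_{k\in S}s_{\delta_k}\mapsto 0\bigr)=q^{-|S|}$ for every finite set $S$ of odd positive integers. As in Proposition~\ref{rectangle_indep}, the conditions ``$s_{\delta_i}\mapsto 0$'' and ``$s_{\delta_i}\not\mapsto 0$'' for odd $i<k$ involve only $h_1,\dots,h_{2k-5}$, so for any collection $C$ of such conditions ranging over all odd $i<k$, the probability $P(s_{\delta_k}\mapsto 0\mid C)$ is an average of conditional probabilities obtained by fixing $h_1,\dots,h_{2k-5}$, each of which is $\ge 1/q$ as in the proof of Corollary~\ref{cor: geq 1/q}. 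Since averaging over all $C$ recovers $P(s_{\delta_k}\mapsto 0)=1/q$, the ``convex combination forces equality'' argument reduces everything to proving $P(s_{\delta_k}\mapsto 0\mid C)=1/q$ for every such $C$; routine manipulations then give the product formula above.

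The tool for this is a sharpening of the expansion in Proposition~\ref{thm: staircase_indep}. Expanding the Jacobi--Trudi matrix of $\delta_k$ along the first row, the two terms meeting columns $k$ and $k-1$ (whose cofactors, after a further expansion along the last row, are $s_{\delta_{k-2}}$ and $h_1s_{\delta_{k-2}}$) combine to give
\[
  s_{\delta_k}=(-1)^{k+1}\bigl(h_{2k-1}-h_1h_{2k-2}\bigr)\,s_{\delta_{k-2}}+T_k,\qquad T_k\in\F_q[h_1,\dots,h_{2k-3}].
\]
Every condition in $C$ involves only $h_1,\dots,h_{2k-5}$, so $h_{2k-1}$ is free after conditioning on $C$ and $s_{\delta_k}$ is affine in $h_{2k-1}$ with leading coefficient $(-1)^{k+1}s_{\delta_{k-2}}$. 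Hence if $C$ contains the condition $s_{\delta_{k-2}}\not\mapsto 0$, then for each choice of the remaining variables compatible with $C$ there is a unique value of $h_{2k-1}$ with $s_{\delta_k}=0$, so $P(s_{\delta_k}\mapsto 0\mid C)=1/q$.

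The remaining case is that $C$ forces $s_{\delta_{k-2}}\mapsto 0$, so that $s_{\delta_k}=T_k$ on the event $C$ and one must show $P(T_k=0\mid C)=1/q$. The intended strategy is to iterate: establish a companion structural identity exposing the still-unconstrained variable $h_{2k-3}$ inside $T_k$ with coefficient a staircase Schur function of the opposite parity, modeled on the computation $T_3=h_3\,s_{\delta_2}$, split again according to whether that coefficient vanishes, obtain $1/q$ on the non-vanishing branch, recurse on the other, and pin down all the values using the total-probability bootstrap of Proposition~\ref{thm: staircase_indep} together with induction on $k$ and $|S|$.

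The main obstacle is controlling this cascade. The recursion for $s_{\delta_k}$, and then for $T_k$, unavoidably brings in the smaller staircases of \emph{both} parities, and those of the opposite parity genuinely cannot be conditioned on: substituting $h_3=h_1h_2$ into the $\delta_3$ identity gives $s_{\delta_3}=h_1(h_5-h_1h_4)$, so $P(s_{\delta_3}\mapsto 0\mid s_{\delta_2}\mapsto 0)=(2q-1)/q^2\neq 1/q$. Thus one needs a simultaneous induction hypothesis delicate enough to track arbitrary intersections of vanishing loci of staircases of both parities while asserting the $1/q$ conclusion only for same-parity families. Moreover the remainder polynomials $T_k$ depend non-linearly on the ``fresh'' variables --- already $T_3=h_1h_2h_3-h_3^2$ is quadratic in $h_3$ --- so the ``exactly one root'' step must be replaced by a finer count of when such remainders vanish with probability exactly $1/q$. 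Equivalently, the crux is to produce, for each $N$, a partial system of polynomial coordinates on $\F_q[h_1,\dots,h_{2N-1}]$ containing all $s_{\delta_k}$ with $k\le N$ of a fixed parity; constructing such a system is where a new idea seems to be needed.
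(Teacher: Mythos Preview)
This statement is listed in the paper as a \emph{conjecture}; the paper gives no proof, so there is nothing to compare your argument against. Your proposal is therefore not a proof that happens to differ from the paper's---it is an attempt to settle an open problem, and you yourself flag that a new idea is needed at the end. Let me point out where the argument actually breaks, because it is earlier than you indicate.

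The first paragraph asserts that for every assignment $h_1=a_1,\dots,h_{2k-5}=a_{2k-5}$ one has $P(s_{\delta_k}\mapsto 0\mid h_1=a_1,\dots,h_{2k-5}=a_{2k-5})\ge 1/q$, ``as in the proof of Corollary~\ref{cor: geq 1/q}''. This is false. Take $k=5$ and set $h_1=h_3=0$. Expanding the Jacobi--Trudi determinant for $\delta_5$ down the first column, then twice more, one finds $s_{\delta_5}=-h_5^3$, independent of $h_2,h_4,h_6,h_7,h_8,h_9$. Hence for any $a_5\neq 0$ (and arbitrary $a_2,a_4$) the conditional probability $P(s_{\delta_5}\mapsto 0\mid h_1=0,h_2=a_2,h_3=0,h_4=a_4,h_5=a_5)$ equals $0$, not $\ge 1/q$. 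The point is that Theorem~\ref{thm: general matrix, geq 1/q} only gives the lower bound when $\varphi$ of the matrix is nonempty; for staircases that can fail well before all variables are assigned. Consequently the convex-combination step does \emph{not} reduce the problem to showing $P(s_{\delta_k}\mapsto 0\mid C)=1/q$: without the termwise lower bound you cannot force equality from the average.

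The reason the parallel argument succeeds for rectangles in Proposition~\ref{rectangle_indep} is Lemma~\ref{lem:rec} and the proof of Corollary~\ref{rectangle_probability}: the $\varphi$-process for a square has such rigid diagonal structure that the conditional probability after any partial assignment is exactly one of $0$, $1/q$, or $1$, and the events $s_{i^i}\mapsto 0$ are precisely the minors governing which of these occurs. No analogue of Lemma~\ref{lem:rec} is available for staircases, and your later paragraphs correctly diagnose why a direct recursion on $T_k$ is obstructed (opposite-parity staircases enter, and the remainders are nonlinear in the fresh variables). So the honest summary is: the reduction you borrow from the rectangle case does not go through for staircases, and the alternative route you sketch is, as you say, incomplete. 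The conjecture remains open.
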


\section{Distribution of Other Values}\label{sec:othervalue}
We now broaden our focus to consider not only $P(s_\lambda \mapsto 0)$, but $P(s_\lambda \mapsto a)$ for any $a\in\F_q$, particularly in the case where $\lambda$ is a rectangle.
\subsection{General Results}
\begin{prop}
\label{prop:x^n}
Let $\lambda$ be a partition with $|\lambda| = n$. Then $P(s_\lambda \mapsto a) = P(s_\lambda \mapsto x^na)$ for any $a,x \in \F_q$ with $x \neq 0.$
\end{prop}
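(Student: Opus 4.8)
The plan is to exploit the fact that $s_\lambda$ is homogeneous of degree $n=|\lambda|$, combined with the observation that each complete homogeneous symmetric function $h_k$ carries degree $k$. The point is that rescaling the images of the $h_k$ in a suitably graded fashion rescales the image of $s_\lambda$ by $x^n$, and this rescaling is a bijection on the set of homomorphisms. First I would make the graded statement precise, then run a counting bijection.

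The graded statement is the following. Writing $\lambda=(\lambda_1,\dots,\lambda_k)$, the Jacobi-Trudi identity gives $s_\lambda=\det(h_{\lambda_i-i+j})_{i,j=1}^k=\sum_{\sigma\in\mathcal{S}_k}\mathrm{sgn}(\sigma)\prod_{i=1}^k h_{\lambda_i-i+\sigma(i)}$, where as usual $h_0=1$ and $h_m=0$ for $m<0$. If we assign weight $k$ to the variable $h_k$ (and weight $0$ to $1=h_0$), then every monomial $\prod_i h_{\lambda_i-i+\sigma(i)}$ occurring above has total weight $\sum_{i=1}^k(\lambda_i-i+\sigma(i))=\sum_{i=1}^k\lambda_i=n$, since $\sum_i\sigma(i)=\sum_i i$. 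Hence, if $F(h_1,\dots,h_N)$ is the polynomial expressing $s_\lambda$ in terms of the finitely many $h_j$ on which it depends, then $F(x h_1,x^2 h_2,\dots,x^N h_N)=x^n\,F(h_1,\dots,h_N)$ for every $x\in\F_q$.

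Now fix $x\in\F_q$ with $x\neq0$. A homomorphism $\Lambda\to\F_q$ is determined by the tuple $(c_1,\dots,c_N)\in\F_q^N$ of images of $h_1,\dots,h_N$ (the remaining $h_j$ do not affect $s_\lambda$), and for any $b\in\F_q$ the probability $P(s_\lambda\mapsto b)$ is exactly the fraction of tuples in $\F_q^N$ with $F(c_1,\dots,c_N)=b$. Consider the map $\tau_x\colon\F_q^N\to\F_q^N$ defined by $\tau_x(c_1,\dots,c_N)=(xc_1,x^2c_2,\dots,x^Nc_N)$. Since $x\neq0$, $\tau_x$ is a bijection, and by the graded identity above $F(\tau_x(c))=x^nF(c)$. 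Therefore $\tau_x$ restricts to a bijection between $\{c\in\F_q^N : F(c)=a\}$ and $\{c\in\F_q^N : F(c)=x^n a\}$, so these sets have equal size, which gives $P(s_\lambda\mapsto a)=P(s_\lambda\mapsto x^n a)$.

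There is essentially no serious obstacle; the only point requiring a moment's care is the weighted-homogeneity of the Jacobi-Trudi determinant, which is immediate from $\sum_i\sigma(i)=\sum_i i$ (equivalently, from $\Lambda$ being graded with $s_\lambda$ and $h_k$ homogeneous of degrees $|\lambda|$ and $k$). I would also flag explicitly that $x\neq0$ is used twice: to make $\tau_x$ a bijection, and to make the graded rescaling of a homomorphism again a homomorphism with each coordinate ranging over all of $\F_q$.
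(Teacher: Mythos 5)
Your proof is correct and follows essentially the same route as the paper: both exploit that $s_\lambda$ is (weighted-)homogeneous of degree $n$ when $h_k$ (the paper uses $e_k$, which is equivalent by Corollary~\ref{cor: e and h}) is given weight $k$, and then conclude via the bijection $c_k\mapsto x^k c_k$ on assignments. Your explicit verification of the weighted homogeneity from the Jacobi-Trudi expansion and $\sum_i\sigma(i)=\sum_i i$ is a slightly more detailed justification of a step the paper takes for granted, but the argument is the same.
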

\begin{proof}
Since $s_\lambda$ is a homogeneous polynomial of degree $n$ and each $e_i$ is a homogeneous polynomial of degree $i$, if some assignment $e_1 = a_1, \ldots , e_n = a_n$ gives $s_\lambda = a$, then the assignment $e_1 = xa_1, e_2 = x^2a_2,\ldots, e_n = x^na_n$ gives $s_\lambda = x^n a$. Since $x$ is nonzero, this creates a bijection between assignments of the $e_i$ such that $s_\lambda = a$ and assignments of the $e_i$ such that $s_\lambda = x^na$.
\end{proof}

The next result follows directly from Proposition~\ref{prop:x^n}.
\begin{cor}
Let $\lambda$ be a partition with $|\lambda| = n$, and let $q$ be a prime power with $\gcd(n,q-1) = 1$. Then $P(s_\lambda \mapsto a) = P(s_\lambda \mapsto b)$ for any nonzero $a,b \in \F_q$.
\end{cor}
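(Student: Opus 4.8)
The plan is to combine Proposition~\ref{prop:x^n} with the cyclic structure of the multiplicative group $\F_q^\times$. First I would recall that $\F_q^\times$ is cyclic of order $q-1$, so the $n$-th power map $\pi\colon \F_q^\times \to \F_q^\times$ given by $\pi(x) = x^n$ is a group homomorphism whose kernel consists exactly of the elements whose order divides $\gcd(n,q-1)$; hence $|\ker \pi| = \gcd(n,q-1)$. Under the hypothesis $\gcd(n,q-1) = 1$, this kernel is trivial, so $\pi$ is injective, and since $\F_q^\times$ is finite, $\pi$ is then also surjective. In other words, every nonzero element of $\F_q$ is an $n$-th power of a nonzero element.

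Given nonzero $a,b \in \F_q$, surjectivity of $\pi$ provides some nonzero $x \in \F_q$ with $x^n = ba^{-1}$, i.e. $x^n a = b$. Applying Proposition~\ref{prop:x^n} with this choice of $x$ yields
\[
  P(s_\lambda \mapsto a) = P(s_\lambda \mapsto x^n a) = P(s_\lambda \mapsto b),
\]
which is the claim. Equivalently, one could fix a generator $g$ of $\F_q^\times$, write $a = g^i$ and $b = g^j$, use $\gcd(n,q-1)=1$ to solve the congruence $nk \equiv j-i \pmod{q-1}$ for some integer $k$, and take $x = g^k$.

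I do not expect any genuine obstacle: the argument is essentially a one-line consequence of Proposition~\ref{prop:x^n} once the bijectivity of the $n$-th power map is observed. The only point worth stating explicitly is that the element $x$ produced is nonzero, so that Proposition~\ref{prop:x^n} indeed applies; this is automatic because $ba^{-1} \neq 0$.
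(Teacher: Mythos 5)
Your argument is correct and is exactly the intended derivation: the paper states only that the corollary ``follows directly from Proposition~\ref{prop:x^n},'' and the step being left implicit is precisely your observation that $\gcd(n,q-1)=1$ makes the $n$-th power map a bijection on $\F_q^\times$, so one can choose nonzero $x$ with $x^n a = b$. Nothing is missing.
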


\subsection{Rectangles}
\begin{lem}\label{lem:nonzero}
Let b be a nonzero element of $\F_q$ and let $a \geq n$ be integers, then

\[
P(s_{a^n} \mapsto b) = \sum_{k\geq 1}\sum_{(c_1,c_2,\ldots,c_k) \in C(n)} \frac{(q-1)^{k-1}}{q^n}g_{b}(\gcd(c_1,c_2,\ldots,c_k,q-1)),
\]
where $C(n)$ is the set of all compositions (i.e. ordered partitions) of n, and 

\[
g_b(d) = \begin{cases} 0 & d \nmid \frac{q-1}{ord(b)} \\ d & d | \frac{q-1}{ord(b)} \end{cases},
\]
where $ord(b)$ is the order of $b$ in the multiplicative group $\F_q^{\times}$.
\end{lem}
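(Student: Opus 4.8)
The plan is to compute $P(s_{a^n}\mapsto b)$ by counting assignments. Since $a\ge n$, the Jacobi--Trudi matrix $A=(x_{j-i+n})_{1\le i,j\le n}$ for $s_{a^n}$ has $2n-1$ genuinely free variables $x_1,\dots,x_{2n-1}$ (none forced to $0$ or $1$), so $P(s_{a^n}\mapsto b)=q^{-(2n-1)}\cdot\#\{x\in\F_q^{2n-1}:\det A=b\}$. To count, I would run the variable-by-variable reduction used in the proof of Corollary~\ref{rectangle_probability}: assign $x_1,x_2,\dots$ in order, applying $\varphi$ after each step. By Lemma~\ref{lem:rec} the current matrix always has a well-defined first nonzero diagonal---say the $k$-th diagonal of a matrix of size $m$---all of whose entries equal $x_\ell-b_\ell$ for a single $b_\ell\in\F_q$ (once $x_1,\dots,x_{\ell-1}$ are set). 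Assigning $x_\ell$ is then either a \emph{magic move} ($x_\ell=b_\ell$, one choice: size stays $m$, the first nonzero diagonal moves to position $k+1$), or a \emph{non-magic move} ($x_\ell=b_\ell+u$ for $u\in\F_q^\times$, so $q-1$ choices: the $k$ equal constant entries on that diagonal form a $k\times k$ upper-triangular block with $u$ on its diagonal, and $\psi$ deletes those $k$ rows and columns, shrinking to size $m-k$). All remaining variables are \emph{free} and change nothing structurally.

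I would then argue that $\det A\neq 0$ exactly when the reduction terminates with $\varphi$ returning the empty matrix, and that such trajectories biject with compositions $(c_1,\dots,c_k)$ of $n$: group the moves into \emph{blocks}, where block $j$ consists of $c_j-1$ magic moves followed by a non-magic move; the non-magic move of block $j<k$ takes the size from $m_{j-1}$ to $m_j=m_{j-1}-c_j$ (with $m_0=n$), and the last block ends by emptying a size-$c_k$ upper-triangular matrix, so $c_1+\dots+c_k=n$; every other trajectory passes through option~(2) of Lemma~\ref{lem:rec} and has $\det A=0$. A direct index count shows that, for a fixed composition, there are exactly $n-1$ free variables ($q$ choices each), $n-k$ magic moves (forced), and $k$ non-magic moves with offsets $u_1,\dots,u_k$ ranging over $(\F_q^\times)^k$. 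Chaining the elementary identity $\det\!\big(M(x_\ell=b_\ell+u)\big)=(-1)^{k(m-k)}u^{k}\det\big(\varphi(M;x_\ell=b_\ell+u)\big)$ through the whole trajectory gives $\det A=\varepsilon\prod_{j=1}^{k}u_j^{c_j}$, where $\varepsilon=(-1)^{\sum_{i<j}c_ic_j}$ depends only on the composition; in particular $\det A$ is independent of the free variables and of the magic values.

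It remains to count $(u_1,\dots,u_k)\in(\F_q^\times)^k$ with $\prod_j u_j^{c_j}=\varepsilon b$. The map $(u_j)\mapsto\prod_j u_j^{c_j}$ is a homomorphism $(\F_q^\times)^k\to\F_q^\times$ with image $(\F_q^\times)^{d}$ for $d=\gcd(c_1,\dots,c_k,q-1)$, so every nonempty fiber has size $d(q-1)^{k-1}$, and the fiber over $w$ is nonempty iff $w$ is a $d$-th power, i.e.\ iff $d\mid (q-1)/ord(w)$; thus the fiber over $w$ has size exactly $(q-1)^{k-1}g_w(d)$. To replace $g_{\varepsilon b}(d)$ by $g_b(d)$ when $\varepsilon=-1$ (which forces $q$ odd): a short parity check shows $\varepsilon=-1$ implies some $c_j$ is odd, hence $d$ is odd; and since $\tfrac{q-1}{2}$ is divisible by every odd prime dividing $q-1$, the integers $\gcd(a,q-1)$ and $\gcd(a+\tfrac{q-1}{2},q-1)$ have the same odd part, so $ord(b)$ and $ord(-b)$ have the same odd part and therefore $g_{-b}(d)=g_b(d)$ for odd $d$. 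Hence the number of assignments with trajectory $(c_1,\dots,c_k)$ and $\det A=b$ is $q^{n-1}(q-1)^{k-1}g_b(\gcd(c_1,\dots,c_k,q-1))$; summing over all compositions of $n$ and dividing by $q^{2n-1}$ gives the claimed formula.

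The part I expect to be most delicate is the bookkeeping in the second paragraph: Lemma~\ref{lem:rec} as stated only tracks positions of zero diagonals, so I would need to check that a non-magic move at a diagonal of index $k$ deletes exactly $k$ rows and columns and contributes exactly the scalar $(-1)^{k(m-k)}u^{k}$, and to pin down the global sign $\varepsilon$ precisely enough to run the parity cancellation above. The remaining steps---the composition bijection, the index count of free/magic/non-magic slots, and the group-theoretic fiber count---are then routine.
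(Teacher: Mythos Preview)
Your approach is correct and is essentially the same as the paper's: both decompose the nonzero outcomes according to a composition $(c_1,\dots,c_k)$ of $n$ encoding the sizes of the blocks produced by successive reductions, compute the probability of each composition as $(q-1)^k/q^n$, observe that the determinant equals $\pm\prod_j y_j^{c_j}$ with the $y_j$ independent and uniform on $\F_q^\times$, and then count fibers of the map $(y_j)\mapsto\prod_j y_j^{c_j}$.

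The differences are presentational. The paper formalizes your ``non-magic move leaves behind a $c_j\times c_j$ scalar block'' observation by introducing a variant $\tilde\varphi$ of $\varphi$ that performs the same row and column operations but does not delete, and proves a short lemma that $\tilde\varphi(A;x_1=a_1,\dots,x_{2n-1}=a_{2n-1})$ is block anti-diagonal with scalar blocks; this makes the determinant identity and the independence of the $y_j$ immediate without the sign bookkeeping you flagged as delicate. For the sign, the paper uses the simpler bijection you implicitly set up: if $\varepsilon=-1$ then some $c_i$ is odd, and $y_i\mapsto -y_i$ shows directly that $\prod y_j^{c_j}=b$ and $\prod y_j^{c_j}=-b$ have the same number of solutions, so one never needs to compare $g_{-b}(d)$ with $g_b(d)$. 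Finally, for the fiber count the paper uses a Euclidean-algorithm reduction $P(\prod y_j^{c_j}=b)=P(\prod y_j^{d}=b)$ rather than your homomorphism-image argument. Your route through $g_{-b}(d)=g_b(d)$ for odd $d$ is correct but more work than necessary; otherwise the two proofs coincide.
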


Before proving Lemma~\ref{lem:nonzero}, we require the following definitions and lemmas.

Recall the definition of the map $\varphi$, based on $\psi$ from Section \ref{sec:operations}, which takes a general Schur matrix and a set of assignments to a reduced general Schur matrix. For the following proofs we use a modified version of both $\varphi$ and $\psi$, which we will denote $\tilde{\varphi}$ and $\tilde{\psi}$. While $\varphi$ and $\psi$ reduce the size of the matrix, $\tilde{\varphi}$ and $\tilde{\psi}$ will leave the size unchanged.

\begin{defn}
Let $M$ be a general Schur matrix of size $n$ with $m$ variables. We define an operation $\tilde\psi$ that takes general Schur matrices to matrices over $\F_q[x_1,x_2,\ldots,x_m]$ as follows.
\begin{enumerate}[(a)]
\item If $M$ has no nonzero constant entries, then $\psi(M) = M$.
\item If $M$ has $k \geq 1$ nonzero constant entries:
\begin{enumerate}
\item[(i)] From top to bottom, use each of these $k$ entries as a pivot to make all other entries in its column zero by subtracting appropriate multiples of its row from each of the rows above.
\item[(ii)] Then, use this nonzero constant to make all other entries in its row zero using column operations.
\end{enumerate}
The resulting matrix $M'$ is $\tilde\psi(M)$.
\end{enumerate}

\end{defn}

Notice that $\tilde{\psi}$ only performs determinant-preserving row and column operations on the matrix, so $\det(M) = \det{\tilde{\psi}(M)}$. To see an example of $\tilde{\psi}$, look at the first step of Example~\ref{ex:psi}.

We define $\tilde{\varphi}$ analogously to $\varphi$ by replacing $\psi$ in the definition of $\varphi$ with $\tilde{\psi}$. For the following lemma, we rename the $h_i$ by $x_i$.

\begin{lem}
Let $\lambda = (a^n)$ be a rectangular partition, and let $A = (x_{j-i+n})_{1\leq i,j\leq n}$ be the Jacobi-Trudi matrix corresponding to $s_\lambda$. Then for any $1\leq r\leq m$ and any choice of $a_1,\ldots,a_r\in \F_q$:
\begin{enumerate}
\item$\tilde{\varphi}(A; x_1=a_1,x_2=a_2,\ldots,x_r=a_r)$ is a block anti-diagonal matrix.
\item The top right block is $\varphi(A; x_1=a_1,x_2=a_2,\ldots,x_r=a_r)$.
\item All others blocks are either scalar multiples of the identity or the zero matrix.
\end{enumerate}
\end{lem}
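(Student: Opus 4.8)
The plan is to prove all three assertions simultaneously by induction on $r$, running in parallel with the proof of Lemma~\ref{lem:rec}. The invariant I would maintain is exactly the statement of the lemma: $\tilde\varphi(A; x_1=a_1,\ldots,x_r=a_r)$ is block anti-diagonal, its top-right block is $\varphi(A; x_1=a_1,\ldots,x_r=a_r)$, and every remaining anti-diagonal block is of the form $cI$ for some $c \in \F_q^{\times}$ (with the off-anti-diagonal blocks zero). For $r=0$ there is nothing to check: $A$ is a single block, which is its own top-right block, and there are no constants; I will treat the assignment of $x_r$ as the inductive step, which also covers the base case $r=1$ with no prior scalar blocks.

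For the inductive step, write $B := \tilde\varphi(A; x_1=a_1,\ldots,x_{r-1}=a_{r-1})$, let $T$ be its top-right block, which by hypothesis equals $\varphi(A; x_1=a_1,\ldots,x_{r-1}=a_{r-1})$, and recall that the remaining anti-diagonal blocks of $B$ are scalar multiples of the identity and hence contain no variables. Therefore assigning $x_r = a_r$ alters only $T$, and by Lemma~\ref{lem:rec} the matrix $T$ satisfies one of options (1)--(3); in particular, any entry of $T$ that becomes a nonzero constant under $x_r = a_r$ lies on the first nonzero diagonal of $T$, and all such entries are mutually equal, say to some $c\in\F_q^{\times}$. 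If no such constant appears (the first nonzero diagonal has label exceeding $r$, or its entries specialize to $0$), then $\tilde\psi$ does nothing, and likewise $\psi$ does nothing, so the block anti-diagonal shape and the identification of the top-right block with $\varphi(A; x_1=a_1,\ldots,x_r=a_r)$ are inherited verbatim.

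Otherwise, $\ell \ge 1$ equal nonzero constants $c$ appear on a single diagonal of $T$, and $\tilde\psi$ clears their rows and columns. The key point to establish here is a locality statement: since $B$ is block anti-diagonal with $T$ occupying the top-right corner—say rows $1,\ldots,p_1$ and the last $p_1$ columns—each of the rows $1,\ldots,p_1$ vanishes outside the last $p_1$ columns, and each of the last $p_1$ columns vanishes outside rows $1,\ldots,p_1$. Hence the row operations (subtracting a pivot row from rows above it) and the subsequent column operations performed by $\tilde\psi$ never leave the block $T$; they leave the previously-assembled scalar blocks of $B$ untouched and act on $T$ exactly as $\tilde\psi$ acts on $T(x_r=a_r)$ in isolation. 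Processing the $\ell$ pivots from top to bottom, no pivot sits in the row or column of another (two entries on a common diagonal lie in distinct rows and distinct columns), and once a pivot's row has been reduced it meets no later pivot's column; so the $\ell$ copies of $c$ survive the clearing and, exactly as in the proof of Lemma~\ref{lem:rec} and display~\eqref{eqn:rec}, end up forming a $cI_{\ell}$ block in the lower-left corner of $T$, while what remains in the top-right is precisely $\psi(T(x_r=a_r)) = \varphi(A; x_1=a_1,\ldots,x_r=a_r)$. This produces one new $cI_{\ell}$ anti-diagonal block and shrinks the top-right block accordingly, closing the induction.

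The step I expect to be the main obstacle is the locality and bookkeeping argument of the last paragraph: one has to be certain that the Gaussian-elimination steps triggered by the freshly created constants stay confined to the current top-right block, so that they cannot corrupt the scalar blocks built at earlier stages, and one has to track carefully how the new $cI_{\ell}$ block nests into the existing anti-diagonal partition. This is precisely where the rigid block anti-diagonal shape carried through the induction does the work, together with the two facts imported from Lemma~\ref{lem:rec}—that the newly created constants all lie on a single diagonal and are mutually equal; without equality one would not obtain a clean scalar block, and without the off-anti-diagonal zeros the earlier blocks could be disturbed.
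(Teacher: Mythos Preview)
Your argument is essentially the paper's: induct on $r$, observe that assigning $x_r$ can only touch the top-right block $T=\varphi(A;x_1=a_1,\ldots,x_{r-1}=a_{r-1})$ because the other blocks carry no variables, invoke Lemma~\ref{lem:rec} to know the freshly created nonzero constants all lie on one diagonal of $T$ and are equal, and then read off the new block structure. Your locality paragraph is in fact more explicit than the paper's one-line justification.

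The one case you omit is when the diagonal of equal constants sits \emph{above} the main diagonal of $T$; this occurs precisely once $T$ has fallen into option (2) of Lemma~\ref{lem:rec} (all of the first $n'$ diagonals zero). In that situation $\tilde\psi$ leaves a scalar-identity block in the upper right of $T$ and a \emph{zero} block in the lower left, which is exactly why part (3) of the lemma allows zero blocks. Your inductive invariant insists every non-top-right block is $cI$ with $c\in\F_q^{\times}$, so as written you are proving a slightly stronger statement that holds only along assignments that never enter option (2). For the intended application (the block structure when $\det A\neq 0$) option (2) cannot occur and your version is enough, but to match the lemma as stated you should add one sentence covering this degenerate case.
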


\begin{ex}\label{ex:tilpsi}
Let $A$ be the Jacobi-Trudi matrix corresponding to $\lambda=(4^4)$. We show below the matrix $A$ as well as the result of an application of $\varphi$ and $\tilde\varphi$. 
\begin{align*}
A=&\begin{bmatrix}
x_4 & x_5 & x_6 & x_7 \\
x_3 & x_4 & x_5 & x_6 \\
x_2 & x_3 & x_4 & x_5 \\
x_1 & x_2 & x_3 & x_4
\end{bmatrix}\\
\tilde{\varphi}(A; x_1 = 0, x_2 = 2) = &
\begin{bmatrix}
0 & 0 & x_6 - \frac{x_3x_5 - x_4^2}{2} & x_7-x_4x_5 \\
0 & 0 & x_5-x_3x_4 & x_6- \frac{x_3x_5 - x_4^2}{2} \\
2 & 0 & 0 & 0 \\
0 & 2 & 0 & 0
\end{bmatrix}\\
\varphi(A; x_1 = 0, x_2 = 2) = &
\begin{bmatrix}
x_6 - \frac{x_3x_5 - x_4^2}{2} & x_7-x_4x_5 \\
x_5-x_3x_4 & x_6- \frac{x_3x_5 - x_4^2}{2} \\
\end{bmatrix}
\end{align*}
\end{ex}

\begin{proof}
We proceed by induction on $r$. If $r=1$, then $\tilde{\varphi}(A; x_1 = a_1)$ is either equal to  $\varphi(A; x_1 = a_1)$ or can be decomposed into a $1\times1$ nonzero block and a block equal to $\varphi(A; x_1 = a_1)$.

For the inductive step, assume $\tilde{\varphi}(A; x_1 = a_1, x_2 = a_2, \ldots , x_{r-1} = a_{r-1})$ is of the desired form. Then $\tilde{\varphi}(A;x_1 = a_1, \ldots , x_r = a_r) = \tilde{\varphi}(\tilde{\varphi}(A;x_1 = a_1, \ldots x_{r-1} = a_{r-1} ); x_r = a_r) $ will only change the final block of $\tilde{\varphi}(A; x_1 = a_1, x_2 = a_2, \ldots , x_{r-1} = a_{r-1})$ that is equal to $\varphi(A; x_1 = a_1, x_2 = a_2, \ldots , x_{r-1} = a_{r-1})$, since it is the only block in which new nonzero constants can appear. Thus it suffices to show that $\tilde{\varphi}(\varphi(A; x_1 = a_1, x_2 = a_2, \ldots , x_{r-1} = a_{r-1});  x_r = a_r)$ is of the desired form.

By \ref{lem:rec}, the entries on the lowest nonzero diagonal of $\varphi(A; x_1 = a_1, x_2 = a_2, \ldots , x_{r-1} = a_{r-1})$ are all the same. The new assignment $x_r = a_r$ can set only these entries to some nonzero constant, since only they have the smallest label. If $x_r = a_r$ does set these entries to some nonzero constant and these entries are below the main diagonal, then $\tilde{\varphi}(\varphi(A;x_1=a_1,\ldots,x_{r-1} = a_{r-1});x_r = a_r)$ is block-antidiagonal with two blocks, where the lower left block is a scalar multiple of the identity, and the upper right block is $\varphi(A;x_1=a_1,\ldots,x_r = a_r)$. If these entries are set to some nonzero constant and are above the main diagonal, then $\tilde{\varphi}(\varphi(A;x_1=a_1,\ldots,x_{r-1} = a_{r-1});x_r = a_r)$ is block-antidiagonal with two blocks: a block of zeros in the bottom left and a scalar multiple of the identity in the top right. If these entries are not sent to 0, then $\tilde{\varphi}(\varphi(A;x_1=a_1,\ldots,x_{r-1} = a_{r-1});x_r = a_r) = \varphi(A;x_1=a_1,\ldots,x_r = a_r)$. All possibilities are of the desired form.
\end{proof}

If the determinant of $A= (x_{j-i+n})_{1\leq i,j\leq n}$ under some particular assignment $x_1 = a_1, x_2 = a_2,\ldots,x_{2n-1}=a_{2n-1}$ is nonzero, then each block in $\tilde{\varphi}(A; x_1 = a_1,\ldots, x_{2n-1} = a_{2n-1})$ must have nonzero determinant. Thus each block must be a scalar multiple of the identity, since $\varphi(A;x_1=a_1,\ldots,x_{2n-1}=a_{2n-1})$ must either be empty or only contain 0's. The sizes of these blocks therefore must sum to $n$. This observation motivates the following definition.

\begin{defn}
Let $A = (x_{j-i+n})_{1\leq i,j\leq n}$ be a matrix corresponding to some rectangular Schur function, and let $x_1 = a_1, x_2 = a_2,\ldots,x_{2n-1} = a_{2n-1}$ be an assignment of the $x_i$'s such that A is nonsingular. Define the \textit{block structure of A under assignment $(a_1,a_2,\ldots,a_{2n-1})$} to be the sequence $(c_1,...,c_k)$ of the sizes of blocks of $\tilde{\varphi}(A; x_1 = a_1, x_2 = a_2,\ldots, x_{2n-1} = a_{2n-1})$ from the lower left corner to the upper right corner. 
Denote this by \[B(A; x_1 = a_1, x_2 = a_2,\ldots,x_{2n-1} = a_{2n-1}) = (c_1,c_2,\ldots,c_k).\]
\end{defn}

\begin{ex}\label{ex:blockstruct}
Let $A$ be the Jacobi-Trudi matrix corresponding to $\lambda=(3^3)$. Consider the assignment $(x_1 = 0, x_2 = 2, x_3 = 1, x_4 = 1, x_5 = 4)$.
\begin{align*}
A=&\begin{bmatrix}
x_3 & x_4 & x_5 \\
x_2 & x_3 & x_4 \\
x_1 & x_2 & x_3 
\end{bmatrix}\\
\tilde{\varphi}(A; x_1 = 0, x_2 = 2) = &
\begin{bmatrix}
0 & 0 & x_5-x_3x_4 \\
2 & 0 & 0\\
0 & 2 & 0
\end{bmatrix}\\
\tilde{\varphi}(A; x_1 = 0, x_2 = 2, x_3 = 1, x_4 = 1, x_5 = 4) = &
\begin{bmatrix}
0 & 0 &3\\
2 & 0 & 0\\
0 & 2 & 0
\end{bmatrix}\\
B(A; x_1 = 0, x_2 = 2, x_3 = 1, x_4 = 1, x_5 = 4)  = & (2,1)
\end{align*}

\end{ex}

We are now ready to prove Lemma~\ref{lem:nonzero}, which is the main lemma of this section.

\begin{proof}[Proof of Lemma~\ref{lem:nonzero}]
We now consider the probability that for some fixed $A$ and composition $(c_1,\ldots,c_k)$, $B(A; x_1 = a_1,\ldots,x_{2n-1} = a_{2n-1}) = (c_1,\ldots,c_k)$. Within a block of size $\ell$, the $\ell -1$ diagonals below the main diagonal must be set to 0, and the main diagonal must be nonzero. The values of the diagonals above the main diagonal can be anything. Therefore,
\[
P(B(A; x_1 = a_1,\ldots,x_{2n-1} = a_{2n-1}) = (c_1,\ldots,c_k)) = \left(\frac{1}{q}\right)^{n-k}\left(\frac{q-1}{q}\right)^k = \frac{(q-1)^k}{q^n}.
\]

The determinant of a matrix of this form is $\pm y_1^{c_2}y_2^{c_2}\cdots y_k^{c_k}$, where the $y_i$ are the values on the main diagonal of the $i^{th}$ block. From the definition of $\tilde\varphi$ and the structure of $A$ we can see that these $y_i$ are all independent and uniformly distributed across the nonzero values of $\F_q$. Also note that if the determinant is $-y_1^{c_2}y_2^{c_2}\cdots y_k^{c_k}$, then some $c_i$ must be odd, and replacing $y_i$ with $-y_i$ allows us to ignore the sign.

We now want to calculate the probability that $y_1^{c_1}y_2^{c_2}\cdots y_k^{c_k} = b$. If $c_1 = c_2 = \ldots = c_k$, then this is just the probability that for some nonzero $y \in \F_q$, $y^{c_1} = b$, so 
\[
P(y_1^{c_1}y_2^{c_2}\cdots y_k^{c_k} = b) = \frac{g_b(\gcd(q-1,c_1))}{q-1}.
\]

If some $c_i > c_j$, then $$y_1^{c_1}y_2^{c_2}\cdots y_k^{c_k} = y_1^{c_1}y_2^{c_2}\cdots y_i^{c_i-c_j} \cdots y_{j-1}^{c_{j-1}}(y_iy_j)^{c_j}y_{j+1}^{c_{j+1}} \cdots y_k^{c_k}.$$
Since $y_iy_j$ is still uniformly distributed and independent of $y_h$ when $h \neq j$,
\begin{align*}
&P(y_1^{c_1}y_2^{c_2}\cdots y_i^{c_i-c_j} \cdots y_{j-1}^{c_{j-1}}(y_iy_j)^{c_j}y_{j+1}^{c_{j+1}} \cdots y_k^{c_k} = b) \\
=&P(y_1^{c_1}y_2^{c_2}\cdots y_i^{c_i-c_j} \cdots y_{j-1}^{c_{j-1}}y_j^{c_j}y_{j+1}^{c_{j+1}} \cdots y_k^{c_k} = b).
\end{align*}
We can then use the Euclidean algorithm to obtain
\[
P(y_1^{c_1}y_2^{c_2}\cdots y_k^{c_k} = b) =  P(y_1^dy_2^d\cdots y_k^d = b) = \frac{g_b(\gcd(q-1,d))}{q-1},
\]
where $d = \gcd(c_1,c_2,\ldots,c_k)$.

Therefore we have 
\begin{align*}
&P\big(s_{a^n} \mapsto b\ \ \&\ \ B(A; x_1 = a_1,\ldots,x_{2n-1} = a_{2n-1}) = (c_1,\ldots,c_k)\big) \\
=& \frac{(q-1)^{k-1}}{q^n}g_b(\gcd(q-1,c_1,\ldots,c_k)).
\end{align*}
Summing over all compositions gives the lemma.
\end{proof}

We next remind the reader of the definition of the M\"obius function of integers, $\mu$. Let $n$ be an integer. Then $\mu(n)=0$ if $n$ is not square-free, and $\mu(n)=(-1)^p$, where $p$ is the number of prime factors of $n$, otherwise. For example, $\mu(4)=0$ and $\mu(30)=(-1)^3$.

Lemma~\ref{lem:nonzero} allows us to write $P(s_{a^n}\mapsto b)$ explicitly, as shown in the following theorem.

\begin{prop}
Let $b$ be a nonzero element of $\F_q$, and let $a \geq n$. Then
\[
P(s_{a^n} \mapsto b) = \sum_{d|\gcd(q-1,n)} \frac{f_b(d)}{q^{n(d-1)/d +1}},
\]
where 
\[
	f_b(d) = \sum_{e|d}\mu(e)g_b\left(\frac{d}{e}\right)
\]
is the M{\"o}bius inverse of $g_b$.
\end{prop}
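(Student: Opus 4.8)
The plan is to start from Lemma~\ref{lem:nonzero} and reorganize its sum over compositions according to the common divisor of the parts. Since $g_b\big(\gcd(c_1,\ldots,c_k,q-1)\big)$ depends on the composition $(c_1,\ldots,c_k)$ only through $d:=\gcd(c_1,\ldots,c_k)$, a divisor of $n$ (and only through $\gcd(d,q-1)$ at that), I would first rewrite
\[
P(s_{a^n}\mapsto b)=\frac{1}{q^n}\sum_{d\mid n}g_b\big(\gcd(d,q-1)\big)\,a_1(n/d),
\]
where $a_1(m):=\sum (q-1)^{k-1}$, the sum running over all compositions $(c_1,\ldots,c_k)$ of $m$ whose parts have $\gcd$ equal to $1$. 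This step uses the weight-preserving bijection $(c_1,\ldots,c_k)\mapsto(c_1/d,\ldots,c_k/d)$ between compositions of $n$ with part-gcd $d$ and compositions of $n/d$ with part-gcd $1$ (the number of parts, hence the exponent of $q-1$, is unchanged).

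Next I would evaluate $a_1$. Grouping \emph{all} compositions of $m$ by the gcd of their parts gives $\sum_{e\mid m}a_1(e)=a(m)$, where $a(m):=\sum_k\binom{m-1}{k-1}(q-1)^{k-1}=q^{m-1}$ by the binomial theorem. Möbius inversion then yields $a_1(m)=\sum_{e\mid m}\mu(e)\,q^{m/e-1}$. Substituting this and reindexing the resulting double sum by $m=de$ (so that $m$ ranges over divisors of $n$ and $(d,e)$ over factorizations of $m$, with $q^{n/(de)-1}=q^{n/m-1}$) reduces the whole problem to the identity
\[
\sum_{e\mid m}\mu(e)\,g_b\big(\gcd(m/e,q-1)\big)=
\begin{cases}f_b(m)&\text{if }m\mid q-1,\\ 0&\text{otherwise,}\end{cases}
\]
valid for every $m\mid n$. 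Granting this, the divisors $m\mid n$ that also satisfy $m\mid q-1$ are exactly those with $m\mid\gcd(q-1,n)$, and since $n-n/m+1=n(m-1)/m+1$, the stated formula falls out immediately.

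The remaining identity is the heart of the argument. When $m\mid q-1$ it is immediate, since every $e\mid m$ has $m/e\mid m\mid q-1$, whence $\gcd(m/e,q-1)=m/e$ and the left side is the defining expression for $f_b(m)$. When $m\nmid q-1$, I would fix a prime $p$ with $v_p(m)>v_p(q-1)$ (here $v_p$ denotes $p$-adic valuation) and pair each squarefree divisor $e$ of $m$ with $p\nmid e$ against $pe$, which is again a squarefree divisor of $m$ because $v_p(m)\ge 1$, and for which $\mu(pe)=-\mu(e)$. Writing $m=p^{\alpha}m'$ with $p\nmid m'$ and $\alpha=v_p(m)$, one has $m/e=p^{\alpha}(m'/e)$ and $m/(pe)=p^{\alpha-1}(m'/e)$; both have $p$-part equal to $p^{v_p(q-1)}$ in their gcd with $q-1$ (since $\alpha-1\ge v_p(q-1)$) and share the same prime-to-$p$ part $\gcd(m'/e,q-1)$, so $g_b\big(\gcd(m/e,q-1)\big)=g_b\big(\gcd(m/(pe),q-1)\big)$ and the two terms cancel. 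Summing over the pairs gives $0$.

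I expect the pairing/cancellation computation in this last case to be the main obstacle: it requires careful bookkeeping of $p$-adic valuations to confirm that replacing $m$ by $m/p$ leaves the relevant gcd unchanged, and one must check that $e\leftrightarrow pe$ is a well-defined involution on the squarefree divisors of $m$ — which it is precisely because $v_p(m)>v_p(q-1)\ge 0$ forces $v_p(m)\ge 1$. Everything preceding it (the combinatorial regrouping, the evaluation $a(m)=q^{m-1}$, and the Möbius inversion) is routine.
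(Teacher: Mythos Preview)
Your argument is correct, but it is organized differently from the paper's, and the difference is instructive.

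The paper exploits the fact that Lemma~\ref{lem:nonzero} already builds $q-1$ into the gcd: since $g_b(D)=\sum_{d\mid D}f_b(d)$ by M\"obius inversion, one can write
\[
g_b\bigl(\gcd(c_1,\ldots,c_k,q-1)\bigr)=\sum_{d\mid \gcd(c_1,\ldots,c_k,q-1)}f_b(d)
\]
and interchange sums. The inner sum then runs over compositions of $n$ whose every part is divisible by $d$; there are $\binom{n/d-1}{k-1}$ of these, and summing over $k$ collapses to $q^{n/d-1}$ by the binomial theorem. The restriction $d\mid\gcd(n,q-1)$ falls out automatically because $d$ must divide both $q-1$ and each $c_i$ (hence $n$). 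No case analysis is needed.

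You instead strip off $q-1$, grouping by $d=\gcd(c_1,\ldots,c_k)\mid n$ alone, and then M\"obius-invert the composition count to get $a_1(m)=\sum_{e\mid m}\mu(e)q^{m/e-1}$. This forces you to prove the auxiliary identity
\[
\sum_{e\mid m}\mu(e)\,g_b\bigl(\gcd(m/e,q-1)\bigr)=
\begin{cases}f_b(m)&m\mid q-1,\\0&m\nmid q-1,\end{cases}
\]
whose nontrivial case you handle by the $e\leftrightarrow pe$ involution. That cancellation argument is sound (your valuation bookkeeping is right: $v_p(m)>v_p(q-1)$ guarantees both that $p\mid m$, so the pairing is well defined on squarefree divisors, and that $\alpha-1\ge v_p(q-1)$, so the gcds match), but it is extra work that the paper's route sidesteps entirely.

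In short: the paper M\"obius-inverts $g_b$ and sums compositions afterward; you M\"obius-invert the composition count and then must reconcile with the $q-1$ constraint by hand. Both arrive at the same place, but the paper's ordering is cleaner.
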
 

\begin{proof}
By Lemma~\ref{lem:nonzero}, we have 
\[
P(s_{a^n} \mapsto b) =\sum_{k\geq 1} \sum_{(c_1,c_2,\ldots,c_k) \in C(n)} \frac{(q-1)^{k-1}}{q^n}g_{b}(\gcd(c_1,c_2,\ldots,c_k,q-1)).
\]
Note that the summand only depends on $\gcd(c_1,c_2,\ldots,c_k,q-1)$ and $k$, so we can rewrite the sum as
\[
P(s_{a^n} \mapsto b) = \sum_{\substack{d|\gcd(n,q-1)\\1\leq k \leq n/d}} N(d,k) \frac{(q-1)^{k-1}}{q^n}f_b(d),
\]
where $N(d,k)$ counts the number of compositions $(c_1,\ldots,c_k)$ of $n$ with $k$ parts such that $d$ divides $\gcd(c_1,c_2,\ldots,c_k,q-1)$ and
\[
\sum_{e|d} f_b(e) = g_b(d),
\]
or equivalently, by M{\"o}bius inversion, 
\[
	f_b(d) = \sum_{e|d}\mu(e)g_b(\frac{d}{e}).
\]
This switch from $g_b$ to $f_b$ is necessary because some compositions are counted by multiple $N(d,k)$.

It is easy to compute $N(d,k)$ as we can consider a $k$-part composition of $n$ as a choice of $k-1$ break points along a line of length $n$. To satisfy the gcd requirement we only have $\frac{n}{d} -1$ places where these break points can be, thus $N(d,k) = \binom{n/d-1}{k-1}$.

We can then simplify the sum using the binomial formula:
\[
P(s_{a^n} \mapsto b) = \sum_{\substack{d|\gcd(n,q-1)\\1\leq k \leq n/d}} \binom{n/d-1}{k-1} \frac{(q-1)^{k-1}}{q^n}f_b(d) = \sum_{d|\gcd(n,q-1)}  \frac{q^{n/d-1}}{q^n}f_b(d).
\]
This completes the proof.
\end{proof}

Recall that Eurler's totient function $\phi$ applied to a positive integer $m$ counts the number of positive integers up to $m$ that are relatively prime to $m$.

\begin{cor}
Let $b$ be an element of $\F_q$ of multiplicative order $q-1$, and let $a \geq n$. 
Then 
\[
P(s_{a^n} \mapsto b) = \sum_{d|\gcd(q-1,n)} \frac{\mu(d)}{q^{n(d-1)/d +1}},
\]
and
\[
P(s_{a^n} \mapsto 1) = \sum_{d|\gcd(q-1,n)} \frac{\phi(d)}{q^{n(d-1)/d +1}},
\]
where $\mu$ is the M{\" o}bius function and $\phi$ is Euler's totient function.
\end{cor}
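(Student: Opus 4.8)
The plan is simply to specialize the preceding proposition, which already gives
\[
P(s_{a^n}\mapsto b)=\sum_{d\mid\gcd(q-1,n)}\frac{f_b(d)}{q^{n(d-1)/d+1}},
\]
where $f_b$ is the M\"obius inverse of the function $g_b$ of Lemma~\ref{lem:nonzero}. So everything reduces to evaluating $f_b(d)=\sum_{e\mid d}\mu(e)\,g_b(d/e)$ for the two special choices of $b$ in the statement, and then reading off the resulting coefficients.

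First I would handle a $b$ of multiplicative order $q-1$. Then $(q-1)/\mathrm{ord}(b)=1$, so the divisibility condition in the definition of $g_b$ forces $g_b(1)=1$ and $g_b(d)=0$ for every $d\geq 2$. Hence in $f_b(d)=\sum_{e\mid d}\mu(e)\,g_b(d/e)$ only the term with $d/e=1$ survives, giving $f_b(d)=\mu(d)\,g_b(1)=\mu(d)$. Substituting back into the proposition yields the first formula.

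For $b=1$ we have $\mathrm{ord}(1)=1$, so $(q-1)/\mathrm{ord}(1)=q-1$ and $g_1(d)=d$ for every $d\mid q-1$. Since the outer sum ranges only over $d\mid\gcd(q-1,n)$, each such $d$—together with all of its divisors $e$—divides $q-1$, so $g_1(e)=e$ throughout the relevant range. Therefore $f_1(d)=\sum_{e\mid d}\mu(e)\,(d/e)=(\mu*\mathrm{id})(d)=\phi(d)$ by the classical identity $\phi=\mu*\mathrm{id}$ for Dirichlet convolution, and substituting back gives the second formula.

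There is no real obstacle beyond this bookkeeping; the entire analytic content lives in the preceding proposition and in Lemma~\ref{lem:nonzero}. The only point needing a word of care is that for $d\mid\gcd(q-1,n)$ every divisor of $d$ also divides $q-1$, which is exactly what allows $g_1$ to be read off as the identity function on the range of divisors appearing in the sum, and hence lets the M\"obius inversion collapse to Euler's totient.
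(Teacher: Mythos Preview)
Your proof is correct and follows exactly the approach the paper takes: both simply specialize the preceding proposition by computing $g_b$ (and hence its M\"obius inverse $f_b$) explicitly in the two cases, with the paper merely asserting that ``$g_b$ is particularly nice'' while you spell out the details. The one slight imprecision is that in the $b=1$ case you emphasize that each divisor $e$ of $d$ divides $q-1$, whereas what is actually needed is that $d/e$ divides $q-1$; but since both $e$ and $d/e$ are divisors of $d\mid q-1$, your conclusion stands.
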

\begin{proof}
These are both immediate consequences of the preceding theorem, when $g_b$ is particularly nice, and therefore $f_b$ is easy to compute.
\end{proof}

\section{Results and conjectures for miscellaneous shapes}\label{sec:misc}
We have a handful of miscellaneous results that we will list for the interested reader. We omit the proofs for brevity and instead simply discuss the results.
\subsection{Probability $\frac{q^2+q-1}{q^3}$}
We first identify some shapes $\lambda$ such that $P(s_\lambda\mapsto 0 )=\frac{q^2+q-1}{q^3}$:
\begin{itemize}
\item $\lambda = (a,a-1,a-2)$ with $a \geq 5$,
\item $\lambda = (a,b,1^{m})$ with $b \geq 2$ and $a \neq b + m$, and
\item $\lambda=(a^m,1^n)$ where $a,m>1, n \geq 1$.
\end{itemize}

Note that we cannot drop the condition $a \neq b+m$ in the second family of shapes. For example, if $\lambda=(4,3,1)$, then $P(s_\lambda\mapsto 0)=(q^3+q^2-2q+1)/q^4$. Regarding the third family, note that not all fattened hooks have the same probability. In fact, it seems the probability for a fattened hook can be very complicated, and Proposition \ref{prop: quasi-polynomial} gives an example.

\subsection{Generalized staircases} We also work on generalization of staircases. Based on numerical data for $n$ up to $7$, we have the following conjecture.
\begin{conjecture}
For a 2-staircase $\lambda=(2n, 2n-2, \cdots, 4, 2)$, we have $$P(s_\lambda\mapsto0)={(q^2+q-1)}/{q^3}.$$
\end{conjecture}

\subsection{Relaxing Proposition~\ref{prop: general shape}}Proposition \ref{prop: general shape} computes the probability for shapes where all the row sizes are far apart, in which case we have no constants or repeated variable in the Jacobi-Trudi matrix. If we relax the condition so that we have one repeated variable, then we obtain the following proposition.

\begin{prop}
\label{prop:relaxed strictness}
Let $\lambda = (\lambda_1,\ldots,\lambda_k)$, where $\lambda_j - \lambda_{j + 1} = k - 2$ for some $j < k$, $\lambda_i - \lambda_{i + 1} \geq k - 1$ for all $i < k, i \neq j$ and $\lambda_k \geq k$. Then
$$P(s_{\lambda} \mapsto 0) = \frac{1}{q^{k^2-2k+2}} \left( q^{k^2-2k+2} - (q^{2k-2}-q^{k-1}-q^{k-2}+1) \prod_{i=0}^{k-3}(q^{k-2} - q^i) \right).$$
\end{prop}

\noindent\textbf{Acknowledgments.}
This research was carried out as part of the 2016 REU program at the School of Mathematics at University of Minnesota, Twin Cities, and was supported by NSF RTG grant DMS-1148634 and by NSF grant DMS-1351590.  The authors would like to thank Joel Lewis and Ben Strasser for their comments and suggestions.  The authors are also grateful to Ethan Alwaise who helped get this project started. 

\bibliographystyle{alpha}
\bibliography{main}
\end{document}